\documentclass{article}
\usepackage[final]{neurips_2023}

% Recommended, but optional, packages for figures and better typesetting:
\usepackage{microtype}
\usepackage{graphicx}
\usepackage{subfigure}
\usepackage{booktabs} % for professional tables

\usepackage{hyperref}

% For theorems and such
\usepackage{amsmath}
\usepackage{amssymb}
\usepackage{mathtools}
\usepackage{amsthm}

\usepackage{tcolorbox}
\usepackage{pifont}
\definecolor{mydarkgreen}{RGB}{39,130,67}
\definecolor{mydarkorange}{RGB}{236,147,14}
% \definecolor{mydarkorange}{RGB}{39,130,67}
\definecolor{mydarkred}{RGB}{192,47,25}
\definecolor{blue}{RGB}{0,0,255}

\newcommand{\orange}{\color{mydarkorange}}

\newcommand{\markchanges}{}

\newcommand{\squeeze}{}

% if you use cleveref..
\usepackage[capitalize,noabbrev]{cleveref}

\hypersetup{
  colorlinks   = true, %Colours links instead of ugly boxes
  urlcolor     = blue, %Colour for external hyperlinks
  linkcolor    = {red!75!black}, %Colour of internal links
  citecolor   = {blue!50!black} %Colour of citations
}

\makeatletter
\newenvironment{protocol}[1][htb]{%
    \renewcommand{\ALG@name}{Protocol}% Update algorithm name
   \begin{algorithm}[#1]%
  }{\end{algorithm}}
\makeatother

\makeatletter
\newenvironment{method}[1][htb]{%
    \renewcommand{\ALG@name}{Method}% Update algorithm name
   \begin{algorithm}[#1]%
  }{\end{algorithm}}
\makeatother

\newcommand{\alglinelabel}{%
  \addtocounter{ALC@line}{-1}% Reduce line counter by 1
  \refstepcounter{ALC@line}% Increment line counter with reference capability
  \label% Regular \label
}

\allowdisplaybreaks

\usepackage{graphicx}
\usepackage{apptools}
\usepackage[flushleft]{threeparttable}
\usepackage{array,booktabs,makecell}
\usepackage{multirow}
\usepackage{nicefrac}
\usepackage{amsthm}
\usepackage{amsmath,amsfonts,bm}
\usepackage{wrapfig}
\usepackage{caption}
\usepackage{siunitx}
\usepackage{thm-restate}
\usepackage{nccmath}
\usepackage{empheq}
\usepackage{bbm}
\usepackage{tabularx}

\usepackage{algorithmic}
\usepackage{algorithm}
\usepackage{filecontents}

% COLORS
\usepackage{color}
\usepackage{colortbl}
\definecolor{bgcolor}{rgb}{0.76,0.88,0.50}
\definecolor{bgcolor0}{rgb}{0.93,0.99,1}
\definecolor{bgcolor1}{rgb}{0.8,1,1}
\definecolor{bgcolor2}{rgb}{0.8,1,0.8}
\definecolor{bgcolor3}{rgb}{0.50,0.90,0.50}
\usepackage{tcolorbox}
\usepackage{pifont}
\definecolor{mydarkgreen}{rgb}{39,130,67}
\definecolor{mydarkred}{rgb}{192,25,25}

\newcommand{\norm}[1]{\left\| #1 \right\|}

 % inner product
\newcommand{\inp}[2]{\left\langle#1,#2\right\rangle} % inner product
\newcommand{\abs}[1]{\left| #1 \right|}

\newcommand{\R}{\mathbb{R}} % reals
\newcommand{\N}{\mathbb{N}} % reals

\newcommand{\Exp}[1]{{\mathbb{E}}\left[#1\right]}
\newcommand{\ExpSub}[2]{{\mathbb{E}}_{#1}\left[#2\right]}
\newcommand{\ExpCond}[2]{{\mathbb{E}}\left[\left.#1\right\vert#2\right]}

\newcommand{\Prob}[1]{\mathbb{P}\left(#1\right)} % probability
\newcommand{\ProbCond}[2]{\mathbb{P}\left(#1\middle\vert#2\right)}

% caligraphic
\newcommand{\cA}{\mathcal{A}}

\newcommand{\cF}{\mathcal{F}}

\newcommand{\cO}{\mathcal{O}}

% bold matrices
\newcommand{\mA}{\mathbf{A}}

%%%%%%%%%%%%%%%%%%%%%%%%%%%%%%%%
% THEOREMS
%%%%%%%%%%%%%%%%%%%%%%%%%%%%%%%%
\theoremstyle{plain}
\newtheorem{theorem}{Theorem}[section]

\newtheorem{lemma}[theorem]{Lemma}

\theoremstyle{definition}
\newtheorem{definition}[theorem]{Definition}
\newtheorem{assumption}[theorem]{Assumption}
\theoremstyle{remark}

\newcommand{\eqdef}{:=} 

\newcommand{\Mod}[1]{\ (\mathrm{mod}\ #1)}

\makeatletter
\newcommand{\vast}{\bBigg@{4}}

\DeclareMathOperator*{\argmin}{arg\,min}

\usepackage[textsize=tiny]{todonotes}

\title{Optimal Time Complexities of \\Parallel Stochastic Optimization Methods \\ Under a Fixed Computation Model}

\author{%
  Alexander Tyurin\\
  KAUST\\
  Saudi Arabia\\
  \texttt{alexandertiurin@gmail.com} \\
  \And
  Peter Richt\'{a}rik \\
  KAUST\\
  Saudi Arabia\\
  \texttt{richtarik@gmail.com} \\
}

\begin{document}

\maketitle

\begin{abstract}
    Parallelization is a popular strategy for improving the performance of iterative algorithms. Optimization methods are no exception: design of efficient parallel optimization methods and tight analysis of their theoretical properties are important research endeavors. While the minimax complexities are well known for sequential optimization methods, the theory of parallel optimization methods is less explored. In this paper, we propose a new protocol that generalizes the classical oracle framework approach. Using this protocol, we establish {\em minimax complexities for parallel optimization methods} that have access to an unbiased stochastic gradient oracle with bounded variance. We consider a fixed computation model characterized by each worker requiring a fixed but worker-dependent time to calculate stochastic gradient. We prove lower bounds and develop optimal algorithms that attain them. Our results have surprising consequences for the literature of {\em asynchronous} optimization methods.
\end{abstract}

\section{Introduction}
\label{sec:introduction}
We consider the  nonconvex optimization problem
\begin{align}
    \label{eq:main_task}
    \squeeze \min \limits_{x \in Q} \Big \{f(x) \eqdef \ExpSub{\xi \sim \mathcal{D}}{f(x;\xi)}\Big \},
\end{align}
where $f\,:\,\R^d \times \mathbb{S}_{\xi} \rightarrow \R,$ $Q \subseteq \R^d,$ and $\xi$ is a random variable with some distribution $\mathcal{D}$ on $\mathbb{S}_{\xi}.$ In machine learning, $\mathbb{S}_\xi$ could be the space of all possible data, $\mathcal{D}$ is the distribution of the training dataset, and $f(\cdot, \xi)$ is the loss of a data sample $\xi.$ In this paper we address the following natural setup:
\begin{itemize}
\item [(i)]  $n$ workers are available to  work in parallel,
\item [(ii)] the $i$\textsuperscript{th} worker requires $\tau_i$ seconds\footnote{Or any other unit of time.} to calculate a stochastic gradient of  $f$. 
\end{itemize}
The function $f$ is $L$--smooth and lower-bounded (see Assumptions~\ref{ass:lipschitz_constant}--\ref{ass:lower_bound}), and stochastic gradients are unbiased and $\sigma^2$-variance-bounded (see Assumption~\ref{ass:stochastic_variance_bounded}).

\subsection{Classical theory}
\label{sec:classical_theory}
In the nonconvex setting, gradient descent (GD)  is an optimal method with respect to the number of gradient ($\nabla f$) calls~\citep{lan2020first,nesterov2018lectures,carmon2020lower} for finding an approximately stationary point of $f$. Obviously, a key issue with GD is that it requires access to the exact gradients $\nabla f$ of the function $f.$ However, in many practical  applications, it can be infeasible to calculate  the gradient of $\Exp{f(\cdot;\xi)}$ analytically.  Moreover, even if this is possible, e.g., if the distribution $\mathcal{D}$ is described by $m$ possible samples, so that $\ExpSub{\xi \sim \mathcal{D}}{f(\cdot;\xi)} = (\nicefrac{1}{m}) \sum_{i=1}^m f(\cdot;\xi_i),$  $m$ can be huge \citep{krizhevsky2017imagenet}, and gradient evaluation can be arbitrarily expensive.

\textbf{Stochastic Gradient Descent.} Due to the above-mentioned problem, machine learning literature is preoccupied with the study of algorithms that can work with stochastic gradients instead~\citep{lan2020first, ghadimi2013stochastic}. For all $x \in \R^d,$ we assume that the $n$ workers have access to independent, unbiased, and $\sigma^2$-variance-bounded stochastic gradients $\widehat{\nabla} f(x, \xi)$ (see Assumption~\ref{ass:stochastic_variance_bounded}), where $\xi$ is a random sample from $\mathcal{D}$. Under such assumptions, {\bf with one worker}, stochastic gradient descent (SGD), i.e., the method $x^{k+1} = x^{k} - \gamma \widehat{\nabla} f(x^k; \xi^k),$ where $\xi^k$ are i.i.d. random samples from $\mathcal{D}$,
 is known to be  optimal  with respect to the number of stochastic gradient calls~\citep{ghadimi2013stochastic,arjevani2022lower}. SGD guarantees  convergence to an $\varepsilon$--stationary point in expectation after $\operatorname{O}\left(\nicefrac{L \Delta}{\varepsilon} + \nicefrac{\sigma^2 L \Delta}{\varepsilon^2}\right)$ stochastic gradient evaluations, where $\Delta \eqdef f(x^0) - f^*$ and $x^0 \in \R^d$ is a starting point.

\subsection{Parallel optimization methods}
\label{sec:previous_parallel_algorithms}
Using the bounds from Section~\ref{sec:classical_theory}, one can easily {\em estimate} the performance of these algorithms in real systems. For instance, if it takes $\tau_1$ \emph{seconds} to calculate a stochastic gradient {\bf with one worker}, then SGD guarantees to return a solution after $$\squeeze\operatorname{O}\left(\tau_1 \left(\frac{L \Delta}{\varepsilon} + \frac{\sigma^2 L \Delta}{\varepsilon^2}\right)\right)$$ seconds. 
If  instead of a single worker we can access $n$ workers that can calculate stochastic gradients in parallel,
 we can consider the following classical parallel methods:

\textbf{Minibatch SGD.} The minibatch SGD method (Minibatch SGD), i.e., the iterative process $$\squeeze x^{k+1} = x^{k} - \gamma \frac{1}{n} \sum \limits_{i=1}^n \widehat{\nabla} f(x^k; \xi^k_i),$$ where $\gamma$ is a stepsize, $\xi^k_i$ are i.i.d.\ samples from $\mathcal{D},$ and the gradients $\widehat{\nabla} f(x^k; \xi^k_i)$ are calculated in parallel. This method converges after $\operatorname{O}\left(\nicefrac{L \Delta}{\varepsilon} + \nicefrac{\sigma^2 L \Delta}{n \varepsilon^2}\right)$ iterations \citep{cotter2011better, goyal2017accurate, gower2019sgd} and after 
\begin{align}
    \label{eq:mini_batch_compl}
    \squeeze
    \operatorname{O}\left(\tau_{\max} \left(\frac{L \Delta}{\varepsilon} + \frac{\sigma^2 L \Delta}{n \varepsilon^2}\right)\right)
\end{align} seconds, where $\tau_{\max} \eqdef \max_{i \in [n]} \tau_i$ is the processing time associated with the \emph{slowest} machine\footnote{Further, we assume that the last $n$\textsuperscript{th} worker is the slowest one: $\tau_n = \tau_{\max}.$}.

Although the time complexity \eqref{eq:mini_batch_compl} of Minibatch SGD improves with the number of workers $n$, in general, this does {\em not} guarantee better performance due to the delay $\tau_{\max}$. In real systems, parallel computations can be very chaotic, e.g., they can be slow due to inconsistent network communications, or GPU computation delays \citep{dutta2018slow, chen2016revisiting}.

\textbf{Asynchronous SGD.} We now consider the asynchronous SGD method (Asynchronous SGD) \citep{recht2011hogwild, nguyen2018sgd, arjevani2020tight, feyzmahdavian2016asynchronous} described by
\begin{align*}
    &\textnormal{1. Receive } \widehat{\nabla} f(x^{k - \delta_k}; \xi^{k - \delta_k}) \textnormal{ from a worker},\\
& \textnormal{2. } x^{k+1} = x^{k} - \gamma^k \widehat{\nabla} f(x^{k - \delta_k}; \xi^{k - \delta_k}),\\
    &\textnormal{3. Ask the worker to calculate } \widehat{\nabla} f(x^{k+1}; \xi^{k+1}),
\end{align*} where $\xi^{k}$ are i.i.d.\ samples from $\mathcal{D},$ and $\delta_k$ are gradient iteration delays. This is an {\em asynchronous} method:  the workers work independently, finish calculations of stochastic gradients with potentially large and chaotic delays $\delta_k$, and the result of their computation is applied as soon as it is ready, without having to wait for other workers. Asynchronous SGD was also considered in the heterogeneous setting (see details in Section~\ref{sec:heterog_regime:related_work}).

\citet{cohen2021asynchronous, mishchenko2022asynchronous, koloskova2022sharper} provide the current state-of-the-art analysis of Asynchronous SGD. In particular, they prove that Asynchronous SGD converges after $\operatorname{O}\left(\nicefrac{n L \Delta}{\varepsilon} + \nicefrac{\sigma^2 L \Delta}{\varepsilon^2}\right)$ iterations. To show the superiority of Asynchronous SGD, \citet{mishchenko2022asynchronous} consider the following {\em fixed computation model}: the $i$\textsuperscript{th} worker requires $\tau_i$ seconds to calculate stochastic gradients. In this setting, Asynchronous SGD converges after
\begin{align}
    \label{eq:async_sgd_complexity}
    \squeeze
    \operatorname{O}\left(\left(\frac{1}{n} \sum \limits_{i=1}^n \frac{1}{\tau_i}\right)^{-1}\left(\frac{L \Delta}{\varepsilon} + \frac{\sigma^2 L \Delta}{n \varepsilon^2}\right)\right)
\end{align} seconds (we reprove this fact in Section~\ref{sec:async_sgd_time}). Thus, Asynchronous SGD can be $(\nicefrac{1}{n}) \sum_{i=1}^n \nicefrac{\tau_{\max}}{\tau_i}$ times faster than Minibatch SGD.

Besides Asynchronous SGD, many other strategies utilize parallelization \citep{dutta2018slow, woodworth2020local, wu2022delay}, and can potentially improve over Minibatch SGD.

\section{Problem and Contribution}

\newcommand\hetrogtable{
        \begin{tabular}[t]{cccccc}
  \toprule
       \multicolumn{2}{c}{\bf Heterogeneous Case} \\
       \midrule
       \bf  Method & \bf Time Complexity \\
         \midrule
         \makecell{Minibatch SGD} & $\squeeze\tau_{n} \left(\frac{L \Delta}{\varepsilon} + \frac{\sigma^2 L \Delta}{n \varepsilon^2}\right)$ \\
         \midrule
         \makecell{Malenia SGD \\ (Theorem~\ref{theorem:heterog_time})} & $\squeeze \tau_{n}\frac{L \Delta}{\varepsilon} + \left(\frac{1}{n} \sum\limits_{i=1}^{n} \tau_i\right)\frac{\sigma^2 L \Delta}{n \varepsilon^2}$ \\
         \midrule
         \midrule
         \makecell{Lower Bound \\ (Theorem~\ref{theorem:lower_bound_heterog})} & $\squeeze \tau_{n}\frac{L \Delta}{\varepsilon} + \left(\frac{1}{n} \sum\limits_{i=1}^{n} \tau_i\right)\frac{\sigma^2 L \Delta}{n \varepsilon^2}$ \\
        \bottomrule
        \end{tabular}
}

\begin{table*}
    \caption{\textbf{Homogeneous and Heterogeneous Case.} The required time to get an $\varepsilon$-stationary point ($\mathbb{E}[\norm{\nabla f(\widehat{x})}^2] \leq \varepsilon$) in the nonconvex setting, where $i$\textsuperscript{th} worker requires $\tau_i$ seconds to calculate a stochastic gradient. We assume that $0 < \tau_1 \leq \dots \leq \tau_n.$}
    \label{table:complexities}
    \centering 
    \scriptsize  
      \begin{tabular}[t]{cccccc}
\toprule
    \multicolumn{2}{c}{\bf Homogeneous Case} \\
    \midrule
     \bf  Method & \bf Time Complexity \\
       \midrule
       \makecell{Minibatch SGD} & $\squeeze\tau_{n} \left(\frac{L \Delta}{\varepsilon} + \frac{\sigma^2 L \Delta}{n \varepsilon^2}\right)$ \\
       \midrule
       \makecell{Asynchronous SGD \\ \citep{cohen2021asynchronous} \\ \citep{koloskova2022sharper} \\ \citep{mishchenko2022asynchronous}} & $\squeeze \left(\frac{1}{n} \sum\limits_{i=1}^n \frac{1}{\tau_i}\right)^{-1}\left(\frac{L \Delta}{\varepsilon} + \frac{\sigma^2 L \Delta}{n \varepsilon^2}\right)$ \\
       \midrule
       \makecell{Rennala SGD \\ (Theorem~\ref{theorem:homog_time})} & $\squeeze \min \limits_{m \in [n]} \left[\left(\frac{1}{m}\sum\limits_{i=1}^m \frac{1}{\tau_i}\right)^{-1}\left(\frac{L \Delta}{\varepsilon} + \frac{\sigma^2 L \Delta}{m \varepsilon^2}\right)\right]$ \\
       \midrule
       \midrule
       \makecell{Lower Bound \\ (Theorem~\ref{theorem:lower_bound_homog})} & $\squeeze \min \limits_{m \in [n]} \left[\left(\frac{1}{m}\sum\limits_{i=1}^m \frac{1}{\tau_i}\right)^{-1}\left(\frac{L \Delta}{\varepsilon} + \frac{\sigma^2 L \Delta}{m \varepsilon^2}\right)\right]$ \\
      \bottomrule
      \end{tabular}
      \hetrogtable
 \end{table*}
In this paper, we seek to find the optimal time complexity in the setting from Section~\ref{sec:introduction}: our goal is to provide a lower bound and a method that attains it. Our main contributions are:

\begin{itemize}
\item [(i)] {\bf Lower bound.} In Sections~\ref{sec:time_oracle_protocol} \& \ref{sec:time_multiple_oracle_protocol}, we define new Protocols~\ref{alg:time_oracle_protocol} \& \ref{alg:time_multiple_oracle_protocol}, and a new complexity measure $\mathfrak{m}_{\textnormal{time}}$ (see \eqref{eq:lower_compl_time}), which we believe are more appropriate for the analysis of parallel optimization algorithms. In Section~\ref{sec:lower_bound_homog}, we prove the time complexity lower bound for (nonconvex) functions and algorithms that work with parallel asynchronous oracles.

\item [(i)] {\bf Optimal method.} In Section~\ref{sec:minimax_optimal_homog}, we develop a minimax optimal method---Rennala\footnote{\tiny\url{https://eldenring.wiki.fextralife.com/Rennala+Queen+of+the+Full+Moon}: Rennala, Queen of the Full Moon is a Legend Boss in Elden Ring. Though not a demigod, Rennala is one of the shardbearers who resides in the Academy of Raya Lucaria. Rennala is a powerful sorceress, head of the Carian Royal family, and erstwhile leader of the Academy. } SGD---that attains this lower bound.
\end{itemize}

In addition, we investigate several other related questions. As an independent result, in Section~\ref{sec:sync_start_main} we prove that all methods which synchronize workers in each iteration (e.g., Minibatch SGD) have \emph{provably} worse time complexity than asynchronous methods (e.g., Rennala SGD (see Method~\ref{alg:alg_server}), Asynchronous SGD). In Section~\ref{sec:heterog_regime}, we extend our theory to the {\em heterogeneous} case, in which the workers have access to different distributions (datasets), and provide a lower bound and a new method that attains it. In Section~\ref{sec:convex_case}, we provide  the optimal time complexities in the \emph{convex} setting.

\section{Classical Oracle Protocol}
\label{sec:classical_oracle_protocol}
Let us recall the classical approach to obtaining lower bounds for optimization algorithms. We need to define a \emph{function class} $\cF,$ an \emph{oracle class} $\cO,$ and an \emph{algorithm class} $\cA.$ We then analyze the complexity of an algorithm $A = \{A^k\}_{k=0}^{\infty} \in \cA,$ using the following protocol:
\begin{protocol}[H]
    \caption{Classical Oracle Protocol}
    \label{alg:classical_oracle_protocol}
    \begin{algorithmic}[1]
    \STATE \textbf{Input: }function $f \in \cF,$ oracle and distribution $(O, \mathcal{D}) \in \cO(f),$ algorithm $A \in \cA$
    \FOR{$k = 0, \dots, \infty$}
    \STATE $x^k = A^k(g^1, \dots, g^{k})$ \hfill $\rhd$ {\scriptsize $x^0 = A^0$ for $k = 0.$}
    \STATE $g^{k+1} = O(x^k, \xi^{k+1}), \quad \xi^{k+1} \sim \mathcal{D}$
    \ENDFOR
    \end{algorithmic}
\end{protocol}
More formally, in first-order stochastic optimization, the oracle class $\cO$ returns a random mapping $O\,:\, \R^d \times \mathbb{S}_{\xi} \rightarrow \R^d$ based on a function $f \in \cF$ and a distribution $\mathcal{D}$; we use the notation $(O, \mathcal{D}) \in \cO(f).$
An algorithm $A = \{A^k\}_{k=0}^{\infty} \in \cA$ is a sequence such that
\begin{align}
    \label{eq:algorithm_mappings}
    \squeeze
    \markchanges
    A^k\,:\, \underbrace{\R^d \times \dots \times \R^d}_{k \textnormal{ times}} \rightarrow \R^d \,\,\,\, \forall k \geq 1, \textnormal{ and } A^0 \in \R^d.
\end{align}
Typically, an oracle $O$ returns an unbiased stochastic gradient that satisfies Assumption~\ref{ass:stochastic_variance_bounded}: $O(x, \xi) = \widehat{\nabla} f(x;\xi)$ for all $x \in \R^d$ and $\xi \in \mathbb{S}_{\xi}.$ Let us fix an oracle class $\cO.$ Then, in the nonconvex first-order stochastic setting, we analyze the  complexity measure
\begin{align}
    \label{eq:lower_compl}
    \mathfrak{m}_{\textnormal{oracle}}\left(\cA, \cF\right) \eqdef \inf_{A \in \cA} \sup_{f \in \cF} \sup_{(O, \mathcal{D}) \in \cO(f)} \inf\left\{k \in \N\,\middle|\,\Exp{\norm{\nabla f(x^k)}^2} \leq \varepsilon\right\},
\end{align}
where the sequence $\{x^k\}_k$ is generated by Protocol~\ref{alg:classical_oracle_protocol}. Virtually all previous works are concerned with  lower bounds of optimization problems using Protocol~\ref{alg:classical_oracle_protocol} and the complexity measure \eqref{eq:lower_compl} \citep{nemirovskij1983problem, carmon2020lower, arjevani2022lower,nesterov2018lectures}.

\section{Time Oracle Protocol}
\label{sec:time_oracle_protocol}
In the previous sections, we discuss the classical approach to estimating the complexities of algorithms. Briefly, these approaches seek to quantify the worst-case number of iterations or oracle calls that are required to find a solution (see \eqref{eq:lower_compl}), which is very natural for \emph{sequential} methods. However, and this is a key observation of our work, this approach is not convenient if we want to analyze \emph{parallel} methods. We now propose an alternative protocol that can be more helpful in this situation:
\begin{protocol}[H]
    \caption{Time Oracle Protocol}
    \label{alg:time_oracle_protocol}
    \begin{algorithmic}[1]
    \STATE \textbf{Input: }functions $f \in \cF,$ oracle and distribution $(O, \mathcal{D}) \in \cO(f),$ algorithm $A \in \cA$
    \STATE $s^0 = 0$
    \FOR{$k = 0, \dots, \infty$}
\STATE $({\orange t^{k+1}}, x^k) = A^k(g^1, \dots, g^{k}), \hfill $$\rhd\,{\orange t^{k+1} \geq \orange t^{k}}$
    \STATE $(s^{k+1}, g^{k+1}) = O({\orange t^{k+1}}, x^k, s^{k}, \xi^{k+1}), \quad \xi^{k+1} \sim \mathcal{D}$
    \ENDFOR
    \end{algorithmic}
\end{protocol}

Protocol~\ref{alg:time_oracle_protocol} is almost identical to Protocol~\ref{alg:classical_oracle_protocol} 
except for one key  detail: Protocol~\ref{alg:time_oracle_protocol} requires the algorithms to return a sequence $\{t^{k+1}\}_{k=1}^{\infty}$ such that $t^{k+1} \geq t^{k} \geq 0$ for all $k \geq 0.$ We assume that $t^0 = 0.$ 
We also assume that the oracles take to the input the states $s^k$ and output them (the role of these states will be made clear later).
In this case, we provide the following definition of an algorithm.
\begin{definition}
\label{def:time_algorithm}
An algorithm $A = \{A^k\}_{k=0}^{\infty}$ is a sequence such that
\begin{align*}
    \squeeze
    \markchanges
    &A^k\,:\, \underbrace{\R^d \times \dots \times \R^d}_{k \textnormal{ times}} \rightarrow {\orange \R_{\geq 0}} \times \R^d \quad \forall k \geq 1, A^0 \in {\orange \R_{\geq 0}} \times \R^d,
\end{align*}
and, for all $k \geq 1$ and $g^1, \dots, g^{k} \in \R^d,$
$
t^{k+1} \geq t^{k},
$
where $t^{k+1}$ and $t^{k}$ are defined as $(t^{k+1}, \cdot) = A^{k}(g^1, \dots, g^{k})$ and $(t^{k}, \cdot) = A^{k-1}(g^1, \dots, g^{k-1}).$
\end{definition}
Let us explain the role of the sequence $\{t^k\}_k$. In Protocol~\ref{alg:classical_oracle_protocol}, an algorithm outputs a point $x^k$ and then asks the oracle: \emph{Provide me a gradient at the point $x^k.$} In contrast, in Protocol~\ref{alg:time_oracle_protocol} an algorithm outputs a point $x^k$ and a time $t^{k+1}$, and asks the oracle: \emph{Start calculating a gradient at the point $x^k$ at a time $t^{k+1}.$} We have a constraint that $t^{k+1} \geq t^k$ for all $k \geq 0,$ which means that the algorithm is not allowed to travel into the past.

Using Protocol~\ref{alg:time_oracle_protocol}, we propose to use another complexity measure instead of \eqref{eq:lower_compl}:
\begin{align}\label{eq:lower_compl_time}
    \begin{split}
    \squeeze
    &\mathfrak{m}_{\textnormal{time}}\left(\cA, \cF\right) \eqdef \inf_{A \in \cA} \sup_{f \in \cF} \sup_{(O, \mathcal{D}) \in \cO(f)} \inf\left\{t \geq 0\,\middle|\,\Exp{ \inf_{k \in S_t}\norm{\nabla f(x^k)}^2} \leq \varepsilon \right\}, 
    \\
    &S_t \eqdef \left\{k \in \N_0 \middle| t^{k} \leq t\right\},
\end{split}
\end{align}
where the sequences $t^k$ and $x^k$ are generated by Protocol~\ref{alg:time_oracle_protocol}. In \eqref{eq:lower_compl}, we seek to find the {\em worst-case number of iterations} $k$  required to get $\Exp{\norm{\nabla f(x^k)}^2} \leq \varepsilon$ for any $A \in \cA.$ In \eqref{eq:lower_compl_time}, we seek to find the {\em worst-case case time} $t$  required to find an $\varepsilon$-stationary point for any $A \in \cA.$

We now provide an example, considering an oracle that calculates a stochastic gradient in $\tau$ seconds. Let us define the appropriate oracle for this problem:
\newcommand{\smallnablafunc}{{{\scriptscriptstyle \widehat{\nabla}}f}}
\begin{align*}
    &O_{\tau}^{\smallnablafunc}\,:\, \underbrace{\R_{\geq 0}}_{\textnormal{time}} \times \underbrace{\R^d}_{\textnormal{point}} \times \underbrace{(\R_{\geq 0} \times \R^d \times \{0, 1\})}_{\textnormal{input state}} \times \mathbb{S}_{\xi} \rightarrow \underbrace{(\R_{\geq 0} \times \R^d \times \{0, 1\})}_{\textnormal{output state}} \times \R^d
\end{align*}

\begin{align}
\begin{split}
    \label{eq:oracle_stochastic_delay}
    \squeeze
    &\textnormal{such that } O_{\tau}^{\smallnablafunc}(t, x, (s_t, s_x, s_q), \xi) = \left\{
\begin{aligned}
    &((t, x, 1), &0), \qquad & s_q = 0, \\
    &((s_t, s_x, 1), &0), \qquad & s_q = 1 \textnormal{ and } t < s_t + \tau,\\
    &((0, 0, 0), & \widehat{\nabla} f(s_x; \xi)), \qquad & s_q = 1 \textnormal{ and } t \geq s_t + \tau,
\end{aligned}
\right.
\end{split}
\end{align}
and $\widehat{\nabla} f$ is a mapping such that
    $\widehat{\nabla} f\,:\, \R^d \times \mathbb{S}_\xi \rightarrow \R^d.$
Further, we additionally assume that $\widehat{\nabla} f$ is an unbiased $\sigma^2$-variance-bounded stochastic gradient (see Assumption~\ref{ass:stochastic_variance_bounded}). 

Note that the oracle $O_{\tau}^{\smallnablafunc}$ emulates the behavior of a real worker. Indeed, the oracle can return three different outputs. If $s_q = 0,$ it means that the oracle has been idle, then ``starts the calculation'' of the gradient at the point $x$, and changes the state $s_q$ to $1.$ Also, using the state, it remembers the time moment $t$ when the calculation began and the point $x.$ Next, if $s_q = 1$ and $t < s_t + \tau,$ it means the oracle is still calculating the gradient, so if an algorithm sends time $t$ such that $t < s_t + \tau,$ then it receives the zero vector. Finally, if $s_q = 1,$ as soon as an algorithm sends time $t$ such that $t \geq s_t + \tau,$ then the oracle will be ready to provide the gradient. Note that the oracle provides the gradient calculated at the point $x$ that was requested when the oracle was idle. Thus, the time between the request of an algorithm to get the gradient and the time when the algorithm gets the gradient is at least $\tau$ seconds.

In Protocol~\ref{alg:time_oracle_protocol}, we have a game between an algorithm $A \in \cA$ and an oracle class $\cO,$ where algorithms can decide the sequence of times $t^k.$ Thus, an algorithm wants to find enough information from an oracle as soon as possible to obtain $\varepsilon$--stationary point.

Let us consider an example. For the oracle class $\cO$ that generates the oracle from \eqref{eq:oracle_stochastic_delay}, we can define the SGD method in the following way. {\markchanges We take any starting point $x^0 \in \R^d,$ a step size $\gamma = \min\left\{\nicefrac{1}{L}, \nicefrac{\varepsilon}{2 L \sigma^2}\right\}$} (see Theorem~\ref{theorem:sgd}) and define $A^k\,:\, \underbrace{(\R^d \times \dots \times \R^d)}_{k \textnormal{ times}}\rightarrow \R_{\geq 0} \times \R^d$ such that
\begin{align}
    \squeeze
\begin{split}
    &A^k (g^1, \dots, g^{k}) = \left\{
\begin{aligned}
    &\squeeze \Big(\tau \left\lfloor k / 2 \right\rfloor , x^0 -\gamma \sum _{j=1}^k g^k\Big), & k \Mod{2} = 0, \\
    &\Big(\tau \left(\left\lfloor k / 2 \right\rfloor + 1\right), 0\Big), & k \Mod{2} = 1, \\
\end{aligned}
\right.
\end{split}
\end{align}
{\markchanges for all $k \geq 1,$ and $A^0 = (0, x^0).$} Let us explain the behavior of the algorithm.
In the first step of Protocol~\ref{alg:time_oracle_protocol}, when $k = 0,$ the algorithm requests the gradient at the point $x^0$ at the time $t^1 = 0$ since $A^0 = (0, x^0).$ The oracle $O$ changes the state from $s_q^0 = 0$ to $s_q^1 = 1$ and remembers the point $x^0$ in the state $s_x^1$. In the second step of the protocol, when $k = 1,$ the algorithm calls the oracle at the time $\tau \left(\left\lfloor\nicefrac{k}{2}\right\rfloor + 1\right) = \tau.$ In the oracle, the condition $t^2 \geq s_t^1 + \tau \Leftrightarrow \tau \geq 0 + \tau$ is satisfied, and it returns the gradient at the point $x^0.$ Note that this can only happen if an algorithm does the second call at a time that is greater or equal to $\tau.$

One can see that after $\tau K$ seconds, the algorithm returns the point $x^{2K} = x^0 - \gamma \sum_{j=0}^{K-1} \widehat{\nabla} f(x^{2j}; \xi^{2j + 1}),$ where $\xi^j \sim \mathcal{D}$ are i.i.d.\ random variables. The algorithm is equivalent to the SGD method that converges after $K = \operatorname{O}\left(\nicefrac{L \Delta}{\varepsilon} + \nicefrac{\sigma^2 L \Delta}{\varepsilon^2}\right)$ steps for the function class $\cF_{\Delta, L}$ (see Definition~\ref{def:func_class}) {\markchanges for $x^0 = 0.$} Thus, the complexity $\mathfrak{m}_{\textnormal{time}}\left(\{A\}, \cF_{\Delta, L}\right)$ equals $\operatorname{O}\left(\tau \times \left(\nicefrac{L \Delta}{\varepsilon} + \nicefrac{\sigma^2 L \Delta}{\varepsilon^2}\right)\right).$

Actually, any algorithm that was designed for Protocol~\ref{alg:classical_oracle_protocol} can be used in Protocol~\ref{alg:time_oracle_protocol} with the oracle \eqref{eq:oracle_stochastic_delay}. Assuming that we have mappings $A^k\,:\, \R^d \times \dots \times \R^d \rightarrow \R^d$ for all {\markchanges $k \geq 1,$}  we can define mappings $\widehat{A}^k\,:\, \R^d \times \dots \times \R^d \rightarrow {\orange \R_{\geq 0}} \times \R^d$ via
\begin{align*}
    &\widehat{A}^k (g^1, \dots, g^{k}) = \left\{
\begin{aligned}
    &\Big(\tau \left\lfloor k / 2\right\rfloor , A^{\left\lfloor k / 2\right\rfloor}(g^2, g^4, \dots, g^{2k})\Big),& & k \Mod{2} = 0, \\
    &\Big(\tau \left(\left\lfloor k / 2\right\rfloor + 1\right), 0\Big),& & k \Mod{2} = 1. \\
\end{aligned}
\right.
\end{align*}
{\markchanges For $k = 0,$ we define $\widehat{A}^0 = (0, A^0).$}

\section{Time Multiple Oracles Protocol}
\label{sec:time_multiple_oracle_protocol}
The protocol framework from the previous section does not seem to be very powerful because one can easily find the time complexity \eqref{eq:lower_compl_time} by knowing \eqref{eq:lower_compl} and the amount of time that oracle needs to calculate a gradient. 
In fact, we provide Protocol~\ref{alg:time_oracle_protocol} for simplicity only. We now consider a protocol that works with multiple oracles:
\begin{protocol}[H]
    \caption{Time Multiple Oracles Protocol}
    \label{alg:time_multiple_oracle_protocol}
    \begin{algorithmic}[1]
    \STATE \textbf{Input: }function(s) $f \in \cF,$ oracles and distributions $((O_1, ..., O_n), (\mathcal{D}_1, ..., \mathcal{D}_n)) \in \cO(f),$ algorithm $A~\in~\cA$
    \STATE $s^0_i = 0$ for all $i \in [n]$
    \FOR{$k = 0, \dots, \infty$}
\STATE $({t^{k+1}}, {\orange i^{k+1}}, x^k) = A^k(g^1, \dots, g^{k}),$ \hfill $\rhd \,{t^{k+1} \geq t^{k}}$
    \STATE $(s^{k+1}_{{\orange i^{k+1}}}, g^{k+1}) = O_{{\orange i^{k+1}}}({t^{k+1}}, x^k, s^{k}_{{\orange i^{k+1}}}, \xi^{k+1}), \quad \xi^{k+1} \sim \mathcal{D}_{{\orange i^{k+1}}}$ \hfill $\rhd\,s^{k+1}_j = s^{k}_j \quad \forall j \neq i^{k+1}$
    \ENDFOR
    \end{algorithmic}
\end{protocol}
Compared to Protocol~\ref{alg:time_oracle_protocol}, Protocol~\ref{alg:time_multiple_oracle_protocol} works with multiple oracles, and algorithms return the indices $i^{k+1}$ of the oracle they want to call. This minor add-on to the protocol enables the possibility of analyzing parallel optimization methods. Also, each oracle $O_{i}$ can have its own distribution $\mathcal{D}_{i}.$

Let us consider an example with two oracles $O_{1} = O_{\tau_1}^{\smallnablafunc}$ and $O_{2} = O_{\tau_2}^{\smallnablafunc}$ from \eqref{eq:oracle_stochastic_delay}. One can see that a ``wise'' algorithm will first call the oracle $O_{1}$ with the time $t^0 = 0,$ and then, in the second step, it will call the oracle $O_{2}$ also with the time $t^1 = 0.$ 
Note that it is impossible to do the following steps: in the first step an algorithm calls the oracle $O_{1}$ with the time $t^0 = 0,$ in the second step, the algorithm calls the oracle $O_{1}$ with the time $t^1 = \tau_1$ and receives the gradient, in the third step, the algorithm calls the oracle $O_{2}$ with the time $t^2 = 0$. Indeed, this can't happen because $t^2 < t^1.$ 

An example of a ``non-wise'' algorithm is an algorithm that, in the first step, calls the oracle $O_{1}$ with the time $t^0 = 0.$ In the second step, the algorithm calls the oracle $O_{1}$ with the time $t^1 = \tau_1$ and receives the gradient. In the third step, the algorithm calls the oracle $O_{2}$ with the time $t^2 = \tau_1.$ It would mean that the ``non-wise'' algorithm did not use the oracle $O_{2}$ for $\tau_1$ seconds. Consequently, the ``wise'' algorithm can receive two gradients after $\max\{\tau_1, \tau_2\}$ seconds, while the ``non-wise'' algorithm can only receive two gradients after $\tau_1 + \tau_2$ seconds.

We believe that Protocol~\ref{alg:time_multiple_oracle_protocol} and the complexity \eqref{eq:lower_compl_time} is a better choice for analyzing the complexities of parallel methods than the classical Protocol~\ref{alg:classical_oracle_protocol}.
In the next section, we will use Protocol~\ref{alg:time_multiple_oracle_protocol} to obtain lower bounds for parallel optimization methods.

\section{Lower Bound for Parallel Optimization Methods}
\label{sec:lower_bound_homog}
Considering Protocol~\ref{alg:time_multiple_oracle_protocol}, we  define a special function class $\cF,$ oracle class $\cO,$ and algorithm class $\cA.$ We consider the same function class  as \citet{nesterov2018lectures, arjevani2022lower, carmon2020lower}:
\begin{definition}[Function Class $\cF_{\Delta, L}$]
    We assume that function $f \,:\,\R^d \rightarrow \R$ is differentiable, $L$-smooth, i.e.,
        $\norm{\nabla f(x) - \nabla f(y)} \leq L \norm{x - y} \quad \forall x, y \in \R^d,$
    and $\Delta$-bounded, i.e.,
        $f(0) - \inf_{x \in \R^d} f(x) \leq \Delta.$
    A set of all functions with such properties we denote by $\cF_{\Delta, L}.$
    \label{def:func_class}
\end{definition}
In this paper, we analyze the class of ``zero-respecting'' algorithms, defined next.
\begin{definition}[Algorithm Class $\cA_{\textnormal{zr}}$]
    Let us consider Protocol~\ref{alg:time_multiple_oracle_protocol}. We say that an algorithm $A$ from Definition~\ref{def:time_algorithm} is a zero-respecting algorithm, if
        $\textnormal{supp} \left(x^k\right) \subseteq \bigcup_{j = 1}^k \textnormal{supp} \left(g^j\right)$
    for all $k \in \N_0,$ where $\textnormal{supp}(x) \eqdef \{i \in [d]\,|\,x_i \neq 0\}.$
    A set of all algorithms with this property we define as $\cA_{\textnormal{zr}}.$
\end{definition}

A zero-respecting algorithm does not try to change the coordinates for which no information was received from oracles. This family is considered by \citet{arjevani2022lower,carmon2020lower}, and includes SGD, Minibatch and Asynchronous SGD, and Adam \citep{kingma2014adam}. 

\begin{definition}[Oracle Class $\cO_{\tau_1, \dots, \tau_n}^{\sigma^2}$]
    Let us consider an oracle class such that, for any $f \in \cF_{\Delta, L},$ it returns oracles $O_i = O_{\tau_i}^{\smallnablafunc}$ and distributions $\mathcal{D}_i$ for all $i \in [n],$ where $\widehat{\nabla} f$ is an unbiased $\sigma^2$-variance-bounded mapping (see Assumption~\ref{ass:stochastic_variance_bounded}). The oracles $O_{\tau_i}^{\smallnablafunc}$ are defined in \eqref{eq:oracle_stochastic_delay}. We define such oracle class as $\cO_{\tau_1, \dots, \tau_n}^{\sigma^2}.$ Without loss of generality, we assume that $0 < \tau_1 \leq \dots \leq \tau_n.$ \label{def:oracle_class}
\end{definition}
We take $\cO_{\tau_1, \dots, \tau_n}^{\sigma^2}$ because it emulates the behavior of workers in real systems, where workers can have different processing times (delays) $\tau_i.$ Note that $\cO_{\tau_1, \dots, \tau_n}^{\sigma^2}$ has the freedom to choose a mapping $\widehat{\nabla} f.$ We only assume that the mapping is unbiased and $\sigma^2$-variance-bounded. We are now ready to present our first result; a lower bound:
\begin{restatable}{theorem}{THEOREMLOWERBOUND}%[Lower bound]
    \label{theorem:lower_bound_homog}
    Let us consider the oracle class $\cO_{\tau_1, \dots, \tau_n}^{\sigma^2}$ for some $\sigma^2 >0$ and $0 < \tau_1 \leq \dots \leq \tau_n.$ We fix any $L, \Delta > 0$ and $0 < \varepsilon \leq c' L \Delta.$ In view Protocol~\ref{alg:time_multiple_oracle_protocol}, for any algorithm $A \in \cA_{\textnormal{zr}},$ there exists a function $f \in \cF_{\Delta, L}$ and oracles and distributions $((O_1, \dots, O_n), (\mathcal{D}_1, \dots, \mathcal{D}_n)) \in \cO_{\tau_1, \dots, \tau_n}^{\sigma^2}(f)$
    such that
        $\Exp{\inf_{k \in S_t} \norm{\nabla f(x^k)}^2} > \varepsilon,$
    where $S_t \eqdef \left\{k \in \N_0 \middle| t^{k} \leq t\right\},$ and $$\squeeze t = c \times \min \limits_{m \in [n]} \left[\left(\frac{1}{m} \sum \limits_{i=1}^{m} \frac{1}{\tau_i}\right)^{-1} \left(\frac{L \Delta}{\varepsilon} + \frac{\sigma^2 L \Delta}{m \varepsilon^2}\right)\right] .$$ The quantities $c'$ and $c$ are {\markchanges universal} constants.
\end{restatable}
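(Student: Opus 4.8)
The overall strategy is the standard two-regime lower bound for stochastic nonconvex optimization (à la Carmon et al. / Arjevani et al.), adapted to the time-based multiple-oracle protocol. The key point is that the adversary gets to choose the hard function \emph{and} the distributions $\mathcal{D}_i$, and these choices must be made to simultaneously block the "optimization" term $L\Delta/\varepsilon$ and the "variance" term $\sigma^2 L\Delta/(m\varepsilon^2)$ for every choice of how many workers $m$ the algorithm effectively uses. First I would recall the standard hard instance: a scaled "chain" function $f_T$ on $\R^d$ (with $d$ large, $d \gtrsim T = L\Delta/\varepsilon$) whose gradient is "progress-limited", meaning that a zero-respecting algorithm needs to discover coordinate $j+1$ before it can make $j$-th coordinate nonzero, and discovering a new coordinate requires querying the oracle at a point whose relevant coordinate is already "activated". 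This gives the $L\Delta/\varepsilon$ term even with exact gradients: the function class $\cF_{\Delta,L}$ is exactly the one used in those references.

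For the stochastic part, I would make the oracle's stochastic gradient $\widehat{\nabla} f(x;\xi)$ reveal the "next" coordinate only with small probability $p$ (a standard "probabilistic progress" construction), while keeping it unbiased and $\sigma^2$-variance-bounded — this forces $\Theta(1/p)$ oracle queries per unit of progress, and tuning $p \asymp \varepsilon / \sigma^2$ up to constants yields the $\sigma^2 L \Delta/\varepsilon^2$ scaling. The crucial new ingredient relative to the classical bound is converting a lower bound on the \emph{number of oracle queries} into a lower bound on \emph{time} under Protocol~\ref{alg:time_multiple_oracle_protocol}. Here I would argue as follows: fix the target time budget $t$ and fix any $m \in [n]$. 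Because the $i$-th oracle requires $\tau_i$ seconds per stochastic gradient and (by the state machine in \eqref{eq:oracle_stochastic_delay}) returns at most one useful gradient per $\tau_i$-length window, in time $t$ the total number of gradients the algorithm can collect is at most $\sum_{i=1}^{n} (t/\tau_i + 1) \le t \sum_{i=1}^n 1/\tau_i + n$, and more sharply, restricting attention to the fastest $m$ workers gives a comparison against $t \cdot \tfrac{m}{(\frac1m\sum_{i=1}^m 1/\tau_i)^{-1}}$-type quantities. Then I would run the argument for the two regimes separately: in the "deterministic-dominated" regime the bottleneck is that no matter how many workers help, the sequential chain of $\Theta(L\Delta/\varepsilon)$ dependent steps through the slowest-needed worker takes time $\gtrsim \tau_n L\Delta/\varepsilon$ — wait, more carefully, the progress is serial so only one "active" query matters at a time and the relevant $\tau$ is at least $\tau_1$, giving $\gtrsim (\frac1m\sum 1/\tau_i)^{-1} L\Delta/\varepsilon$ after accounting for how many parallel probes can be spent per step; in the "variance-dominated" regime, collecting the $\Theta(\sigma^2 L\Delta/(m\varepsilon^2))$ gradients needed when $m$ workers are used in parallel takes time $\gtrsim (\frac1m\sum_{i=1}^m 1/\tau_i)^{-1} \sigma^2 L\Delta/(m\varepsilon^2)$. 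Taking the minimum over $m$ of the max of these two (and noting $\max\{a,b\} \ge \tfrac12(a+b)$) reproduces the claimed expression up to a universal constant.

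The main obstacle I anticipate is making the "time-to-queries" reduction fully rigorous in the face of an \emph{adaptive} algorithm that may interleave queries to different oracles in a data-dependent way and may choose the times $t^{k}$ cleverly. The clean way to handle this is a potential/"progress" argument: define $\text{prog}(x) = \max\{ j : x_j \neq 0 \text{ (or } |x_j| \geq \text{threshold)} \}$, show (i) $\nabla f(x)$ has norm $> \varepsilon$ whenever $\text{prog}(x) < T$, and (ii) with high probability, over any time interval of length $\tau_i$, oracle $i$ increases the achievable progress by at most one, and does so only with probability $\le p$ per query — so the progress after time $t$ is stochastically dominated by a sum of independent Bernoulli-driven increments whose expected total is $\le p \cdot (\text{number of queries in } [0,t]) \le p (t \sum_i 1/\tau_i + n)$. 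A Markov/Chernoff bound then shows that unless $t$ exceeds the stated threshold (for the best $m$), $\Exp{\inf_{k\in S_t}\norm{\nabla f(x^k)}^2} > \varepsilon$. I would also need the standard "resisting oracle" / distributional-indistinguishability step to handle the $\sup$ over $(O,\mathcal{D}) \in \cO(f)$ — i.e., the adversary reveals coordinate $j+1$'s information only through a low-probability event in $\xi$, and one shows the algorithm's transcript is statistically close to one that never sees it; this is where the factor $m$ inside $\sigma^2 L\Delta/(m\varepsilon^2)$ (rather than a larger/smaller power of $m$) gets pinned down, since splitting a fixed query budget across $m$ oracles with independent noise reduces effective variance by exactly $m$. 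The rest (choosing $d$, the scaling constants, verifying $f \in \cF_{\Delta,L}$, the restriction $\varepsilon \le c' L\Delta$) is routine bookkeeping inherited from the cited lower-bound constructions.
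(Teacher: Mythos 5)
Your overall scaffolding (scaled chain function, probabilistic-progress oracle with success probability $p\asymp\varepsilon/\sigma^2$, zero-respecting/zero-chain argument, Chernoff bound on accumulated progress) matches the paper's Steps 1--3. But the step where you convert query counts into time contains a gap that is not bookkeeping: the ``two regimes separately'' decomposition cannot produce the claimed bound. The per-$m$ optimization bound $t\gtrsim\bigl(\frac1m\sum_{i=1}^m\frac{1}{\tau_i}\bigr)^{-1}\frac{L\Delta}{\varepsilon}$ is simply false for $m>1$: the progress chain is serial, and an algorithm that routes every query through the fastest worker advances one coordinate per $\tau_1$ seconds whenever the Bernoulli succeeds, so the only valid ``pure optimization'' lower bound is $\tau_1 L\Delta/\varepsilon$; likewise the only valid ``pure statistical'' bound is $\bigl(\sum_{i=1}^n\frac{1}{\tau_i}\bigr)^{-1}\frac{\sigma^2L\Delta}{\varepsilon^2}$. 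The sum (or max) of those two is exactly the Woodworth--Srebro-type bound that the paper shows is \emph{strictly weaker} than the $\min_m$ expression: with $\tau_i=\sqrt{i}$ and $\sigma^2/\varepsilon=\sqrt{n}$ the claimed bound is $\Theta(n^{1/4})\cdot L\Delta/\varepsilon$ while the decoupled bound is $\Theta(1)\cdot L\Delta/\varepsilon$ (see Section~\ref{sec:graph_oracle}). So your plan would prove a correct but strictly smaller lower bound, not the theorem.

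The missing idea is that the optimization and statistical costs must be coupled \emph{within a single coordinate-discovery step}. The paper's Lemma~\ref{lemma:boundtime} does this by defining $\widehat{t}_k$ as the $k$-th smallest element of the multiset $\{j\tau_i: j\ge 1,\ i\in[n]\}$ (the earliest time by which the workers can collectively have produced $k$ stochastic gradients, each worker paying its own startup delay), and showing that discovering coordinate $j{+}1$ costs at least $\widehat{t}_{\eta_{j+1}}$ with $\eta_{j+1}$ stochastically dominating a Geometric$(p)$ variable (Lemma~\ref{lemma:probs}, which also handles the adaptivity you worry about). Since at most $\sum_{i=1}^n\lfloor t'/\tau_i\rfloor$ draws exist by time $t'$, one gets $\Prob{\widehat{t}_{\eta}\le t'}\le p\sum_i\lfloor t'/\tau_i\rfloor$, and optimizing $t'$ yields a per-coordinate cost of order $\min_{m\in[n]}\bigl[\bigl(\sum_{i=1}^m\frac{1}{\tau_i}\bigr)^{-1}\bigl(\frac1p+m\bigr)\bigr]$ (Lemma~\ref{lemma:tau}); the additive ``$+m$'' term, which records that worker $i$ cannot contribute even one draw before time $\tau_i$, is precisely what your decomposition loses. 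Multiplying by $T=\Theta(L\Delta/\varepsilon)$ coordinates and applying the Chernoff bound to $\sum_{i=1}^T\widehat{t}_{\eta_i}$ then gives the stated $\min_m$ expression.
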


Theorem~\ref{theorem:lower_bound_homog} states that 
\begin{align}
\begin{split}
    \label{eq:lower_complexity_lower_bound}
    &\mathfrak{m}_{\textnormal{time}}\left(\cA_{\textnormal{zr}}, \cF_{\Delta, L}\right) = \squeeze \Omega\left(\min \limits_{m \in [n]} \left[\left(\frac{1}{m} \sum \limits_{i=1}^{m} \frac{1}{\tau_i}\right)^{-1} \left(\frac{L \Delta}{\varepsilon} + \frac{\sigma^2 L \Delta}{m \varepsilon^2}\right)\right]\right).
\end{split}
\end{align}

The interpretation behind this complexity will be discussed later in Section~\ref{sec:discussion}.
No algorithms known to us attain \eqref{eq:lower_complexity_lower_bound}. For instance, Asynchronous SGD has the time complexity \eqref{eq:async_sgd_complexity}.
Let us assume that $\nicefrac{\sigma^2}{\varepsilon} \leq p$ and $p \in [n].$ Then
    $\squeeze\textnormal{(lower bound from \eqref{eq:lower_complexity_lower_bound})} = \operatorname{O}\big(\big(\frac{1}{p} \sum_{i=1}^{p} \frac{1}{\tau_i}\big)^{-1} \big(\frac{L \Delta}{\varepsilon}\big)\big).$
In this case, the lower bound in \eqref{eq:lower_complexity_lower_bound} will be at least $\big(\frac{1}{n}\sum_{i=1}^n \frac{1}{\tau_i}\big)^{-1} / \big(\frac{1}{p}\sum_{i=1}^p \frac{1}{\tau_i}\big)^{-1}$ times smaller. It means that either the obtained lower bound is not tight, or Asynchronous SGD is a suboptimal method. In the following section we provide a method that attains the lower bound. The obtained lower bound is valid even if an algorithm has the freedom to interrupt oracles. See details in Section~\ref{sec:stop_oracle}.

\subsection{Related work}
For convex problems, \citet{woodworth2018graph} proposed the graph oracle, which generalizes the classical gradient oracle \citep{nemirovskij1983problem,nesterov2018lectures}, and provided lower bounds for a rather general family of parallel methods. \citet{arjevani2020tight} analyzed the delayed gradient descent method, which is Asynchronous SGD when all iteration delays $\delta_k = \delta$ are a constant.

As far as we know, \citet{woodworth2018graph} provide the most suitable and tightest prior framework for analyzing lower bound complexities for problem \eqref{eq:main_task}. However, as we shall see, our framework us more powerful. Moreover, they only consider the convex case.
In Section~\ref{sec:graph_oracle}, we use the framework of \citet{woodworth2018graph} and analyze the fixed computation model, where $i$\textsuperscript{th} worker requires $\tau_i$ seconds to calculate stochastic gradients. In Section~\ref{sec:convex_case}, we consider the convex setting and show that the lower bound obtained by their framework  is not tight and can be improved. While the graph oracle framework by \citet{woodworth2018graph} is related to the classical oracle protocol (Section~\ref{sec:classical_oracle_protocol}) and also calculates the number of oracle calls in order to get lower bounds,  our approach directly estimates the required time. For more details, see Section~\ref{sec:convex_case} and the discussion in Section~\ref{sec:convex_dis}.

\begin{minipage}{0.55\textwidth}
\begin{method}[H]
    \caption{Rennala SGD}
    \label{alg:alg_server}
    \begin{algorithmic}[1]
    \STATE \textbf{Input:} starting point $x^0$, stepsize $\gamma$, batch size $S$
    \STATE Run Method~\ref{alg:alg_worker} in all workers
    \FOR{$k = 0, 1, \dots, K - 1$}
    \STATE Init $g^k = 0$ and $s = 1$
    \WHILE{$s \leq S$}
    \STATE Wait for the next worker
    \STATE Receive gradient and iteration index $(g, k')$
    \IF{$k' = k$}
    \STATE $g^k = g^k + \frac{1}{S} g$; \;\; $s = s + 1$
    \ENDIF
    \STATE Send $(x^k, k)$ to the worker
    \ENDWHILE
    \STATE $x^{k+1} = x^k - \gamma g^{k}$ \alglinelabel{alg:step}
    \ENDFOR
    \end{algorithmic}
\end{method}
\end{minipage}
\hfill
\begin{minipage}{0.45\textwidth}
\begin{method}[H]
    \caption{Worker's Infinite Loop}
    \label{alg:alg_worker}
    \begin{algorithmic}[1]
    \STATE Init $g = 0$ and $k' = -1$
    \WHILE{True}
    \STATE Send $(g, k')$ to the server
    \STATE Receive $(x^k, k)$ from the server
    \STATE $k' = k$
    \STATE $g = \widehat{\nabla} f(x^k;\xi), \quad \xi \sim \mathcal{D}$
    \ENDWHILE
    \end{algorithmic}
\end{method}
\end{minipage}

\section{Minimax Optimal Method}
\label{sec:minimax_optimal_homog}

We now propose and analyze a new method: Rennala SGD (see Method~\ref{alg:alg_server}).
Methods with a similar structure were proposed previously (e.g., \citep{dutta2018slow}), but we are not aware of any method with precisely the same parameters and structure. For us, in this paper, the theoretical bounds are more important than the method itself.

Let us briefly describe the structure of the method. At the start, Method~\ref{alg:alg_server} asks all workers to run Method~\ref{alg:alg_worker}. Method~\ref{alg:alg_worker} is a standard routine:  the workers receive points $x^k$ from the server, calculate stochastic gradients, and send them back to the server. Besides that, the workers receive and send the iteration counter $k$ of the received points $x^k$. At the server's side, in each iteration $k,$ Method~\ref{alg:alg_server} calculates $g^k$ and performs the standard gradient-type step $x^{k+1} = x^k - \gamma g^{k}.$ The calculation of $g^k$ is done in a loop. The server waits for the workers to receive a stochastic gradient and an iteration index. The most important part of the method is that the server ignores a stochastic gradient if its iteration index is not equal to the current iteration index. In fact, this means that $g^k = (\nicefrac{1}{ S}) \sum_{i=1}^{S} \widehat{\nabla} f(x^k;\xi_i),$ where $\xi_i$ are i.i.d.\  samples. In other words, the server ignores all stochastic gradients that were calculated at the points $x^0, \cdots , x^{k-1}.$

It may seem that Method~\ref{alg:alg_server} does not fully use the information due to ignoring some stochastic gradients. That contradicts the philosophy of Asynchronous SGD, which tries to use all stochastic gradients calculated in the previous points. Nevertheless,  we show that Rennala SGD has {\em better time complexity} than Asynchronous SGD, and this complexity matches the lower bound from Theorem~\ref{theorem:lower_bound_homog}. The fact that Rennala SGD ignores the previous iterates is motivated by the proof of the lower bound in Section~\ref{sec:lower_bound_homog}. In the proof, any algorithm, on the constructed ``worst case'' function, does not progress to a stationary point if it calculates a stochastic gradient at a non-relevant point. This suggested to us to construct a method that would focus all workers on the last iterate.

\subsection{Assumptions}
\label{sec:assumption}

Let us consider the following assumptions.

\begin{assumption}
    \label{ass:lipschitz_constant}
$f$ is differentiable \& $L$--smooth, i.e., $\norm{\nabla f(x) - \nabla f(y)} \leq L \norm{x - y}$, $\forall x, y \in \R^d.$
 \end{assumption}

 \begin{assumption}
    \label{ass:lower_bound}
    There exist $f^* \in \R$ such that $f(x) \geq f^*$ for all $x \in \R^d$.
 \end{assumption}

\begin{assumption}
    \label{ass:stochastic_variance_bounded}
    For all $x \in \R^d,$ stochastic gradients $\widehat{\nabla} f(x;\xi)$ are unbiased and $\sigma^2$-variance-bounded, i.e., $\ExpSub{\xi}{\widehat{\nabla} f(x;\xi)} = \nabla f(x)$ and 
    $\ExpSub{\xi}{\norm{\widehat{\nabla} f(x;\xi) - \nabla f(x)}^2} \leq \sigma^2,$ where $\sigma^2 \geq 0.$
\end{assumption} 

\subsection{Analysis of Rennala SGD}
\label{sec:analysis_homog}

\begin{restatable}{theorem}{THEOREMSGDHOMOG}
    \label{thm:sgd_homog}
    Assume that Assumptions~\ref{ass:lipschitz_constant}, \ref{ass:lower_bound} and \ref{ass:stochastic_variance_bounded} hold. Let us take the batch size $S = \max\left\{\left\lceil\nicefrac{\sigma^2}{\varepsilon}\right\rceil, 1\right\},$ and
    $\gamma = \min\left\{\frac{1}{L}, \frac{\varepsilon S}{2 L \sigma^2}\right\} = \Theta\left(\nicefrac{1}{L}\right)$ in Method~\ref{alg:alg_server}.
    Then after $$K \geq \frac{24 \Delta L}{\varepsilon}$$
    iterations, the method guarantees that $\frac{1}{K}\sum_{k=0}^{K-1}\Exp{\norm{\nabla f(x^k)}^2} \leq \varepsilon.$
\end{restatable}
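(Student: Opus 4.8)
The key observation is that, \emph{as a recursion on the server iterates}, Rennala SGD is nothing but minibatch SGD with batch size $S$: in iteration $k$ the server discards every incoming gradient whose index $k'$ differs from the current $k$ (the check \texttt{if} $k'=k$ in Method~\ref{alg:alg_server}), so it accumulates exactly $S$ stochastic gradients all evaluated at the single point $x^k$, yielding $x^{k+1} = x^k - \gamma g^k$ with $g^k = \frac1S \sum_{i=1}^S \widehat{\nabla} f(x^k;\xi^k_i)$, where the $\xi^k_i$ are i.i.d.\ draws from $\mathcal{D}$ produced independently of $x^k$. Consequently the time/asynchrony structure of Methods~\ref{alg:alg_server}--\ref{alg:alg_worker} is irrelevant for this statement, and only Assumptions~\ref{ass:lipschitz_constant}--\ref{ass:stochastic_variance_bounded} enter. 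First I would fix the filtration $\cF_k$ generated by $x^0,\dots,x^k$ and record the two consequences of Assumption~\ref{ass:stochastic_variance_bounded}: $\ExpCond{g^k}{\cF_k} = \nabla f(x^k)$, and, using independence of the $S$ samples, $\ExpCond{\sqnorm{g^k - \nabla f(x^k)}}{\cF_k} \le \sigma^2/S$.

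Next I would run the textbook nonconvex descent argument. By $L$-smoothness (Assumption~\ref{ass:lipschitz_constant}),
$$f(x^{k+1}) \le f(x^k) - \gamma \inp{\nabla f(x^k)}{g^k} + \tfrac{L\gamma^2}{2}\sqnorm{g^k}.$$
Taking $\ExpCond{\cdot}{\cF_k}$, using unbiasedness and $\ExpCond{\sqnorm{g^k}}{\cF_k} = \sqnorm{\nabla f(x^k)} + \ExpCond{\sqnorm{g^k-\nabla f(x^k)}}{\cF_k} \le \sqnorm{\nabla f(x^k)} + \sigma^2/S$, gives
$$\ExpCond{f(x^{k+1})}{\cF_k} \le f(x^k) - \gamma\Big(1 - \tfrac{L\gamma}{2}\Big)\sqnorm{\nabla f(x^k)} + \tfrac{L\gamma^2\sigma^2}{2S}.$$
Since $\gamma \le 1/L$ we have $1 - L\gamma/2 \ge 1/2$. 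Taking total expectations, telescoping over $k=0,\dots,K-1$, and dividing by $K$ yields
$$\frac{1}{K}\sum_{k=0}^{K-1}\Exp{\sqnorm{\nabla f(x^k)}} \le \frac{2\Delta}{\gamma K} + \frac{L\gamma\sigma^2}{S},$$
where $\Delta = f(x^0) - f^* \ge f(x^0) - \inf_x f(x)$ (Assumption~\ref{ass:lower_bound}).

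Finally I would substitute the prescribed parameters. From $S = \max\{\lceil\sigma^2/\varepsilon\rceil,1\} \ge \sigma^2/\varepsilon$ we get $\gamma = \min\{1/L,\, \varepsilon S/(2L\sigma^2)\} \ge \min\{1/L, 1/(2L)\} = 1/(2L)$, which also gives $\gamma = \Theta(1/L)$. The variance term obeys $L\gamma\sigma^2/S \le \varepsilon/2$ by $\gamma \le \varepsilon S/(2L\sigma^2)$ (and simply vanishes when $\sigma^2=0$, where $S=1$ and $\gamma = 1/L$), and the optimization term obeys $2\Delta/(\gamma K) \le 4\Delta L/K \le \varepsilon/2$ whenever $K \ge 8\Delta L/\varepsilon$; the stated threshold $K \ge 24\Delta L/\varepsilon$ is comfortably larger, absorbing any looser constant bookkeeping (e.g.\ keeping $\gamma(1-L\gamma/2)$ instead of $\gamma/2$, or a cruder lower bound on $\gamma$). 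Hence $\frac1K\sum_{k}\Exp{\sqnorm{\nabla f(x^k)}} \le \varepsilon$, as claimed.

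\textbf{Main obstacle.} There is no deep difficulty here; the only point requiring genuine care is the probabilistic bookkeeping — justifying that within iteration $k$ the server really collects $S$ \emph{fresh, i.i.d.} samples all at the \emph{same} point $x^k$ (this is exactly what the index check buys, and what makes the conditional-independence step legitimate), together with tracking constants so that the clean bound $K \ge 24\Delta L/\varepsilon$ emerges. One should also remark that each \texttt{while} loop terminates, since under the fixed computation model every worker keeps returning gradients, so $g^k$ is well defined — though this is not needed for the inequality itself.
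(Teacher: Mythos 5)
Your proof is correct and follows essentially the same route as the paper: the paper likewise observes that Rennala SGD is minibatch SGD with batch size $S$ and then invokes its classical SGD lemma (Theorem~\ref{theorem:sgd}), which is exactly the descent-lemma-plus-telescoping argument you write out explicitly. Your constant bookkeeping is slightly tighter (you note $K \ge 8\Delta L/\varepsilon$ already suffices), but this only confirms the stated threshold $K \ge 24\Delta L/\varepsilon$.
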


In the following theorem, we provide the time complexity of Method~\ref{alg:alg_server}.

\begin{restatable}{theorem}{THEOREMSGDHOMOGTIME}
    \label{theorem:homog_time}
        Consider Theorem~\ref{thm:sgd_homog}. We assume that $i$\textsuperscript{th} worker returns a stochastic gradient every $\tau_i$ seconds for all $i \in [n]$. Without loss of generality, we assume that $0 < \tau_1 \leq \cdots \leq \tau_n.$ Then after
        \begin{align}
            \label{eq:homog_optim_complex}
            \squeeze 96 \times \min \limits_{m \in [n]} \left[\left(\frac{1}{m}\sum \limits_{i=1}^m \frac{1}{\tau_i}\right)^{-1}\left(\frac{L \Delta}{\varepsilon} + \frac{\sigma^2 L \Delta}{m \varepsilon^2}\right)\right]
        \end{align}
        seconds, Method~\ref{alg:alg_server} guarantees to find an $\varepsilon$-stationary point.
    \end{restatable}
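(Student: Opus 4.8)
The plan is to combine the iteration complexity bound $K = \Theta(\Delta L/\varepsilon)$ from Theorem~\ref{thm:sgd_homog} with a time-per-iteration estimate. The central observation is that a single iteration of Method~\ref{alg:alg_server} collects $S = \max\{\lceil \sigma^2/\varepsilon\rceil, 1\}$ stochastic gradients computed at the \emph{current} iterate $x^k$, and the workers run asynchronously, so I need to bound the wall-clock time $T(S)$ for $n$ workers with speeds $\tau_1 \le \cdots \le \tau_n$ to produce $S$ gradients at a fixed point. First I would establish the following elementary scheduling fact: if we only use the $m$ fastest workers, then in $t$ seconds worker $i$ produces at least $\lfloor t/\tau_i \rfloor \ge t/\tau_i - 1$ gradients, so the total number produced is at least $t \sum_{i=1}^m 1/\tau_i - m$. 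Setting this $\ge S$ and solving for $t$ shows that $S$ gradients are collected within
\begin{align*}
t_m \le \left(\frac{1}{m}\sum_{i=1}^m \frac{1}{\tau_i}\right)^{-1} \cdot \frac{S + m}{m}
\end{align*}
seconds; but we should be a bit careful because the very first gradient each worker returns may correspond to a stale iterate index (from when it was asked at iteration $k-1$), costing one extra "round." Accounting for that costs at most an extra additive $\tau_m \le (\frac{1}{m}\sum_{i=1}^m 1/\tau_i)^{-1}$ per iteration. So per-iteration time is $O\big((\frac{1}{m}\sum_{i=1}^m 1/\tau_i)^{-1} \cdot \frac{S+m}{m}\big)$ for \emph{every} choice of $m \in [n]$, and we are free to pick the best one at the end.

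Next I would multiply by $K = \Theta(\Delta L/\varepsilon)$ and simplify. Using $S = \max\{\lceil \sigma^2/\varepsilon\rceil,1\} = \Theta(1 + \sigma^2/\varepsilon)$, the quantity $\frac{S+m}{m} = \Theta\big(1 + \frac{1}{m} + \frac{\sigma^2}{m\varepsilon}\big) = \Theta\big(1 + \frac{\sigma^2}{m\varepsilon}\big)$ (since $m \ge 1$). Hence the total time with the $m$ fastest workers is
\begin{align*}
O\!\left(\left(\frac{1}{m}\sum_{i=1}^m \frac{1}{\tau_i}\right)^{-1}\!\left(1 + \frac{\sigma^2}{m\varepsilon}\right)\frac{\Delta L}{\varepsilon}\right) = O\!\left(\left(\frac{1}{m}\sum_{i=1}^m \frac{1}{\tau_i}\right)^{-1}\!\left(\frac{\Delta L}{\varepsilon} + \frac{\sigma^2 \Delta L}{m\varepsilon^2}\right)\right),
\end{align*}
which is exactly the $m$-th term inside the minimum in \eqref{eq:homog_optim_complex}. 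Since Method~\ref{alg:alg_server} as written lets all $n$ workers participate, I must argue that running all workers is never worse than running only the fastest $m$: extra workers only add more (valid, current-iterate) gradients, so they can only make the while-loop terminate sooner; formally, the time to collect $S$ gradients using all $n$ workers is at most $t_m$ for every $m$, hence at most $\min_{m\in[n]} t_m$. Taking the minimum over $m$ then yields \eqref{eq:homog_optim_complex} up to the stated constant $96$, which I would track by being slightly more careful with the $S+m$ versus $S$ and the stale-round additive terms (noting $S+m \le 2\max\{S,m\} \le 2(S+m)$ type bounds and $\lceil\sigma^2/\varepsilon\rceil \le \sigma^2/\varepsilon + 1$).

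The main obstacle is making the scheduling argument rigorous under the asynchronous protocol: one has to handle (a) the "warm-up" effect where each worker's first returned gradient after entering iteration $k$ was actually computed for iteration $k-1$ and must be discarded, and (b) the fact that a worker mid-computation when iteration $k$ begins does not restart — it finishes its current (stale) gradient, gets told the new iterate, and only then starts a useful computation, so in the worst case a worker "wastes" up to $\tau_i$ seconds at the start of each iteration. Both effects are additive per iteration and bounded by $\tau_m$ (when restricting to the $m$ fastest), which is dominated by the $\frac{\Delta L}{\varepsilon}$ term after multiplying by $K$, so they do not change the order; but writing this cleanly — ideally via a clean lemma stating "starting from any worker configuration, $S$ current-iterate gradients are collected within $\big(\frac{1}{m}\sum_{i=1}^m 1/\tau_i\big)^{-1}(S/m + 2)$ seconds for each $m$" — is where the real work lies. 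Everything else is routine bookkeeping of constants.
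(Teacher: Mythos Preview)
Your proposal is correct and follows essentially the same approach as the paper's proof: both bound the per-iteration time to collect $S$ current-iterate gradients by a quantity of order $\left(\frac{1}{m}\sum_{i=1}^m \frac{1}{\tau_i}\right)^{-1}\frac{S+m}{m}$ for each $m\in[n]$ (the paper via an explicit feasibility construction setting $B_i = t'/\tau_i - 1$ for $i\le j^*$ and $B_i=0$ otherwise, you via the counting bound $\sum_{i\le m}\lfloor t/\tau_i\rfloor \ge S$ plus the monotonicity observation that extra workers only shorten the while-loop), absorb the one wasted stale round into the ``$+1$'' in $t_i=\tau_i(1+B_i)$, and multiply by $K=24\Delta L/\varepsilon$. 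The only cosmetic difference is that the paper solves for the minimizing $j^*$ constructively whereas you take the $\min$ over $m$ at the end via monotonicity; the resulting bound and the constant $96$ match once you carry through $S\le \sigma^2/\varepsilon+1$ and the factor $2$ from the integer relaxation.
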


This result with Theorem~\ref{theorem:lower_bound_homog} state that 
\begin{align}
\begin{split}
    \label{eq:time_comple}
    \mathfrak{m}_{\textnormal{time}}\left(\cA_{\textnormal{zr}}, \cF_{\Delta, L}\right) = \squeeze\Theta\left(\min \limits_{m \in [n]} \left[\left(\frac{1}{m}\sum\limits_{i=1}^m \frac{1}{\tau_i}\right)^{-1}\left(\frac{L \Delta}{\varepsilon} + \frac{\sigma^2 L \Delta}{m \varepsilon^2}\right)\right]\right)
\end{split}
\end{align}
for Protocol~\ref{alg:time_multiple_oracle_protocol} and and the oracle class $\cO_{\tau_1, \dots, \tau_n}^{\sigma^2}$ from Definition~\ref{def:oracle_class}.

\subsection{Discussion}
\label{sec:discussion}

Theorem~\ref{theorem:homog_time} and Theorem~\ref{theorem:lower_bound_homog} state that Method~\ref{alg:alg_server} is {\em minimax optimal} under the assumption that the delays of the workers are fixed and equal to $\tau_i.$ 
Note that this assumption is required only in Theorem~\ref{theorem:homog_time}, and Theorem~\ref{thm:sgd_homog} holds without it. 

In the same setup, the previous works \citep{cohen2021asynchronous, mishchenko2022asynchronous, koloskova2022sharper} obtained the weaker time complexity \eqref{eq:async_sgd_complexity}. We do not rule out that it might be possible for the analysis, the parameters or the structure of Asynchronous SGD to be improved and obtain the optimal time complexity \eqref{eq:homog_optim_complex}. We leave this to future work. However, instead, we developed Method~\ref{alg:alg_server} that has not only the optimal time complexity, but also a very simple structure and analysis (see Section~\ref{sec:proof_of_thm:sgd_homog}). Our claims are supported by experiments in Section~\ref{sec:experiments}.

The reader can see that we provide the complexity in a nonconstructive way, as the minimization over the parameter $m \in [n].$ Note that Method~\ref{alg:alg_server} {\em automatically finds the optimal $m$} in \eqref{theorem:homog_time}, and it does not require the knowledge of the delays $\tau_i$ to do so! Let us explain the intuition behind the complexity \eqref{eq:homog_optim_complex}. Let  $m^*$ be the optimal parameter of \eqref{eq:homog_optim_complex} with the smallest index. In Section~\ref{sec:proof_of_thm:homog_time}, we show that all workers with the delays $\tau_i$ for all $i > m^*$ can be simply ignored since their delays are too large, and their inclusion would only harm the convergence time of the method. So, the method \emph{automatically} ignores them! However, in Asynchronous SGD, these harmful workers can contribute to the optimization process, which can be the reason for the suboptimality of Asynchronous SGD.

In general, there are two important regimes: $\nicefrac{\sigma^2}{\varepsilon} \ll n$ (``low noise/large \# of workers'') and $\nicefrac{\sigma^2}{\varepsilon} \gg n$ (``high noise/small \# of workers''). Intuitively, in the ``high noise/small \# of workers'' regime, \eqref{eq:time_comple} is minimized when $m$ is close to $n.$ However, in the ``low noise/large \# of workers'', the optimal $m$ can be much smaller than $n.$

\section{Synchronized Start of Workers}
\label{sec:sync_start_main}
In the previous sections, we obtain the time complexities for the case when the workers asynchronously compute stochastic gradients. It is important that the complexities are obtained assuming that the workers \emph{can start} their calculations asynchronously. However, in practice, it is common to train machine learning models with multiple workers/GPUs, so that all workers are {\em synchronized} after each stochastic gradient calculation \citep{goyal2017accurate, sergeev2018horovod}. The simplest example of such a strategy is Minibatch SGD (see Section~\ref{sec:previous_parallel_algorithms}). We want to find an answer to the question: what is the best time complexity we can get if we assume that the workers start simultaneously? In Section~\ref{sec:sync_start}, we formalize this setting, and show that the time complexity equals to
\begin{align}
    \label{eq:time_comple_sync}
    \squeeze \mathfrak{m}_{\textnormal{time}}\left(\cA_{\textnormal{zr}}, \cF_{\Delta, L}\right) = \Theta\left(\min \limits_{m \in [n]} \left[\tau_m \left(\frac{L \Delta}{\varepsilon} + \frac{\sigma^2 L \Delta}{m \varepsilon^2}\right)\right]\right)
\end{align}
for Protocol~\ref{alg:time_oracle_protocol} and the oracle class $\cO_{\tau_1, \dots, \tau_n}^{\sigma^2, \textnormal{sync}}$ from Definition~\ref{def:oracle_sync}. Comparing \eqref{eq:time_comple} and \eqref{eq:time_comple_sync}, one can see that \emph{methods that start the calculations of workers simultaneously are provably worse than methods that allow workers to start the calculations asynchronously}.

\section{Future Work}
In this work, we consider the setup where the times $\tau_i$ are fixed. In future work, one can consider natural, important, and more general scenarios where they can be random, follow some distribution, and/or depend on the random variables $\xi$ from Assumption~\ref{ass:stochastic_variance_bounded} (be correlated with stochastic gradients).

\subsection*{Acknowledgements}
This work of P. Richt\'{a}rik and A. Tyurin was supported by the KAUST Baseline Research Scheme (KAUST BRF) and the KAUST Extreme Computing Research Center (KAUST ECRC), and the work of P. Richt\'{a}rik was supported by the SDAIA-KAUST Center of Excellence in Data Science and Artificial Intelligence (SDAIA-KAUST AI).

\newcommand\convextable{
    \begin{threeparttable}
        \begin{tabular}{cccccc}
  \toprule
       \bf  Method & \bf Time Complexity \\
         \midrule
         \makecell{Minibatch SGD} & $\squeeze \tau_{n} \left(\min\left\{\frac{\sqrt{L} R}{\sqrt{\varepsilon}}, \frac{M^2 R^2}{\varepsilon^2}\right\} + \frac{\sigma^2 R^2}{n \varepsilon^2}\right)$ \\
         \midrule
         \makecell{Asynchronous SGD \\ \citep{mishchenko2022asynchronous}} & $\squeeze \left(\frac{1}{n} \sum_{i=1}^n \frac{1}{\tau_i}\right)^{-1}\left(\frac{L R^2}{\varepsilon} + \frac{\sigma^2 R^2}{n \varepsilon^2}\right)$ \\
         \midrule
         \makecell{(Accelerated) Rennala SGD \\ (Theorems~\ref{theorem:homog_time_convex} and \ref{theorem:homog_time_convex_acc})} & $\squeeze \min \limits_{m \in [n]} \left[\left(\frac{1}{m} \sum_{i=1}^{m} \frac{1}{\tau_i}\right)^{-1} \left(\min\left\{\frac{\sqrt{L} R}{\sqrt{\varepsilon}}, \frac{M^2 R^2}{\varepsilon^2}\right\} + \frac{\sigma^2 R^2}{m \varepsilon^2}\right)\right]$ \\
         \midrule
         \midrule
         \makecell{Lower Bound} (Theorem~\ref{theorem:lower_bound_homog_convex}) & $\squeeze \min \limits_{m \in [n]} \left[\left(\frac{1}{m} \sum_{i=1}^{m} \frac{1}{\tau_i}\right)^{-1} \left(\min\left\{\frac{\sqrt{L} R}{\sqrt{\varepsilon}}, \frac{M^2 R^2}{\varepsilon^2}\right\} + \frac{\sigma^2 R^2}{m \varepsilon^2}\right)\right]$ \\
         \midrule
         \makecell{Lower Bound (Section~\ref{sec:graph_oracle}) \\ \citep{woodworth2018graph}}  & $\squeeze \tau_1 \min\left\{\frac{\sqrt{L} R}{\sqrt{\varepsilon}}, \frac{M^2 R^2}{\varepsilon^2}\right\} + \left(\frac{1}{n} \sum_{i=1}^{n} \frac{1}{\tau_i}\right)^{-1}\frac{\sigma^2 R^2}{n \varepsilon^2}$ \\
        \bottomrule
        \end{tabular}
    \end{threeparttable}
}

\newcommand\convextablecaption{
    \caption{\textbf{Convex Homogeneous Case.} The required time to get an $\varepsilon$-solution in the convex setting, where $i$\textsuperscript{th} worker requires $\tau_i$ seconds to calculate a stochastic gradient. We assume that $0 < \tau_1 \leq \dots \leq \tau_n.$}
}

\bibliography{paper}
\bibliographystyle{apalike}

\newpage
\appendix

\tableofcontents
\newpage

\section{Heterogeneous Regime}
\label{sec:heterog_regime}

\begin{table*}
    \caption{\textbf{Heterogeneous Case.} The required time to get an $\varepsilon$-stationary point in the nonconvex setting, where $i$\textsuperscript{th} worker requires $\tau_i$ seconds to calculate a stochastic gradient. We assume that $0 < \tau_1 \leq \dots \leq \tau_n.$}
    \centering
    \hetrogtable
\end{table*}

Up to this point, we discussed the regime when all workers calculate i.i.d. stochastic gradients. In distributed optimization and federated learning \citep{konevcny2016federated}, it can be possible that the workers hold different datasets. Let us consider the following optimization problem:
\begin{align}
    \label{eq:main_task_heterog}
    \squeeze \min \limits_{x \in \R^d} \Big \{f(x) \eqdef \frac{1}{n} \sum_{i=1}^n \ExpSub{\xi_i \sim \mathcal{D}_i}{f_i(x;\xi_i)}\Big \},
\end{align}
where $f_i\,:\,\R^d \times \mathbb{S}_{\xi_i} \rightarrow \R^d$ and $\xi_i$ are random variables with some distributions $\mathcal{D}_i$ on $\mathbb{S}_{\xi_i}.$ Problem \eqref{eq:main_task_heterog} generalizes problem \eqref{eq:main_task}. Here we have the same goals as in the previous sections. We want to obtain the minimax complexities for the case when the workers contain different datasets.

\subsection{Lower bound}
For the heterogeneous case, we modify Definition~\ref{def:oracle_class}:
\begin{definition}[Oracle Class $\cO_{\tau_1, \dots, \tau_n}^{\sigma^2, \textnormal{\it heterog}}$]\ \\
    Let us consider an oracle class such that, for any $f = \frac{1}{n} \sum_{i=1}^n f_i \in \cF_{\Delta, L},$ it returns oracles $O_i = O_{\tau_i}^{\smallnablafunc_i}$ and distributions $\mathcal{D}_i$ for all $i \in [n],$ where $\widehat{\nabla} f_i$ is an unbiased $\sigma^2$-variance-bounded mapping for all $i \in [n]$ (see Assumption~\ref{ass:stochastic_variance_bounded}). The oracles $O_{\tau_i}^{\smallnablafunc_i}$ are defined in \eqref{eq:oracle_stochastic_delay}. We define such oracle class as $\cO_{\tau_1, \dots, \tau_n}^{\sigma^2, \textnormal{\it heterog}}.$ Without loss of generality, we assume that $0 < \tau_1 \leq \dots \leq \tau_n.$
\end{definition}

\begin{restatable}{theorem}{THEOREMLOWERBOUNDHETROG}
    \label{theorem:lower_bound_heterog}
    Let us consider the oracle class $\cO_{\tau_1, \dots, \tau_n}^{\sigma^2, \textnormal{\it heterog}}$ for some $\sigma^2 >0$ and $0 < \tau_1 \leq \dots \leq \tau_n.$ We fix any $L, \Delta > 0$ and $0 < \varepsilon \leq c' L \Delta.$ In the view Protocol~\ref{alg:time_multiple_oracle_protocol}, for any algorithm $A \in \cA_{\textnormal{zr}},$ there exists a function $f = \frac{1}{n} \sum_{i=1}^n f_i \in \cF_{\Delta, L}$ and oracles and distributions $((O_1, \dots, O_n), (\mathcal{D}_1, \dots, \mathcal{D}_n)) \in \cO_{\tau_1, \dots, \tau_n}^{\sigma^2, \textnormal{\it heterog}}(f_1, \dots, f_n)$
    such that
    $\Exp{\inf_{k \in S_t} \norm{\nabla f(x^k)}^2} > \varepsilon,$
    where $S_t \eqdef \left\{k \in \N_0 \middle| t^{k} \leq t\right\},$ and 
    \begin{align}
        \label{eq:lower_bound_heterog}
        \squeeze t = c \times \left(\tau_{n}\frac{L \Delta}{\varepsilon} + \left(\frac{1}{n} \sum_{i=1}^{n} \tau_i\right)\frac{\sigma^2 L \Delta}{n \varepsilon^2}\right).
    \end{align} The quantity $c'$ and $c$ are {\markchanges universal} constants.
\end{restatable}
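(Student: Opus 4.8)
The plan is to reduce to two separate lower bounds—a ``deterministic'' one forcing the $\tau_n L\Delta/\varepsilon$ term and a ``stochastic'' one forcing the $\left(\tfrac1n\sum_i\tau_i\right)\tfrac{\sigma^2 L\Delta}{n\varepsilon^2}$ term—and then combine them. For the first term, I would use a $\sigma=0$ (noiseless) hard instance adapted from the classical nonconvex construction of \citet{carmon2020lower,arjevani2022lower}: take the standard ``chain'' function $f$ with $T=\Theta(L\Delta/\varepsilon)$ active coordinates, but distribute it across workers so that \emph{progress on the chain requires a gradient from the slowest worker} (for instance, let $f_n$ carry the ``progress-enabling'' structure while $f_1,\dots,f_{n-1}$ are trivial, or interleave so that unlocking coordinate $j$ needs oracle $n$). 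A zero-respecting algorithm cannot discover a new coordinate without querying that worker, and by the time-protocol constraint $t^{k+1}\ge t^k$, each such query costs $\tau_n$ seconds; since $\Omega(L\Delta/\varepsilon)$ sequential discoveries are needed, time $\Omega(\tau_n L\Delta/\varepsilon)$ is forced. This part should transfer almost verbatim from the homogeneous proof of Theorem~\ref{theorem:lower_bound_homog} restricted to the last worker.

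For the second term I would take $L,\Delta$ small enough that the noiseless chain is short (one or two coordinates) and instead make the stochastic gradients informative only through averaging. Here the key difference from the homogeneous case is that each worker $i$ has its \emph{own} distribution $\mathcal{D}_i$; the natural hard instance puts independent ``sign'' or Gaussian-type noise of variance $\sigma^2$ on each $\widehat\nabla f_i$, so that to estimate $\nabla f=\tfrac1n\sum_i\nabla f_i$ to accuracy $\varepsilon$ one genuinely needs $\Omega(\sigma^2/\varepsilon)$ samples \emph{from each worker} (a single worker's samples only reveal $\nabla f_i$, not $\nabla f$). Collecting $B$ samples from worker $i$ takes $\Theta(B\tau_i)$ seconds, and since all workers run in parallel the wall-clock cost of getting $B$ samples from everybody is $\Theta(B\max_i\tau_i)=\Theta(B\tau_n)$—wait, that would give the wrong bound, so the construction must be subtler: one should make the \emph{required} number of samples per worker scale so that the binding constraint is the \emph{average}, e.g.\ via an information-theoretic argument showing that after time $t$ the algorithm has total sample budget $\sum_i t/\tau_i$ and the expected squared gradient error is $\Omega\!\big(\sigma^2 / (\tfrac1n\sum_i t/\tau_i)\big)\cdot\tfrac1n$, which is $\le\varepsilon$ only when $t=\Omega\!\big((\tfrac1n\sum_i\tau_i)\tfrac{\sigma^2}{n\varepsilon}\big)$ after scaling by the $L\Delta/\varepsilon$ repetitions of the chain. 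This ``budget'' argument—bounding expected progress per unit time by a concave function of the per-worker query counts and invoking Jensen—is the technical heart.

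Finally I would glue the two instances: either by a direct $\max$ (a lower bound of $\Omega(a)$ and $\Omega(b)$ gives $\Omega(\max\{a,b\})=\Omega(a+b)$ up to constants), or more carefully by building a single function whose first $\Theta(L\Delta/\varepsilon)$ coordinate-blocks each require \emph{both} a slowest-worker unlock step \emph{and} a $\Theta(\sigma^2/(n\varepsilon))$-sized minibatch averaged across all workers, so the per-block time is $\Omega\!\big(\tau_n + (\tfrac1n\sum_i\tau_i)\tfrac{\sigma^2}{n\varepsilon}\big)$ and the total is \eqref{eq:lower_bound_heterog}. The main obstacle I anticipate is making the averaging-across-heterogeneous-workers argument rigorous within the zero-respecting / time-protocol formalism: one must show that no clever scheduling of which oracle to query when can beat the $\tfrac1n\sum_i\tau_i$ average—i.e.\ that the adversary can always ``hide'' a constant fraction of each coordinate-block's signal in \emph{every} worker's distribution so that skipping slow workers is not an option, unlike in the homogeneous case where Method~\ref{alg:alg_server} legitimately ignores slow workers. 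This is precisely why the heterogeneous bound has $\sum_i\tau_i$ rather than the homogeneous $\min_m(\tfrac1m\sum_{i=1}^m 1/\tau_i)^{-1}$, and capturing that distinction cleanly is the crux.
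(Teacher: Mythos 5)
Your Part 1 (the $\tau_n L\Delta/\varepsilon$ term) matches the paper's proof essentially verbatim: the paper sets $f_i=0$ for $i<n$, puts the scaled chain function entirely on worker $n$ with deterministic gradients, and concludes each of the $T=\Theta(L\Delta/\varepsilon)$ coordinate discoveries costs $\tau_n$ seconds. The final gluing step (a lower bound of $\Omega(\max\{a,b\})$ gives $\Omega(a+b)$) is also how the paper combines its two parts.

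The gap is in Part 2, and you have correctly sensed but not resolved it. Your first idea---independent noise on each $\widehat\nabla f_i$ so that $\Omega(\sigma^2/\varepsilon)$ samples are needed \emph{from each worker}---would force wall-clock time $\Omega(\tau_n\cdot\sigma^2/(n\varepsilon))$ per chain step, which \emph{exceeds} the claimed bound (since $\tau_n\ge\frac1n\sum_i\tau_i$) and would therefore contradict the matching upper bound of Malenia SGD, whose stopping rule only requires the \emph{harmonic} mean of the per-worker batch sizes $B_i$ to be large, so slow workers legitimately contribute fewer samples. Your fallback ``budget'' argument has the opposite problem: if after time $t$ the total sample count is $\sum_i t/\tau_i$ and the error scales like $\sigma^2$ over that total, you recover the harmonic-mean quantity $\bigl(\frac1n\sum_i 1/\tau_i\bigr)^{-1}$, which is \emph{smaller} than the arithmetic mean $\frac1n\sum_i\tau_i$ appearing in the theorem---so that route proves too little. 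The paper's actual construction threads this needle differently: it gives worker $i$ its \emph{own} block of $T$ coordinates, $f_i(x)=\frac{nL\lambda_i^2}{l_1}F_T(x_i/\lambda_i)$, with the scaling $\lambda_i\propto\sqrt{\tau_i}$ chosen so that (i) the Bernoulli reveal probability satisfies $p_i\propto\tau_i$, making every worker's chain take the \emph{same} expected time $\approx T\tau_i/p_i\approx T\,\frac{\sigma^2\sum_j\tau_j}{n^2\varepsilon}$ to complete, and (ii) the squared gradient norm is bounded below by the $\tau$-weighted average $\sum_i\frac{\eta\varepsilon\tau_i}{\sum_j\tau_j}\mathbbm{1}[\textnormal{prog}(x_i)<T]$, so that by Markov's inequality a constant $\tau$-weighted fraction of unfinished blocks keeps the gradient large. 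This is exactly the mechanism that makes ``skipping slow workers'' impossible: each worker's signal lives in coordinates only that worker can unlock, and the adversary compensates a worker's slowness by making its oracle less noisy, so that no scheduling strategy gains anything. You identified this as the crux but did not supply the construction, so the proposal as written does not establish the statistical term.
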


Theorem~\ref{theorem:lower_bound_heterog} states that 
\begin{align*}
    \squeeze \mathfrak{m}_{\textnormal{time}}\left(\cA_{\textnormal{zr}}, \cF_{\Delta, L}\right) = \Omega\left(\tau_{n}\frac{L \Delta}{\varepsilon} + \left(\frac{1}{n} \sum_{i=1}^{n} \tau_i\right)\frac{\sigma^2 L \Delta}{n \varepsilon^2}\right).
\end{align*}

One can see that the lower bound for the heterogeneous case is larger than \eqref{eq:lower_complexity_lower_bound}. 
In Section~\ref{sec:heterog}, we provide a method that attains the lower bound.

\subsection{Related work and discussion}
\label{sec:heterog_regime:related_work}
The optimization problem \eqref{eq:main_task_heterog} is well-investigated by many papers, including \citep{aytekin2016analysis, mishchenko2018delay, nguyen2022federated, wu2022delay, koloskova2022sharper, mishchenko2022asynchronous}. There were attempts to analyze Asynchronous SGD in the heterogeneous regime. For instance, \citet{mishchenko2022asynchronous} proved the convergence to a neighborhood of a solution only. In general, it is quite challenging to get good rates for Asynchronous SGD without additional assumptions about the similarity of the functions $f_i$ \citep{koloskova2022sharper, mishchenko2022asynchronous}.

In the non-stochastic case, when $\sigma^2 = 0,$ \citet{wu2022delay} analyzed the PIAG method in the non-stochastic heterogeneous regime and showed convergence. Although the performance of PIAG can be good in practice, in the worst case PIAG requires $\operatorname{O}\left(\nicefrac{\tau_{n} \widehat{L} \Delta}{\varepsilon}\right)$ seconds to converge, where $\tau_{n}$ is the time delay of the slowest worker, $\widehat{L} \eqdef \sqrt{\sum_{i=1}^n L_i^2},$ and $L_i$ is a Lipschitz constant of $\nabla f_i.$ Note that the synchronous Minibatch SGD (see Section~\ref{sec:previous_parallel_algorithms}) method has the complexity $\operatorname{O}\left(\nicefrac{\tau_{n} L \Delta}{\varepsilon}\right),$ which is always better.\footnote{In the nonconvex case, $\widehat{L}$ can be arbitrarily larger than $L.$} 

Our lower bound in Theorem~\ref{theorem:lower_bound_heterog} does not leave hope of breaking the dependence on the worst straggler in the heterogeneous case. In the stochastic case, the lower bound is slightly more optimistic in the regimes when the statistical term (the second term in \eqref{eq:lower_bound_heterog}) is large. If the stragglers do not have too large delays, then their contributions to the \emph{arithmetic} mean can be small. Note that in Theorem~\ref{theorem:lower_bound_homog} in the homogeneous case, we have the \emph{harmonic} mean of the delays instead.

\subsection{Minimax optimal method}
\label{sec:heterog}
In this section, we provide Malenia\footnote{\tiny\url{https://eldenring.wiki.fextralife.com/Malenia+Blade+of+Miquella}: Malenia, Blade of Miquella and Malenia, Goddess of Rot is two-phase a Demigod Boss in Elden Ring. She's the twin of Miquella, the most powerful of the Empyreans, and gained renown for her legendary battle against Starscourge Radahn during the Shattering, in which she unleashed the power of the Scarlet Rot and reduced Caelid to ruins.} SGD (see Method~\ref{alg:alg_server_heterog}) that is slightly different from Rennala SGD (Method~\ref{alg:alg_server}). There are two main differences: the first one is that Method~\ref{alg:alg_server_heterog} has different gradients estimators $g^k_i$ for each worker, and the second one is the constraint $\left(\frac{1}{n} \sum_{i=1}^n \nicefrac{1}{B_i}\right)^{-1} < \nicefrac{S}{n}$ in the inner loop\footnote{We assume that $\left(\frac{1}{n} \sum_{i=1}^n \nicefrac{1}{B_i}\right)^{-1} = 0$ if exists $i \in [n]$ such that $B_i = 0.$}. The more gradients we get from the workers, the larger the term $\left(\frac{1}{n} \sum_{i=1}^n \nicefrac{1}{B_i}\right)^{-1}.$
\begin{method}
    \caption{Malenia SGD}
    \label{alg:alg_server_heterog}
    \begin{algorithmic}[1]
    \STATE \textbf{Input:} starting point $x^0$, stepsize $\gamma$, parameter $S$
    \STATE Run Method~\ref{alg:alg_worker_heterog} in all workers
    \FOR{$k = 0, 1, \dots, K - 1$}
    \STATE Init $g^k_i = 0$ and $B_i = 0$
    \WHILE{$\left(\frac{1}{n} \sum_{i=1}^n \frac{1}{B_i}\right)^{-1} < \frac{S}{n}$}
    \STATE Wait for the next worker
    \STATE Receive gradient, iteration index, worker's index $(g, k', i)$
    \IF{$k' = k$}
    \STATE $g^k_i = g^k_i + g$
    \STATE $B_i = B_i + 1$
    \ENDIF
    \STATE Send $(x^k, k)$ to the worker
    \ENDWHILE
    \STATE $g^{k} = \frac{1}{n} \sum_{i=1}^n \frac{1}{B_i} g^k_i$
    \STATE $x^{k+1} = x^k - \gamma g^{k}$
    \ENDFOR
    \end{algorithmic}
\end{method}

\begin{method}
    \caption{Worker's Infinite Loop}
    \label{alg:alg_worker_heterog}
    \begin{algorithmic}[1]
    \STATE Init $g = 0,$ $k' = -1,$ and worker's index $i$
    \WHILE{True}
    \STATE Send $(g, k', i)$ to the server
    \STATE Receive $(x^k, k)$ from the server
    \STATE $k' = k$
    \STATE $g = \widehat{\nabla} f_i(x^k;\xi), \quad \xi \sim \mathcal{D}$
    \ENDWHILE
    \end{algorithmic}
\end{method}

As in Section~\ref{sec:analysis_homog}, we can provide the convergence theorems.

\begin{restatable}{theorem}{THEOREMSGDHETROG}
    \label{thm:sgd_heterog}
    Assume that Assumptions~\ref{ass:lipschitz_constant} and \ref{ass:lower_bound} hold for the function $f$. Assumption \ref{ass:stochastic_variance_bounded} holds for the function $f_i$ for all $i \in [n].$ Let us take the parameter $S = \max\left\{\left\lceil\nicefrac{\sigma^2}{\varepsilon}\right\rceil, n\right\},$ and
    $\gamma = \min\left\{\frac{1}{L}, \frac{\varepsilon S}{2 L \sigma^2}\right\} = \Theta\left(\nicefrac{1}{L}\right)$ in Method~\ref{alg:alg_server_heterog},
    then after $K \geq \nicefrac{24 \Delta L}{\varepsilon}$
    iterations the method guarantees that $\frac{1}{K}\sum_{k=0}^{K-1}\Exp{\norm{\nabla f(x^k)}^2} \leq \varepsilon.$
\end{restatable}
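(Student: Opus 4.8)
\textbf{Proof proposal for Theorem~\ref{thm:sgd_heterog}.}

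The plan is to adapt the standard descent-lemma argument for SGD, mirroring the homogeneous analysis behind Theorem~\ref{thm:sgd_homog}, but accounting for the fact that the gradient estimator $g^k$ in Method~\ref{alg:alg_server_heterog} is a weighted average $g^k = \frac{1}{n}\sum_{i=1}^n \frac{1}{B_i} g^k_i$ where each $g^k_i$ is a sum of $B_i$ i.i.d.\ stochastic gradients of $f_i$ evaluated at the current iterate $x^k$. First I would establish unbiasedness: since each worker only contributes gradients tagged with the current index $k$, and since $\frac{1}{B_i}g^k_i$ is an average of $B_i$ i.i.d.\ unbiased samples of $\nabla f_i(x^k)$, we get $\ExpCond{\frac{1}{B_i}g^k_i}{x^k, B_i} = \nabla f_i(x^k)$, hence $\ExpCond{g^k}{x^k, \{B_i\}} = \frac{1}{n}\sum_{i=1}^n \nabla f_i(x^k) = \nabla f(x^k)$. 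Here one must be slightly careful that the $B_i$ are themselves random (determined by the race among workers), but conditioning on the $B_i$ and using that the samples are drawn fresh and independently resolves this.

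Next I would bound the conditional variance. By independence across workers and across samples within a worker, $\ExpCond{\sqnorm{g^k - \nabla f(x^k)}}{x^k, \{B_i\}} = \frac{1}{n^2}\sum_{i=1}^n \frac{1}{B_i^2}\cdot B_i \sigma^2 = \frac{\sigma^2}{n^2}\sum_{i=1}^n \frac{1}{B_i} = \frac{\sigma^2}{n}\left(\frac{1}{n}\sum_{i=1}^n\frac{1}{B_i}\right)$. The stopping rule of the inner \textbf{while} loop guarantees $\left(\frac{1}{n}\sum_{i=1}^n\frac{1}{B_i}\right)^{-1} \geq \frac{S}{n}$ the moment the loop exits, i.e.\ $\frac{1}{n}\sum_{i=1}^n\frac{1}{B_i} \leq \frac{n}{S}\cdot\frac{1}{n} = \frac{1}{S}$... more precisely $\frac{1}{n}\sum \frac{1}{B_i}\le \frac{1}{S}$ up to the overshoot from the last received gradient, which only decreases the sum further, so the clean bound $\ExpCond{\sqnorm{g^k-\nabla f(x^k)}}{x^k}\le \frac{\sigma^2}{S}$ holds. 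Plugging $S = \max\{\lceil\sigma^2/\varepsilon\rceil, n\}$ gives variance at most $\varepsilon$.

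With unbiasedness and the variance bound $\sigma^2/S$ in hand, the rest is the textbook $L$-smooth nonconvex SGD computation: from $f(x^{k+1}) \le f(x^k) - \gamma\inp{\nabla f(x^k)}{g^k} + \frac{L\gamma^2}{2}\sqnorm{g^k}$, take expectations, use $\gamma \le 1/L$ to absorb the quadratic term and obtain $\ExpCond{f(x^{k+1})}{x^k} \le f(x^k) - \frac{\gamma}{2}\sqnorm{\nabla f(x^k)} + \frac{L\gamma^2}{2}\cdot\frac{\sigma^2}{S}$. Telescoping over $k = 0,\dots,K-1$, dividing by $K$, and using $f(x^0)-f^* \le \Delta$ yields $\frac{1}{K}\sum_k \Exp{\sqnorm{\nabla f(x^k)}} \le \frac{2\Delta}{\gamma K} + L\gamma\frac{\sigma^2}{S}$. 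Finally I substitute $\gamma = \min\{1/L, \varepsilon S/(2L\sigma^2)\}$, which makes the second term $\le \varepsilon/2$, and check that $K \ge 24\Delta L/\varepsilon$ makes the first term $\le \varepsilon/2$ in both cases $\gamma = 1/L$ and $\gamma = \varepsilon S/(2L\sigma^2)$ (in the latter one uses $S/\sigma^2 \ge 1/\varepsilon$ to bound $1/\gamma$). The only genuinely delicate point, and the one I would treat most carefully, is the interaction between the randomness of the stopping counts $B_i$ and the martingale/tower-property argument: one wants the per-step variance bound to hold conditionally on $x^k$ alone, so I would argue that regardless of which (random) values $B_i$ take, the conditional variance given $(x^k, \{B_i\})$ is at most $\sigma^2/S$, and then average out the $B_i$.
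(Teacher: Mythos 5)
Your proposal is correct and follows essentially the same route as the paper: rewrite the update as $x^{k+1}=x^k-\gamma\,\frac{1}{n}\sum_{i=1}^n\frac{1}{B_i}\sum_{j=1}^{B_i}\widehat{\nabla} f_i(x^k;\xi_{i,j})$, use independence to get the variance bound $\frac{\sigma^2}{n}\bigl(\frac{1}{n}\sum_{i=1}^n\frac{1}{B_i}\bigr)\le\frac{\sigma^2}{S}$ from the while-loop exit condition $\bigl(\frac{1}{n}\sum_{i=1}^n\frac{1}{B_i}\bigr)^{-1}\ge\frac{S}{n}$, and then invoke the classical nonconvex SGD descent argument with the stated $\gamma$ and $S$. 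One tiny transcription slip: the exit condition gives $\frac{1}{n}\sum_{i=1}^n\frac{1}{B_i}\le\frac{n}{S}$ (not $\frac{1}{S}$), which combined with the prefactor $\frac{\sigma^2}{n}$ yields exactly the $\frac{\sigma^2}{S}$ you claim, so the conclusion is unaffected.
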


\begin{restatable}{theorem}{THEOREMSGDHETROGTIME}
    \label{theorem:heterog_time}
        Let us consider Theorem~\ref{thm:sgd_heterog}. We assume that $i$\textsuperscript{th} worker returns a stochastic gradient every $\tau_i$ seconds for all $i \in [n]$. Without loss of generality, we assume that $0 < \tau_1 \leq \cdots \leq \tau_n.$ Then after
        \begin{align}
            \label{eq:compl_heterog}
            \squeeze 96\left(\tau_{n}\frac{L \Delta}{\varepsilon} + \left(\frac{1}{n} \sum_{i=1}^n \tau_i\right) \frac{\sigma^2 L \Delta}{n \varepsilon^2}\right)
        \end{align}
        seconds, Method~\ref{alg:alg_server_heterog} guarantees to find an $\varepsilon$-stationary point.
\end{restatable}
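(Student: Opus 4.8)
\textbf{Proof plan for Theorem~\ref{theorem:heterog_time}.}

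The plan is to bound the wall-clock time consumed by a single iteration of the inner \textbf{while} loop of Method~\ref{alg:alg_server_heterog}, and then multiply by the iteration count $K = \Theta(L\Delta/\varepsilon)$ from Theorem~\ref{thm:sgd_heterog}. Fix an iteration $k$ and suppose the loop runs for $T$ seconds (counting from when the server first broadcasts $x^k$). After a brief transient, each worker $i$ delivers a fresh gradient at the point $x^k$ roughly every $\tau_i$ seconds, so over a window of length $T$ worker $i$ contributes $B_i \approx \lfloor T/\tau_i\rfloor$ accepted gradients (I would be careful about the one-gradient ``flush'' of a stale iterate at the start, which only costs an additive $\tau_n$ per iteration and is absorbed by constants). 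The loop terminates as soon as the harmonic-type quantity $(\frac1n\sum_{i=1}^n \frac1{B_i})^{-1}$ reaches $S/n$, i.e. as soon as $\sum_{i=1}^n \frac1{B_i} \le \frac{n^2}{S}$. So the first step is: express the stopping time $T^*$ as (essentially) the smallest $T$ with $\sum_{i=1}^n \frac{\tau_i}{T} \lesssim \frac{n^2}{S}$, which rearranges to $T^* \asymp \frac{S}{n^2}\sum_{i=1}^n \tau_i + \tau_n$, where the additive $\tau_n$ accounts both for the startup flush and for the fact that $B_i$ must be at least $1$ for every $i$ (the slowest worker needs one full $\tau_n$ to register any gradient at all, since $(\frac1n\sum 1/B_i)^{-1}=0$ whenever some $B_i=0$).

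Next I would plug in the batch parameter $S = \max\{\lceil\sigma^2/\varepsilon\rceil, n\}$ from Theorem~\ref{thm:sgd_heterog}. This gives per-iteration time $\operatorname{O}\!\left(\tau_n + \left(1+\frac{\sigma^2}{n\varepsilon}\right)\frac{1}{n}\sum_{i=1}^n \tau_i\right)$. Using $\frac1n\sum_i\tau_i \le \tau_n$, the ``$1$'' inside the parenthesis is dominated by the explicit $\tau_n$ term, so the per-iteration time is $\operatorname{O}\!\left(\tau_n + \frac{\sigma^2}{n\varepsilon}\cdot\frac1n\sum_{i=1}^n\tau_i\right)$. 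Multiplying by $K = \Theta(L\Delta/\varepsilon)$ yields
\begin{align*}
\operatorname{O}\!\left(\tau_n \frac{L\Delta}{\varepsilon} + \left(\frac1n\sum_{i=1}^n\tau_i\right)\frac{\sigma^2 L\Delta}{n\varepsilon^2}\right),
\end{align*}
which is exactly \eqref{eq:compl_heterog}; then it only remains to track the numerical constant (here $96$) through Theorem~\ref{thm:sgd_heterog}'s $24\Delta L/\varepsilon$ and the factor-of-a-few slack in the stopping-time estimate.

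The technical heart — and the step most likely to need care — is the rigorous lower and upper bound on $B_i$ as a function of elapsed time, because the server does not poll workers on a fixed schedule: a worker that finishes while the server is busy sending to another worker waits in a queue, and at the moment $x^{k+1}$ is computed all in-flight gradients are at the now-stale point $x^k$ and get discarded at the next iteration. I would handle this by a clean amortization argument: charge each worker's ``wasted'' partial computation at the boundary between iterations $k$ and $k+1$ to the single additive $\tau_n$ term, and argue that within an iteration of duration $T$ the number of completed gradients from worker $i$ is at least $\lfloor T/\tau_i\rfloor - 1$ and at most $\lceil T/\tau_i\rceil$, independently of the server's internal ordering (the server's own message-passing time is assumed negligible, consistent with the oracle model of Section~\ref{sec:time_multiple_oracle_protocol}). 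Monotonicity of $(\frac1n\sum 1/B_i)^{-1}$ in each $B_i$ then converts these pointwise bounds on $B_i$ into two-sided bounds on $T^*$, and the rest is bookkeeping. A secondary subtlety is ensuring the stopping condition is actually reachable in finite time — i.e. that $(\frac1n\sum_i 1/B_i)^{-1}$ is eventually $\ge S/n$ — which follows since every $B_i \to \infty$, so the harmonic mean of the $B_i$ grows without bound.
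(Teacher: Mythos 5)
Your proposal is correct and follows essentially the same route as the paper: the paper also bounds the per-iteration batch-collection time by exhibiting a feasible pair $\bigl(t', (B_i)\bigr)$ with $t' \geq \max_i \tau_i(1+B_i)$ (the $+1$ being exactly your stale-gradient ``flush'') and $\bigl(\frac1n\sum_i 1/B_i\bigr)^{-1} \geq S/n$, choosing $t' = 2\bigl(\tau_n + \bigl(\frac1n\sum_i\tau_i\bigr)\frac{S}{n}\bigr)$ and $B_i = t'/\tau_i - 1$, handling integrality by rounding up and doubling, and then multiplying by $K = 24\Delta L/\varepsilon$.
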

Comparing Theorem~\ref{theorem:lower_bound_heterog} and Theorem~\ref{theorem:heterog_time}, one can see that the complexity \eqref{eq:compl_heterog} is optimal. Note that Theorem~\ref{thm:sgd_heterog} holds without assumptions that the delays $\tau_i$ are fixed.

\subsection{Discussion}
Unlike Asynchronous SGD and PIAG, Malenia SGD ignores all stochastic gradients that were calculated in the previous iterations, which appears to be counterproductive. Nevertheless, we show that Malenia SGD converges, and the time complexity is optimal with respect to all parameters. Note that Malenia SGD does not require the Lipschitz smoothness of the local functions $f_i,$ does not depend on the time delays $\tau_i,$ does not need any similarity assumptions about the functions $f_i,$ and can be applied to problems where the function is not Lipschitz (does not have bounded gradients). The analysis of the method is elementary and does not go far away from the theory of the classical SGD method. When the ratio $\nicefrac{\sigma^2}{\varepsilon}$ is large, Malenia SGD is better than Minibatch SGD (see \eqref{eq:mini_batch_compl}) by $\Theta\left(\tau_n / \left(\frac{1}{n} \sum_{i=1}^n \tau_i\right)\right)$ times.

\section{Convex Case}
\label{sec:convex_case}

\subsection{Lower Bound}

Let us consider the optimization problem \eqref{eq:main_task} in the case when the function $f$ is convex. For the convex case, using Protocol~\ref{alg:time_oracle_protocol}, we propose to use another complexity measure instead of \eqref{eq:lower_compl_time}:
\begin{align}\label{eq:lower_compl_time_convex}
    \begin{split}
    \squeeze
    &\mathfrak{m}_{\textnormal{time}}\left(\cA, \cF\right) \eqdef \inf_{A \in \cA} \sup_{f \in \cF} \sup_{(O, \mathcal{D}) \in \cO(f)} \inf\left\{t \geq 0\,\middle|\,\Exp{ \inf_{k \in S_t}f(x^k)} - \inf_{x \in Q} f(x) \leq \varepsilon \right\}, 
    \\
    &S_t \eqdef \left\{k \in \N_0 \middle| t^{k} \leq t\right\},
\end{split}
\end{align}
where the sequences $t^k$ and $x^k$ are generated by Protocol~\ref{alg:time_oracle_protocol}. Let us consider the following class of convex functions:
\begin{definition}[Function Class $\cF^{\textnormal{conv}}_{R, M, L}$]\ \\
    \leavevmode
    Let us define $B_2(0, R) \eqdef \left\{x \in \R^d\,|\,\norm{x} \leq R\right\}.$ We assume that a function $f \,:\,\R^d \rightarrow \R$ is convex, differentiable, $L$-smooth on the set $B_2(0, R)$, i.e.,
    \begin{align*}
        \norm{\nabla f(x) - \nabla f(y)} \leq L \norm{x - y} \quad \forall x, y \in B_2(0, R),
    \end{align*}
    and $M$-Lipschitz on the set $B_2(0, R)$, i.e.,
    \begin{align*}
        |f(x) - f(y)| \leq M \norm{x - y} \quad \forall x, y \in B_2(0, R).
    \end{align*}
    A set of all functions with such properties we define as $\cF^{\textnormal{conv}}_{R, M, L}.$
    \label{def:func_class_convex}
\end{definition}
For the convex case, we analyze the following class of algorithms:

\begin{table*}
    \convextablecaption
    \label{table:complexities_convex}
    \centering 
    \scriptsize  
    \convextable
 \end{table*}

\begin{definition}[Algorithm Class $\cA^{R}_{\textnormal{zr}}$]\ \\
    Let us consider Protocol~\ref{alg:time_multiple_oracle_protocol}. We say that an algorithm $A$ from Definition~\ref{def:time_algorithm} belongs to a class $\cA^{R}_{\textnormal{zr}}$ iff $A \in \cA_{\textnormal{zr}}$ and $x^k \in B_2(0, R)$ for all $k \geq 0.$
\end{definition}

We also define an oracle class:
\begin{definition}[Oracle Class $\cO_{\tau_1, \dots, \tau_n}^{\textnormal{conv}, \sigma^2}$]\ \\
    Let us consider an oracle class such that, for any $f \in \cF^{\textnormal{conv}}_{R, M, L},$ it returns oracles $O_i = O_{\tau_i}^{\smallnablafunc}$ and distributions $\mathcal{D}_i$ for all $i \in [n],$ where $\widehat{\nabla} f$ is an unbiased $\sigma^2$-variance-bounded mapping on the set $B_2(0, R)$. The oracles $O_{\tau_i}^{\smallnablafunc}$ are defined in \eqref{eq:oracle_stochastic_delay}. We define such oracle class as $\cO_{\tau_1, \dots, \tau_n}^{\textnormal{conv}, \sigma^2}.$ Without loss of generality, we assume that $0 < \tau_1 \leq \dots \leq \tau_n.$ \label{def:oracle_class_convex}
\end{definition}

For this setup, we provide the lower bound for the class of convex functions in the next theorem.
\begin{restatable}{theorem}{THEOREMLOWERBOUNDCONVEX}
    \label{theorem:lower_bound_homog_convex}
    Let us consider the oracle class $\cO_{\tau_1, \dots, \tau_n}^{\textnormal{conv}, \sigma^2}$ for some $\sigma^2 >0$ and $0 < \tau_1 \leq \dots \leq \tau_n.$ We fix any $R, L, M, \varepsilon > 0$ such that $\sqrt{L} R > c_1 \sqrt{\varepsilon} > 0$ and $M^2 R^2 > c_2 \varepsilon^2.$ In the view Protocol~\ref{alg:time_multiple_oracle_protocol}, for any algorithm $A \in \cA^{R}_{\textnormal{zr}},$ there exists a function $f \in \cF^{\textnormal{conv}}_{R, M, L}$ and oracles and distributions $((O_1, \dots, O_n), (\mathcal{D}_1, \dots, \mathcal{D}_n)) \in \cO_{\tau_1, \dots, \tau_n}^{\textnormal{conv}, \sigma^2}(f)$
    such that
        $$\Exp{ \inf_{k \in S_t}f(x^k)} - \inf_{x \in B_2(0, R)} f(x) > \varepsilon,$$
    where $S_t \eqdef \left\{k \in \N_0 \middle| t^{k} \leq t\right\},$ and $$ t = c \times \min \limits_{m \in [n]} \left[\left(\frac{1}{m} \sum \limits_{i=1}^{m} \frac{1}{\tau_i}\right)^{-1} \left(\min\left\{\frac{\sqrt{L} R}{\sqrt{\varepsilon}}, \frac{M^2 R^2}{\varepsilon^2}\right\} + \frac{\sigma^2 R^2}{m \varepsilon^2}\right)\right] .$$ The quantities $c_1,$ $c_2$ and $c$ are {\markchanges universal} constants.
\end{restatable}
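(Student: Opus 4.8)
## Proof Proposal for Theorem~\ref{theorem:lower_bound_homog_convex}

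The plan is to combine a time-budget counting argument (analogous to the nonconvex lower bound in Theorem~\ref{theorem:lower_bound_homog}) with the classical "hard instance" constructions for convex stochastic optimization. The key observation is that within a time budget $t$, if an algorithm chooses to parallelize effectively over some number $m$ of the fastest workers, the total number of oracle calls it can complete is $\operatorname{O}\!\left(t \cdot \frac{1}{m}\sum_{i=1}^m \frac{1}{\tau_i}\right)$; any attempt to involve a slower worker $j > m$ either wastes that worker's time or does not help. So the proof reduces to a lower bound on the \emph{number} of stochastic gradient queries needed, which must hold uniformly over all $m$, and then taking the worst case (the minimum over $m$, since the algorithm gets to pick the best $m$).

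\textbf{Step 1: Reduce to an oracle-call lower bound.}
First I would formalize that, for any fixed time budget $t$ and any choice of algorithm $A \in \cA_{\textnormal{zr}}^R$, the set $S_t$ of indices reachable by time $t$ corresponds to a sequence of queries in which the number of gradients returned by the fastest $m$ workers is at most a constant times $t \left(\sum_{i=1}^m \tau_i^{-1}\right)$ for each $m$. This is the same scheduling/counting lemma underlying Theorem~\ref{theorem:lower_bound_homog}; I would invoke or re-derive it here. The upshot: it suffices to show that there is a convex hard instance on which any zero-respecting algorithm needs at least $\Omega\!\left(m\min\{\sqrt{L}R/\sqrt\varepsilon, M^2R^2/\varepsilon^2\} + \sigma^2 R^2/\varepsilon^2\right)$ oracle calls when restricted to $m$ "active" workers, for \emph{every} $m \in [n]$ simultaneously — because the adversary commits to one function, and the algorithm afterwards picks the $m$ it likes best.

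\textbf{Step 2: Construct the hard function and noise.}
I would build $f$ as a sum of two (orthogonally supported) pieces. The "optimization" piece is a Nesterov-type chain function — either the smooth worst-case quadratic-spline chain giving the $\sqrt{L}R/\sqrt\varepsilon$ term, or a nonsmooth-but-$M$-Lipschitz piecewise-linear chain giving the $M^2R^2/\varepsilon^2$ term — with the number of "active" coordinates scaled so that progressing requires that many sequential rounds of queries; the $\min\{\cdot,\cdot\}$ arises because the adversary picks whichever chain is harder for the given parameters (standard; cf. \citet{woodworth2018graph,nesterov2018lectures}). Crucially, in Protocol~\ref{alg:time_multiple_oracle_protocol} the chain is sequential: the $(j{+}1)$-th coordinate of the gradient only becomes nonzero after the $j$-th has been discovered, so $m$ parallel workers give at most a constant-factor speedup per round, hence the $\left(\frac1m\sum \tau_i^{-1}\right)^{-1}$ factor multiplying the first term. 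The "statistical" piece is a low-dimensional convex function (e.g. a scaled linear or squared term) equipped with a stochastic gradient oracle whose noise has variance $\sigma^2$ in a direction orthogonal to the current iterate's progress; a standard information-theoretic / variance argument (Le Cam or a direct second-moment bound, as in \citet{arjevani2022lower}) shows that estimating this piece to accuracy $\varepsilon$ requires $\Omega(\sigma^2 R^2/\varepsilon^2)$ samples \emph{in total across all workers}, which is why this second term is \emph{not} divided by $m$ inside the bracket but the whole bracket is multiplied by the harmonic-mean factor. I would set $\Delta$-type scale parameters so that $f \in \cF^{\textnormal{conv}}_{R,M,L}$ and so all iterates can be assumed in $B_2(0,R)$ (the class $\cA^R_{\textnormal{zr}}$ restriction).

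\textbf{Step 3: Assemble and optimize over $m$.}
Combining Steps 1--2: on this single instance, for the algorithm to certify $\Exp{\inf_{k\in S_t} f(x^k)} - \inf_{B_2(0,R)} f \le \varepsilon$, it must complete enough queries that, no matter which effective worker-count $m$ it commits to, the elapsed time is at least $c\left(\frac1m\sum_{i=1}^m\tau_i^{-1}\right)^{-1}\left(\min\{\sqrt{L}R/\sqrt\varepsilon,\,M^2R^2/\varepsilon^2\} + \sigma^2R^2/(m\varepsilon^2)\right)$; since the algorithm will choose the best $m$, the guaranteed lower bound is the minimum over $m\in[n]$ of this expression, which is exactly the claimed $t$. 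The conditions $\sqrt L R > c_1\sqrt\varepsilon$ and $M^2R^2 > c_2\varepsilon^2$ are just the nondegeneracy conditions ensuring the chain has at least one "round" to traverse and the construction is nontrivial.

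\textbf{Main obstacle.}
I expect the hard part to be the clean separation and simultaneous lower-bounding of the two terms within the \emph{parallel, time-indexed} protocol. In the classical single-oracle setting one simply adds a deterministic hard chain and an independent statistical subproblem; here one must verify that (a) a zero-respecting algorithm cannot "cheat" by using slow workers only for the statistical part while fast workers do the chain in a way that beats the $\min_m$ bound, and (b) the variance lower bound degrades correctly — $\Omega(\sigma^2R^2/\varepsilon^2)$ total samples — regardless of how the algorithm distributes those samples across the $n$ heterogeneous oracles and interleaves them in time. Handling the interaction between the adaptive time-schedule $\{t^k\}$, the adaptive oracle-index choices $\{i^k\}$, and the zero-respecting structure — so that the counting lemma of Step 1 genuinely applies on the constructed instance — is the technical crux; the rest is adapting known constructions.
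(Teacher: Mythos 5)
Your Steps 1 and 3 (the scheduling/counting reduction and the optimization over $m$) match the paper's strategy, but your Step 2 contains a genuine gap that would prevent the argument from reaching the claimed bound. You propose to build $f$ as a sum of two orthogonally supported pieces --- a deterministic chain for the $\min\{\sqrt{L}R/\sqrt{\varepsilon},\,M^2R^2/\varepsilon^2\}$ term and an independent low-dimensional statistical subproblem requiring $\Omega(\sigma^2R^2/\varepsilon^2)$ samples in total. This is exactly the graph-oracle decomposition of \citet{woodworth2018graph}, and the paper shows (Section~\ref{sec:graph_oracle} and Section~\ref{sec:convex_dis}) that it provably yields only the \emph{weaker} bound $\tau_1\min\{\cdot,\cdot\} + \bigl(\frac1n\sum_i\tau_i^{-1}\bigr)^{-1}\sigma^2R^2/(n\varepsilon^2)$. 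The reason is precisely the "cheat" you flag as obstacle~(a) but do not resolve: when the two pieces are independent, a zero-respecting algorithm can run the sequential chain on the fastest worker while all $n$ workers simultaneously accumulate samples for the statistical piece, so the total time is the \emph{maximum} of the two separately optimized costs rather than the coupled $\min_m$ of their sum. For $\tau_i=\sqrt{i}$ the gap between the two bounds is polynomial in $n$, so no amount of care in executing your construction can close it.

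The idea you are missing is that the statistical hardness must be attached to the \emph{same} chain rather than to a separate piece. The paper uses a single function $f_{l,\eta}$ (the Moreau-envelope max-of-linears construction, Lemma~\ref{lemma:worst_function_convex}) of chain length $T=\Theta(\min\{\sqrt{L}R/\sqrt{\varepsilon},\,M^2R^2/\varepsilon^2\})$, together with the same Bernoulli coordinate-masking oracle as in the nonconvex proof: the next coordinate of the gradient is revealed only with probability $p=\Theta(l^2/\sigma^2)=\Theta(\varepsilon^2 T/\sigma^2)$ per query. Advancing one link then costs, in wall-clock time with $m$ workers, about $\bigl(\sum_{i=1}^m\tau_i^{-1}\bigr)^{-1}(1/p+m)$ (Lemmas~\ref{lemma:boundtime} and \ref{lemma:tau}), and summing over the $T$ links produces the coupled expression $\bigl(\frac1m\sum_{i=1}^m\tau_i^{-1}\bigr)^{-1}\bigl(T+\sigma^2R^2/(m\varepsilon^2)\bigr)$ for every $m$ simultaneously, which is what forces the $\min_m$ of the \emph{sum}. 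Your Le Cam-style total-sample argument for an independent piece cannot produce this coupling.
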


\subsubsection{Discussion}
\label{sec:convex_dis}
We improve the lower bound obtained by \citep{woodworth2018graph} (see Table~\ref{table:complexities_convex}).  \citet{woodworth2018graph} try to reduce any optimization problem to an oracle graph. Then, they get a lower bound using the depth and the number of nodes in a graph. Our approach is different, as we directly estimate the required time and avoid the reduction to an oracle graph. 
One can think that our ``oracle graph'' is always linear in Protocol~\ref{alg:time_multiple_oracle_protocol}, but every node in an ``oracle graph'' is associated with a timestamp and an index. Unlike the oracle in \citep{woodworth2018graph}, which always returns a stochastic gradient, our oracle \eqref{eq:oracle_stochastic_delay} returns a stochastic gradient only if the conditions are satisfied.
Also, \citet{woodworth2018graph} construct different ``worst case'' functions and oracles for the ``optimization'' and ``statistical'' terms. While our construction
consists only of one function and one oracle. 

\subsection{Minimax optimal method}
\label{sec:minimax_optimal_homog_convex}

\subsubsection{Assumptions}

Additionally to some assumptions from Section~\ref{sec:assumption}, we use the following assumptions in the convex case.

\begin{assumption}
    \label{ass:convex}
    The function $f$ is convex and attains the minimum at some point $x^* \in \R^d.$
\end{assumption}

\begin{assumption}
    \label{ass:lipschitz_constant_function}
    The function $f$ is $M$--Lipschitz, i.e., $$\abs{f(x) - f(y)} \leq M \norm{x - y}, \quad \forall x, y \in \R^d.$$
\end{assumption}

\begin{assumption}
    \label{ass:stochastic_variance_bounded_convex}
    For all $x \in \R^d,$ stochastic gradients $\widehat{\nabla} f(x;\xi)$ are unbiased and have $\sigma^2$-variance-bounded, i.e.,
    $\ExpSub{\xi \sim \mathcal{D}}{\widehat{\nabla} f(x;\xi)} \in \partial f(x)$ and 
    $\ExpSub{\xi \sim \mathcal{D}}{\norm{\widehat{\nabla} f(x;\xi) - \Exp{\widehat{\nabla} f(x;\xi)}}^2} \leq \sigma^2,$ where $\sigma^2 \geq 0.$
\end{assumption} 

\subsubsection{Analysis of Rennala SGD and Accelerated Rennala SGD in convex case}
\label{sec:analysis_homog_convex}

\begin{restatable}{theorem}{THEOREMSGDHOMOGCONVEX}
    \label{thm:sgd_homog_convex}
    Assume that Assumptions~\ref{ass:convex}, \ref{ass:lipschitz_constant_function} and \ref{ass:stochastic_variance_bounded_convex} hold. Let us take the batch size $S = \max\left\{\left\lceil\nicefrac{\sigma^2}{M^2}\right\rceil, 1\right\},$ and
    $\gamma = \frac{\varepsilon}{M^2 + \sigma^2 / S} = \Theta(\nicefrac{\varepsilon}{M^2})$ in Method~\ref{alg:alg_server},
    then after $K \geq \nicefrac{2 M^2 R^2}{\varepsilon^2}$
    iterations the method guarantees that $\Exp{f(\widehat{x}^K)} - f(x^*) \leq \varepsilon,$ where $\widehat{x}^K = \frac{1}{K} \sum_{k=0}^{K-1} x^k$ and $R = \norm{x^* - x^{0}}.$
\end{restatable}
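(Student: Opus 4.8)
The plan is to reduce Rennala SGD (Method~\ref{alg:alg_server}) to plain mini-batch SGD and then run the textbook averaged-subgradient analysis for nonsmooth Lipschitz convex functions. The first step is the structural observation already made after Method~\ref{alg:alg_server} (and used in the proof of Theorem~\ref{thm:sgd_homog}): in iteration $k$ the inner loop accepts exactly $S$ stochastic gradients whose iteration index equals $k$, and each of them was produced by some worker as $\widehat{\nabla}f(x^k;\xi)$ with a \emph{freshly} drawn $\xi\sim\mathcal{D}$ (Method~\ref{alg:alg_worker}). Since $x^k$ is a measurable function of $g^0,\dots,g^{k-1}$, hence of samples drawn strictly before the $k$-th loop begins, conditionally on $x^k$ the $S$ samples entering $g^k$ are i.i.d. copies of $\mathcal{D}$ and independent of the past. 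Therefore $g^k=\tfrac1S\sum_{i=1}^S\widehat{\nabla}f(x^k;\xi^k_i)$ and, by Assumption~\ref{ass:stochastic_variance_bounded_convex}, its conditional mean $\nabla f(x^k):=\mathbb{E}[g^k\mid x^k]$ lies in $\partial f(x^k)$ with $\mathbb{E}[\|g^k-\nabla f(x^k)\|^2\mid x^k]\le\sigma^2/S$; since Assumption~\ref{ass:lipschitz_constant_function} forces every subgradient to have norm at most $M$, this yields the second-moment bound $\mathbb{E}[\|g^k\|^2\mid x^k]\le M^2+\sigma^2/S$.

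Next I would expand the standard one-step recursion. From $x^{k+1}=x^k-\gamma g^k$,
\[
\|x^{k+1}-x^*\|^2=\|x^k-x^*\|^2-2\gamma\langle g^k,x^k-x^*\rangle+\gamma^2\|g^k\|^2 .
\]
Taking conditional expectation, using convexity $\langle\nabla f(x^k),x^k-x^*\rangle\ge f(x^k)-f(x^*)$ with the subgradient $\nabla f(x^k)=\mathbb{E}[g^k\mid x^k]$, and the second-moment bound above, gives
\[
\mathbb{E}\big[\|x^{k+1}-x^*\|^2\mid x^k\big]\le\|x^k-x^*\|^2-2\gamma\big(f(x^k)-f(x^*)\big)+\gamma^2\Big(M^2+\tfrac{\sigma^2}{S}\Big).
\]
Taking total expectation, telescoping over $k=0,\dots,K-1$, dropping the nonnegative terminal term $\mathbb{E}\|x^K-x^*\|^2$, dividing by $2\gamma K$, and applying Jensen's inequality to $\widehat{x}^K=\tfrac1K\sum_{k=0}^{K-1}x^k$ gives
\[
\mathbb{E}[f(\widehat{x}^K)]-f(x^*)\le\frac1K\sum_{k=0}^{K-1}\mathbb{E}[f(x^k)-f(x^*)]\le\frac{R^2}{2\gamma K}+\frac{\gamma}{2}\Big(M^2+\frac{\sigma^2}{S}\Big),
\]
with $R=\|x^*-x^0\|$.

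Finally I would plug in the prescribed parameters. With $\gamma=\varepsilon/(M^2+\sigma^2/S)$ the second term equals exactly $\varepsilon/2$, and the first term $R^2(M^2+\sigma^2/S)/(2\varepsilon K)$ is at most $\varepsilon/2$ as soon as $K\ge R^2(M^2+\sigma^2/S)/\varepsilon^2$. It then remains to observe that $S=\max\{\lceil\sigma^2/M^2\rceil,1\}$ makes $M^2+\sigma^2/S\le 2M^2$: if $\sigma^2\ge M^2$ then $S\ge\sigma^2/M^2$, so $\sigma^2/S\le M^2$; if $\sigma^2<M^2$ then $S=1$ and $\sigma^2/S<M^2$. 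Hence $K\ge 2M^2R^2/\varepsilon^2$ suffices, and the same inequalities give $\gamma\in[\varepsilon/(2M^2),\varepsilon/M^2]$, i.e. $\gamma=\Theta(\varepsilon/M^2)$, completing the proof.

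The only genuinely delicate point is the conditional-independence claim in the first paragraph: one must argue that which workers' gradients get accepted into $g^k$ (and in what order) does not correlate with the sample \emph{values} $\xi^k_i$. This is immediate here because a worker draws a new sample only after receiving $x^k$, and the accept/reject rule inspects the iteration index $k'$ alone, never $\xi$; but it is the place where a careful write-up must set up the filtration generated by the samples in the order they are returned to the server and verify measurability. Everything after that is the classical $\operatorname{O}(M^2R^2/\varepsilon^2)$ averaged-subgradient calculation, identical in spirit to the proof of Theorem~\ref{thm:sgd_homog}.
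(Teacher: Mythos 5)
Your proposal is correct and follows essentially the same route as the paper: Rennala SGD is identified with mini-batch SGD of batch size $S$, the classical averaged-subgradient recursion gives $\Exp{f(\widehat{x}^K)}-f(x^*)\le \frac{R^2}{2\gamma K}+\frac{\gamma}{2}\left(M^2+\frac{\sigma^2}{S}\right)$, and the stated $\gamma$, $S$, $K$ close the bound (the paper packages this recursion as its Theorem~\ref{theorem:sgd_convex} rather than inlining it, and leaves the conditional-independence point you flag implicit).
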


\begin{restatable}{theorem}{THEOREMSGDHOMOGCONVEXTIME}
\label{theorem:homog_time_convex}
    Let us consider Theorem~\ref{thm:sgd_homog_convex}. We assume that $i$\textsuperscript{th} worker returns a stochastic gradient every $\tau_i$ seconds for all $i \in [n]$. Without loss of generality, we assume that $0 < \tau_1 \leq \cdots \leq \tau_n.$ Then after
    \begin{align}
        8 \min_{m \in [n]} \left[\left(\frac{1}{m}\sum_{i=1}^m \frac{1}{\tau_i}\right)^{-1}\left(\frac{M^2 R^2}{\varepsilon^2} + \frac{\sigma^2 R^2}{m \varepsilon^2}\right)\right]
    \end{align}
    seconds Method~\ref{alg:alg_server} guarantees to find an $\varepsilon$-solution.
\end{restatable}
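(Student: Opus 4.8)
The plan is to convert the iteration bound from Theorem~\ref{thm:sgd_homog_convex} into a time bound by bounding the time required for Method~\ref{alg:alg_server} to complete one outer iteration, and then optimizing over the set of workers effectively used. The key observation, exactly as in the nonconvex case (Theorem~\ref{theorem:homog_time}), is that in each outer iteration the server collects $S$ stochastic gradients computed at the current iterate $x^k$, accepting contributions from \emph{any} subset of workers. If we restrict attention to any fixed subset of the $m$ fastest workers (the server does this automatically by ignoring stale gradients), then in $T$ seconds the $i$\textsuperscript{th} of these workers produces roughly $\lfloor T/\tau_i\rfloor$ gradients, so the number of gradients collected in $T$ seconds is at least $\sum_{i=1}^m (T/\tau_i - 1) = T\sum_{i=1}^m 1/\tau_i - m$. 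Hence collecting $S$ gradients takes at most $\left(\sum_{i=1}^m \tfrac{1}{\tau_i}\right)^{-1}(S + m)$ seconds, i.e.\ $\operatorname{O}\!\left(\left(\tfrac1m\sum_{i=1}^m \tfrac1{\tau_i}\right)^{-1}\left(1 + \tfrac{S}{m}\right)\right)$ seconds per outer iteration.

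First I would make the per-iteration time bound rigorous: fix $m\in[n]$, argue that restricting the server's accepted gradients to the first $m$ workers only decreases the rate at which the batch of size $S$ fills up, handle the ceiling/floor discretization with a crude constant (each worker is ``warming up'' for at most one $\tau_i$ window, contributing the additive $m$), and conclude that one outer iteration costs at most $C\left(\tfrac1m\sum_{i=1}^m\tfrac1{\tau_i}\right)^{-1}\left(1+\tfrac{S}{m}\right)$ seconds. Then I would multiply by the iteration count $K = \lceil 2M^2R^2/\varepsilon^2\rceil$ from Theorem~\ref{thm:sgd_homog_convex} and substitute $S = \max\{\lceil \sigma^2/M^2\rceil, 1\}$, so $S/m \le (\sigma^2/M^2 + 1)/m$. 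Working through the algebra: $K(1+S/m) = \operatorname{O}\!\left(\tfrac{M^2R^2}{\varepsilon^2} + \tfrac{\sigma^2 R^2}{m\varepsilon^2}\right)$ after absorbing lower-order terms (using $\varepsilon \le$ suitable constants so that the $\tfrac{M^2R^2}{\varepsilon^2}$ term dominates the ``$+1$'' and ``$+m$'' slack), which gives total time $\operatorname{O}\!\left(\left(\tfrac1m\sum_{i=1}^m\tfrac1{\tau_i}\right)^{-1}\left(\tfrac{M^2R^2}{\varepsilon^2} + \tfrac{\sigma^2R^2}{m\varepsilon^2}\right)\right)$ for \emph{every} $m\in[n]$. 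Since this holds for all $m$, and the method does not need to know $\tau_i$ to realize the best one, the time to reach an $\varepsilon$-solution is at most the minimum over $m\in[n]$, matching the claimed bound with an explicit constant (the paper states $8$).

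The main obstacle is the bookkeeping around the discretization and the ``warm-up'' of workers across \emph{consecutive} outer iterations: when a new outer iteration $k+1$ begins, gradients in flight from workers were started for iterate $x^k$ and get discarded, so one must be careful that the additive $-m$ loss is incurred only once per iteration and does not compound into an extra multiplicative factor. I expect this to be handled exactly as in the proof of Theorem~\ref{theorem:homog_time} (referenced as living in Section~\ref{sec:proof_of_thm:homog_time}); the convex case differs only in plugging in the different $K$ and $S$ and in the final constant. A secondary, purely cosmetic obstacle is choosing the constant $8$: one needs $\varepsilon$, $L$, $M$, $R$ in the regime where the stated conditions $\sqrt{L}R > c_1\sqrt\varepsilon$, $M^2R^2 > c_2\varepsilon^2$ make the dominant term genuinely dominant, so that the additive slack terms ($+m$ per iteration, $+1$ in $S$) are absorbed without inflating the leading constant beyond $8$.
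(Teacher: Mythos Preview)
Your proposal is correct and follows essentially the same approach as the paper: the paper's proof literally states ``The proof is the same as in Theorem~\ref{theorem:homog_time}. It is only required to estimate the time that is required to collect a batch of size $S$,'' then multiplies the per-iteration batch-collection time $2t'(j^*) = 2\min_{j}\big[(\sum_{i=1}^j 1/\tau_i)^{-1}(S+j)\big]$ by the convex iteration count $K = 2M^2R^2/\varepsilon^2$ and substitutes $S = \max\{\lceil \sigma^2/M^2\rceil,1\}$ to obtain the constant $8$. Your identification of the warm-up/discretization bookkeeping as the only nontrivial step, and your deferral to the argument of Theorem~\ref{theorem:homog_time} for it, matches the paper exactly.
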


Let us provide the theorems in the \emph{smooth} convex case. We consider the accelerated version of Rennala SGD. In particular, we assume that instead of Line~\ref{alg:step} in Method~\ref{alg:alg_server}, we have
\begin{equation}
\begin{aligned}
    &\gamma_{k+1} = \gamma (k + 1), \quad \alpha_{k+1} = 2 / (k+2) \\
    &y^{k+1} = (1 - \alpha_{k+1}) x^k + \alpha_{k+1} u^k, \qquad (u^0 = x^0)\\
    &u^{k+1} = u^{k} - \gamma_{k+1} g_k, \\
    &x^{k+1} = (1 - \alpha_{k+1}) x^k + \alpha_{k+1} u^{k+1}.
\end{aligned}
\end{equation}
We refer to such method as Accelerated Method~\ref{alg:alg_server} or Accelerated Rennala SGD. The acceleration technique is based on \citep{lan2020first}.

\begin{restatable}{theorem}{THEOREMSGDHOMOGCONVEXACC}
    \label{thm:sgd_homog_convex_acc}
    Assume that Assumptions~\ref{ass:convex}, \ref{ass:lipschitz_constant} and \ref{ass:stochastic_variance_bounded} hold. Let us take the batch size $S = \max\left\{\left\lceil (\sigma^2 R) / (\varepsilon^{3/2} \sqrt{L})\right\rceil, 1\right\},$ and
    $\gamma = \min\left\{\frac{1}{4L}, \left[\frac{3 R^2 S}{4 \sigma^2 (K + 1)(K + 2)^2}\right]^{1/2}\right\}$ in Accelerated Method~\ref{alg:alg_server},
    then after $K \geq \frac{8 \sqrt{L} R}{\sqrt{\varepsilon}}$
    iterations the method guarantees that $\Exp{f(x^K)} - f(x^*) \leq \varepsilon,$ where $R \geq \norm{x^* - x^{0}}.$
\end{restatable}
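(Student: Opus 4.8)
The plan is to view Accelerated Method~\ref{alg:alg_server} (Accelerated Rennala SGD) as the accelerated stochastic approximation (AC-SA) scheme of \citep{lan2020first}, run with a \emph{minibatch} oracle: at iteration $k$ the direction $g_k$ is the average $\frac{1}{S}\sum_{i=1}^{S}\widehat{\nabla} f(y^{k+1};\xi^k_i)$ of $S$ i.i.d.\ stochastic gradients evaluated at the extrapolation point $y^{k+1}$. Let $\mathcal{F}_k$ denote the information available just before sampling $g_k$, so that $x^k$, $u^k$, $y^{k+1}$ are $\mathcal{F}_k$-measurable, and set $\delta_k \eqdef g_k - \nabla f(y^{k+1})$. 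By Assumption~\ref{ass:stochastic_variance_bounded} and independence of the $S$ samples, $\ExpCond{\delta_k}{\mathcal{F}_k}=0$ and $\ExpCond{\sqnorm{\delta_k}}{\mathcal{F}_k}\leq \sigma^2/S$; this is the only role of the batch size, namely to shrink the effective variance from $\sigma^2$ to $\sigma^2/S$.

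\emph{One-step recursion.} First I would combine three ingredients: $L$-smoothness at $y^{k+1}$ (Assumption~\ref{ass:lipschitz_constant}) together with the identity $x^{k+1}-y^{k+1}=\alpha_{k+1}(u^{k+1}-u^k)$; convexity of $f$ (Assumption~\ref{ass:convex}), which with $y^{k+1}=(1-\alpha_{k+1})x^k+\alpha_{k+1}u^k$ yields $f(y^{k+1})\leq(1-\alpha_{k+1})f(x^k)+\alpha_{k+1}f(x^*)-\alpha_{k+1}\inp{\nabla f(y^{k+1})}{x^*-u^k}$; and the Euclidean three-point identity for the prox step $u^{k+1}=u^k-\gamma_{k+1}g_k$. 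Substituting $g_k=\nabla f(y^{k+1})+\delta_k$ and collecting terms, the $\sqnorm{g_k}$ contribution appears with coefficient $\tfrac{\alpha_{k+1}\gamma_{k+1}}{2}\bigl(L\alpha_{k+1}\gamma_{k+1}-1\bigr)$; since $\alpha_{k+1}\gamma_{k+1}=\tfrac{2\gamma(k+1)}{k+2}\leq 2\gamma\leq\tfrac{1}{2L}$, this coefficient is $\leq -\tfrac{\alpha_{k+1}\gamma_{k+1}}{4}$, so a Young inequality absorbs the cross term $\alpha_{k+1}\gamma_{k+1}\inp{\delta_k}{g_k}$ against it. Splitting $\inp{\delta_k}{u^{k+1}-x^*}=\inp{\delta_k}{u^k-x^*}+\inp{\delta_k}{u^{k+1}-u^k}$, the first summand has zero $\mathcal{F}_k$-conditional expectation; taking $\ExpCond{\cdot}{\mathcal{F}_k}$ then leaves
\begin{equation*}
\ExpCond{f(x^{k+1})-f(x^*)}{\mathcal{F}_k}\leq(1-\alpha_{k+1})\bigl(f(x^k)-f(x^*)\bigr)+\tfrac{\alpha_{k+1}}{2\gamma_{k+1}}\bigl(\sqnorm{u^k-x^*}-\ExpCond{\sqnorm{u^{k+1}-x^*}}{\mathcal{F}_k}\bigr)+\tfrac{\alpha_{k+1}\gamma_{k+1}\sigma^2}{S}.
\end{equation*}
Multiplying by the weights $w_k\eqdef\gamma_{k+1}/\alpha_{k+1}=\tfrac{\gamma(k+1)(k+2)}{2}$ — which satisfy $w_k(1-\alpha_{k+1})=w_{k-1}$ with $w_{-1}=0$, and $\alpha_1=1$ so the $k=0$ right-hand term vanishes — summing over $k=0,\dots,K-1$, taking full expectation, and using $\norm{u^0-x^*}=\norm{x^0-x^*}\leq R$ together with $\sum_{k=0}^{K-1}(k+1)^2=\tfrac{K(K+1)(2K+1)}{6}$, I obtain
\begin{equation*}
\Exp{f(x^K)}-f(x^*)\leq\frac{R^2}{\gamma K(K+1)}+\frac{\gamma\sigma^2(2K+1)}{3S}\lesssim\frac{R^2}{\gamma K^2}+\frac{\gamma\sigma^2 K}{S}.
\end{equation*}

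\emph{Choosing the stepsize.} It then remains to substitute $\gamma=\min\bigl\{\tfrac{1}{4L},\,\bigl[\tfrac{3R^2S}{4\sigma^2(K+1)(K+2)^2}\bigr]^{1/2}\bigr\}$ and split the two cases. If the minimum is the second term, the two summands balance to $\operatorname{O}\!\bigl(\sigma R/\sqrt{SK}\bigr)$; since $S\geq\tfrac{\sigma^2 R}{\varepsilon^{3/2}\sqrt{L}}$ and $K\geq\tfrac{8\sqrt{L}R}{\sqrt{\varepsilon}}$ give $SK\geq\tfrac{8\sigma^2R^2}{\varepsilon^2}$, this is $\operatorname{O}(\varepsilon)$ once absolute constants are tracked. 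If the minimum is $\tfrac{1}{4L}$, then $\tfrac{R^2}{\gamma K^2}\leq\tfrac{4LR^2}{K^2}\leq\tfrac{\varepsilon}{16}$ by $K\geq8\sqrt{L}R/\sqrt{\varepsilon}$, and the fact that $\tfrac{1}{4L}$ is the active minimum forces $\tfrac{1}{16L^2}\leq\tfrac{3R^2S}{4\sigma^2(K+1)(K+2)^2}$, i.e.\ $\sigma^2K^3/S\lesssim L^2R^2$, hence the stochastic term $\tfrac{\gamma\sigma^2 K}{S}=\tfrac{\sigma^2K}{4LS}\lesssim\tfrac{LR^2}{K^2}\lesssim\varepsilon$. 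In both cases $\Exp{f(x^K)}-f(x^*)\leq\varepsilon$, which is exactly the claim (with $x^K$ the output, not an average).

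\emph{Main obstacle.} The argument is elementary, but three places need care: (a) choosing the potential so the weights telescope \emph{exactly} — the prescriptions $\alpha_{k+1}=2/(k+2)$ and $\gamma_{k+1}=\gamma(k+1)$ are precisely what make $w_k(1-\alpha_{k+1})=w_{k-1}$ an identity, with $w_{-1}=0$; (b) controlling the martingale cross term $\inp{\delta_k}{u^{k+1}-x^*}$ by peeling off its $\mathcal{F}_k$-measurable part and absorbing the remainder against the spare $-\tfrac{\alpha_{k+1}\gamma_{k+1}}{4}\sqnorm{g_k}$, which is where $\gamma\leq 1/(4L)$ enters; and (c) the two-case bookkeeping of the $\min$ in $\gamma$ so that the final constants end up consistent with the constant $8$ in $K\geq8\sqrt{L}R/\sqrt{\varepsilon}$ and with the prescribed batch size $S$.
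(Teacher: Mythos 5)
Your proposal is correct and follows the same route as the paper: the paper simply identifies Accelerated Rennala SGD as minibatch AC-SA and invokes Proposition 4.4 of \citet{lan2020first} to get $\Exp{f(x^K)}-f(x^*)\leq \frac{4LR^2}{K^2}+\frac{4\sigma R}{\sqrt{SK}}$, then performs exactly your final bookkeeping with the choices of $S$ and $K$. The only difference is that you reprove the cited recursion from scratch (one-step estimate, telescoping weights $\gamma_{k+1}/\alpha_{k+1}$, variance reduction $\sigma^2\to\sigma^2/S$), which is a correct, self-contained substitute for the black-box reference.
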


\begin{restatable}{theorem}{THEOREMSGDHOMOGCONVEXACCTIME}
\label{theorem:homog_time_convex_acc}
    Let us consider Theorem~\ref{thm:sgd_homog_convex_acc}. We assume that $i$\textsuperscript{th} worker returns a stochastic gradient every $\tau_i$ seconds for all $i \in [n]$. Without loss of generality, we assume that $0 < \tau_1 \leq \cdots \leq \tau_n.$ Then after
    \begin{align*}
        32 \min_{m \in [n]} \left[\left(\frac{1}{m}\sum_{i=1}^m \frac{1}{\tau_i}\right)^{-1}\left(\frac{\sqrt{L} R}{\sqrt{\varepsilon}} + \frac{\sigma^2 R^2}{m \varepsilon^{2}}\right)\right]
    \end{align*}
    seconds Accelerated Method~\ref{alg:alg_server} guarantees to find an $\varepsilon$-solution.
\end{restatable}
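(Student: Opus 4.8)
The plan is to combine the iteration complexity bound of Theorem~\ref{thm:sgd_homog_convex_acc} with a counting argument about how long each iteration of Accelerated Rennala SGD takes, exactly mirroring the structure that (presumably) appears in the proof of Theorem~\ref{theorem:homog_time}. First I would fix an arbitrary $m \in [n]$ and argue that we may as well assume the server only ever collects gradients from the $m$ fastest workers, $1,\dots,m$: since Method~\ref{alg:alg_server} accepts any gradient carrying the current iteration index $k$, restricting attention to these $m$ workers only makes the per-iteration time larger, so the resulting bound is an upper bound on the true running time, and we then minimize over $m$ at the end. With workers $1,\dots,m$ running Method~\ref{alg:alg_worker} continuously, in any time window of length $T$ worker $i$ produces at least $\lfloor T/\tau_i\rfloor \ge T/\tau_i - 1$ fresh stochastic gradients at the current point. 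Hence to accumulate the batch of size $S$ needed in iteration $k$, it suffices that $\sum_{i=1}^m (T/\tau_i - 1) \ge S$, i.e. the time for one iteration is at most
\begin{align*}
T_{\mathrm{iter}} \;\le\; \left(\sum_{i=1}^m \frac{1}{\tau_i}\right)^{-1}\bigl(S + m\bigr) \;=\; \left(\frac{1}{m}\sum_{i=1}^m \frac{1}{\tau_i}\right)^{-1}\frac{S+m}{m}.
\end{align*}
(One has to be slightly careful: gradients computed before the current iteration started are discarded, so the very first batch after a step may lose up to $m$ in-flight gradients; this only adds another $+m$ inside the parentheses and does not change the order.)

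Next I would plug in the parameter choices from Theorem~\ref{thm:sgd_homog_convex_acc}, namely $S = \max\{\lceil \sigma^2 R/(\varepsilon^{3/2}\sqrt L)\rceil, 1\}$ and $K \ge 8\sqrt L R/\sqrt\varepsilon$, and multiply: the total time is at most
\begin{align*}
K \cdot T_{\mathrm{iter}} \;\le\; \left(\frac{1}{m}\sum_{i=1}^m \frac{1}{\tau_i}\right)^{-1}\frac{K(S+m)}{m}\,.
\end{align*}
Using $S+m \le 2\max\{S,m\}$ and splitting into the two cases $S\le m$ and $S>m$, together with $S \le 1 + \sigma^2 R/(\varepsilon^{3/2}\sqrt L)$ and $K = \Theta(\sqrt L R/\sqrt\varepsilon)$, the product $K(S+m)/m$ is bounded by a constant times $\sqrt L R/\sqrt\varepsilon + (\sigma^2 R/(\varepsilon^{3/2}\sqrt L))\cdot(\sqrt L R/\sqrt\varepsilon)/m = \sqrt L R/\sqrt\varepsilon + \sigma^2 R^2/(m\varepsilon^2)$. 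This gives the claimed $32\,(\frac1m\sum_{i=1}^m \tau_i^{-1})^{-1}\bigl(\sqrt L R/\sqrt\varepsilon + \sigma^2 R^2/(m\varepsilon^2)\bigr)$ for this particular $m$; since the argument holds for every $m\in[n]$, taking the minimum over $m$ finishes the proof.

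The main obstacle — really the only nonroutine point — is making the per-iteration counting argument airtight in the presence of the discarded "stale" gradients: one must verify that the workers are never left idle between iterations (they always have a point to work on, since the server immediately sends $(x^k,k)$ back on line~10 of Method~\ref{alg:alg_server}), so that the steady-state rate $\sum_{i\le m}1/\tau_i$ is achieved up to an additive $O(m)$ slack per iteration, and then check that this additive slack, multiplied by $K$, is dominated by the first term $\sqrt L R/\sqrt\varepsilon$ times the harmonic-mean factor. Everything else — the case analysis on $S$ versus $m$ and the constant bookkeeping — is mechanical and identical in spirit to the nonconvex Theorem~\ref{theorem:homog_time}.
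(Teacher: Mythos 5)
Your proposal is correct and follows essentially the same route as the paper: the paper's proof simply reuses the batch-collection time bound $2t'(j^*)$ with $t'(j)=\bigl(\sum_{i=1}^{j}\nicefrac{1}{\tau_i}\bigr)^{-1}(S+j)$ established in the proof of Theorem~\ref{theorem:homog_time} (including the same ``$+1$ per worker'' accounting for an in-flight stale gradient and the factor $2$ for integer rounding), multiplies by $K = 8\sqrt{L}R/\sqrt{\varepsilon}$, and substitutes $S=\max\{\lceil\sigma^2R/(\varepsilon^{3/2}\sqrt{L})\rceil,1\}$ to reach the constant $32$. Your per-iteration counting argument and constant bookkeeping match the paper's.
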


\newpage

\section{Table of Notations}

\newcommand{\bslash}{\char`\\}
\begin{table}[h]
\centering
\begin{tabular}{cc}
\hline
Notation & Meaning\\
\hline
$g = \operatorname{O}(f)$ & Exist $C > 0$ such that $g(z) \leq C \times f(z)$ for all $z \in \mathcal{Z}$\\
$g = \Omega(f)$ & Exist $C' > 0$ such that $g(z) \geq C' \times f(z)$ for all $z \in \mathcal{Z}$\\
$g = \Theta(f)$ & $g = \operatorname{O}(f)$ and $g = \Omega(f)$ \\
$\{a, \dots, b\}$ & Set $\{i \in \mathbb{Z}\,|\, a \leq i \leq b\}$ \\
$[n]$ & $\{1, \dots, n\}$ \\
\hline
\end{tabular}
\end{table}

\section{Proofs for Homogeneous Regime}

\subsection{The ``worst case'' function}
\label{sec:worst_case}
In this section, we recall the ``worst case'' function that we use to prove our lower bounds. This is the standard function that is used in nonconvex optimization. Let us define
\begin{align*}
    \textnormal{prog}(x) \eqdef \max \{i \geq 0\,|\,x_i \neq 0\} \quad (x_0 \equiv 1).
\end{align*}

In our proofs, we use the construction from \citep{carmon2020lower,arjevani2022lower}. For any $T \in \N,$ the authors define
\begin{align}
    \label{eq:worst_case}
    F_T(x) \eqdef -\Psi(1) \Phi(x_1) + \sum_{i=2}^T \left[\Psi(-x_{i-1})\Phi(-x_i) - \Psi(x_{i-1})\Phi(x_i)\right],
\end{align}
where
\begin{align*}
    \Psi(x) = \begin{cases}
        0, & x \leq 1/2, \\
        \exp\left(1 - \frac{1}{(2x - 1)^2}\right), & x \geq 1/2,
    \end{cases}
    \quad\textnormal{and}\quad
    \Phi(x) = \sqrt{e} \int_{-\infty}^{x}e^{-\frac{1}{2}t^2}dt.
\end{align*}

The main property of the function $F_T(x)$ is that its gradients are large unless $\textnormal{prog}(x) \geq T.$

\begin{lemma}[\cite{carmon2020lower, arjevani2022lower}]
    \label{lemma:worst_function}
    The function $F_T$ satisfies:
    \begin{enumerate}
        \item $F_T(0) - \inf_{x \in \R^T} F_T(x) \leq \Delta^0 T,$ where $\Delta^0 = 12.$
        \item The function $F_T$ is $l_1$--smooth, where $l_1 = 152.$
        \item For all $x \in \R^T,$ $\norm{\nabla F_T(x)}_{\infty} \leq \gamma_{\infty},$ where $\gamma_{\infty} = 23.$
        \item For all $x \in \R^T,$ $\textnormal{prog}(\nabla F_T(x)) \leq \textnormal{prog}(x) + 1.$
        \item For all $x \in \R^T,$ if $\textnormal{prog}(x) < T,$ then $\norm{\nabla F_T(x)} > 1.$
    \end{enumerate}
\end{lemma}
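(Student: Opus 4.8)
The statement is the standard hard-instance lemma of \citet{carmon2020lower,arjevani2022lower}, so the plan is to verify the five items directly from the explicit formulas for $\Psi$ and $\Phi$. The only inputs I need are elementary one-variable facts, each checkable by hand: (a) $\Psi(t)=0$ and $\Psi'(t)=0$ for all $t\le 1/2$ — in particular $\Psi(0)=\Psi'(0)=0$; (b) $\Psi\ge 0$, $\Psi$ is nondecreasing and $C^\infty$, and $\Psi,\Psi',\Psi''$ are uniformly bounded (with $\Psi(1)=1$); (c) near $t=1/2^+$ the smallness of $\Psi(t)$ is compensated by largeness of $\Psi'(t)$, and $\Psi(t)\ge 1$ once $t\ge 1$; (d) $\Phi>0$ everywhere, $\Phi'(t)=\sqrt{e}\,e^{-t^2/2}>0$ everywhere, and $\Phi,\Phi',\Phi''$ are uniformly bounded, with $\Phi'(0)=\sqrt{e}$ and $\Phi'(t)\ge 1$ for $|t|\le 1$. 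Once these are recorded, every item is a finite estimate, and the only genuinely delicate one is item 5.

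\emph{Items 1--3 (size bounds).} A key structural observation is that the coordinate $x_j$ appears only in the two summands indexed $i=j$ and $i=j+1$, so each row of $\nabla F_T$ and of $\nabla^2 F_T$ has at most a bounded number of nonzero entries. For item 1, note $F_T(0)=-\Psi(1)\Phi(0)\le 0$ (all bracketed terms vanish at $x=0$ since $\Psi(0)=0$), while bounding $|F_T(x)|$ term by term with the uniform bounds on $\Psi$ and $\Phi$ gives $F_T(x)\ge -cT$; tracking the constants yields $\Delta^0=12$. For items 2 and 3, bound each nonzero entry of $\nabla F_T(x)$, resp.\ $\nabla^2F_T(x)$, by products of the uniform bounds on $\Psi,\Psi',\Psi'',\Phi,\Phi',\Phi''$; since there are only $O(1)$ of them per coordinate/row, one gets $\norm{\nabla F_T(x)}_\infty\le\gamma_\infty=23$ and $\norm{\nabla^2 F_T(x)}_{\mathrm{op}}\le l_1=152$.

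\emph{Item 4 (zero-chain).} I would write out $\partial F_T/\partial x_j$: it equals a term proportional to $\Psi(\pm x_{j-1})\,\Phi'(\pm x_j)$ (from the summand $i=j$) plus a term proportional to $\Psi'(\pm x_j)\,\Phi(\pm x_{j+1})$ (from $i=j+1$). If $x_{j-1}=0$ and $x_j=0$, both contributions vanish because $\Psi(0)=0$ and $\Psi'(0)=0$. Hence if $\textnormal{prog}(x)=p$, then for every $j\ge p+2$ we have $x_{j-1}=x_j=0$ and so $\partial_j F_T(x)=0$; that is precisely $\textnormal{prog}(\nabla F_T(x))\le p+1$.

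\emph{Item 5 (large gradient off the solution set), the main obstacle.} If $\textnormal{prog}(x)<T$ then $x_T=0$, so the set $\{i:|x_i|<1/2\}$ is nonempty; let $i^\ast$ be its smallest element. If $i^\ast=1$ then $\Psi'(\pm x_1)=0$, so $\partial_1 F_T=-\Psi(1)\Phi'(x_1)$ with $|x_1|<1/2$, giving $|\partial_1 F_T|=\Phi'(x_1)\ge\Phi'(1/2)>1$. If $i^\ast>1$ then $|x_{i^\ast-1}|\ge 1/2$ and $|x_{i^\ast}|<1/2$, whence again $\Psi'(\pm x_{i^\ast})=0$ and $\partial_{i^\ast}F_T=-\Psi(-x_{i^\ast-1})\Phi'(-x_{i^\ast})-\Psi(x_{i^\ast-1})\Phi'(x_{i^\ast})$; the signs do not cancel (every term is $\le 0$), and $\Phi'$ is evaluated at points of absolute value $<1/2$, so this is bounded below in magnitude by $\Phi'(1/2)\cdot\Psi(|x_{i^\ast-1}|)$. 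If $|x_{i^\ast-1}|$ is bounded away from $1/2$ (say $\ge t_0$ for a fixed $t_0>1/2$) this already gives $|\partial_{i^\ast}F_T|>1$; otherwise $|x_{i^\ast-1}|$ is close to $1/2$, and then one instead lower-bounds $|\partial_{i^\ast-1}F_T|$ using its term $-\Psi'(\pm x_{i^\ast-1})\Phi(\pm x_{i^\ast})$, since $\Psi'$ blows up near $1/2$ while $\Phi(\pm x_{i^\ast})\ge\Phi(-1/2)>0$. Combining the cases, $\norm{\nabla F_T(x)}\ge\norm{\nabla F_T(x)}_\infty>1$. The only real work is choosing $t_0$ so that the two lower bounds both exceed $1$ and keeping the $\pm$ cases straight; this quantitative balancing of $\Psi$ against $\Psi'$ near $t=1/2$ is the crux, and the detailed argument is in \citet{carmon2020lower,arjevani2022lower}.
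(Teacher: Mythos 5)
The paper does not prove this lemma at all: it is imported verbatim from \citet{carmon2020lower,arjevani2022lower}, so there is no in-paper proof to compare against. Judged on its own merits, your sketch is fine for items 1--4 (the bookkeeping for the constants $\Delta^0, l_1, \gamma_\infty$ and the zero-chain computation via $\Psi(0)=\Psi'(0)=0$ are exactly the standard argument), but your treatment of item 5 contains a genuine error.

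The problem is twofold. First, your premise that ``$\Psi'$ blows up near $1/2$'' is false: for $t=\tfrac12+\delta$ one has $\Psi'(t)=\Psi(t)\cdot\frac{4}{(2t-1)^{3}}=e^{1-1/(4\delta^2)}/(2\delta^{3})\to 0$ as $\delta\to 0^{+}$, since the essential singularity of the exponential kills the polynomial blow-up; indeed $\Psi$ is $C^\infty$ with \emph{all} derivatives vanishing at $1/2$, and item 2 of the lemma ($l_1$-smoothness) already forces $\Psi'$ to be bounded. Consequently your ``case 2'' patch --- lower-bounding $|\partial_{i^*-1}F_T|$ via its $\Psi'(\pm x_{i^*-1})\Phi(\pm x_{i^*})$ term when $|x_{i^*-1}|$ is close to $1/2$ --- cannot work: both $\Psi(|x_{i^*-1}|)$ and $\Psi'(|x_{i^*-1}|)$ are simultaneously arbitrarily small there, so neither $\partial_{i^*}F_T$ nor $\partial_{i^*-1}F_T$ admits a uniform lower bound. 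Second, the root cause is your choice of threshold $1/2$ in defining $i^*$. The standard (and correct) choice is threshold $1$: since $\textnormal{prog}(x)<T$ gives $x_T=0$, the set $\{i:|x_i|<1\}$ is nonempty; let $j$ be its smallest element, so $|x_{j-1}|\ge 1$ (with $x_0\equiv 1$). All four terms of $\partial_j F_T(x)$ are nonpositive, hence
\begin{align*}
|\partial_j F_T(x)| \;\ge\; \bigl(\Psi(-x_{j-1})+\Psi(x_{j-1})\bigr)\min\{\Phi'(x_j),\Phi'(-x_j)\}
\;=\;\Psi(|x_{j-1}|)\,\sqrt{e}\,e^{-x_j^2/2}\;>\;\Psi(1)\cdot 1\;=\;1,
\end{align*}
using $\Psi(1)=1$ and $|x_j|<1$. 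No case split and no balancing of $\Psi$ against $\Psi'$ is needed; the ``delicate'' step you flagged dissolves entirely once the threshold is set to $1$.
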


We use these properties in the proofs.

\subsection{Proof of Theorem~\ref{theorem:lower_bound_homog}}
\THEOREMLOWERBOUND*

Before we prove the theorem, let us briefly explain the idea. In Steps 1 and 2 of the proof, we construct the appropriate scaled function and stochastic oracles using the function \eqref{eq:worst_case}. These steps are almost the same as in \citep{carmon2020lower, arjevani2022lower}. 

In Step 3, we use the zero-chain property of the function \eqref{eq:worst_case} and the zero-respecting property of algorithms that would guarantee us that unless oracles send us a non-zero coordinate, an algorithm would not be able to progress to a new coordinate. The oracles send a non-zero coordinate with some probability $p$. We have $n$ parallel oracles that flip random coins \emph{in parallel}. With a large probability, we show that will \emph{not} get a new coordinate earlier than $$\approx \min \limits_{m \in [n]} \left[\left(\sum_{i=1}^{m} \frac{1}{\tau_i}\right)^{-1} \left(\frac{1}{p} + m\right)\right]$$ seconds, where $\tau_i$ are the delays of the oracles. So, with a large probability, we will not be able to solve the optimization earlier than 
$$\approx T \times \min \limits_{m \in [n]} \left[\left(\sum_{i=1}^{m} \frac{1}{\tau_i}\right)^{-1} \left(\frac{1}{p} + m\right)\right],$$ where $T$ is the dimension of the problem.

\begin{proof}
    (\textbf{Step 1}: $f \in \cF_{\Delta, L}$)

    Let us fix $\lambda > 0$ and take a function $f(x) \eqdef L \lambda^2 / l_1 F_T\left(\frac{x}{\lambda}\right),$  where the function $F_T$ is defined in Section~\ref{sec:worst_case}. Note that the function $f$ is $L$-smooth: 
    \begin{align*}
        \norm{\nabla f(x) - \nabla f(y)} = L \lambda / l_1 \norm{F_T\left(\frac{x}{\lambda}\right) - F_T\left(\frac{y}{\lambda}\right)} \leq L \lambda \norm{\frac{x}{\lambda} - \frac{y}{\lambda}} = L \norm{x - y} \quad \forall x, y \in \R^d.
    \end{align*}
    Let us take $$T = \left\lfloor\frac{\Delta l_1}{L \lambda^2 \Delta^0}\right\rfloor,$$ then
    \begin{align*}
        f(0) - \inf_{x \in \R^T} f(x) = \frac{L \lambda^2}{l_1} (F_T\left(0\right) - \inf_{x \in \R^T} F_T(x)) \leq \frac{L \lambda^2 \Delta^0 T}{l_1} \leq \Delta.
    \end{align*}
    We showed that the function $f \in \cF_{\Delta, L}.$

    (\textbf{Step 2}: Oracle Class)

    In the oracles $O_i,$ we have the freedom to choose a mapping $\widehat{\nabla} f(\cdot; \cdot)$ (see \eqref{eq:oracle_stochastic_delay}). Let us take 
    \begin{align*}[\widehat{\nabla} f(x; \xi)]_j \eqdef \nabla_j f(x) \left(1 + \mathbbm{1}\left[j > \textnormal{prog}(x)\right]\left(\frac{\xi}{p} - 1\right)\right) \quad \forall x \in \R^T,\end{align*} and $\mathcal{D}_i = \textnormal{Bernouilli}(p)$ for all $i \in [n],$ where $p \in (0, 1].$ We denote $[x]_j$ as the $j$\textsuperscript{th} index of a vector $x \in \R^T.$
It is left to show this mapping is unbiased and $\sigma^2$-variance-bounded. Indeed, $$\Exp{[\widehat{\nabla} f(x, \xi)]_i} = \nabla_i f(x) \left(1 + \mathbbm{1}\left[i > \textnormal{prog}(x)\right]\left(\frac{\Exp{\xi}}{p} - 1\right)\right) = \nabla_i f(x)$$ for all $i \in [T],$ and
\begin{align*}
    \Exp{\norm{\widehat{\nabla} f(x; \xi) - \nabla f(x)}^2} \leq \max_{j \in [T]} \left|\nabla_j f(x)\right|^2\Exp{\left(\frac{\xi}{p} - 1\right)^2}
\end{align*}
because the difference is non-zero only in one coordinate. Thus
\begin{align*}
    \Exp{\norm{\widehat{\nabla} f(x, \xi) - \nabla f(x)}^2} &\leq \frac{\norm{\nabla f(x)}_{\infty}^2(1 - p)}{p} = \frac{L^2 \lambda^2 \norm{F_T\left(\frac{x}{\lambda}\right)}_{\infty}^2(1 - p)}{l_1^2 p} \\
    &\leq \frac{L^2 \lambda^2 \gamma_{\infty}^2 (1 - p)}{l_1^2 p} \leq \sigma^2,
\end{align*}
where we take 
$$p = \min\left\{\frac{L^2 \lambda^2 \gamma_{\infty}^2}{\sigma^2 l_1^2}, 1\right\}.$$

(\textbf{Step 3}: Analysis of Protocol)

We choose 
$$\lambda = \frac{\sqrt{2 \varepsilon} l_1}{L}$$
to ensure that $\norm{\nabla f(x)}^2 = \frac{L^2 \lambda^2}{l_1^2}\norm{\nabla F_T (\frac{x}{\lambda})}^2 > 2 \varepsilon \mathbbm{1}\left[\textnormal{prog}(x) < T\right]$ for all $x \in \R^T,$ where we use Lemma~\ref{lemma:worst_function}. Thus
$$T = \left\lfloor\frac{\Delta L}{2 \varepsilon l_1 \Delta^0}\right\rfloor$$
and
$$p = \min\left\{\frac{2 \varepsilon \gamma_{\infty}^2}{\sigma^2}, 1\right\}.$$

Protocol~\ref{alg:time_multiple_oracle_protocol} generates a sequence $\{x^k\}_{k=0}^{\infty}.$ We have
\begin{align}
    \label{eq:aux_part_3_homog}
    \inf_{k \in S_t} \norm{\nabla f(x^k)}^2 > 2 \varepsilon \inf_{k \in S_t} \mathbbm{1}\left[\textnormal{prog}(x^k) < T\right].
\end{align}

Using Lemma~\ref{lemma:boundtime} with $\delta = 1/2$ and \eqref{eq:aux_part_3_homog}, we obtain
\begin{align*}
    \Exp{\inf_{k \in S_t} \norm{\nabla f(x^k)}^2} &\geq 2 \varepsilon \Prob{\inf_{k \in S_t} \mathbbm{1}\left[\textnormal{prog}(x^k) < T\right] \geq 1} > \varepsilon
\end{align*}
for $$t = \frac{1}{24} \min_{m \in [n]} \left[\left(\sum_{i=1}^{m} \frac{1}{\tau_i}\right)^{-1} \left(\frac{\sigma^2}{2 \varepsilon \gamma_{\infty}^2} + m\right)\right] \left(\frac{\Delta L}{2 \varepsilon l_1 \Delta^0} - 2\right).$$

\end{proof}

\subsection{Auxillary lemmas}
\label{sec:aux_lemmas}

\subsubsection{Proof of Lemma~\ref{lemma:boundtime}}

\begin{restatable}{lemma}{LEMMABOUNDTIME}
    \label{lemma:boundtime}
    Let us fix $T, T' \in \N$ such that $T \leq T'$, consider Protocol~\ref{alg:time_multiple_oracle_protocol} with a differentiable function $f\,:\,\R^{T'} \rightarrow \R$ such that $\textnormal{prog}(\nabla f(x)) \leq \textnormal{prog}(x) + 1$ for all $x \in \textnormal{domain}(f),$ delays $0 < \tau_1 \leq \dots \leq \tau_n,$ distributions $\mathcal{D}_i = \textnormal{Bernouilli}(p)$ and oracles $O_i = O_{\tau_i}^{\smallnablafunc}$ for all $i \in [n],$ mappings 
    \begin{align}
        \label{eq:proof_stoch_grad_lemma}
        [\widehat{\nabla} f(x; \xi)]_j = \nabla_j f(x) \left(1 + \mathbbm{1}\left[j > \textnormal{prog}(x)\right]\left(\frac{\xi}{p} - 1\right)\right) \quad \forall x \in \R^{T'}, \forall \xi \in \{0, 1\}, \forall j \in [T],
    \end{align}
    and an algorithm $A \in \cA_{\textnormal{zr}}.$  With probability not less than $1 - \delta,$
    \begin{align*}
        \inf_{k \in S_t} \mathbbm{1}\left[\textnormal{prog}(x^k) < T\right] \geq 1
    \end{align*}
    for $$t \leq \frac{1}{24} \min_{m \in [n]} \left[\left(\sum_{i=1}^{m} \frac{1}{\tau_i}\right)^{-1} \left(\frac{1}{p} + m\right)\right] \left(\frac{T}{2} + \log \delta\right),$$
    where the iterates $x^k$ are defined in Protocol~\ref{alg:time_multiple_oracle_protocol}.
\end{restatable}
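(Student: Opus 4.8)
\textbf{Proof plan for Lemma~\ref{lemma:boundtime}.}

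The plan is to analyze how the ``progress counter'' $\textnormal{prog}(x^k)$ can increase over time in Protocol~\ref{alg:time_multiple_oracle_protocol}. By the zero-chain property $\textnormal{prog}(\nabla f(x)) \le \textnormal{prog}(x)+1$ combined with the structure of the stochastic gradient mapping \eqref{eq:proof_stoch_grad_lemma}, a fresh coordinate past $\textnormal{prog}(x)$ is revealed only when the Bernoulli sample $\xi$ equals $1$ on an oracle call made at the current ``frontier'' point; moreover, since $A \in \cA_{\textnormal{zr}}$, the support of $x^k$ cannot exceed what the oracles have returned, so $\textnormal{prog}(x^k)$ can advance by at most one per successful coin flip, and only in this way. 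Thus $\textnormal{prog}(x^k) < T$ for all $k$ with $t^k \le t$ as long as fewer than $T$ ``successes'' have been accumulated by time $t$. First I would make this reduction precise: define, for each oracle $i$, the number $N_i(t)$ of gradient computations that oracle $i$ can complete by time $t$; since each such computation takes $\tau_i$ seconds and they cannot overlap (the state machine \eqref{eq:oracle_stochastic_delay} forces serialization) and $t^k$ is nondecreasing, we get $N_i(t) \le \lfloor t/\tau_i \rfloor \le t/\tau_i$. Hence the total number of coin flips available to the algorithm by time $t$ is at most $N(t) := \sum_{i=1}^n \min\{t/\tau_i, \ldots\}$, and $\textnormal{prog}$ reaching $T$ requires at least $T$ of these $N(t)$ Bernoulli($p$) trials to come up $1$.

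Next I would turn this into a concentration statement. The key subtlety is adaptivity: the algorithm chooses which oracle to query and when, based on past outcomes, so the relevant trials form an adaptive sequence rather than a fixed i.i.d.\ batch; still, conditioned on the filtration, each new trial on a previously-unsuccessful frontier is Bernoulli($p$), so a standard martingale/Chernoff argument on the number of successes among the first $M$ trials applies. Concretely, I would bound, for a suitable integer $M$, the probability that a sum of $M$ (adaptively revealed) Bernoulli($p$) variables is at least $T$; choosing $M \approx \tfrac{T}{2p}$ (so the mean is $\approx T/2$) and applying a multiplicative Chernoff bound gives that this probability is at most $\exp(-cT)$ for a universal constant, which we want to be at most $\delta$ — this is where the $(\tfrac{T}{2} + \log\delta)$ factor enters. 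It remains to translate ``at most $M$ trials have occurred'' into a lower bound on $t$: we need $N(t) \le M$, i.e., $\sum_{i=1}^n \lfloor t/\tau_i\rfloor \le M$. Since we are free to ignore the slow oracles, for each $m \in [n]$ the quantity $N(t)$ restricted to using only the $m$ fastest oracles satisfies $N(t) \le \sum_{i=1}^m t/\tau_i + (\text{something like } m)$ after accounting for the at-most-one-in-progress-per-oracle slack (the additive $+m$, matching the $+m$ in the statement); solving $\big(\sum_{i=1}^m 1/\tau_i\big) t + m \lesssim M$ for $t$ and then taking the best $m$ yields exactly $t \le \tfrac{1}{24}\min_m \big[\big(\sum_{i=1}^m 1/\tau_i\big)^{-1}(\tfrac1p + m)\big](\tfrac T2 + \log\delta)$ up to the universal constant.

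The main obstacle I expect is handling the adaptivity and the bookkeeping around ``in-progress'' computations rigorously: one must argue that the number of \emph{completed} Bernoulli trials relevant to advancing $\textnormal{prog}$ by time $t$ is genuinely bounded by $N(t)$ even though the algorithm interleaves queries across oracles, may query stale points (which, by zero-respecting-ness, cannot help advance the frontier), and may have several computations running simultaneously. Making the ``at most one success needed per coordinate, at most one trial per $\tau_i$-slot per oracle'' accounting airtight — and showing the extra $+m$ slack suffices to absorb the partially-finished computations and the off-by-one from the floor function — is the delicate combinatorial core; the concentration inequality itself is routine once the count is pinned down. I would structure the write-up as: (a) reduce solving to $\textnormal{prog}=T$ via Lemma~\ref{lemma:worst_function}(4)–(5) and zero-respecting-ness; (b) define $N(t)$ and prove $N(t) \le \sum_{i=1}^m t/\tau_i + m$ for every $m$; (c) Chernoff on the adaptive Bernoulli sum; (d) combine and optimize over $m$.
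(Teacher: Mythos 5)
Your reduction in steps (a)--(b) --- ``$\textnormal{prog}(x^k)$ reaching $T$ requires at least $T$ successes among the $N(t)$ Bernoulli trials completed by time $t$'' --- is a valid necessary condition, but it is too weak to prove the lemma: it discards the fact that the $T$ required successes must occur \emph{sequentially}, one per coordinate level, with the trials for level $j+1$ only becoming available after the level-$j$ success has occurred. Your global count credits the algorithm with all trials run in parallel across all levels, and the resulting bound loses the per-level latency term. Concretely, take $p=1$ and $\tau_1=\dots=\tau_n=1$: your condition ``fewer than $T$ successes by time $t$'' already fails at $t=\operatorname{O}\left(\max\{1, \nicefrac{T}{n}\}\right)$ (since $N(t)=n\lfloor t\rfloor$ and every trial succeeds), whereas the lemma asserts the algorithm is stuck until $t=\Omega(T)$ --- which is the truth, since each of the $T$ coordinates costs one full round of latency $\tau_1=1$. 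This factor-of-$\min\{n,T\}$ loss is exactly the $m$ inside $(\nicefrac{1}{p}+m)$ being multiplied by the outer factor $T$; your additive ``$+m$ slack for in-progress computations'' inside the bound on $N(t)$ cannot produce it.

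The paper's proof supplies the missing idea. Define $k(j)$ as the first iteration with $\textnormal{prog}(x^{k(j)})=j$ and $\eta_j$ as the index, within the trials performed at frontier level $j-1$, of the first success; then the telescoping inequality $t^{k(T)}\geq \sum_{j=1}^{T}\widehat t_{\eta_j}$ holds, where $\widehat t_k$ is the $k$\textsuperscript{th} smallest element of the multiset $\{l\tau_i \,|\, l\geq 1,\ i\in[n]\}$, i.e., the minimum time for the oracles to produce $k$ coin flips. Adaptivity is handled by showing $\ProbCond{\eta_{j+1}=l}{\eta_j,\dots,\eta_1}\leq (1-p)^{l-1}p$ (Lemma~\ref{lemma:probs}), and the Chernoff/MGF bound is then applied to the \emph{sum of waiting times} $\sum_{j=1}^T \widehat t_{\eta_j}$, not to the count of successes; the optimization over $m$ and the $+m$ term emerge from lower-bounding a single $\widehat t_{\eta_j}$ via Lemma~\ref{lemma:tau}. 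Your step (a) and your remark about adaptivity are in the right spirit, but steps (b)--(d) must be reorganized around this per-level decomposition for the argument to reach the claimed bound.
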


\begin{proof}

    \textbf{(Part 1):} \emph{Comment: in this part, we formally show that if $\inf_{k \in S_t} \mathbbm{1}\left[\textnormal{prog}(x^k) < T\right] < 1$ holds, then we have the inequality $\sum_{i=1}^T \widehat{t}_{\eta_{i}} \leq t$, where $\widehat{t}_{\eta_{i}}$ are random variables with some known ``good'' distributions. If $\inf_{k \in S_t} \mathbbm{1}\left[\textnormal{prog}(x^k) < T\right] < 1,$ then it means that exists $k$ such that $\textnormal{prog}(x^k) = T.$ Note that the algorithm is zero-respecting, so it can not progress to $T$\textsuperscript{th} coordinate unless the oracles generate stochastic gradients with non-zero $1$\textsuperscript{st}, $2$\textsuperscript{nd}, ..., $T$\textsuperscript{th} coordinates. The oracles flip coins in parallel, so the algorithm should wait for the moment when the oracles flip a success. At the same time, it takes time to generate a coin (calculate a stochastic gradient), and the oracles can not flip more than $k$ coins before some time $\widehat{t}_{k}.$ So if the $\eta_{i}$ is an index of the first success to generate a non-zero $i$\textsuperscript{th} coordinate, then the algorithm should wait at least $\widehat{t}_{\eta_{i}}$ seconds. Next, we give a formal proof.}

    Let us fix $t \geq 0$ and define the smallest index $k(i)$ of the sequence when the progress $\textnormal{prog}(x^{k(i)})$ equals $i:$
    \begin{align*}
        k(i) \eqdef \inf\left\{k \in \N_0\,|\, i = \textnormal{prog} (x^k)\right\} \in \N_0 \cup \{\infty\}.
    \end{align*}

    If $\inf_{k \in S_t} \mathbbm{1}\left[\textnormal{prog}(x^k) < T\right] < 1$ holds, then exists $k \in S_t$ such that $\textnormal{prog} (x^{k}) = T,$ thus, by the definition of $k(T),$ $t^{k(T)} \leq t^{k} \leq t,$ and $k(T) < \infty.$ Note that $t^{k(T)}$ is the smallest time when we make progress to the $T$\textsuperscript{th} coordinate.

    Since $x^0 = 0$ and $A$ is a zero-respecting algorithm, the algorithm can return a vector $x^k$ with the non-zero first coordinate only if some of returned by the oracles stochastic gradients have the first coordinate not equal to zero. The oracles $O_i$ are constructed in such a way (see \eqref{eq:proof_stoch_grad_lemma} and \eqref{eq:oracle_stochastic_delay}) that they zero out a coordinate based on i.i.d. Bernoulli trials.

    \begin{definition}[Sequence $k^{\xi}_j$]
    Let us consider a set
    \begin{align*}
        \{k \in \N \,|\, s^{k-1}_{{i^{k}}, q} = 1 \textnormal{ and } t^{k} \geq s^{k-1}_{{i^{k}}, t} + \tau_{i^{k}}\}, \quad s^{k-1}_{{i^{k}}} \equiv (s^{k-1}_{{i^{k}}, t}, s^{k-1}_{{i^{k}}, q}, s^{k-1}_{{i^{k}}, x}).
    \end{align*}
    We order this set and define the result sequence as $\{k^{\xi}_j\}_{j=1}^m,$ where $m \in [0, \infty]$ is the size of the sequence. The sequence $k^{\xi}_i$ is a subsequence of iterations where the oracles use the generated Bernouilli random variables in the third output of \eqref{eq:oracle_stochastic_delay}. The sequence $s^{k-1}_{{i^{k}}}$ is defined in Protocol~\ref{alg:time_multiple_oracle_protocol}.
    \end{definition}

    Let $k_{\textnormal{success}}$ be the \emph{first} iteration index when the oracles use a draw $\xi = 1,$ i.e., $$k_{\textnormal{success}} \eqdef \inf \{k\,|\, \xi^k = 1 \textnormal{ and } k \in \{k^{\xi}_j\}_{j=1}^{m}\} \in \N \cup \{\infty\}.$$ 
    Since the algorithm $A$ is a zero-respecting and the function $f$ is a zero-chain function, i.e., $\textnormal{prog}(\nabla f(x)) \leq \textnormal{prog}(x) + 1$ for all $x \in \textnormal{domain}(f),$ then $\textnormal{prog}(g^k) = \textnormal{prog}(x^k) = 0$ for all $k < k_{\textnormal{success}}.$ If $\inf_{k \in S_t} \mathbbm{1}\left[\textnormal{prog}(x^k) < T\right] < 1$ holds, then $k_{\textnormal{success}} < \infty,$ and $t^{{k_{\textnormal{success}}}} \leq t^{k(1)}.$

    The oracles use the generated Bernoulli random variables $\{\xi^{k} \,|\, k \in \{k^{\xi}_j\}_{j=1}^{m}\}$. Let us denote the index of the first successful trial as $\eta_1,$ i.e., $$\eta_1 \eqdef \inf \{i \,|\, \xi^{k^{\xi}_i} = 1 \textnormal{ and } i \in [1, m]\} \in \N \cup \{\infty\}.$$ The $i$\textsuperscript{th} worker can generate the first Bernoulli random variable not earlier than after $\tau_i$ seconds, the second Bernoulli random variable not earlier than after $2\tau_i$ seconds, and so forth. 

    \begin{definition}[Sequence $\widehat{t}_k$]
    \label{def:time_seq}
    Let us consider a \emph{multi-set} of times $$\{j \tau_i\,|\, j \geq 1, i \in [n]\} \equiv \{\tau_1, 2 \tau_1, \dots\} \uplus \dots \uplus \{\tau_n, 2 \tau_n, \dots\}.$$ We order this multi-set and define the result sequence as $\{\widehat{t}_k\}_{k=1}^{\infty},$ and $\widehat{t}_{\infty} \eqdef \lim_{k \rightarrow \infty} \widehat{t}_{k} = \infty.$
    \end{definition}

    Then $\eta_1$\textsuperscript{th} Bernoulli random variable can not be generated earlier than $\widehat{t}_{\eta_1}$ because $\widehat{t}_{\eta_1}$ is the earliest time when the oracles can generate $\eta_1$ random variables. Therefore, if $\inf_{k \in S_t} \mathbbm{1}\left[\textnormal{prog}(x^k) < T\right] < 1$ holds, then $\widehat{t}_{\eta_1} \leq t^{k_{\textnormal{success}}} \leq t^{k(1)}.$

    Using the same reasoning, $t^{k(j+1)} \geq t^{k(j)} + \widehat{t}_{\eta_{j+1}},$ where $\eta_{j+1}$ is the index of the first successful trial of Bernouilli random variables when $\textnormal{prog}(\cdot) = j$ in the sequence $x^k$. More formally:

    \begin{definition}[Sequence $k^{\xi}_{j,i}$]
        Let us consider a set
        \begin{align*}
            \{k \in \N \,|\, s^{k-1}_{{i^{k}}, q} = 1 \textnormal{ and } t^{k} \geq s^{k-1}_{{i^{k}}, t} + \tau_{i^{k}} \textnormal{ and } \textnormal{prog}(s^{k-1}_{{i^{k}}, x}) = j\}.
        \end{align*}
        We order this set and define the result sequence as $\{k^{\xi}_{j,i}\}_{i=1}^{m_{j+1}},$ where $m_{j+1} \in [0, \infty]$ is the size of the sequence. The sequence $k^{\xi}_{j,i}$ is a subsequence of iterations where the oracles use the generated Bernouilli random variables in \eqref{eq:oracle_stochastic_delay} when $\textnormal{prog}(s_x) = j.$
    \end{definition}

    Then 
    \begin{align}
        \label{eq:geom_dist}
        \eta_{j+1} \eqdef \inf \{i \,|\, \xi^{k^{\xi}_{j,i}} = 1 \textnormal{ and } i \in [1, m_{j+1}]\} \in \N \cup \{\infty\} \quad \forall j \in \{0, \dots, T-1\}.
    \end{align}
    By the definition of $k(j),$ $x^{k(j)}$ is the first vector of the sequence, that contains a non-zero $j$\textnormal{th} coordinate. Thus the oracles will start returning stochastic gradients that potentially have a non-zero $j+1$\textsuperscript{th} coordinate starting only from the iteration $k(j).$ Therefore, $$t^{k(T)} \geq t^{k(T-1)} + \widehat{t}_{\eta_{T}} \geq \sum_{i=1}^T \widehat{t}_{\eta_{i}}.$$

    Combining the observations, if $\inf_{k \in S_t} \mathbbm{1}\left[\textnormal{prog}(x^k) < T\right] < 1$ holds, then 
    $\sum_{i=1}^T \widehat{t}_{\eta_{i}} \leq t^{k(T)} \leq t.$ Thus
    \begin{align*}
        \Prob{\inf_{k \in S_t} \mathbbm{1}\left[\textnormal{prog}(x^k) < T\right] < 1} \leq \Prob{\sum_{i=1}^T \widehat{t}_{\eta_{i}} \leq t} \quad \forall t \geq 0.
    \end{align*}

    In Section~\ref{sec:lemma_probs}, we prove the following inequality that we use in Part 2 of the proof.

    \begin{restatable}{lemma}{LEMMAPROBS}
        \label{lemma:probs}
        Let us take $l_{j+1} \in \N.$ Then
        \begin{align*}
            \ProbCond{\eta_{j+1} = l_{j+1}}{\eta_{j}, \dots, \eta_{1}} \leq (1 - p)^{l_{j+1} - 1}p
        \end{align*}
        for all $j \in \{0, \dots, T-1\}.$
    \end{restatable}

    \textbf{(Part 2):} \emph{Comment: in this part, we use the standard technique to bound the large deviations of the sum $\sum_{i=1}^T \widehat{t}_{\eta_{i}}$.}

    Let us fix $t' \geq 0.$ Recall Definition~\ref{def:time_seq} of $\{\widehat{t}_k\}_{k=1}^{\infty}.$ If the number of workers $n = 1,$ then $\widehat{t}_k = k\tau_1$ for all $k \geq 1.$ For $n > 1,$ the sequence $\{\widehat{t}_k\}_{k=1}^{\infty}$ has more complicated structure and depends on the delays $\tau_1, \dots, \tau_n.$ 
    
    For any $k \geq 1,$ if $\widehat{t}_{k} \leq t',$ then $k \leq \sum_{i = 1}^{n} \lfloor \frac{t'}{\tau_i}\rfloor.$ Indeed, let us assume that $k > \sum_{i = 1}^{n} \lfloor \frac{t'}{\tau_i}\rfloor.$ The sequence $\widehat{t}_{k}$ is constructed by the ordering the multi-set $\{j \tau_i\,|\, j \geq 1, i \in [n]\}.$ The number of elements, which are less or equal to $t',$ equals $\sum_{i = 1}^{n} \lfloor \frac{t'}{\tau_i}\rfloor.$ Thus, we get a contradiction. 
    
    It means that
    \begin{align*}
        \ProbCond{\widehat{t}_{\eta_{j+1}} \leq t'}{\eta_{j}, \dots, \eta_{1}} \leq \ProbCond{\eta_{j+1} \leq \sum_{i = 1}^{n} \left\lfloor \frac{t'}{\tau_i}\right\rfloor}{\eta_{j}, \dots, \eta_{1}}.
    \end{align*}
    Using Lemma~\ref{lemma:probs}, we have
    \begin{align*}
        \ProbCond{\widehat{t}_{\eta_{j+1}} \leq t'}{\eta_{j}, \dots, \eta_{1}} \leq \sum_{j=1}^{\sum_{i = 1}^{n} \left\lfloor \frac{t'}{\tau_i}\right\rfloor} (1 - p)^{j - 1} p.
    \end{align*}
    If $0 \leq t' < \tau_1,$ then $\sum_{i = 1}^{n} \left\lfloor \nicefrac{t'}{\tau_i}\right\rfloor = 0,$ and
    \begin{align*}
        \ProbCond{\widehat{t}_{\eta_{j+1}} \leq t'}{\eta_{j}, \dots, \eta_{1}} = 0.
    \end{align*}
    Otherwise, if $t' > \tau_1,$ then $\sum_{i = 1}^{n} \left\lfloor \nicefrac{t'}{\tau_i}\right\rfloor \geq 1,$
    and
    \begin{align*}
        \ProbCond{\widehat{t}_{\eta_{j+1}} \leq t'}{\eta_{j}, \dots, \eta_{1}} \leq 1 - (1 - p)^{\sum_{i = 1}^{n} \left\lfloor \frac{t'}{\tau_i}\right\rfloor} \leq p \sum_{i = 1}^{n} \left\lfloor \frac{t'}{\tau_i}\right\rfloor,
    \end{align*}
    where we use the fact that $1 - (1 - p)^m \leq p m$ for all $p \in [0, 1]$ and $m \in \N.$ For all $t' \geq 0,$ we have
    \begin{align*}
        \ProbCond{\widehat{t}_{\eta_{j+1}} \leq t'}{\eta_{j}, \dots, \eta_{1}} \leq p \sum_{i = 1}^{n} \left\lfloor \frac{t'}{\tau_i}\right\rfloor.
    \end{align*}

    Let us define $$p' \eqdef p \sum_{i = 1}^{n} \left\lfloor \frac{t'}{\tau_i}\right\rfloor,$$ then 
    \begin{align}
        \label{eq:aux_prob_hat_t}
        \ProbCond{\widehat{t}_{\eta_{j+1}} \leq t'}{\eta_{j}, \dots, \eta_{1}} \leq p'.
    \end{align}

    Let us fix $s \geq 0$ and $\widehat{t} \geq 0.$ Using the Chernoff method, we have
    \begin{align*}
        \Prob{\sum_{i=1}^T \widehat{t}_{\eta_{i}} \leq \widehat{t}} &= \Prob{-s \left(\sum_{i=1}^T \widehat{t}_{\eta_{i}}\right) \geq -s \widehat{t}} = \Prob{\exp\left(-s \sum_{i=1}^T \widehat{t}_{\eta_{i}}\right) \geq \exp\left(-s \widehat{t}\right)} \\
        &\leq e^{s \widehat{t}} \Exp{\exp\left(-s \sum_{i=1}^T \widehat{t}_{\eta_{i}}\right)}.
    \end{align*}
    Let us bound the expected value separately:
    \begin{align*}
        \Exp{\exp\left(-s \sum_{i=1}^T \widehat{t}_{\eta_{i}}\right)} = \Exp{\prod_{i = 1}^{T}\ExpCond{e^{-s \widehat{t}_{\eta_{i}}}}{\eta_{i-1}, \dots, \eta_{1}}}.
    \end{align*}
    Since $\widehat{t}_{\eta_{i}} \geq 0,$ we have
    \begin{align*}
        \ExpCond{e^{-s \widehat{t}_{\eta_{i}}}}{\eta_{i-1}, \dots, \eta_{1}} &= \ExpCond{e^{-s \widehat{t}_{\eta_{i}}}}{\widehat{t}_{\eta_{i}} \leq t', \eta_{i-1}, \dots, \eta_{1}} \ProbCond{\widehat{t}_{\eta_{i}} \leq t'}{\eta_{i-1}, \dots, \eta_{1}} \\
        &\quad + \ExpCond{e^{-s \widehat{t}_{\eta_{i}}}}{\widehat{t}_{\eta_{i}} > t', \eta_{i-1}, \dots, \eta_{1}} \left(1 - \ProbCond{\widehat{t}_{\eta_{i}} \leq t'}{\eta_{i-1}, \dots, \eta_{1}}\right) \\
        &\leq \ProbCond{\widehat{t}_{\eta_{i}} \leq t'}{\eta_{i-1}, \dots, \eta_{1}} + e^{-s t'} \left(1 - \ProbCond{\widehat{t}_{\eta_{i}} \leq t'}{\eta_{i-1}, \dots, \eta_{1}}\right) \\
        &\overset{\eqref{eq:aux_prob_hat_t}}{\leq} p' + e^{-s t'} \left(1 - p'\right).
    \end{align*}
    Thus
    \begin{align*}
        \Exp{\exp\left(-s \sum_{i=1}^T \widehat{t}_{\eta_{i}}\right)} \leq \left(p' + e^{-s t'} \left(1 - p'\right)\right)^T
    \end{align*}
    and
    \begin{align*}
        &\Prob{\sum_{i=1}^T \widehat{t}_{\eta_{i}} \leq \widehat{t}} \leq e^{s \widehat{t}} \left(p' + e^{-s t'} \left(1 - p'\right)\right)^T = e^{s \widehat{t} - s t' T} \left(1 + \left(e^{s t'} - 1\right)p'\right)^T.
    \end{align*}
    Let us take $s = \nicefrac{1}{t'},$ and get
    \begin{align}
        &\Prob{\sum_{i=1}^T \widehat{t}_{\eta_{i}} \leq \widehat{t}} \leq e^{\widehat{t} / t' - T} \left(1 + \left(e - 1\right)p'\right)^T \leq e^{\widehat{t} / t' - T + 2 p' T}.
        \label{eq:aux_prob}
    \end{align}
    Let us recall the definition of $p':$
    \begin{align*}
        p' = p \left(\sum_{i=1}^n \left\lfloor \frac{t'}{\tau_i}\right\rfloor\right)
    \end{align*}
    Now, we have to take the right $t'.$ We will take it using a nonconstructive definition. Assume that $t' = \frac{1}{4p}\left(\sum_{i=1}^{j^*} \frac{1}{\tau_i}\right)^{-1},$ where $$j^* = \inf \left\{m \in [n] \,\middle|\, \frac{1}{4p}\left(\sum_{i=1}^{m} \frac{1}{\tau_i}\right)^{-1} < \tau_{m+1}\right\} \quad (\tau_{n + 1} \equiv \infty).$$
    This set is not empty because $n$ belongs to it. Using the definition of $j^*,$ we have
    \begin{align*}
        p' = p \left(\sum_{i=1}^n \left\lfloor \frac{t'}{\tau_i}\right\rfloor\right) = p \left(\sum_{i=1}^{j^*} \left\lfloor \frac{t'}{\tau_i}\right\rfloor\right) \leq p t' \left(\sum_{i=1}^{j^*} \frac{1}{\tau_i}\right) = \frac{1}{4}.
    \end{align*}
    Substituting this inequality to \eqref{eq:aux_prob}, we obtain
    \begin{align*}
        \Prob{\sum_{i=1}^T \widehat{t}_{\eta_{i}} \leq \widehat{t}} \leq e^{\widehat{t} / t' - \frac{T}{2}}.
    \end{align*}
    For $\widehat{t} \leq t' \left(\frac{T}{2} + \log \delta\right),$ we have
    \begin{align*}
        \Prob{\sum_{i=1}^T \widehat{t}_{\eta_{i}} \leq \widehat{t}} \leq \delta.
    \end{align*}
    Recall that $t' = \frac{1}{4p}\left(\sum_{i=1}^{j^*} \frac{1}{\tau_i}\right)^{-1}.$ Using Lemma~\ref{lemma:tau}, we have $$t' \geq \frac{1}{24} \min_{m \in [n]} \left[\left(\sum_{i=1}^{m} \frac{1}{\tau_i}\right)^{-1} \left(\frac{1}{p} + m\right)\right].$$

    Finally, we obtain
    \begin{align}
        \label{eq:aux_delta}
        \Prob{\inf_{k \in S_t} \mathbbm{1}\left[\textnormal{prog}(x^k) < T\right] < 1} \leq \Prob{\sum_{i=1}^T \widehat{t}_{\eta_{i}} \leq  t} \leq \delta
    \end{align}
    for $$t \leq \frac{1}{24} \min_{m \in [n]} \left[\left(\sum_{i=1}^{m} \frac{1}{\tau_i}\right)^{-1} \left(\frac{1}{p} + m\right)\right] \left(\frac{T}{2} + \log \delta\right).$$
\end{proof}

\subsubsection{Proof of Lemma~\ref{lemma:probs}}
\label{sec:lemma_probs}

In the following lemma, we use notations from Part 1 of the proof of Lemma~\ref{lemma:boundtime}.
\LEMMAPROBS*

In this lemma, we want to bound the probability for the random variable $\eta_{j+1}$ from \eqref{eq:geom_dist}. $\eta_{j+1}$ is the index of the first successful trial of the sequence of Bernouilli random variables. At first sight, this is a trivial task since $\eta_{j+1}$ has a distribution similar to the geometric distribution. But the main problem here is that the sequence $k^{\xi}_{j,i}$ and the quantity $m_{j+1}$ are also random variables. Therefore, we must be careful with this.

\begin{proof}
    Since the image of the random variables $\eta_{j}, \dots, \eta_{1}$ is in $\N \cap \{\infty\}.$ Let us take $l_1, \dots, l_{j} \in \N \cup \{\infty\},$ and prove the theorem for a probability conditioned on an event $\bigcap_{i=1}^{j}\{\eta_{i} = l_{i}\}$ such that $\Prob{\bigcap_{i=1}^{j}\{\eta_{i} = l_{i}\}} > 0.$ Therefore, it is enough to prove that
    \begin{align*}
        \ProbCond{\eta_{j+1} = l_{j+1}}{\bigcap_{i=1}^{j}\{\eta_{i} = l_{i}\}} \leq (1 - p)^{l_{j+1} - 1}p.
    \end{align*}

    First, assume that exists $i \in [j]$ such that $l_i = \infty.$ It means that, for all $k \geq 0,$ $\textnormal{prog}(x^k) < j,$ thus $$\ProbCond{\eta_{j+1} = l_{j+1}}{\bigcap_{i=1}^{j}\{\eta_{i} = l_{i}\}} = 0$$ for all $l_{j+1} \in \N.$ Let us explain this step. if exists $i \in [j]$ such that $l_i = \infty,$ then an algorithm never get a progress to $j$\textsuperscript{th} coordinate, thus $\eta_{j+1} = \inf \{i \,|\, \xi^{k^{\xi}_{j,i}} = 1 \textnormal{ and } i \in [1, m_{j+1}]\} = \infty$ a.s. because $m_{j+1} = 0$ and $k^{\xi}_{j,i}$ is an empty sequence.

    Assume that $l_i < \infty$ for all $i \in [j].$ By the definition of $\eta_{j+1},$ we have $m_{j+1} \geq l_{j+1}$ and $\xi^{k^{\xi}_{j,1}} = \dots = \xi^{k^{\xi}_{j, l_{j+1}-1}} = 0$ and $\xi^{k^{\xi}_{j, l_{j+1}}} = 1.$ Thus
    \begin{align*}
        &\ProbCond{\eta_{j+1} = l_{j+1}}{\bigcap_{i=1}^{j}\{\eta_{i} = l_{i}\}} \leq \ProbCond{\bigcap_{i=1}^{l_{j+1} - 1} \{\xi^{k^{\xi}_{j,i}}=0\}, \xi^{k^{\xi}_{j, l_{j+1}}} = 1, m_{j+1} \geq l_{j+1}}{\bigcap_{i=1}^{j}\{\eta_{i} = l_{i}\}}.
    \end{align*}
    Since $k^{\xi}_{j,s} < k^{\xi}_{j,i}$ a.s. for all $s < i \in [m_{j+1}]$, using the law of total probability, we have
    \begin{align*}
        &\ProbCond{\eta_{j+1} = l_{j+1}}{\bigcap_{i=1}^{j}\{\eta_{i} = l_{i}\}} \nonumber\\
        &\leq \sum_{k_1 < \dots < k_{l_{j+1}} = 1}^{\infty} \ProbCond{\bigcap_{i=1}^{l_{j+1} - 1} \{\xi^{k^{\xi}_{j,i}}=0\}, \xi^{k^{\xi}_{j, l_{j+1}}} = 1, m_{j+1} \geq l_{j+1}, \bigcap_{i=1}^{l_{j+1}} \{k^{\xi}_{j,i} = k_i\}}{\bigcap_{i=1}^{j}\{\eta_{i} = l_{i}\}} \\
        &= \sum_{k_1 < \dots < k_{l_{j+1}} = 1}^{\infty} \ProbCond{\bigcap_{i=1}^{l_{j+1} - 1} \{\xi^{k_{i}}=0\}, \xi^{k_{l_{j+1}}} = 1, m_{j+1} \geq l_{j+1}, \bigcap_{i=1}^{l_{j+1}} \{k^{\xi}_{j,i} = k_i\}}{\bigcap_{i=1}^{j}\{\eta_{i} = l_{i}\}},
    \end{align*}
    where $\sum_{k_1 < \dots < k_{l_{j+1}} = 1}^{\infty}$ is a sum over a set $\{(k_1, \dots, k_{l_{j+1}}) \in \N^{l_{j+1}} \,|\, \forall i < p \in [l_{j+1}] : k_i < k_p\}.$ 
    Next, if the event
    \begin{align*}
    \bigcap_{i=1}^{l_{j+1}} \{k^{\xi}_{j,i} = k_i\} \bigcap \{m_{j+1} \geq l_{j+1}\}
    \end{align*}
    holds, then an event $\bigcap_{i=1}^{l_{j+1}} A_{k_i}$
    holds, where \begin{align*}
        A_{k_i} \eqdef \{s^{k_i-1}_{{i^{k_i}}, q} = 1 \textnormal{ and } t^{k_i} \geq s^{k_i-1}_{{i^{k_i}}, t} + \tau_{i^{k_i}} \textnormal{ and } \textnormal{prog}(s^{k_i-1}_{{i^{k_i}}, x}) = j\}.
    \end{align*}
    At the same time, if $\bigcap_{i=1}^{l_{j+1}} A_{k_i}$ holds, then $\{m_{j+1} \geq l_{j+1}\}$ holds. Therefore, 
    \begin{align*}
        \bigcap_{i=1}^{l_{j+1}} \{k^{\xi}_{j,i} = k_i\} \bigcap \{m_{j+1} \geq l_{j+1}\} = \bigcap_{i=1}^{l_{j+1}} \left(\{k^{\xi}_{j,i} = k_i\} \bigcap A_{k_i}\right)
    \end{align*}
    and 
    \begin{align*}
        &\ProbCond{\eta_{j+1} = l_{j+1}}{\bigcap_{i=1}^{j}\{\eta_{i} = l_{i}\}} \nonumber\\
        &\leq \sum_{k_1 < \dots < k_{l_{j+1}} = 1}^{\infty} \ProbCond{\bigcap_{i=1}^{l_{j+1} - 1} \{\xi^{k_{i}}=0\}, \xi^{k_{l_{j+1}}} = 1, \bigcap_{i=1}^{l_{j+1}} \left(\{k^{\xi}_{j,i} = k_i\} \bigcap A_{k_i}\right)}{\bigcap_{i=1}^{j}\{\eta_{i} = l_{i}\}}.
    \end{align*}
    Let us define $\sigma(\xi^1, \dots, \xi^{k_{l_{j+1}-1}})$ as a sigma-algebra generated by $\xi^1, \dots, \xi^{k_{l_{j+1}-1}}.$ Note that, for all $i \in [l_{j+1} - 1],$ the event $\{\xi^{k_{i}}=0\} \in \sigma(\xi^1, \dots, \xi^{k_{l_{j+1}-1}}).$ Also,
    for all $i \in [l_{j+1}],$ the event $\{k^{\xi}_{j,i} = k_i\} \bigcap A_{k_i} \in \sigma(\xi^1, \dots, \xi^{k_{l_{j+1}-1}}).$ Finally, since $k^{\xi}_{i-1,l_i} < k^{\xi}_{j,l_{j+1}},$ the event $$A_{k_{l_{j+1}}} \bigcap \{k^{\xi}_{j,l_{j+1}} = k_{l_{j+1}}\} \bigcap \{\eta_{i} = l_{i}\} \subseteq \sigma(\xi^1, \dots, \xi^{k_{l_{j+1}-1}})$$ for all $i \in [j].$ Therefore, the event $\{\xi^{k_{l_{j+1}}} = 1\}$ is independent of the event $$\bigcap_{i=1}^{l_{j+1} - 1} \{\xi^{k_{i}}=0\} \bigcap_{i=1}^{l_{j+1}} \left(\{k^{\xi}_{j,i} = k_i\} \bigcap A_{k_i}\right) \bigcap_{i=1}^{j}\{\eta_{i} = l_{i}\} \in \sigma(\xi^1, \dots, \xi^{k_{l_{j+1}-1}})$$ because $\xi^k$ are i.i.d. random variables. Using the independence and the equality $\Prob{\xi^{k_{l_{j+1}}} = 1} = p,$ we have
    \begin{align*}
        &\ProbCond{\eta_{j+1} = l_{j+1}}{\bigcap_{i=1}^{j}\{\eta_{i} = l_{i}\}} \nonumber\\
        &\leq p\sum_{k_1 < \dots < k_{l_{j+1}} = 1}^{\infty} \ProbCond{\bigcap_{i=1}^{l_{j+1} - 1} \{\xi^{k_{i}}=0\}, \bigcap_{i=1}^{l_{j+1}} \left(\{k^{\xi}_{j,i} = k_i\} \bigcap A_{k_i}\right)}{\bigcap_{i=1}^{j}\{\eta_{i} = l_{i}\}}.
    \end{align*}
    Since the events $\{k^{\xi}_{j,l_{j+1}} = k_{l_{j+1}}\} \bigcap A_{k_{l_{j+1}}}$ do not intersect, we can use the additivity of the probability.
    If $l_{j+1} = 1,$ we get
    \begin{align*}
        &\ProbCond{\eta_{j+1} = l_{j+1}}{\bigcap_{i=1}^{j}\{\eta_{i} = l_{i}\}} \leq p \ProbCond{\bigcup_{i = 1}^{\infty} \left(\{k^{\xi}_{j,l_{j+1}} = i\} \bigcap A_{i}\right)}{\bigcap_{i=1}^{j}\{\eta_{i} = l_{i}\}} \leq p,
    \end{align*}
    and prove the lemma for $l_{j+1} = 1.$
    Otherwise, if $l_{j+1} > 1,$ we obtain
    \begin{align*}
        &\ProbCond{\eta_{j+1} = l_{j+1}}{\bigcap_{i=1}^{j}\{\eta_{i} = l_{i}\}} \nonumber\\
        &\leq p\sum_{k_1 < \dots < k_{l_{j+1} - 1} = 1}^{\infty} \ProbCond{\bigcap_{i=1}^{l_{j+1} - 1} \{\xi^{k_{i}}=0\}, \bigcap_{i=1}^{l_{j+1} - 1} \left(\{k^{\xi}_{j,i} = k_i\} \bigcap A_{k_i}\right), \right.\\
        &\qquad\qquad\qquad\qquad\qquad\left.\bigcup_{i = k_{l_{j+1} - 1} + 1}^{\infty} \left(\{k^{\xi}_{j,l_{j+1}} = i\} \bigcap A_{i}\right)}{\bigcap_{i=1}^{j}\{\eta_{i} = l_{i}\}}.
    \end{align*}
    For any events $A$ and $B,$ we have $\Prob{A, B} \leq \Prob{A},$ thus 
    \begin{align*}
        &\ProbCond{\eta_{j+1} = l_{j+1}}{\bigcap_{i=1}^{j}\{\eta_{i} = l_{i}\}} \nonumber\\
        &\leq p\sum_{k_1 < \dots < k_{l_{j+1} - 1} = 1}^{\infty} \ProbCond{\bigcap_{i=1}^{l_{j+1} - 1} \{\xi^{k_{i}}=0\}, \bigcap_{i=1}^{l_{j+1} - 1} \left(\{k^{\xi}_{j,i} = k_i\} \bigcap A_{k_i}\right)}{\bigcap_{i=1}^{j}\{\eta_{i} = l_{i}\}}.
    \end{align*}
    Let us continue for $l_{j+1} > 1$ and rewrite the last inequality: 
    \begin{align*}
        &\ProbCond{\eta_{j+1} = l_{j+1}}{\bigcap_{i=1}^{j}\{\eta_{i} = l_{i}\}} \nonumber\\
        &\leq p\sum_{k_1 < \dots < k_{l_{j+1} - 1} = 1}^{\infty} \ProbCond{\bigcap_{i=1}^{l_{j+1} - 2} \{\xi^{k_{i}}=0\}, \xi^{k_{l_{j+1} - 1}}=0, \bigcap_{i=1}^{l_{j+1} - 1} \left(\{k^{\xi}_{j,i} = k_i\} \bigcap A_{k_i}\right)}{\bigcap_{i=1}^{j}\{\eta_{i} = l_{i}\}}.
    \end{align*}
    Note that, for all $i \in [l_{j+1} - 2],$ the event $\{\xi^{k_{i}}=0\} \in \sigma(\xi^1, \dots, \xi^{k_{l_{j+1}-2}}).$ Also,
    for all $i \in [l_{j+1} - 1],$ the event $\{k^{\xi}_{j,i} = k_i\} \bigcap A_{k_i} \in \sigma(\xi^1, \dots, \xi^{k_{l_{j+1}-2}}).$ Finally, since $k^{\xi}_{i-1,l_i} < k^{\xi}_{j,l_{j+1} - 1},$ the event $$A_{k_{l_{j+1} - 1}} \bigcap \{k^{\xi}_{j,l_{j+1} - 1} = k_{l_{j+1} - 1}\} \bigcap \{\eta_{i} = l_{i}\} \subseteq \sigma(\xi^1, \dots, \xi^{k_{l_{j+1}-2}})$$ for all $i \in [j].$ Therefore, the event $\{\xi^{k_{l_{j+1} - 1}} = 0\}$ is independent of the event $$\bigcap_{i=1}^{l_{j+1} - 2} \{\xi^{k_{i}}=0\} \bigcap_{i=1}^{l_{j+1} - 1} \left(\{k^{\xi}_{j,i} = k_i\} \bigcap A_{k_i}\right) \bigcap_{i=1}^{j}\{\eta_{i} = l_{i}\}.$$Thus, we have
    \begin{align*}
        &\ProbCond{\eta_{j+1} = l_{j+1}}{\bigcap_{i=1}^{j}\{\eta_{i} = l_{i}\}} \nonumber\\
        &\leq p (1 - p)\sum_{k_1 < \dots < k_{l_{j+1} - 1} = 1}^{\infty} \ProbCond{\bigcap_{i=1}^{l_{j+1} - 2} \{\xi^{k_{i}}=0\}, \bigcap_{i=1}^{l_{j+1} - 1} \left(\{k^{\xi}_{j,i} = k_i\} \bigcap A_{k_i}\right)}{\bigcap_{i=1}^{j}\{\eta_{i} = l_{i}\}}.
    \end{align*}
    Since the events $\{k^{\xi}_{j,l_{j+1} - 1} = k_{l_{j+1} - 1}\} \bigcap A_{k_{l_{j+1} - 1}}$ do not intersect, we use the additivity of the probability. If $l_{j+1} = 2,$ we get
    \begin{align*}
        \ProbCond{\eta_{j+1} = l_{j+1}}{\bigcap_{i=1}^{j}\{\eta_{i} = l_{i}\}} &\leq p (1 - p)\ProbCond{\bigcup_{i = 1}^{\infty} \left(\{k^{\xi}_{j,l_{j+1} - 1} = i\} \bigcap A_i\right)}{\bigcap_{i=1}^{j}\{\eta_{i} = l_{i}\}} \\
        &\leq p(1 - p),
    \end{align*}
    and prove the lemma for $l_{j+1} = 2.$
    Otherwise, if $l_{j+1} > 2,$ we obtain
    \begin{align*}
        &\ProbCond{\eta_{j+1} = l_{j+1}}{\bigcap_{i=1}^{j}\{\eta_{i} = l_{i}\}} \nonumber\\
        &\leq p (1 - p)\sum_{k_1 < \dots < k_{l_{j+1} - 2} = 1}^{\infty} \ProbCond{\bigcap_{i=1}^{l_{j+1} - 2} \{\xi^{k_{i}}=0\}, \bigcap_{i=1}^{l_{j+1} - 2} \left(\{k^{\xi}_{j,i} = k_i\} \bigcap A_{k_i}\right), \right.\\
        &\qquad\qquad\qquad\qquad\qquad\qquad\qquad \left. \bigcup_{i = k_{l_{j+1} - 2} + 1}^{\infty} \left(\{k^{\xi}_{j,l_{j+1} - 1} = i\} \bigcap A_i\right)}{\bigcap_{i=1}^{j}\{\eta_{i} = l_{i}\}} \\
        &\leq p (1 - p)\sum_{k_1 < \dots < k_{l_{j+1} - 2} = 1}^{\infty} \ProbCond{\bigcap_{i=1}^{l_{j+1} - 2} \{\xi^{k_{i}}=0\}, \bigcap_{i=1}^{l_{j+1} - 2} \left(\{k^{\xi}_{j,i} = k_i\} \bigcap A_{k_i}\right)}{\bigcap_{i=1}^{j}\{\eta_{i} = l_{i}\}},
    \end{align*}
    where we use $\Prob{A, B} \leq \Prob{A}$ for any events $A$ and $B$.
    Using mathematical induction, we can continue and get that
    \begin{align*}
        &\ProbCond{\eta_{j+1} = l_{j+1}}{\bigcap_{i=1}^{j}\{\eta_{i} = l_{i}\}} \leq p (1 - p)^{l_{j+1} - 1}.
    \end{align*}
\end{proof}

\subsubsection{Lemma~\ref{lemma:tau}}

This is a technical lemma that we use in the proof of Lemma~\ref{lemma:boundtime}.

\begin{lemma}
    \label{lemma:tau}
    Let us consider a sorted sequence $0 < \tau_1 \leq \dots \leq \tau_n \leq \tau_{n + 1} = \infty$ and a constant $S \geq \frac{1}{4}.$ We define
    \begin{align*}
        t_1 \eqdef S \left(\sum_{i=1}^{j^*_1} \frac{1}{\tau_i}\right)^{-1},
    \end{align*}
    where
    \begin{align*}
        j^*_1 = \inf \left\{m \in [n] \,\middle|\, S\left(\sum_{i=1}^{m} \frac{1}{\tau_i}\right)^{-1} < \tau_{m+1}\right\},
    \end{align*}
    and
    \begin{align*}
        t_2 \eqdef \min_{j \in [n]} \left[\left(\sum_{i=1}^{j} \frac{1}{\tau_i}\right)^{-1} \left(S + j\right)\right].
    \end{align*}
    Then $$t_1 \leq t_2 \leq 6 t_1.$$
\end{lemma}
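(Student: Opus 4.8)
The plan is to set $H_j \eqdef \sum_{i=1}^j \nicefrac{1}{\tau_i}$ for $j \in [n]$, so that $t_1 = \nicefrac{S}{H_{j^*_1}}$ and $t_2 = \min_{j \in [n]} \nicefrac{(S+j)}{H_j}$, and to prove the two inequalities $t_1 \le t_2$ and $t_2 \le 6 t_1$ separately. Throughout I will use that the $H_j$ are nondecreasing in $j$ (since $\nicefrac{1}{\tau_i} > 0$) and that the $\tau_i$ are sorted, and I will exploit the defining property of $j^*_1$: for every $m < j^*_1$ one has $S (\sum_{i=1}^m \nicefrac{1}{\tau_i})^{-1} \ge \tau_{m+1}$, while $S (\sum_{i=1}^{j^*_1} \nicefrac{1}{\tau_i})^{-1} < \tau_{j^*_1+1}$.

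For $t_1 \le t_2$ I would show $\nicefrac{S}{H_{j^*_1}} \le \nicefrac{(S+j)}{H_j}$ for every $j \in [n]$, splitting into two cases. If $j \le j^*_1$, then $H_j \le H_{j^*_1}$, so $\nicefrac{S}{H_{j^*_1}} \le \nicefrac{S}{H_j} \le \nicefrac{(S+j)}{H_j}$. If $j > j^*_1$ (so in particular $j^*_1 < n$ and $\tau_{j^*_1+1}$ is finite), I use that $j^*_1$ lies in its defining set, i.e. $\nicefrac{S}{H_{j^*_1}} < \tau_{j^*_1+1}$, hence $\nicefrac{1}{\tau_{j^*_1+1}} < \nicefrac{H_{j^*_1}}{S}$; combining this with $\nicefrac{1}{\tau_i} \le \nicefrac{1}{\tau_{j^*_1+1}}$ for $i > j^*_1$ gives
\[
H_j = H_{j^*_1} + \sum_{i=j^*_1+1}^{j} \frac{1}{\tau_i} \le H_{j^*_1} + \frac{j - j^*_1}{\tau_{j^*_1+1}} \le H_{j^*_1}\left(1 + \frac{j}{S}\right) = H_{j^*_1}\,\frac{S+j}{S}.
\]
Rearranging $S H_j \le (S+j) H_{j^*_1}$ yields $\nicefrac{S}{H_{j^*_1}} \le \nicefrac{(S+j)}{H_j}$, which completes this direction.

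For $t_2 \le 6 t_1$ it suffices to evaluate $\nicefrac{(S+j)}{H_j}$ at the single test index $j = j^*_1$, giving $t_2 \le \nicefrac{(S + j^*_1)}{H_{j^*_1}}$; so it is enough to prove $S + j^*_1 \le 6S$. The key estimate is $j^*_1 \le S + 1$. If $j^*_1 = 1$ this is immediate; if $j^*_1 \ge 2$, then $m = j^*_1 - 1$ is not in the defining set, so $S (\sum_{i=1}^{j^*_1-1} \nicefrac{1}{\tau_i})^{-1} \ge \tau_{j^*_1}$, i.e. $S \ge \tau_{j^*_1} H_{j^*_1-1} = \sum_{i=1}^{j^*_1-1} \nicefrac{\tau_{j^*_1}}{\tau_i} \ge j^*_1 - 1$, using $\tau_i \le \tau_{j^*_1}$ for $i \le j^*_1 - 1$. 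Hence $j^*_1 \le S+1$, and since $S \ge \nicefrac{1}{4}$ we have $1 \le 4S$, so $S + j^*_1 \le 2S + 1 \le 6S$, which gives $t_2 \le 6 t_1$.

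The manipulations using monotonicity of $H_j$ and the sortedness of $\{\tau_i\}$ are routine; the only genuinely substantive steps are (i) choosing $j = j^*_1$ as the test index in the upper bound and (ii) the a priori estimate $j^*_1 \le S+1$, which is exactly where the hypothesis $S \ge \nicefrac{1}{4}$ enters (it converts $2S+1$ into $6S$). I do not anticipate any real obstacle beyond isolating these two observations; no further case analysis should be needed.
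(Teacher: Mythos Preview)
Your proof is correct and follows essentially the same approach as the paper: split around $j^*_1$ for the lower bound, then use the estimate $j^*_1 \le S+1$ together with $S \ge \tfrac14$ for the upper bound. Your argument for $t_1 \le t_2$ is marginally more direct---you bound $(S+j)/H_j$ uniformly over all $j$ rather than introducing the minimizer $j^*_2$ and the auxiliary inequality $\tau_{j^*_2} < t_2$ as the paper does---but the decomposition and the key estimate $j^*_1 \le S+1$ are identical.
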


\begin{proof}
    Additionally, let us define 
    \begin{align*}
        j^*_2 = \argmin_{j \in [n]} \left[\left(\sum_{i=1}^{j} \frac{1}{\tau_i}\right)^{-1} \left(S + j\right)\right],
    \end{align*}
    where $j^*_2$ is the smallest index. For $j^*_2 = 1,$ we have
    \begin{align*}
        t_2 = \tau_1 \left(S + 1\right) > \tau_1.
    \end{align*}

    For $j^*_2 > 1$, we have
    \begin{align*}
        \left(\sum_{i=1}^{j^*_2} \frac{1}{\tau_i}\right)^{-1}\left(S + j^*_2\right) < \left(\sum_{i=1}^{j^*_2 - 1} \frac{1}{\tau_i}\right)^{-1}\left(S + j^*_2 - 1\right).
    \end{align*}
    From this inequality, we get
    \begin{align*}
        \left(\sum_{i=1}^{j^*_2 - 1} \frac{1}{\tau_i}\right)\left(S + j^*_2\right) < \left(\sum_{i=1}^{j^*_2} \frac{1}{\tau_i}\right)\left(S + j^*_2 - 1\right)
    \end{align*}
    and
    \begin{align*}
        \left(\sum_{i=1}^{j^*_2} \frac{1}{\tau_i}\right) < \frac{1}{\tau_{j^*_2}} \left(S + j^*_2\right).
    \end{align*}
    Thus $\tau_{j^*_2} < t_2$ for all $j^*_2 \geq 1.$

    Then either $j^*_2 \leq j^*_1$ and 
    \begin{align*}
        t_2 = \left(\sum_{i=1}^{j^*_2} \frac{1}{\tau_i}\right)^{-1} \left(S + j^*_2\right) \geq S \left(\sum_{i=1}^{j^*_2} \frac{1}{\tau_i}\right)^{-1} \geq S \left(\sum_{i=1}^{j^*_1} \frac{1}{\tau_i}\right)^{-1} = t_1,
    \end{align*}
    or $j^*_2 > j^*_1$ and 
    \begin{align*}
        t_2 > \tau_{j^*_2} \geq \tau_{j^*_1 + 1} > S \left(\sum_{i=1}^{j^*_1} \frac{1}{\tau_i}\right)^{-1} = t_1,
    \end{align*}
    where we used the definition of $j^*_1.$ It concludes that $t_2 \geq t_1.$ 

    Assume that $j_1^* > S + 1.$ Since the harmonic mean of a sequence less or equal to the maximum, we have
    \begin{align*}
        S \left(\sum_{i=1}^{j^*_1 - 1} \frac{1}{\tau_i}\right)^{-1} < \left(j^*_1 - 1\right) \left(\sum_{i=1}^{j^*_1 - 1} \frac{1}{\tau_i}\right)^{-1} \leq \tau_{j^*_1 - 1} \leq \tau_{j^*_1}.
    \end{align*}
    This inequality contradicts the definition of $j^*_1.$ It means that $j^*_1 \leq S + 1$ and
    \begin{align*}
        t_2 & \leq \left(\sum_{i=1}^{j^*_1} \frac{1}{\tau_i}\right)^{-1} \left(S + j^*_1\right) \leq \left(\sum_{i=1}^{j^*_1} \frac{1}{\tau_i}\right)^{-1} \left(2 S + 1\right) \leq \left(\sum_{i=1}^{j^*_1} \frac{1}{\tau_i}\right)^{-1} \left(6 S \right) \leq 6 t_1.
    \end{align*}
\end{proof}

\subsection{Proof of Theorems~\ref{thm:sgd_homog} and \ref{theorem:homog_time}}

\subsubsection{Proof of Theorems~\ref{thm:sgd_homog}}
\label{sec:proof_of_thm:sgd_homog}

\THEOREMSGDHOMOG*

\begin{proof}
    Note that Method~\ref{alg:alg_server} is just the stochastic gradient method with the batch size $S.$ Method~\ref{alg:alg_server} can be rewritten as $x^{k+1} = x^k - \gamma \frac{1}{S} \sum_{i=1}^S \widehat{\nabla} f(x^k;\xi_i),$ where the $\xi_i$ are independent random samples. It means that we can use the classical SGD result (see Theorem~\ref{theorem:sgd}). For a stepsize $$\gamma = \min\left\{\frac{1}{L}, \frac{\varepsilon S}{2 L \sigma^2}\right\},$$ we have
    \begin{align*}
        \frac{1}{K}\sum_{k=0}^{K-1}\Exp{\norm{\nabla f(x^k)}^2} \leq \varepsilon,
    \end{align*}
    if $$K \geq \frac{12 \Delta L}{\varepsilon} + \frac{12 \Delta L \sigma^2}{\varepsilon^2 S}.$$ Using the choice of $S,$ we showed that Method~\ref{alg:alg_server} converges after $$K \geq \frac{24 \Delta L}{\varepsilon}$$ steps with 
    $$\gamma = \min\left\{\frac{1}{L}, \frac{\varepsilon S}{2 L \sigma^2}\right\} \geq \frac{1}{2 L}.$$
\end{proof}

\subsubsection{The classical SGD theorem}
We reprove the classical SGD result \citep{ghadimi2013stochastic, khaled2020better}.
\begin{theorem}
    \label{theorem:sgd}
    Assume that Assumptions~\ref{ass:lipschitz_constant} and \ref{ass:lower_bound} hold. We consider the SGD method: $$x^{k+1} = x^k - \gamma g(x^k),$$ where
    \begin{align*}
        \gamma = \min\left\{\frac{1}{L}, \frac{\varepsilon}{2 L \sigma^2}\right\}
    \end{align*} 
    For a fixed $x \in \R^d$, $g(x)$ is a random vector such that $\Exp{g(x)} = \nabla f(x),$ \begin{align}
        \label{eq:aux_sigma}
        \Exp{\norm{g(x) - \nabla f(x)}^2} \leq \sigma^2,
    \end{align} and $g(x^k)$ are independent vectors for all $k \geq 0.$ Then
    \begin{align*}
        \frac{1}{K}\sum_{k=0}^{K-1}\Exp{\norm{\nabla f(x^k)}^2} \leq \varepsilon
    \end{align*}
    for
    \begin{align*}
        K \geq \frac{4 \Delta L}{\varepsilon} + \frac{8 \Delta L \sigma^2}{\varepsilon^2}.
    \end{align*}
\end{theorem}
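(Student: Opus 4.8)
The plan is to run the standard descent-lemma analysis for SGD on $L$-smooth, lower-bounded functions. First I would invoke Assumption~\ref{ass:lipschitz_constant} in the form of the descent inequality
\[
f(x^{k+1}) \leq f(x^k) + \inp{\nabla f(x^k)}{x^{k+1} - x^k} + \frac{L}{2}\sqnorm{x^{k+1} - x^k},
\]
and substitute the update $x^{k+1} - x^k = -\gamma g(x^k)$ to obtain
\[
f(x^{k+1}) \leq f(x^k) - \gamma \inp{\nabla f(x^k)}{g(x^k)} + \frac{L \gamma^2}{2}\sqnorm{g(x^k)}.
\]

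Next I would take the expectation conditioned on $x^k$; this is legitimate because the $g(x^k)$ are independent across $k$, so $x^k$ is independent of the fresh randomness in $g(x^k)$. Unbiasedness $\Exp{g(x^k)} = \nabla f(x^k)$ turns the cross term into $-\gamma \sqnorm{\nabla f(x^k)}$, and the decomposition $\Exp{\sqnorm{g(x^k)}} = \sqnorm{\nabla f(x^k)} + \Exp{\sqnorm{g(x^k) - \nabla f(x^k)}}$ together with \eqref{eq:aux_sigma} gives
\[
\ExpCond{f(x^{k+1})}{x^k} \leq f(x^k) - \gamma\left(1 - \frac{L\gamma}{2}\right)\sqnorm{\nabla f(x^k)} + \frac{L\gamma^2 \sigma^2}{2}.
\]
Since $\gamma \leq 1/L$ we have $1 - L\gamma/2 \geq 1/2$, so after taking full expectation and using the tower property,
\[
\Exp{f(x^{k+1})} \leq \Exp{f(x^k)} - \frac{\gamma}{2}\Exp{\sqnorm{\nabla f(x^k)}} + \frac{L\gamma^2\sigma^2}{2}.
\]

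Then I would telescope this from $k = 0$ to $K-1$, use $f(x^0) - \inf f \leq \Delta$ (Assumption~\ref{ass:lower_bound} together with $\Delta \eqdef f(x^0) - f^*$), and divide by $\gamma K / 2$ to obtain
\[
\frac{1}{K}\sum_{k=0}^{K-1}\Exp{\sqnorm{\nabla f(x^k)}} \leq \frac{2\Delta}{\gamma K} + L\gamma\sigma^2.
\]
Finally I would plug in $\gamma = \min\{1/L, \varepsilon/(2L\sigma^2)\}$: the second term is then at most $\varepsilon/2$, while $1/\gamma = \max\{L, 2L\sigma^2/\varepsilon\} \leq L + 2L\sigma^2/\varepsilon$, so $2\Delta/(\gamma K) \leq 2\Delta L/K + 4\Delta L \sigma^2/(\varepsilon K)$, which is at most $\varepsilon/2$ as soon as $K \geq 4\Delta L/\varepsilon + 8\Delta L\sigma^2/\varepsilon^2$. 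Combining the two bounds yields the claim.

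There is essentially no real obstacle here; this is the textbook SGD descent argument. The only points requiring a little care are the conditional-expectation bookkeeping, which is handled by the stated independence of the $g(x^k)$, and bounding the $\max$ in the stepsize by a sum so that the two regimes $\gamma = 1/L$ and $\gamma = \varepsilon/(2L\sigma^2)$ are treated uniformly.
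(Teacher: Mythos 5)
Your proposal is correct and follows essentially the same route as the paper's proof: descent lemma, conditional expectation using unbiasedness and the variance bound, the factor $1 - L\gamma/2 \geq 1/2$ from $\gamma \leq 1/L$, telescoping to $\frac{2\Delta}{\gamma K} + L\gamma\sigma^2$, and then the stepsize choice. Your explicit verification of the final constants (bounding $1/\gamma$ by the sum $L + 2L\sigma^2/\varepsilon$ and splitting $\varepsilon$ into two halves) is actually more detailed than the paper, which simply asserts that the choice of $\gamma$ and $K$ yields the bound.
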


\begin{proof}
    From Assumption~\ref{ass:lipschitz_constant}, we have
    \begin{align*}
        f(x^{k+1}) &\leq f(x^k) + \inp{\nabla f(x^k)}{x^{k+1} - x^{k}} + \frac{L}{2} \norm{x^{k+1} - x^{k}}^2 \\
        &= f(x^k) - \gamma \inp{\nabla f(x^k)}{g(x^k)} + \frac{L \gamma^2}{2} \norm{g(x^k)}^2.
    \end{align*}
    We denote $\mathcal{G}^k$ as a sigma-algebra generated by $g(x^0), \dots, g(x^{k-1}).$ Using unbiasedness and \eqref{eq:aux_sigma}, we obtain
    \begin{align*}
        \ExpCond{f(x^{k+1})}{\mathcal{G}^k} &\leq f(x^k) - \gamma \left(1 - \frac{L \gamma}{2}\right) \norm{\nabla f(x^k)}^2 + \frac{L \gamma^2}{2} \ExpCond{\norm{g^k - \nabla f(x^k)}^2}{\mathcal{G}^k} \\
        &\leq f(x^k) - \gamma \left(1 - \frac{L \gamma}{2}\right) \norm{\nabla f(x^k)}^2 + \frac{L \gamma^2 \sigma^2}{2}.
    \end{align*}
    Since $\gamma \leq \nicefrac{1}{L},$ we get
    \begin{align*}
        \ExpCond{f(x^{k+1})}{\mathcal{G}^k} \leq f(x^k) - \frac{\gamma}{2} \norm{\nabla f(x^k)}^2 + \frac{L \gamma^2 \sigma^2}{2}.
    \end{align*}
    We subtract $f^*$ and take the full expectation to obtain
    \begin{align*}
        \Exp{f(x^{k+1}) - f^*} \leq \Exp{f(x^k) - f^*} - \frac{\gamma}{2} \Exp{\norm{\nabla f(x^k)}^2} + \frac{L \gamma^2 \sigma^2}{2}.
    \end{align*}
    Next, we sum the inequality for $k \in \{0, \dots, K - 1\}$:
    \begin{align*}
        \Exp{f(x^{K}) - f^*} &\leq f(x^0) - f^* - \sum_{k=0}^{K-1}\frac{\gamma}{2} \Exp{\norm{\nabla f(x^k)}^2} + \frac{K L \gamma^2 \sigma^2}{2} \\
        & = \Delta - \sum_{k=0}^{K-1}\frac{\gamma}{2} \Exp{\norm{\nabla f(x^k)}^2} + \frac{K L \gamma^2 \sigma^2}{2}.
    \end{align*}
    Finally, we rearrange the terms and use that $\Exp{f(x^{K}) - f^*} \geq 0$:
    \begin{align*}
        \frac{1}{K}\sum_{k=0}^{K-1}\Exp{\norm{\nabla f(x^k)}^2} \leq \frac{2 \Delta}{\gamma K} + L \gamma \sigma^2.
    \end{align*}
    The choice of $\gamma$ and $K$ ensures that
    \begin{align*}
        \frac{1}{K}\sum_{k=0}^{K-1}\Exp{\norm{\nabla f(x^k)}^2} \leq \varepsilon.
    \end{align*}
\end{proof}

\subsubsection{Proof of Theorems~\ref{theorem:homog_time}}
\label{sec:proof_of_thm:homog_time}

\THEOREMSGDHOMOGTIME*

\begin{proof}
    In this setup, the method converges after $K \times$\emph{\{time required to collect a batch of the size $S$\}.}
    Without loss of generality, we assume that $\tau_1 \leq \dots \leq \tau_n.$ 
    
    Let us define time that is enough to collect a batch of the size $S$ as $t'.$ Obviously, one can always take $t' = 3 \tau_{n} \left\lceil\frac{S}{n}\right\rceil$ and guarantees that every worker calculates at least $\left\lceil\frac{S}{n}\right\rceil$ stochastic gradients, but we will provide a tighter $t'.$

    We define $B_i$ as the number of received gradients with an iteration index equals to $k.$\footnote{Note that a worker may send a gradient from the previous iterations that we ignore in the method.} 
    For each worker, there are two options: either the $i$\textsuperscript{th} worker does not send a gradient with an iteration index $k$ and $B_i = 0,$ or it sends at least once and $B_i > 0.$

    In the worst case, for $i$\textsuperscript{th} worker, the time required to calculate $B_i$ gradients equals
    \begin{align*}
        t_i \eqdef 
    \begin{cases}
        \tau_i \left(1 + B_i\right) & B_i > 0 \\
        0 & B_i = 0
    \end{cases} 
    \end{align*}
    because either $B_i > 0$ and, in the worst case, a worker finishes the calculation of a gradient from the previous iteration (that we ignore) and only then starts the calculation of a gradient of the current iteration $k,$ or $B_i = 0$ and the server does not receive any gradients from a worker.

    Note that all workers work in parallel, so our goal is to find feasible points $t' \in \R$ and $B_1, \cdots, B_n \in \N_0$ such that
    \begin{align}
        \label{eq:aux_t_problem}
        &t' \geq \max_{i \in [n]}t_i \nonumber\\
        &B_1, \cdots, B_n \in \N_0 \\
        &\sum_{i=1}^n {B_i} \geq S \nonumber
    \end{align}

    First, we relax an assumption that $B_i \in \N_0$ and assume that $B_i \in \R$ for all $i \in [n]:$ 
    \begin{align}
        \label{eq:aux_t_problem_relaxed}
        &t' \geq \max_{i \in [n]}t_i \nonumber\\
        &B_1, \cdots, B_n \in \R \\
        &B_1, \cdots, B_n \geq 0 \nonumber \\
        &\sum_{i=1}^n {B_i} \geq S \nonumber
    \end{align}
    If $B_i \in \R$ are feasible points of \eqref{eq:aux_t_problem_relaxed}, then 
    \begin{align*}&\max_{i \in [n]}t_i = \max_{B_i > 0} \tau_i \left(1 + B_i\right) \leq \max_{B_i > 0} \tau_i \left(1 + \lceil B_i \rceil\right) \\
        &\leq \max_{B_i > 0} \tau_i \left(2 + B_i \right) \leq 2 \max_{B_i > 0} \tau_i \left(1 + B_i \right) = 2 \max_{i \in [n]}t_i.
    \end{align*}
    It means that if $t' \in \R$ and $B_1, \cdots, B_n$ are feasible points of \eqref{eq:aux_t_problem_relaxed}, then $2 t'$ and $\lceil B_1 \rceil, \cdots, \lceil B_n \rceil$ are feasible points of \eqref{eq:aux_t_problem}.

    Let us define 
    \begin{align*}
        t'(j) \eqdef \left(\sum_{i=1}^j \frac{1}{\tau_i}\right)^{-1}\left(S + j\right) \quad \forall j \in [n],
    \end{align*}
    and take $j^* = \argmin_{j \in [n]} t'(j),$ $j^*$ is the smallest index from all minimizers of $t'(j).$ Let us show that 
    $t'(j^*)$ and 
    \begin{align*}
        B_i = \begin{cases}
            \frac{t'(j^*)}{\tau_i} - 1, & i \leq j^*\\
            0, & i > j^*
        \end{cases}
    \end{align*}
    are feasible points of \eqref{eq:aux_t_problem_relaxed}. First, we have
    \begin{align*}
        \sum_{i=1}^n B_i = \sum_{i=1}^{j^*} \left(\frac{t'(j^*)}{\tau_i} - 1\right) = \left(\sum_{i=1}^{j^*} \frac{1}{\tau_i}\right)^{-1}\left(S + j^*\right) \left(\sum_{i=1}^{j^*} \frac{1}{\tau_i}\right) - j^* = S.
    \end{align*}
    Next, we show that $B_i > 0$ for all $i \leq j^*.$ If $j^* = 1,$ then $t'(1) = \tau_1 (S + 1),$ thus $B_1 = S > 0.$ If $j^* > 1,$ then, by its definition, we have $t'(j^*) < t'(j^* - 1),$ thus
    \begin{align*}
        \left(\sum_{i=1}^{j^*} \frac{1}{\tau_i}\right)^{-1}\left(S + j^*\right) < \left(\sum_{i=1}^{j^* - 1} \frac{1}{\tau_i}\right)^{-1}\left(S + j^* - 1\right).
    \end{align*}
    From this inequality, we get
    \begin{align*}
        \left(\sum_{i=1}^{j^* - 1} \frac{1}{\tau_i}\right)\left(S + j^*\right) < \left(\sum_{i=1}^{j^*} \frac{1}{\tau_i}\right)\left(S + j^* - 1\right)
    \end{align*}
    and
    \begin{align*}
        \left(\sum_{i=1}^{j^*} \frac{1}{\tau_i}\right) < \frac{1}{\tau_{j^*}} \left(S + j^*\right).
    \end{align*}
    From the last inequality, we get that $\tau_{j^*} < t'(j^*),$ thus $B_{i} \geq B_{j^*} > 0$ for all $i \leq j^*.$ It is left to show that
    \begin{align*}
        \max_{i \in [n]} t_i = \max_{i \leq j^*} \tau_i \left(B_i + 1\right) = t'(j^*).
    \end{align*}

    Finally, we can conclude that Method~\ref{alg:alg_server} returns a solution after
    \begin{align*}
        K \times 2 t'(j^*) = \frac{48 \Delta L}{\varepsilon} \min_{j \in [n]} \left[\left(\sum_{i=1}^j \frac{1}{\tau_i}\right)^{-1}\left(S + j\right)\right]
    \end{align*}
    seconds.
\end{proof}

\section{Proofs for Heterogeneous Regime}

\subsection{Proof of Theorem~\ref{theorem:lower_bound_heterog}}
\THEOREMLOWERBOUNDHETROG*

The structure of the following proof is similar to the proof of Theorem~\ref{theorem:lower_bound_homog}. In the heterogeneous regime, the main difference is that we have more freedom to choose the functions $f_i.$

\begin{proof}
    In \eqref{eq:lower_bound_heterog}, we have the sum of two terms. We split the proof in two parts for each of the terms.\\
    (\textbf{Part 1}) \\
    (\textbf{Step 1}: $f \in \cF_{\Delta, L}$) \\
    Let us fix $\lambda > 0.$ We consider the following functions $f_i:$
    \begin{align*}
        f_i(x) \eqdef 
        \begin{cases} 
            0, & i < n, \\
            \frac{n L \lambda^2}{l_1} F_{T}\left(\frac{x}{\lambda}\right), & i = n.
        \end{cases}
    \end{align*}
    Let us show that the function $f$ is $L$-smooth:
    \begin{align*}
        \norm{\nabla f(x) - \nabla f(y)} &= \frac{1}{n}\norm{\sum_{i=1}^n \left(\nabla f_i(x) - \nabla f_i(y)\right)} = \frac{L \lambda}{l_1} \norm{\nabla F_{T}\left(\frac{x}{\lambda}\right) - \nabla F_{T}\left(\frac{y}{\lambda}\right)} \leq L \norm{x - y}.
    \end{align*}
    Let us take $$T = \left\lfloor\frac{\Delta l_1}{L \lambda^2 \Delta^0}\right\rfloor,$$ then
    \begin{align*}
        f(0) - \inf_{x \in \R^T} f(x) = \frac{1}{n} \frac{n L \lambda^2}{l_1} (F_{T}\left(0\right) - \inf_{x \in \R^{T}} F_{T}(x)) \leq \frac{L \lambda^2 \Delta^0 T}{l_1} \leq \Delta.
    \end{align*}
    We showed that the function $f \in \cF_{\Delta, L}.$\\
    (\textbf{Step 2}: Oracle Class)

    In the oracles $O_i,$ we have the freedom to choose a mapping $\widehat{\nabla} f_i(\cdot; \cdot)$ (see \eqref{eq:oracle_stochastic_delay}). In this part of the proof, we simply take non-stochastic mappings $\widehat{\nabla} f_i(x; \xi) \eqdef \nabla f_i(x)$ that are, obviously, unbiased and $\sigma^2$-variance-bounded. We can take an arbitrary distribution, for instance, let us take $\mathcal{D}_i = \textnormal{Bernouilli(1)}$ for all $i \in [n].$

    (\textbf{Step 3}: Analysis of Protocol)

    We take 
    $$\lambda = \frac{l_1 \sqrt{\varepsilon}}{L}$$
    to ensure that 
    \begin{align*}
        \norm{\nabla f(x)}^2 &= \frac{1}{n^2} \norm{\nabla f_n(x)}^2 = \frac{L^2 \lambda^2}{l_1^2} \norm{\nabla F_{T}\left(\frac{x}{\lambda}\right)}^2 > \frac{L^2 \lambda^2}{l_1^2} = \varepsilon
    \end{align*}
    for all $x \in \R^T$ such that $\textnormal{prog}(x) < T.$
    Thus
    $$T = \left\lfloor\frac{\Delta L}{l_1 \varepsilon \Delta^0}\right\rfloor.$$
    Only the $n$\textsuperscript{th} worker contains a nonzero function and can provide a gradient every $\tau_n$ seconds. Since $A$ is a zero-respecting algorithm and the function $f_n$ is a zero-chain function, for all $k \geq 0$ such that
    \begin{align*}
        t^k < \tau_n T,
    \end{align*}
    we have $$\norm{\nabla f(x^k)} > \varepsilon$$ because we need at least $T$ oracle calls to obtain $\textnormal{prog}(x^k) \geq T.$
    It means that
    \begin{align*}
        \inf_{k \in S_t} \norm{\nabla f(x^k)}^2 > \varepsilon
    \end{align*}
    for $$t = \tau_n \left(\frac{\Delta L}{l_1 \varepsilon \Delta^0} - 1\right).$$

    We now prove the second part of the lower bound.\\
    (\textbf{Part 2})\\
    (\textbf{Step 1}: $f \in \cF_{\Delta, L}$)

    Let us fix $\lambda > 0.$ We assume that $x = [x_1, \dots, x_n] \in \R^{nT}.$ We define $x_i \in \R^{T}$ as the $i$\textsuperscript{th} block of a vector $x = [x_1, \dots, x_n] \in \R^{nT}.$ We consider the following functions $f_i:$ 
    \begin{align*}
        f_i(x) \eqdef \frac{n L \lambda_i^2}{l_1} F_{T}\left(\frac{x_i}{\lambda_i}\right).
    \end{align*}
    The function $f_i$ depends only on a subset of variables $x_i$ from $x.$ Let us show that the function $f$ is $L$-smooth. Indeed, we have
    \begin{align*}
        \norm{\nabla f_i(x) - \nabla f_i(y)} = \frac{n L \lambda_i}{l_1} \norm{\nabla F_{T}\left(\frac{x_i}{\lambda_i}\right) - \nabla F_{T}\left(\frac{y_i}{\lambda_i}\right)} \leq n L \norm{x_i - y_i} \quad \forall i \in [n],
    \end{align*}
    and
    \begin{align*}
        \norm{\nabla f(x) - \nabla f(y)}^2 &= \frac{1}{n^2}\norm{\sum_{i=1}^n \left(\nabla f_i(x) - \nabla f_i(y)\right)}^2 = \frac{1}{n^2}\sum_{i=1}^n \norm{\nabla f_i(x) - \nabla f_i(y)}^2\\
        &\leq \frac{1}{n^2}\sum_{i=1}^n n^2 L^2 \norm{x_i - y_i}^2 = L^2\norm{x - y}^2.
    \end{align*}
    Let us take $$T = \left\lfloor \frac{\Delta l_1}{L \sum_{j=1}^n \lambda_j^2 \Delta^0} \right\rfloor \quad,$$ then
    \begin{align*}
        f(0) - \inf_{x \in \R^T} f(x) = \frac{1}{n} \sum_{i=1}^n \frac{n L \lambda_i^2}{l_1} (F_{T}\left(0\right) - \inf_{x \in \R^{T}} F_{T}(x)) \leq \sum_{i=1}^n \frac{L \lambda_i^2 \Delta^0 T}{l_1} \leq \Delta.
    \end{align*}
    We showed that the function $f \in \cF_{\Delta, L}.$

    (\textbf{Step 2}: Oracle Class)

    In the oracles $O_i,$ we have the freedom to choose a mapping $\widehat{\nabla} f_i(\cdot; \cdot)$ (see \eqref{eq:oracle_stochastic_delay}). Let us take 
    $$[\widehat{\nabla} f_i(x; \xi)]_j \eqdef \nabla_j f_i(x) \left(1 + \mathbbm{1}\left[j > (i - 1)T + \textnormal{prog}(x_i)\right]\left(\frac{\xi}{p_i} - 1\right)\right) \quad \forall x \in \R^{nT},$$ 
    $\mathcal{D}_i = \textnormal{Bernouilli}(p_i),$ and $p_i \in (0, 1]$ for all $i \in [n].$
Let us show it is unbiased and $\sigma^2$-variance-bounded:
 $$\Exp{[\widehat{\nabla} f_i(x; \xi)]_j} = \nabla_j f_i(x) \left(1 + \mathbbm{1}\left[j > (i - 1)T + \textnormal{prog}(x_i)\right]\left(\frac{\Exp{\xi}}{p_i} - 1\right)\right) = \nabla_j f_i(x)$$ for all $j \in nT,$ and
\begin{align*}
    \Exp{\norm{\widehat{\nabla} f_i(x; \xi) - \nabla f_i(x)}^2} \leq \norm{\nabla f_i(x)}_{\infty}^2\Exp{\left(\frac{\Exp{\xi}}{p_i} - 1\right)^2}
\end{align*}
because the difference is non-zero only in one coordinate.
Thus
\begin{align*}
    \Exp{\norm{\widehat{\nabla} f_i(x; \xi) - \nabla f_i(x)}^2} &\leq \frac{\norm{\nabla f_i(x)}_{\infty}^2(1 - p_i)}{p_i} = \frac{n^2 L^2 \lambda_i^2 \norm{F_{T}\left(\frac{x_i}{\lambda_i}\right)}_{\infty}^2(1 - p_i)}{l_1^2 p_i} \\
    &\leq \frac{n^2 L^2 \lambda_i^2 \gamma_{\infty}^2 (1 - p_i)}{l_1^2 p_i} \leq \sigma^2,
\end{align*}
where we take 
$$p_i = \min\left\{\frac{n^2 L^2 \lambda_i^2 \gamma_{\infty}^2}{\sigma^2 l_1^2}, 1\right\}.$$

(\textbf{Step 3}: Analysis of Protocol)

We fix $\eta > 0$ and choose 
$$\lambda_i = \frac{l_1 \sqrt{\eta \varepsilon \tau_i}}{L \sqrt{\sum_{i=1}^n \tau_i}}$$
to ensure that 
\begin{align}
    \norm{\nabla f(x)}^2 &= \frac{1}{n^2} \sum_{i=1}^n \norm{\nabla f_i(x)}^2 = \sum_{i=1}^n \frac{L^2 \lambda_i^2}{l_1^2}\norm{\nabla F_{T_i} \left(\frac{x_i}{\lambda_i}\right)}^2 \nonumber \\ 
    &= \sum_{i=1}^n \frac{\eta \varepsilon \tau_i}{\sum_{i=1}^n \tau_i}\norm{\nabla F_{T_i} \left(\frac{x_i}{\lambda_i}\right)}^2 > \sum_{i=1}^n \frac{\eta \varepsilon \tau_i}{\sum_{i=1}^n \tau_i}\mathbbm{1}[\textnormal{prog} (x_i) < T]
    \label{eq:aux_step_3_bound}
\end{align}
for all $x = [x_1, \dots, x_n] \in \R^{T}.$
Thus
$$T = \left\lfloor\frac{\Delta L}{\eta \varepsilon l_1 \Delta^0}\right\rfloor$$
and
\begin{align}
    \label{eq:prob_choice}
    p_i = \min\left\{\frac{n^2 \gamma_{\infty}^2 \eta \varepsilon \tau_i}{\sigma^2 \sum_{i=1}^n \tau_i}, 1\right\} \quad \forall i \in [n].
\end{align}

Protocol~\ref{alg:time_multiple_oracle_protocol} generates the sequence $\{x^k\}_{k=0}^{\infty} \equiv \{[x^k_1, \dots, x^k_n]\}_{k=0}^{\infty}.$ From \eqref{eq:aux_step_3_bound}, we have
\begin{align}
    \label{eq:aux_f_lower}
    \inf_{k \in S_t} \norm{\nabla f(x^k)}^2 > \inf_{k \in S_t} \sum_{i=1}^n \frac{\eta \varepsilon \tau_i}{\sum_{i=1}^n \tau_i}\mathbbm{1}[\textnormal{prog} (x_i^k) < T] \geq \sum_{i=1}^n \frac{\eta \varepsilon \tau_i}{\sum_{i=1}^n \tau_i}\inf_{k \in S_t} \mathbbm{1}[\textnormal{prog} (x_i^k) < T].
\end{align}
Further, we require the following auxillary lemma. See the proof in Section~\ref{sec:lemma_boundtime_heterog}.
\begin{restatable}{lemma}{LEMMABOUNDTIMEHETROG}
    \label{lemma:boundtime_heterog}
    For $\eta = 4,$ with probability not less than $1 / 2,$
    \begin{align*}
        \sum_{i=1}^n \frac{\eta \varepsilon \tau_i}{\sum_{i=1}^n \tau_i}\inf_{k \in S_t} \mathbbm{1}[\textnormal{prog} (x_i^k) < T] > 2 \varepsilon
    \end{align*}
    for $$t \leq \frac{1}{24} \left(\frac{\sigma^2 \sum_{i=1}^n \tau_i}{n^2 \gamma_{\infty}^2 \eta \varepsilon}\right) \left(\frac{T}{2} - 1\right).$$
\end{restatable}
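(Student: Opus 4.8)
The plan is to reduce the claim to a statement about $n$ independent ``races'' for progress and then apply the single-oracle machinery of Lemma~\ref{lemma:boundtime} to each block separately. Observe that the functions $f_i$ constructed in Step~1 of Part~2 are \emph{separable}: $f_i$ depends only on the block $x_i \in \R^T$, and the oracle $O_i = O_{\tau_i}^{\smallnablafunc_i}$ zeroes out coordinates of $x_i$ based on i.i.d.\ $\textnormal{Bernoulli}(p_i)$ draws, exactly as in \eqref{eq:proof_stoch_grad_lemma} but with success probability $p_i$ and delay $\tau_i$. Since a zero-respecting algorithm cannot advance $\textnormal{prog}(x_i)$ to $T$ without the $i$\textsuperscript{th} oracle having flipped $T$ successes, and \emph{only} $O_i$ can ever produce a nonzero coordinate in block $i$, the event $\{\inf_{k \in S_t}\mathbbm{1}[\textnormal{prog}(x_i^k) < T] < 1\}$ is controlled \emph{solely} by the Bernoulli$(p_i)$ draws fed to $O_i$ and by the times at which $O_i$ can deliver them. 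First I would invoke (the single-oracle case $n=1$ of) Lemma~\ref{lemma:boundtime} applied to oracle $O_i$ in isolation: with probability at least $1 - \delta_i$,
\begin{align*}
    \inf_{k \in S_t}\mathbbm{1}[\textnormal{prog}(x_i^k) < T] \geq 1
    \qquad\text{for}\qquad
    t \leq \frac{1}{24}\,\frac{\tau_i}{p_i}\left(\frac{T}{2} + \log \delta_i\right),
\end{align*}
using that for a single worker $\widehat{t}_k = k\tau_i$, so the ``$\min_m$'' collapses and $(\sum_{i=1}^m 1/\tau_i)^{-1}(1/p + m)$ with $m=1$ is $\tau_i(1/p_i + 1) \geq \tau_i/p_i$.

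Next I would substitute the chosen value $p_i = \min\{ n^2 \gamma_\infty^2 \eta \varepsilon \tau_i / (\sigma^2 \sum_j \tau_j),\, 1\}$ from \eqref{eq:prob_choice}. In the regime where the $\min$ is not active, $\tau_i / p_i = \sigma^2 \sum_j \tau_j / (n^2 \gamma_\infty^2 \eta \varepsilon)$, which is \emph{independent of $i$}; this is the key design feature that makes all blocks ``equally hard''. So for each $i$ and each $\delta \in (0,1)$, with probability at least $1-\delta$ we have $\textnormal{prog}(x_i^k) < T$ for all $k \in S_t$ whenever $t \leq \frac{1}{24}\left(\frac{\sigma^2 \sum_j \tau_j}{n^2 \gamma_\infty^2 \eta \varepsilon}\right)\left(\frac{T}{2} + \log \delta\right)$. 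Then I would take a union bound over $i \in [n]$: choosing $\delta$ so that $n\delta \leq 1/2$ costs only an additive $-\log(2n)$ inside the parenthesis, which I would absorb by a slightly larger constant or by noting $\log\delta$ already appears with a favourable sign; picking $\eta = 4$ as in the statement and using the hypotheses $\varepsilon \le c'L\Delta$ (so $T \ge 2$) I would arrange $\frac{T}{2} + \log\delta \ge \frac{T}{2} - 1$, giving that with probability at least $1/2$, \emph{simultaneously for all} $i \in [n]$, $\inf_{k\in S_t}\mathbbm{1}[\textnormal{prog}(x_i^k) < T] = 1$ for $t \leq \frac{1}{24}\left(\frac{\sigma^2 \sum_j \tau_j}{n^2 \gamma_\infty^2 \eta \varepsilon}\right)\left(\frac{T}{2} - 1\right)$.

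Finally, on this probability-$\tfrac12$ event I would plug the block-wise conclusion into the weighted sum:
\begin{align*}
    \sum_{i=1}^n \frac{\eta \varepsilon \tau_i}{\sum_j \tau_j}\,\inf_{k \in S_t}\mathbbm{1}[\textnormal{prog}(x_i^k) < T]
    \;=\; \sum_{i=1}^n \frac{\eta \varepsilon \tau_i}{\sum_j \tau_j}
    \;=\; \eta \varepsilon \;=\; 4\varepsilon \;>\; 2\varepsilon,
\end{align*}
which is exactly the claimed bound. The main obstacle I anticipate is the careful bookkeeping for the union bound and the independence/measurability argument: one has to be sure that the hard direction of Lemma~\ref{lemma:boundtime} genuinely applies \emph{per block} even though a single algorithm $A$ interleaves calls to all $n$ oracles and may adapt its choice of $i^{k+1}$ to gradients received from other blocks. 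The resolution is that the adversarial/adaptive interleaving can only \emph{delay} progress in block $i$ relative to the idealized schedule that devotes worker $i$ exclusively to block $i$, so the time-lower-bound $\sum_{r=1}^{T}\widehat t_{\eta_r^{(i)}}$ derived in Part~1 of Lemma~\ref{lemma:boundtime}'s proof (with $\widehat t_k = k\tau_i$) still lower-bounds $t^{k(T)}$ for block $i$; and the Bernoulli draws driving distinct oracles are independent, so the per-block failure events are independent (or at worst one applies the union bound without needing independence). I would also double-check the edge case where some $p_i$ hits the cap $1$, in which case $\tau_i/p_i = \tau_i \le \sigma^2\sum_j\tau_j/(n^2\gamma_\infty^2\eta\varepsilon)$ still holds, so the uniform bound is unaffected.
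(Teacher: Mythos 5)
Your per-block reduction is exactly what the paper does: for each block $i$, the event $\{\inf_{k\in S_t}\mathbbm{1}[\textnormal{prog}(x_i^k)<T]<1\}$ forces $\sum_{j=1}^{T}\widehat t_{i,\eta_{i,j}}\le t$ with $\widehat t_{i,k}=k\tau_i$, and the single-oracle case of Lemma~\ref{lemma:boundtime} gives $\Prob{\sum_{j=1}^{T}\widehat t_{i,\eta_{i,j}}\le t}\le\delta$ for $t\le\frac{\tau_i}{24 p_i}(\frac{T}{2}+\log\delta)$, with $\tau_i/p_i$ uniformly lower-bounded by $\sigma^2\sum_j\tau_j/(n^2\gamma_\infty^2\eta\varepsilon)$ by the choice \eqref{eq:prob_choice}. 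That part is sound.

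The gap is in the aggregation step. A union bound over the $n$ blocks forces $\delta_i=\Theta(1/n)$, so the per-block time threshold degrades to $\frac{1}{24}\bigl(\frac{\sigma^2\sum_j\tau_j}{n^2\gamma_\infty^2\eta\varepsilon}\bigr)\bigl(\frac{T}{2}-\log(2n)\bigr)$. The $\log(2n)$ term cannot be ``absorbed by a slightly larger constant'': $T=\lfloor\Delta L/(\eta\varepsilon l_1\Delta^0)\rfloor$ does not depend on $n$, so for $n$ large relative to $T$ your threshold becomes negative and the statement vacuous, whereas the lemma claims the bound with the $n$-independent constant $\frac{T}{2}-1$. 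Moreover, ``$\log\delta$ appears with a favourable sign'' is backwards: shrinking $\delta$ shrinks the admissible $t$, weakening the conclusion. The reason you are driven to a union bound is that you insist on the event that \emph{all} blocks succeed, which makes the sum equal to $\eta\varepsilon=4\varepsilon$ — but the lemma only needs $>2\varepsilon$, i.e.\ it tolerates up to a $\tau_i$-weighted fraction $1-2/\eta=1/2$ of failed blocks. The paper exploits exactly this slack: writing the complement event as $\{\sum_i\tau_i\mathbbm{1}[\textnormal{block }i\textnormal{ fails}]\ge(1-\frac{2}{\eta})\sum_i\tau_i\}$ and applying Markov's inequality to the nonnegative weighted sum of failure indicators yields a failure probability of at most $(1-\frac{2}{\eta})^{-1}\delta$ with a \emph{fixed} $\delta=1/4$, i.e.\ $2\cdot\frac14=\frac12$, with no dependence on $n$ and no independence assumption. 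This is also why $\eta=4$ is chosen. To repair your proof, replace the union bound by this Markov argument; everything else you wrote goes through.
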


Using Lemma~\ref{lemma:boundtime_heterog} and \eqref{eq:aux_f_lower}, we have
\begin{align*}
    \Exp{\inf_{k \in S_t} \norm{\nabla f(x^k)}^2} > \varepsilon
\end{align*}
for $$t = \frac{1}{24} \left(\frac{\sigma^2 \sum_{i=1}^n \tau_i}{4 n^2 \gamma_{\infty}^2 \varepsilon}\right) \left(\frac{\Delta L}{8 \varepsilon l_1 \Delta^0} - 2\right).$$
This finishes the proof of Part 2.

\end{proof}

\subsection{Proof of Lemma~\ref{lemma:boundtime_heterog}}
\label{sec:lemma_boundtime_heterog}

In the following lemma, we use notations from the proof of Theorem~\ref{theorem:lower_bound_heterog}.

\LEMMABOUNDTIMEHETROG*

\begin{proof}
    Let us fix $t \geq 0.$ Our goal is to show that the probability of an inequality \begin{align}
        \label{eq:aux_sum_eps}
        \sum_{i=1}^n \frac{\eta \varepsilon \tau_i}{\sum_{i=1}^n \tau_i}\inf_{k \in S_t} \mathbbm{1}[\textnormal{prog} (x_i^k) < T] \leq 2 \varepsilon
    \end{align} is small. 

    We now use the same reasoning as in Lemma~\ref{lemma:boundtime}. We use a notation $\left\{x\right\}_i$ is $i$\textsuperscript{th} block of the vector $x$.
    Let us fix a worker's index $i \in [n].$ 
    
    \begin{definition}[Sequence $k^{\xi}_{i,j,l}$]
        Let us consider a set
        \begin{align*}
            \left\{k \in \N \,|\, s^{k-1}_{{i^k}, q} = 1 \textnormal{, } t^{k} \geq s^{k-1}_{{i^k}, t} + \tau_{i^k} \textnormal{, } \textnormal{prog}\left(\left\{s^{k-1}_{{i^k}, x}\right\}_i\right) = j, i^k = i\right\}, \quad s^{k-1}_{{i^{k}}} \equiv (s^{k-1}_{{i^{k}}, t}, s^{k-1}_{{i^{k}}, q}, s^{k-1}_{{i^{k}}, x}).
        \end{align*}
        We order this set and define the result sequence as $\{k^{\xi}_{i,j,l}\}_{i=1}^{m_{i,j+1}},$ where $m_{i,j+1} \in [0, \infty]$ is the size of the sequence. The sequence $k^{\xi}_{i,j,l}$ is a subsequence of iterations where the $i$\textsuperscript{th} oracle use the generated Bernouilli random variables in \eqref{eq:oracle_stochastic_delay} when $\textnormal{prog}\left(\left\{s_{x}\right\}_i\right) = j.$ The sequence $s^{k-1}_{{i^{k}}}$ is defined in Protocol~\ref{alg:time_multiple_oracle_protocol}.
    \end{definition}

    Then $$\eta_{i,j+1} \eqdef \inf \{l \,|\, \xi^{k^{\xi}_{i,j,l}} = 1 \textnormal{ and } l \in [1, m_{i,j+1}]\} \in \N \cup \{\infty\}.$$
    The quantity $\eta_{i,j+1}$ is the index of the first successful trial, when $\textnormal{prog}(\cdot) = j$ in the $i$\textsuperscript{th} block of the sequence $x^k$. Since the algorithm $A$ is a zero-respecting algorithm, for all $k < k^{\xi}_{i,j,\eta_{i,j+1}},$ the progress $\textnormal{prog}(x^k_i) < j + 1.$ 
    
    As in Lemma~\ref{lemma:boundtime} (we skip the proof since the idea is the same. It is only required to use the different notations: $x^k \rightarrow x^k_i, \widehat{t}_{\eta_{j}} \rightarrow \widehat{t}_{i,\eta_{i,j}}$), for all $i \in [n],$ one can show that if $\inf_{k \in S_t} \mathbbm{1}[\textnormal{prog} (x_i^k) < T] < 1$ holds, then $\sum_{j=1}^{T} \widehat{t}_{i,\eta_{i,j}} \leq t,$ where $\widehat{t}_{i,k} \eqdef k\tau_i$ for all $k \geq 1.$ The time $\widehat{t}_{i,k}$ is the smallest possible time when the $i$\textsuperscript{th} oracle can return the $k$\textsuperscript{th} stochastic gradient. Thus
    \begin{align*}
        \Prob{\sum_{i=1}^n \frac{\eta \varepsilon \tau_i}{\sum_{i=1}^n \tau_i}\inf_{k \in S_t} \mathbbm{1}[\textnormal{prog} (x_i^k) < T] \leq 2 \varepsilon} \leq \Prob{\sum_{i=1}^n \frac{\eta \varepsilon \tau_i}{\sum_{i=1}^n \tau_i} \mathbbm{1}\left[\sum_{j=1}^{T} \widehat{t}_{i,\eta_{i,j}} \leq t\right] \leq 2 \varepsilon}.
    \end{align*}
    Using \eqref{eq:aux_delta} with $n = 1$ (and the different notations $p \rightarrow p_i, \tau_1 \rightarrow \tau_i,$ and $\widehat{t}_{\eta_{j}} \rightarrow \widehat{t}_{i,\eta_{i,j}}$), we have
    \begin{align}\
        \label{eq:aux_large_prob}
        \Prob{\sum_{j=1}^{T} \widehat{t}_{i,\eta_{i,j}} \leq t} \leq \delta \quad \forall t \leq \frac{\tau_i}{24 p_i} \left(\frac{T}{2} + \log \delta\right).
    \end{align}
    We now rearrange the terms and use Markov's inequality to obtain
    \begin{align*}
        &\Prob{\sum_{i=1}^n \frac{\eta \varepsilon \tau_i}{\sum_{i=1}^n \tau_i} \mathbbm{1}\left[\sum_{j=1}^{T} \widehat{t}_{i,\eta_{i,j}} > t\right] \leq 2 \varepsilon} \\
        &= \Prob{\sum_{i=1}^n \tau_i \mathbbm{1}\left[\sum_{j=1}^{T} \widehat{t}_{i,\eta_{i,j}} \leq t\right] \geq \left(1 - \frac{2}{\eta}\right) \sum_{i=1}^n \tau_i} \\
        &\leq \left(1 - \frac{2}{\eta}\right)^{-1} \left(\sum_{i=1}^n \tau_i\right)^{-1}\Exp{\sum_{i=1}^n \tau_i \mathbbm{1}\left[\sum_{j=1}^{T} \widehat{t}_{i,\eta_{i,j}} \leq t\right]} \\
        &= \left(1 - \frac{2}{\eta}\right)^{-1} \left(\sum_{i=1}^n \tau_i\right)^{-1} \sum_{i=1}^n \tau_i \Prob{\sum_{j=1}^{T} \widehat{t}_{i,\eta_{i,j}} \leq t},
    \end{align*}
    for $\eta > 2.$
    Using the choice of $p_i$ in \eqref{eq:prob_choice}, we have $$\frac{\tau_i}{p_i} \geq \frac{\sigma^2 \sum_{i=1}^n \tau_i}{n^2 \gamma_{\infty}^2 \eta \varepsilon}$$ for all $i \in [n].$
    The last term does not depend on $i.$ Therefore, we can use \eqref{eq:aux_large_prob} with $$t \leq \frac{1}{24} \left(\frac{\sigma^2 \sum_{i=1}^n \tau_i}{n^2 \gamma_{\infty}^2 \eta \varepsilon}\right) \left(\frac{T}{2} + \log \delta\right)$$ to get
    \begin{align*}
        \Prob{\sum_{i=1}^n \frac{\eta \varepsilon \tau_i}{\sum_{i=1}^n \tau_i} \mathbbm{1}\left[\sum_{j=1}^{T} \widehat{t}_{i,\eta_{i,j}} > t\right] \leq 2 \varepsilon}
        \leq \left(1 - \frac{2}{\eta}\right)^{-1} \left(\sum_{i=1}^n \tau_i\right)^{-1} \left(\sum_{i=1}^n \tau_i\right) \delta = \left(1 - \frac{2}{\eta}\right)^{-1} \delta.
    \end{align*}
    Finally, for $\eta = 4$ and $\delta = 1 / 4,$ we have
    \begin{align*}
        \Prob{\sum_{i=1}^n \frac{\eta \varepsilon \tau_i}{\sum_{i=1}^n \tau_i}\inf_{k \in S_t} \mathbbm{1}[\textnormal{prog} (x_i^k) < T] \leq 2 \varepsilon} \leq \frac{1}{2}.
    \end{align*}

\end{proof}

\subsection{Proof of Theorem~\ref{thm:sgd_heterog}}

\THEOREMSGDHETROG*

\begin{proof}
    Note that Method~\ref{alg:alg_server_heterog} can be rewritten as $x^{k+1} = x^k - \gamma \frac{1}{n} \sum_{i=1}^n \frac{1}{B_i} \sum_{j=1}^{B_i} \widehat{\nabla} f_i(x^k;\xi_{i,j}),$ where the $\xi_{i,j}$ are independent random samples. The variance of the gradient estimator equals
    \begin{align*}
        &\Exp{\norm{\frac{1}{n} \sum_{i=1}^n \frac{1}{B_i} \sum_{j=1}^{B_i} \widehat{\nabla} f_i(x^k;\xi_{i,j}) - \nabla f(x^k)}^2}\\
        &=\frac{1}{n^2} \sum_{i=1}^n \Exp{\norm{\frac{1}{B_i} \sum_{j=1}^{B_i} \widehat{\nabla} f_i(x^k;\xi_{i,j}) - \nabla f_i(x^k)}^2} \\
        &=\frac{1}{n^2} \sum_{i=1}^n \frac{1}{B_i^2} \sum_{j=1}^{B_i} \Exp{\norm{\widehat{\nabla} f_i(x^k;\xi_{i,j}) - \nabla f_i(x^k)}^2} \leq \frac{1}{n^2} \sum_{i=1}^n \frac{\sigma^2}{B_i} = \left(\frac{1}{n} \sum_{i=1}^n \frac{1}{B_i}\right) \frac{\sigma^2}{n} \leq \frac{\sigma^2}{S},
    \end{align*}
    where we use the inequality $\left(\frac{1}{n} \sum_{i=1}^n \frac{1}{B_i}\right)^{-1} \geq \frac{S}{n},$
    We can use the classical SGD result (see Theorem~\ref{theorem:sgd}). For a stepsize $$\gamma = \min\left\{\frac{1}{L}, \frac{\varepsilon S}{2 L \sigma^2}\right\},$$ we have
    \begin{align*}
        \frac{1}{K}\sum_{k=0}^{K-1}\Exp{\norm{\nabla f(x^k)}^2} \leq \varepsilon,
    \end{align*}
    if $$K \geq \frac{12 \Delta L}{\varepsilon} + \frac{12 \Delta L \sigma^2}{\varepsilon^2 S}.$$ Using the choice of $S,$ we obtain that Method~\ref{alg:alg_server_heterog} converges after $$K \geq \frac{24 \Delta L}{\varepsilon}$$ steps.
\end{proof}

\subsection{Proof of Theorem~\ref{theorem:heterog_time}}

\THEOREMSGDHETROGTIME*

\begin{proof}
    The method converges after $K \times$\emph{\{time required to collect batches with the sizes $B_i$ such that $\left(\frac{1}{n} \sum_{i=1}^n \nicefrac{1}{B_i}\right)^{-1} \geq \frac{S}{n}$ holds\}.}

    In the worst case, for $i$\textsuperscript{th} worker, the time required to calculate $B_i$ gradients equals
    \begin{align*}
        t_i \eqdef \tau_i \left(1 + B_i\right)
    \end{align*}
    because it is possible that a worker finishes the calculation of a gradient from the previous iteration (that we ignore) and only then starts the calculation of a gradient of the current iteration.

    Our goal is to find feasible points $t' \in \R$ and $B_1, \cdots, B_n \in \N$ such that
    \begin{align}
        \label{eq:aux_t_problem_het}
        &t' \geq \max_{i \in [n]}t_i, \nonumber\\
        &B_1, \cdots, B_n \in \N, \\
        &\left(\frac{1}{n} \sum_{i=1}^n \frac{1}{B_i}\right)^{-1} \geq \frac{S}{n}. \nonumber
    \end{align}
    Using the same reasoning as in Theorem~\ref{theorem:homog_time}, we relax the assumption that $B_i \in \N$ and assume that $B_i \in \R$ for all $i \in [n]:$ 
    \begin{align}
        \label{eq:aux_t_problem_relaxed_het}
        &t' \geq \max_{i \in [n]}t_i \nonumber\\
        &B_1, \cdots, B_n \in \R \\
        &B_1, \cdots, B_n > 0\nonumber \\
        &\left(\frac{1}{n} \sum_{i=1}^n \frac{1}{B_i}\right)^{-1} \geq \frac{S}{n}, \nonumber
    \end{align}
If $t' \in \R$ and $B_1, \cdots, B_n$ are feasible points of \eqref{eq:aux_t_problem_relaxed_het}, then $2 t'$ and $\lceil B_1 \rceil, \cdots, \lceil B_n \rceil$ are feasible points of \eqref{eq:aux_t_problem_het}. Let us show that $t' = 2 \left(\tau_{n} + \left(\frac{1}{n} \sum_{i=1}^n \tau_i\right) \frac{S}{n}\right)$ and $B_i = \frac{t'}{\tau_i} - 1$ are feasible points. Indeed, for all $i \in [n],$
\begin{align*}
    B_i = \frac{t'}{\tau_i} - 1 \geq \frac{2 \tau_n}{\tau_i} - 1 \geq 1.
\end{align*}
Next, we have
\begin{align*}
    \max_{i \in [n]} t_i = \max_{i \in [n]} \tau_i \left(B_i + 1\right) = t'
\end{align*}
and
\begin{align*}
    \left(\frac{1}{n} \sum_{i=1}^n \frac{1}{B_i}\right)^{-1} = \left(\frac{1}{n} \sum_{i=1}^n \frac{\tau_i}{t' - \tau_i}\right)^{-1} \geq \left(\frac{1}{n} \sum_{i=1}^n \frac{2 \tau_i}{t'}\right)^{-1} = \frac{t'}{2} \left(\frac{1}{n} \sum_{i=1}^n \tau_i\right)^{-1} \geq \frac{S}{n},
\end{align*}
where we use $t' - \tau_i \geq t' / 2 + \tau_i - \tau_i = t' / 2$ for all $i \in [n].$
Finally, it means that Method~\ref{alg:alg_server_heterog} returns a solution after
    \begin{align*}
        K \times 2 t' = \frac{96 \Delta L}{\varepsilon} \left(\tau_{n} + \left(\frac{1}{n} \sum_{i=1}^n \tau_i\right) \frac{S}{n}\right)
    \end{align*}
seconds.
\end{proof}

\section{Interrupt Oracle Calculations}
\label{sec:stop_oracle}
Let us define a protocol and an oracle where an algorithm can stop the oracle anytime. If an algorithm stops the oracle, its current calculations are canceled and discarded.
\begin{protocol}[H]
    \caption{Time Multiple Oracles Protocol With Control}
    \label{alg:time_multiple_oracle_protocol_with_control}
    \begin{algorithmic}[1]
    \STATE \textbf{Input: }functions $f \in \cF,$ oracles and distributions $((O_1, \dots, O_n), (\mathcal{D}_1, \dots, \mathcal{D}_n)) \in \cO(f),$ algorithm $A \in \cA$
    \STATE $s^0_i = 0$ for all $i \in [n]$
    \FOR{$k = 0, \dots, \infty$}
\STATE $(t^{k+1}, i^{k+1}, {\orange c^{k}}, x^k) = A^k(g^1, \dots, g^{k}), \hfill $$\rhd\,{t^{k+1} \geq t^{k}} \hspace{1.82cm}$
    \STATE $(s^{k+1}_{{i^{k+1}}}, g^{k+1}) = O_{{i^{k+1}}}({t^{k+1}}, x^k, {\orange c^{k}}, s^{k}_{{i^{k+1}}}, \xi^{k+1}), \quad \xi^{k+1} \sim \mathcal{D}$ \hfill $\rhd\,s^{k+1}_j = s^{k}_j \quad \forall j \neq i^{k+1}$
    \ENDFOR
    \end{algorithmic}
\end{protocol}

In Protocol~\ref{alg:time_multiple_oracle_protocol_with_control}, we allow algorithms to output the control variables $c^{k}$ that can be used in the following oracle.

We take an oracle $$O_{\tau}^{\smallnablafunc}\,:\, \R_{\geq 0} \times \R^d \times \underbrace{(\R_{\geq 0} \times \R^d \times \{0, 1\})}_{\textnormal{input state}} \times \underbrace{\{0, 1\}}_{\textnormal{control}} \times \mathbb{S}_{\xi} \rightarrow \underbrace{(\R_{\geq 0} \times \R^d \times \{0, 1\})}_{\textnormal{output state}} \times \R^d$$ such that
\begin{align}
    \label{eq:oracle_stochastic_delay_control}
    O_{\tau}^{\smallnablafunc}(t, x, (s_t, s_x, s_q), c, \xi) = 
\left\{
\begin{aligned}
    &((t, x, 1), &0), \qquad & c = 0 \textnormal{ and } s_q = 0, \\
    &((s_t, s_x, 1), &0), \qquad & c = 0 \textnormal{ and } s_q = 1 \textnormal{ and } t < s_t + \tau,\\
    &((0, 0, 0), & \widehat{\nabla} f(s_x; \xi)), \qquad & c = 0 \textnormal{ and } s_q = 1 \textnormal{ and } t \geq s_t + \tau, \\
    &((0, 0, 0), & 0), \qquad & c = 1,
\end{aligned}
\right.
\end{align}
and $\widehat{\nabla} f$ is a mapping such that
\begin{align*}
    \widehat{\nabla} f\,:\, \R^d \times \mathbb{S}_\xi \rightarrow \R^d.
\end{align*}
The oracle \eqref{eq:oracle_stochastic_delay_control} generalizes the oracle \eqref{eq:oracle_stochastic_delay} since an algorithm can send a signal $c$ to the oracle \eqref{eq:oracle_stochastic_delay_control} and interrupt the calculations. Note that if $c = 1,$ then \eqref{eq:oracle_stochastic_delay_control} has the same behavior as \eqref{eq:oracle_stochastic_delay}. But, if $c = 0,$ then the oracle \eqref{eq:oracle_stochastic_delay_control} discards all previous information in the state, and changes $s_q$ to $0.$ 

Let us define an oracle class:
\begin{definition}[Oracle Class $\cO_{\tau_1, \dots, \tau_n}^{\sigma^2, \textnormal{stop}}$]\ \\
    Let us consider an oracle class such that, for any $f \in \cF_{\Delta, L},$ it returns oracles $O_i = O_{\tau_i}^{\smallnablafunc}$ and distributions $\mathcal{D}_i$ for all $i \in [n],$ where $\widehat{\nabla} f$ is an unbiased $\sigma^2$-variance-bounded mapping (see Assumption~\ref{ass:stochastic_variance_bounded}). The oracles $O_{\tau_i}^{\smallnablafunc}$ are defined in \eqref{eq:oracle_stochastic_delay_control}. We define such oracle class as $\cO_{\tau_1, \dots, \tau_n}^{\sigma^2, \textnormal{stop}}.$ Without loss of generality, we assume that $0 < \tau_1 \leq \dots \leq \tau_n.$ \label{def:oracle_class_control}
\end{definition}

For the oracle class $\cO_{\tau_1, \dots, \tau_n}^{\sigma^2, \textnormal{stop}}$, we state that 
\begin{align*}
    \mathfrak{m}_{\textnormal{time}}\left(\cA_{\textnormal{zr}}, \cF_{\Delta, L}\right) = \Omega\left(\min_{m \in [n]} \left[\left(\frac{1}{m} \sum_{i=1}^{m} \frac{1}{\tau_i}\right)^{-1} \left(\frac{L \Delta}{\varepsilon} + \frac{\sigma^2 L \Delta}{m \varepsilon^2}\right)\right]\right).
\end{align*}
The lower bound is the same as for the oracle class from Definition~\ref{def:oracle_class}. We do not provide a formal proof, but a close investigation can reveal that the proof is the same as in Theorem~\ref{theorem:lower_bound_homog}. 

Indeed, in Part 1 of the proof of Lemma~\ref{lemma:boundtime}, we reduce the the inequality $\inf_{k \in S_t} \mathbbm{1}\left[\textnormal{prog}(x^k) < T\right] < 1$ to the inequality $\sum_{i=1}^T \widehat{t}_{\eta_{i}} \leq t$, where $\widehat{t}_{\eta_{i}}$ is the shortest time when the oracles can draw a successful Bernouilli random variable. The fact that an algorithm can interrupt the oracles can not change the quantity $\widehat{t}_{\eta_{i}}.$

\section{Time Complexity with Synchronized Start}
\label{sec:sync_start}
In this section, we continue and fill up the discussion in Section~\ref{sec:sync_start_main}.

Let us design an oracle for the synchronized start setting. We take an oracle $$O_{\tau_1, \dots, \tau_n}^{\smallnablafunc}\,:\, \R_{\geq 0} \times \R^d \times \underbrace{(\R_{\geq 0} \times \R^d \times \{0, 1\})}_{\textnormal{input state}} \times \underbrace{(\mathbb{S}_{\xi} \times \dots \times \mathbb{S}_{\xi})}_{n \textnormal{ times}} \rightarrow \underbrace{(\R_{\geq 0} \times \R^d \times \{0, 1\})}_{\textnormal{output state}} \times \R^d$$ such that

\begin{align}
    \label{eq:oracle_stochastic_delay_sync}
    &O_{\tau_1, \dots, \tau_n}^{\smallnablafunc}(t, x, (s_t, s_x, s_q), (\xi_1, \dots, \xi_n)) = \nonumber\\
&\left\{
\begin{aligned}
    &((t, x, 1), &0), \qquad & s_q = 0, \\
    &((0, 0, 0), &0), \qquad & s_q = 1 \textnormal{ and } t \in [0, s_t + \tau_1),\\
    &((0, 0, 0), & \widehat{\nabla} f(s_x; \xi_1)), \qquad & s_q = 1 \textnormal{ and } t \in [s_t + \tau_1, s_t + \tau_2),\\
    &((0, 0, 0), & \sum_{i=1}^2 \widehat{\nabla} f(s_x; \xi_i)), \qquad & s_q = 1 \textnormal{ and } t \in [s_t + \tau_2, s_t + \tau_3),\\
    &\dots\\
    &((0, 0, 0), & \sum_{i=1}^n \widehat{\nabla} f(s_x; \xi_i)), \qquad & s_q = 1 \textnormal{ and } t \in [s_t + \tau_n, \infty),\\
\end{aligned}
\right.
\end{align}
and $\widehat{\nabla} f$ is a mapping such that
\begin{align*}
    \widehat{\nabla} f\,:\, \R^d \times \mathbb{S}_\xi \rightarrow \R^d.
\end{align*}
We assume that $\xi^{k+1}$ is a tuple in Protocol~\ref{alg:time_oracle_protocol}: $\xi^{k+1} \equiv (\xi_1^{k+1}, \dots, \xi_n^{k+1}) \sim \mathcal{D}.$

The oracle \eqref{eq:oracle_stochastic_delay_sync} with Protocol~\ref{alg:time_oracle_protocol} emulates the behavior of a setting where we broadcast an iterate $x$ to all workers, and they start calculations simultaneously. The workers have different time delays, hence some finish earlier than others. An algorithm can stop the procedure earlier and get calculated stochastic gradients, but other non-calculated ones will be discarded.

For this oracle, we define an oracle class:
\begin{definition}[Oracle Class $\cO_{\tau_1, \dots, \tau_n}^{\sigma^2, \textnormal{sync}}$]\ \\
    \label{def:oracle_sync}
    Let us consider an oracle class such that, for any $f \in \cF_{\Delta, L},$ it return an oracle $O = \cO_{\tau_1, \dots, \tau_n}^{\smallnablafunc}$ and a distribution $\mathcal{D},$ where $\widehat{\nabla} f$ is an unbiased $\sigma^2$-variance-bounded mapping (see Assumption~\ref{ass:stochastic_variance_bounded}). The oracle $O_{\tau_1, \dots, \tau_n}^{\smallnablafunc}$ is defined in \eqref{eq:oracle_stochastic_delay_sync}. We define such oracle class as $\cO_{\tau_1, \dots, \tau_n}^{\sigma^2, \textnormal{sync}}.$ Without loss of generality, we assume that $0 < \tau_1 \leq \dots \leq \tau_n.$ \label{def:oracle_class_sync}
\end{definition}

We now provide the lower bound for Protocol~\ref{alg:time_oracle_protocol} and the oracle class $\cO_{\tau_1, \dots, \tau_n}^{\sigma^2, \textnormal{sync}}.$

\begin{restatable}{theorem}{THEOREMLOWERBOUNDSYNC}
    \label{theorem:lower_bound_homog_sync}
    Let us consider the oracle class $\cO_{\tau_1, \dots, \tau_n}^{\sigma^2, \textnormal{sync}}$ for some $\sigma^2 >0$ and $0 < \tau_1 \leq \dots \leq \tau_n.$ We fix any $L, \Delta > 0$ and $0 < \varepsilon \leq c' L \Delta.$ In the view Protocol~\ref{alg:time_oracle_protocol}, for any algorithm $A \in \cA_{\textnormal{zr}},$ there exists a function $f \in \cF_{\Delta, L}$ and an oracle and a distribution $(O, \mathcal{D}) \in \cO_{\tau_1, \dots, \tau_n}^{\sigma^2, \textnormal{sync}}(f)$
    such that $\Exp{\inf_{k \in S_t} \norm{\nabla f(x^k)}^2} > \varepsilon,$
    where $S_t \eqdef \left\{k \in \N_0 \middle| t^{k} \leq t\right\},$ and $$t = c \times \min_{m \in [n]} \left[\tau_m \left(\frac{L \Delta}{\varepsilon} + \frac{\sigma^2 L \Delta}{m \varepsilon^2}\right)\right].$$ The quantity $c'$ and $c$ are {\markchanges universal} constants.
\end{restatable}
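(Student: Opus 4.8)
\textbf{Proof proposal for Theorem~\ref{theorem:lower_bound_homog_sync}.}

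The plan is to mirror the structure of the proof of Theorem~\ref{theorem:lower_bound_homog}, reusing the ``worst case'' function $F_T$ from Section~\ref{sec:worst_case} together with the zero-chain / zero-respecting argument, but replacing the parallel-asynchronous timing analysis of Lemma~\ref{lemma:boundtime} with a much simpler synchronized-timing analysis. In Step~1 I would take $f(x) = (L\lambda^2/l_1) F_T(x/\lambda)$ with $\lambda = \sqrt{2\varepsilon}\, l_1 / L$ and $T = \lfloor \Delta L / (2\varepsilon l_1 \Delta^0)\rfloor$, exactly as before, so that $f \in \cF_{\Delta,L}$ and $\norm{\nabla f(x)}^2 > 2\varepsilon$ whenever $\textnormal{prog}(x) < T$. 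In Step~2 I would equip the single oracle $O_{\tau_1,\dots,\tau_n}^{\smallnablafunc}$ from \eqref{eq:oracle_stochastic_delay_sync} with the randomized mapping $[\widehat\nabla f(x;\xi)]_j = \nabla_j f(x)(1 + \mathbbm{1}[j > \textnormal{prog}(x)](\xi/p - 1))$ and $\mathcal{D} = \textnormal{Bernoulli}(p)^{\otimes n}$ with $p = \min\{2\varepsilon\gamma_\infty^2/\sigma^2, 1\}$; the unbiasedness and $\sigma^2$-variance-boundedness computations are identical to those in Step~2 of Theorem~\ref{theorem:lower_bound_homog}.

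The heart of the argument is Step~3, the timing analysis, which is where the $\min_m$ structure appears. The key observation is that in the synchronized oracle \eqref{eq:oracle_stochastic_delay_sync}, whenever an algorithm issues a ``start'' request at the current iterate $x$ and later queries at a time $t$, it receives a \emph{sum} of stochastic gradients — specifically $\sum_{i=1}^m \widehat\nabla f(x;\xi_i)$ where $m = m(t)$ is the largest index with $s_t + \tau_m \le t$ — but all of these stochastic gradients are evaluated at the \emph{same} point $x$. By the zero-chain property of $F_T$ (Lemma~\ref{lemma:worst_function}, item 4) and the zero-respecting property of $A$, progressing from coordinate $j$ to coordinate $j+1$ requires obtaining a stochastic gradient at a point with $\textnormal{prog} = j$ whose $(j+1)$st coordinate is non-zero, which happens only if \emph{at least one} of the $m$ Bernoulli draws in that batch succeeds. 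A batch obtained at ``wall-clock cost'' $\tau_m$ (measured from the moment the batch was started) contributes $m$ fresh i.i.d.\ Bernoulli($p$) trials. Thus, to advance one coordinate, an algorithm that decides to wait time $\tau_m$ per round and collect size-$m$ batches needs, in expectation, roughly $1/(pm)$ rounds, i.e.\ time $\approx \tau_m/(pm)$ per coordinate; and across all $T$ coordinates this is $\approx T\,\tau_m/(pm)$. Since the algorithm is free to choose a different $m$ at each round, the relevant lower bound is $T \cdot \min_{m\in[n]}[\tau_m(1/(pm) + 1)]$ — the ``$+1$'' accounting for the fact that even a successful round costs at least $\tau_m$ (equivalently $\tau_1$ if $m=1$, but the minimization absorbs this). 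I would formalize this by defining, analogously to Lemma~\ref{lemma:boundtime}, the index $\eta_{j+1}$ of the first successful \emph{round} in progressing past coordinate $j$, showing $\ProbCond{\eta_{j+1} = \ell}{\eta_j,\dots,\eta_1} \le (1-p^{m})^{\ell-1}\cdot(\text{something})$ — more precisely that a round of batch size $m$ succeeds with probability $1 - (1-p)^m \le pm$ — and then running the same Chernoff / moment-generating-function computation as in Part~2 of the proof of Lemma~\ref{lemma:boundtime}, with the quantity $\sum_{i=1}^n \lfloor t'/\tau_i \rfloor$ replaced by the simpler per-round count $\sum_{i:\,\tau_i \le t'} 1 = |\{i : \tau_i \le t'\}|$, which is exactly the batch size available after waiting time $t'$ from a synchronized start. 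A technical lemma analogous to Lemma~\ref{lemma:tau} would convert the non-constructive threshold $t' = \frac{1}{4p}\cdot\frac{1}{m^*}$ (where $m^* = |\{i : \tau_i \le t'\}|$, defined self-referentially) into the clean form $t' \gtrsim \min_{m\in[n]}[\tau_m(1/p + m)^{-1}\cdot m]$... on reflection, it is cleaner to directly verify that $\min_m[\tau_m(1/(pm)+1)]$ is $\Theta$ of the optimal threshold, since here the batch-size/time relationship $m \leftrightarrow \tau_m$ is monotone by the sorting assumption $\tau_1 \le \dots \le \tau_n$, so no harmonic-mean manipulation is needed — one simply notes $1/p \cdot \min_m(\tau_m/m) + \min_m \tau_m \asymp \min_m[\tau_m(1/(pm) + 1)]$ and substitutes $1/p \asymp \sigma^2/(\varepsilon\gamma_\infty^2)$ and $T \asymp \Delta L/\varepsilon$ to land on $t = c\min_m[\tau_m(L\Delta/\varepsilon + \sigma^2 L\Delta/(m\varepsilon^2))]$.

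Finally I would conclude exactly as in Theorem~\ref{theorem:lower_bound_homog}: the event $\{\textnormal{prog}(x^k) < T \text{ for all } k \in S_t\}$ has probability at least $1/2$ for $t$ below the stated threshold, hence $\Exp{\inf_{k\in S_t}\norm{\nabla f(x^k)}^2} \ge 2\varepsilon \cdot \tfrac12 > \varepsilon$. The main obstacle I anticipate is the combinatorial bookkeeping in the analogue of Lemma~\ref{lemma:probs} / Part~1 of Lemma~\ref{lemma:boundtime}: one must argue carefully that the Bernoulli trials consumed in distinct rounds are genuinely fresh and independent of the (algorithm-controlled, hence random) choices of when to stop each round and which batch size to collect — i.e.\ a stopping-time / filtration argument showing that conditioning on past $\eta$'s does not bias future draws. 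This is conceptually the same subtlety handled in the proof of Lemma~\ref{lemma:probs}, and I would adapt that argument, which should go through with only notational changes since the synchronized setting is in fact simpler (a single oracle, a single stream of batches, no interleaving across $n$ asynchronous oracles). Everything else — the function construction, the variance bound, the Chernoff tail — is routine given the machinery already developed in the homogeneous proof.
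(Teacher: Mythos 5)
Your proposal is correct and follows essentially the same route as the paper's proof: identical Steps 1--2 (scaled $F_T$ with $\lambda=\sqrt{2\varepsilon}l_1/L$ and the Bernoulli-masked gradient with $p\asymp\varepsilon/\sigma^2$), and a Step 3 whose self-corrected form --- lower-bounding the time to accumulate $\eta$ individual Bernoulli draws by the packing bound $\eta\min_{m}\tau_m/m$, verifying $\tfrac1p\min_m(\tau_m/m)+\min_m\tau_m\asymp\min_m\tau_m(1+\tfrac{1}{pm})$, and controlling the conditional law of $\eta_{j+1}$ via a filtration argument before the Chernoff step --- is exactly the paper's Lemma~\ref{lemma:boundtime_sync} together with Lemmas~\ref{lemma:tau_sync} and~\ref{lemma:probs_sync}. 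The one subtlety you flag (freshness of draws under algorithm-controlled, adaptive round sizes) is indeed the content of Lemma~\ref{lemma:probs_sync}, and your proposed treatment of it matches the paper's.
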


\subsection{Minimax optimal method}
In this section, we analyze the $m$--Minibatch SGD method (see Method~\ref{alg:alg_server_m_star}). This method generalizes the Minibatch SGD method from Section~\ref{sec:previous_parallel_algorithms}. Unlike Minibatch SGD, the $m$--Minibatch SGD method only asks for stochastic gradients from the first $m \in [n]$ (fastest) workers. Later, we show that optimal $m$ is determined by \eqref{eq:optimal_m}. And with this parameter, $m$--Minibatch SGD method is minimax optimal under the setting from Sections \ref{sec:sync_start_main} and \ref{sec:sync_start}. Note that $m$--Minibatch SGD is Minibatch SGD if $m = n.$

\begin{method}
    \caption{$m$--Minibatch SGD}
    \label{alg:alg_server_m_star}
    \begin{algorithmic}[1]
    \STATE \textbf{Input:} starting point $x^0$, stepsize $\gamma$, number of workers $m \in [n]$
    \FOR{$k = 0, 1, \dots, K - 1$}
    \STATE Send the point $x^k$ to the first $m$ workers
    \STATE Receive i.i.d. stochastic gradients $\widehat{\nabla} f(x^k, \xi_1), \dots, \widehat{\nabla} f(x^k, \xi_{m})$ from the workers
    \STATE $x^{k+1} = x^k - \gamma \frac{1}{m}\sum_{i=1}^{m} \widehat{\nabla} f(x^k, \xi_i)$
    \ENDFOR
    \end{algorithmic}
\end{method}

We now provide the convergence rate and the time complexity.

\begin{restatable}{theorem}{THEOREMSGDHOMOGSYNC}
    \label{thm:sgd_homog_sync}
    Assume that Assumptions~\ref{ass:lipschitz_constant}, \ref{ass:lower_bound} and \ref{ass:stochastic_variance_bounded} hold. 
    Let us take the step size $$\gamma = \min\left\{\frac{1}{L}, \frac{\varepsilon m}{2 L \sigma^2}\right\}$$ in Method~\ref{alg:alg_server_m_star},
    then after 
    \begin{eqnarray*}
        K \geq \frac{12 \Delta L}{\varepsilon} + \frac{12 \Delta L \sigma^2}{\varepsilon^2 m}
    \end{eqnarray*}
    iterations the method guarantees that $\frac{1}{K}\sum_{k=0}^{K-1}\Exp{\norm{\nabla f(x^k)}^2} \leq \varepsilon.$
\end{restatable}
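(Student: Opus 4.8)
The plan is to observe that Method~\ref{alg:alg_server_m_star} is exactly Minibatch SGD restricted to the $m$ fastest workers, so it coincides with the standard SGD iteration $x^{k+1} = x^k - \gamma g(x^k)$ with $g(x^k) = \frac{1}{m}\sum_{i=1}^m \widehat{\nabla} f(x^k;\xi_i)$, where the $\xi_i$ are i.i.d.\ across workers and across iterations. Hence the only thing we need to do is compute the variance of this batched estimator and then invoke the classical SGD theorem (Theorem~\ref{theorem:sgd}) off the shelf.

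First I would record that $g(x^k)$ is unbiased: $\Exp{g(x^k)} = \frac{1}{m}\sum_{i=1}^m \Exp{\widehat{\nabla} f(x^k;\xi_i)} = \nabla f(x^k)$ by Assumption~\ref{ass:stochastic_variance_bounded}. Next I would bound its variance. Since the $\widehat{\nabla} f(x^k;\xi_i) - \nabla f(x^k)$ are independent, zero-mean, and each has squared norm in expectation at most $\sigma^2$ (Assumption~\ref{ass:stochastic_variance_bounded}), we get
\begin{align*}
\Exp{\norm{g(x^k) - \nabla f(x^k)}^2}
= \frac{1}{m^2}\sum_{i=1}^m \Exp{\norm{\widehat{\nabla} f(x^k;\xi_i) - \nabla f(x^k)}^2}
\leq \frac{\sigma^2}{m}.
\end{align*}
So $g(x^k)$ is an unbiased estimator with variance bounded by $\sigma^2/m$, and the $g(x^k)$ are independent across $k$ because fresh samples are drawn in each iteration.

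Then I would apply Theorem~\ref{theorem:sgd} with $\sigma^2$ replaced by $\sigma^2/m$. That theorem guarantees $\frac{1}{K}\sum_{k=0}^{K-1}\Exp{\norm{\nabla f(x^k)}^2}\leq\varepsilon$ provided the stepsize is $\gamma = \min\{\frac{1}{L}, \frac{\varepsilon}{2L(\sigma^2/m)}\} = \min\{\frac{1}{L}, \frac{\varepsilon m}{2L\sigma^2}\}$ — which is exactly the stepsize in the statement — and $K \geq \frac{4\Delta L}{\varepsilon} + \frac{8\Delta L(\sigma^2/m)}{\varepsilon^2}$. Since $\frac{12\Delta L}{\varepsilon} + \frac{12\Delta L\sigma^2}{\varepsilon^2 m} \geq \frac{4\Delta L}{\varepsilon} + \frac{8\Delta L\sigma^2}{\varepsilon^2 m}$, the stated iteration bound is sufficient, completing the proof.

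There is essentially no obstacle here: the argument is a direct reduction to Theorem~\ref{theorem:sgd}, and the only mildly nontrivial point is getting the variance-reduction factor $1/m$ right, which follows from the independence of the $m$ per-worker stochastic gradients. The minor slack between the constants $12$ in the statement and $4,8$ in Theorem~\ref{theorem:sgd} is harmless (it is there to make the later time-complexity computation in Theorem~\ref{theorem:lower_bound_homog_sync}'s companion upper bound cleaner), so I would just remark on it rather than optimize constants.
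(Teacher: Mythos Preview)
Your proposal is correct and follows exactly the paper's approach: recognize that Method~\ref{alg:alg_server_m_star} is minibatch SGD with variance $\sigma^2/m$, then apply Theorem~\ref{theorem:sgd} with $\sigma^2 \to \sigma^2/m$. If anything, your writeup is more explicit than the paper's, which omits the variance computation and simply cites Theorem~\ref{theorem:sgd}.
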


\begin{restatable}{theorem}{THEOREMSGDHOMOGTIMESYNC}
\label{theorem:homog_time_sync}
Let us consider Theorem~\ref{thm:sgd_homog_sync}. We assume that $i$\textsuperscript{th} worker returns a stochastic gradient every $\tau_i$ seconds for all $i \in [n]$. Without loss of generality, we assume that $0 < \tau_1 \leq \cdots \leq \tau_n.$ Let us take
\begin{align}
    \label{eq:optimal_m}
    m = \argmin_{m' \in [n]} \tau_{m'}\left(1 + \frac{\sigma^2}{m' \varepsilon}\right).
\end{align}
Then after
\begin{align}
    \label{eq:homog_optim_complex_sync}
    12 \min_{m \in [n]} \tau_m \left(\frac{L \Delta}{\varepsilon} + \frac{\sigma^2 L \Delta}{m \varepsilon^2}\right)
\end{align}
seconds Method~\ref{alg:alg_server_m_star} guarantees to find an $\varepsilon$-stationary point.
\end{restatable}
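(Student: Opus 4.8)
The plan is to derive Theorem~\ref{theorem:homog_time_sync} directly from the iteration-complexity bound of Theorem~\ref{thm:sgd_homog_sync} by accounting for the wall-clock cost of one synchronized round of Method~\ref{alg:alg_server_m_star}. First I would fix an arbitrary candidate number of workers $m' \in [n]$. Since Method~\ref{alg:alg_server_m_star} broadcasts $x^k$ to the first $m'$ workers and then waits for all of them to return a stochastic gradient, and since the workers are sorted so that $\tau_1 \le \dots \le \tau_n$, each synchronized round costs exactly $\tau_{m'}$ seconds (the slowest of the $m'$ selected workers determines the round time). By Theorem~\ref{thm:sgd_homog_sync} with batch size $m'$ and the prescribed stepsize, the method reaches an $\varepsilon$-stationary point after $K = \lceil \frac{12 \Delta L}{\varepsilon} + \frac{12 \Delta L \sigma^2}{\varepsilon^2 m'} \rceil$ iterations, so the total time for this choice of $m'$ is at most
\begin{align*}
    \tau_{m'} \left(\frac{12 \Delta L}{\varepsilon} + \frac{12 \Delta L \sigma^2}{\varepsilon^2 m'} + 1\right) \le 12\,\tau_{m'}\left(\frac{L \Delta}{\varepsilon} + \frac{\sigma^2 L \Delta}{m' \varepsilon^2}\right),
\end{align*}
where the last inequality uses $\varepsilon \le c' L \Delta$ (equivalently $L\Delta/\varepsilon \ge 1$) to absorb the ``$+1$'' into the constant.

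Next I would observe that the algorithm is free to pick the best $m'$, and that this is precisely what the choice \eqref{eq:optimal_m} does. I would note that $\argmin_{m' \in [n]} \tau_{m'}(1 + \nicefrac{\sigma^2}{m'\varepsilon})$ and $\argmin_{m' \in [n]} \tau_{m'}\left(\frac{L\Delta}{\varepsilon} + \frac{\sigma^2 L \Delta}{m'\varepsilon^2}\right)$ coincide, since the second objective is just $\frac{L\Delta}{\varepsilon}$ times the first; hence running the method with $m$ from \eqref{eq:optimal_m} attains the minimum over $m'$ of the per-choice time bound displayed above. Taking the minimum over $m' \in [n]$ of the right-hand side yields exactly \eqref{eq:homog_optim_complex_sync}, which finishes the proof.

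The routine-but-careful step is justifying that one round genuinely takes $\tau_{m'}$ seconds and not more: I would argue that in Method~\ref{alg:alg_server_m_star}, unlike the asynchronous Rennala SGD, there is no ``wasted'' partial computation from a previous iteration — all $m'$ workers are idle at the start of the round (they were synchronized at the end of the previous round), receive $x^k$ simultaneously, and the $i$\textsuperscript{th} of them returns its gradient after exactly $\tau_i$ seconds. So the server has all $m'$ gradients after $\max_{i \le m'} \tau_i = \tau_{m'}$ seconds. This is the main conceptual point distinguishing the synchronized setting from the asynchronous one, and it is where the bound $\tau_m$ (rather than a harmonic-mean expression as in Theorem~\ref{theorem:homog_time}) comes from. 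I do not anticipate a serious obstacle here; the only thing to be careful about is the ceiling in $K$ and the additive ``$+1$'' round, both handled by the assumption $\varepsilon \le c' L\Delta$ and adjusting the universal constant from $12$ to the stated value.
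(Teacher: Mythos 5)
Your proof is correct and follows essentially the same route as the paper: each synchronized round costs $\tau_m$ seconds (the slowest selected worker), so the total time is $K \times \tau_m$ with $K$ from Theorem~\ref{thm:sgd_homog_sync}, and the choice \eqref{eq:optimal_m} minimizes this product because the two objectives differ only by the factor $\nicefrac{L\Delta}{\varepsilon}$. Your extra care about the ceiling on $K$ and the resulting ``$+1$'' round is a harmless refinement the paper simply omits.
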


Despite the triviality of the $m$--Minibatch SGD and it analysis, we provide it to show that the lower bound in Theorem~\ref{theorem:lower_bound_homog_sync} is tight.

\subsection{Proof of Theorem~\ref{theorem:lower_bound_homog_sync}}
\THEOREMLOWERBOUNDSYNC*

Step 1 and Step 2 mirrors the corresponding steps from the proof of Theorem~\ref{theorem:lower_bound_homog_sync}.
\begin{proof}
    (\textbf{Step 1}: $f \in \cF_{\Delta, L}$) \\
    Let us fix $\lambda > 0.$ We take the same function $f \in \cF_{\Delta, L}$ as in the proof of Theorem~\ref{theorem:lower_bound_homog}. We define $$f(x) \eqdef \frac{L \lambda^2}{l_1} F_T\left(\frac{x}{\lambda}\right)$$ with 
    $$T = \left\lfloor\frac{\Delta l_1}{L \lambda^2 \Delta^0}\right\rfloor.$$
    (\textbf{Step 2}: Oracle Class) \\
    Following the proof of Theorem~\ref{theorem:lower_bound_homog}, in the oracle $O,$ we take the following stochastic estimator
    \begin{align}\label{eq:stoch_grad_sync}[\widehat{\nabla} f(x; \xi)]_j \eqdef \nabla_j f(x) \left(1 + \mathbbm{1}\left[j > \textnormal{prog}(x)\right]\left(\frac{\xi}{p} - 1\right)\right) \quad \forall x \in \R^T,\end{align}
    and $\mathcal{D} = \underbrace{(\textnormal{Bernouilli}(p), \dots, \textnormal{Bernouilli}(p))}_{n \textnormal{ times}},$ where $p \in (0, 1].$ The stochastic gradient is unbiased and $\sigma^2$-variance-bounded if
    $$p = \min\left\{\frac{L^2 \lambda^2 \gamma_{\infty}^2}{\sigma^2 l_1^2}, 1\right\}.$$\\
    (\textbf{Step 3}: Analysis of Protocol)

We choose 
$$\lambda = \frac{\sqrt{2 \varepsilon} l_1}{L}$$
to ensure that $\norm{\nabla f(x)}^2 = \frac{L^2 \lambda^2}{l_1^2}\norm{\nabla F_T (\frac{x}{\lambda})}^2 > 2 \varepsilon \mathbbm{1}\left[\textnormal{prog}(x) < T\right]$ for all $x \in \R^T,$ where we use Lemma~\ref{lemma:worst_function}. Thus
$$T = \left\lfloor\frac{\Delta L}{2 \varepsilon l_1 \Delta^0}\right\rfloor$$
and
$$p = \min\left\{\frac{2 \varepsilon \gamma_{\infty}^2}{\sigma^2}, 1\right\}.$$

Protocol~\ref{alg:time_oracle_protocol} generates a sequence $\{x^k\}_{k=0}^{\infty}.$ We have
\begin{align}
    \label{eq:aux_part_3_homog_sync}
    \inf_{k \in S_t} \norm{\nabla f(x^k)}^2 > 2 \varepsilon \inf_{k \in S_t} \mathbbm{1}\left[\textnormal{prog}(x^k) < T\right].
\end{align}

Further, we require the following auxillary lemma. See the proof in Section~\ref{sec:aux_lemmas}.
\begin{restatable}{lemma}{LEMMABOUNDTIMESYNC}
    \label{lemma:boundtime_sync}
    With probability not less than $1 - \delta,$
    \begin{align*}
        \inf_{k \in S_t} \mathbbm{1}\left[\textnormal{prog}(x^k) < T\right] \geq 1
    \end{align*}
    for $$t \leq \frac{1}{2} \min_{m \in [n]} \tau_{m}\left(1 + \frac{1}{4 p m}\right) \left(\frac{T}{2} + \log \delta\right).$$
\end{restatable}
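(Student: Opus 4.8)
## Proof Plan for Lemma~\ref{lemma:boundtime_sync}

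The plan is to transplant the two-part structure of the proof of Lemma~\ref{lemma:boundtime} to the synchronized oracle \eqref{eq:oracle_stochastic_delay_sync}, and to package the new ``scheduling'' phenomenon into a deterministic inequality of the same flavor as Lemma~\ref{lemma:tau}. Throughout I use the function $f=\frac{L\lambda^2}{l_1}F_T(\cdot/\lambda)$, the dimension $T$, the probability $p$, and the stochastic estimator \eqref{eq:stoch_grad_sync} already fixed in the proof of Theorem~\ref{theorem:lower_bound_homog_sync}.

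\textbf{Part 1 (reduction to a sum of random times).} Exactly as in Part~1 of Lemma~\ref{lemma:boundtime}, since $F_T$ is a zero-chain and $A\in\cA_{\textnormal{zr}}$, the algorithm cannot produce $x^k$ with $\textnormal{prog}(x^k)=T$ unless, for every $\ell\in[T]$, the oracle has returned --- while the current iterate had progress $\ell-1$ --- a summed stochastic gradient with nonzero $\ell$-th coordinate, which by \eqref{eq:stoch_grad_sync} happens iff at least one of the Bernoulli draws $\xi_i$ in that output equals $1$. The key structural point is that \eqref{eq:oracle_stochastic_delay_sync} resets its state to $(0,0,0)$ at every output, so each ``round'' (a setup at time $a$ followed by an extraction at time $b\ge a$) yields at most $\max\{s\in\{0,\dots,n\}:\tau_s\le b-a\}$ fresh draws, and, because $t^{k+1}\ge t^{k}$, consecutive rounds occupy disjoint time intervals whose lengths add up. Let $\eta_\ell$ be the position, in the time-ordered list of Bernoulli draws used while the iterate had progress $\ell-1$, of the first success, and set
\[
\widehat T_k\eqdef\inf\Big\{\textstyle\sum_{j=1}^{J}\tau_{s_j}\ :\ J\ge1,\ s_j\in[n],\ \sum_{j=1}^{J}s_j\ge k\Big\},
\]
the minimal wall-clock time in which \eqref{eq:oracle_stochastic_delay_sync} can produce $k$ draws. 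Chaining rounds over the $T$ levels gives, on the event $\{\inf_{k\in S_t}\mathbbm{1}[\textnormal{prog}(x^k)<T]<1\}$, the inequality $\sum_{\ell=1}^{T}\widehat T_{\eta_\ell}\le t$. Since all $\xi_i$ are i.i.d.\ $\textnormal{Bernoulli}(p)$ and the assignment of draws to levels is determined by the past, an argument identical to Lemma~\ref{lemma:probs} gives $\Pr(\eta_{\ell}=m\mid\eta_{\ell-1},\dots,\eta_1)\le(1-p)^{m-1}p$.

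\textbf{Part 2 (Chernoff bound).} This is verbatim Part~2 of Lemma~\ref{lemma:boundtime} with $\widehat t_{\eta_i}$ replaced by $\widehat T_{\eta_i}$ and the count $\sum_{i=1}^n\lfloor t'/\tau_i\rfloor$ replaced by $N(t')\eqdef\max\{\sum_j s_j:\sum_j\tau_{s_j}\le t',\ s_j\in[n]\}$, the number of draws \eqref{eq:oracle_stochastic_delay_sync} can produce by time $t'$. By definition, $\widehat T_k\le t'\Leftrightarrow k\le N(t')$, so the conditional-geometric bound gives $\Pr(\widehat T_{\eta_\ell}\le t'\mid\eta_{\ell-1},\dots,\eta_1)\le p\,N(t')=:p'$. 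The same Chernoff computation yields $\Pr(\sum_{\ell=1}^{T}\widehat T_{\eta_\ell}\le\widehat t)\le e^{\widehat t/t'-T+2p'T}$; choosing $t'$ maximal subject to $N(t')\le\frac1{4p}$ makes $p'\le\frac14$, hence $\Pr(\sum_\ell\widehat T_{\eta_\ell}\le\widehat t)\le e^{\widehat t/t'-T/2}\le\delta$ for all $\widehat t\le t'(\frac T2+\log\delta)$. Combined with Part~1, $\Pr(\inf_{k\in S_t}\mathbbm{1}[\textnormal{prog}(x^k)<T]<1)\le\delta$ for $t\le t'(\frac T2+\log\delta)$.

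\textbf{Part 3 (scheduling estimate) and the main obstacle.} It remains to show $t'\ge\frac12\min_{m\in[n]}\tau_m(1+\frac1{4pm})$. Since $N$ is nondecreasing, $t'$ essentially equals the minimal time $\widehat T_{\lceil 1/(4p)\rceil}$ needed to produce $\lceil\frac1{4p}\rceil$ draws, so it suffices to prove $\widehat T_k\ge c\min_{m\in[n]}\tau_m(1+k/m)$ for all $k\ge1$ (the matching upper bound $\widehat T_k\le\min_m\tau_m(1+k/m)$ comes from using $\lceil k/m\rceil$ batches of size $m$). This deterministic inequality --- an analogue of Lemma~\ref{lemma:tau} --- is the crux: the naive per-fixed-batch-size lower bound $\tau_m\lceil k/m\rceil$ is \emph{not} valid because an optimal schedule may mix batch sizes, so one argues through the largest batch size $m$ actually used (which forces at least $\lceil k/m\rceil\ge\frac12(1+k/m)$ rounds, each costing $\ge\tau_1$, plus one round costing $\ge\tau_m$) and balances $\max\big(\tfrac12(1+k/m)\tau_1,\ \tau_m,\ k\min_j\tfrac{\tau_j}{j}\big)$ against $\min_m\tau_m(1+k/m)$ via a short case split on whether $k$ lies below or above the efficient batch size $\arg\min_j\tau_j/j$. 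The probabilistic skeleton (Parts~1--2) transfers mechanically; this combinatorial scheduling bound, and the bookkeeping tying $t'$ to it with a universal constant, is where the real effort lies.
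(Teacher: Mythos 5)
Your proposal follows essentially the same route as the paper's proof: the reduction on the bad event to $\sum_{\ell=1}^{T}\widehat{T}_{\eta_\ell}\le t$ via the round-scheduling problem, the conditional geometric bound on $\eta_{\ell}$ (the paper's Lemma~\ref{lemma:probs_sync}), and the Chernoff computation with $p'=p\,N(t')\le 1/4$ are all exactly as in the paper, and the deterministic inequality you isolate as the crux is precisely the paper's Lemma~\ref{lemma:tau_sync}. The paper settles that inequality more directly than your three-term max: it first observes that any batch size exceeding the target $k$ may be replaced by $k$ without increasing the cost (since $\tau$ is nondecreasing), then writes $\sum_j\tau_{m_j}=\sum_j m_j\,\tau_{m_j}/m_j\ge k\min_{m\in[k]}\tau_m/m$, and finally shows $k\min_{m\in[k]}\tau_m/m$ is within a factor $2$ of $\min_{m\in[n]}\tau_m(1+k/m)$; note that in your sketch the term $k\min_j\tau_j/j$ must have $j$ ranging over $[k]$ rather than $[n]$, since otherwise it can be far too weak (e.g.\ $k=1$, $\tau_j=1$ for $j<n$, $\tau_n=\sqrt{n}$ gives $k\min_{j\in[n]}\tau_j/j=1/\sqrt{n}$ while $\widehat{T}_1=\tau_1=1$).
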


Using Lemma~\ref{lemma:boundtime_sync} with $\delta = 1/2$ and \eqref{eq:aux_part_3_homog_sync}, we obtain
\begin{align*}
    \Exp{\inf_{k \in S_t} \norm{\nabla f(x^k)}^2} &\geq 2 \varepsilon \Prob{\inf_{k \in S_t} \mathbbm{1}\left[\textnormal{prog}(x^k) < T\right]} > \varepsilon
\end{align*}
for $$t = \frac{1}{2} \min_{m \in [n]} \tau_{m}\left(1 + \frac{\sigma^2}{8 \gamma_{\infty}^2 m \varepsilon}\right) \left(\frac{\Delta L}{2 \varepsilon l_1 \Delta^0} - 2\right).$$
\end{proof}

\subsubsection{Proof of Lemma~\ref{lemma:boundtime_sync}}
\LEMMABOUNDTIMESYNC*

\begin{proof}
    \textbf{(Part 1):} \emph{Comment: in this part, we mirror the proof of Lemma~\ref{lemma:boundtime}. We also show that if $\inf_{k \in S_t} \mathbbm{1}\left[\textnormal{prog}(x^k) < T\right] < 1$ holds, then we have the inequality $\sum_{i=1}^T \widehat{t}_{\eta_{i}} \leq t$, where $\widehat{t}_{\eta_{i}}$ are random variables with some known ``good'' distributions. However, in this lemma the quantities $\widehat{t}_{\eta_{i}}$ are different.}\\
    In Protocol~\ref{alg:time_oracle_protocol}, the algorithm $A$ consequently calls the oracle $O.$ If $\inf_{k \in S_t} \mathbbm{1}\left[\textnormal{prog}(x^k) < T\right] < 1$ holds, then exists $k \in S_t$ such that $\textnormal{prog}(x^{k}) = T.$ Since the algorithm $A$ is zero-respecting, the mappings $A^k$ will not output a non-zero vector (a vector with a non-zero first coordinate) unless the oracle returns a non-zero vector. 

    Let us define the smallest index $k(i)$ of the sequence when the progress $\textnormal{prog}(x^{k(i)})$ equals $i:$
    \begin{align*}
        k(i) \eqdef \inf\left\{k \in \N_0\,|\, i = \textnormal{prog} (x^k)\right\} \in \N_0 \cup \{\infty\}.
    \end{align*}

    \begin{definition}[Sequence $(k^{\xi}_p, i^{\xi}_p)$]
    Let us consider a set
    \begin{align*}
        \{(k, i) \in \N \times [n] \,|\, s^{k-1}_{q} = 1 \textnormal{ and } t^{k} \geq s^{k-1}_{t} + \tau_{i}\}, \quad s^{k-1} \equiv (s^{k-1}_{t}, s^{k-1}_{q}, s^{k-1}_{x}).
    \end{align*}
    We order this set lexicographically and define the result sequence as $\{(k^{\xi}_p, i^{\xi}_p)\}_{p=1}^m,$ where $m \in [0, \infty]$ is the size of the sequence. 
    The sequence $s^{k-1}$ is defined in Protocol~\ref{alg:time_oracle_protocol}.
    \end{definition}

    Note that the algorithm $A$ is only depends on the random samples $\left\{\xi^{k^{\xi}_p}_{i^{\xi}_p}\right\}_{p=1}^{m}$ since the sequence $\{(k^{\xi}_p, i^{\xi}_p)\}_{p=1}^m$ are the indices of the random samples that are used in the oracle \eqref{eq:oracle_stochastic_delay_sync}.

    Let us denote the index of the first successful trial as $\eta_1,$ i.e., 
    $$\eta_1 \eqdef \inf \left\{i \,\middle|\, \xi^{k^{\xi}_p}_{i^{\xi}_p} = 1 \textnormal{ and } p \in [1, m]\right\} \in \N \cup \{\infty\}.$$

    If $\inf_{k \in S_t} \mathbbm{1}\left[\textnormal{prog}(x^k) < T\right] < 1$ holds, 
    then $\eta_1 < \infty.$
    Using the times $t^k,$ the algorithm $A$ consequently requests the gradient estimators from the oracle \eqref{eq:oracle_stochastic_delay_sync}. 

    In each round of the oracle's calculation, the algorithm can get the vector $\widehat{\nabla} f(s_x; \xi_1)$ in not less than $\tau_1$ seconds, the vector $\sum_{i=1}^2 \widehat{\nabla} f(s_x; \xi_i)$ in less than $\tau_2$ seconds, and so forth (see \eqref{eq:oracle_stochastic_delay_sync}). The algorithm can repeat any number of these rounds sequentially.
    
    The algorithm $A$ one by one requests $m_1, \dots, m_i, \dots \in \{0, \dots, n\}$ gradient estimators from the oracle \eqref{eq:oracle_stochastic_delay_sync}. It takes at least $\tau_{m_i}$ seconds  ($\tau_0 \equiv 0$) to get a vector $\sum_{i=1}^{m_i} \widehat{\nabla} f(s_x; \xi_i)$. Let us consider that $k \in \N$ is the \emph{first} index of gradient estimators such that is depends on some $\xi_i = 1.$ Necessarily, we have $\sum_{j=1}^k m_j \geq \eta_1.$ Also, since the $A$ is zero-respecting, we have $\sum_{j=1}^k \tau_{m_j} \leq t^{k(1)}.$ Note that $k,$ and $m_1, \dots, m_k.$ depend on the algorithm's strategy. Let us find ``the best possible'' quantities $k,$ and $m_1, \dots, m_k$ that are independent of an algorithm.
    
    Let us assume that $k^*,$ and $m_1^*, \dots m_k^*$ minimize the quantity 
    \begin{equation}
    \label{eq:aux_opt}
    \begin{aligned}
    & \min_{k, m_1, \dots, m_k}\sum_{j=1}^k \tau_{m_j},\\
    \textrm{s.t.} \quad & k \in \N,\\
        &m_1, \dots m_k \in \{0, \dots, n\},\\
        &\sum_{j=1}^k m_j \geq \eta_1.
    \end{aligned}
    \end{equation}
    Then, we have $$t^{k(1)} \geq \sum_{j=1}^{k^*} \tau_{m_j^*}.$$
    Note that if exists $j \in [k^*]$ such that $m_j^* > \eta_1,$ then $k^*,$ and $m_1^*, \dots, m_{j-1}^*, \eta_1, m_{j+1}^*\dots,  m_k^*$ are also minimizers of \eqref{eq:aux_opt} since the sequence $\tau_{k}$ is not decreasing. Therefore, \eqref{eq:aux_opt} is equivalent to 
    \begin{equation}
    \label{eq:aux_opt_2}
    \begin{aligned}
    & \min_{k, m_1, \dots, m_k} \sum_{j=1}^k \tau_{m_j},\\
    \textrm{s.t.} \quad & k \in \N,\\
        &m_1, \dots m_k \in \{0, \dots, \eta_1\},\\
        &\sum_{j=1}^k m_j \geq \eta_1.
    \end{aligned}
    \end{equation}
    Then, using the simple algebra, we have
    \begin{align*}
        t^{k(1)} &\geq \sum_{j=1}^{k^*} \tau_{m_j^*} = \sum_{j\,:\, m_j^* \neq 0} \tau_{m_j^*} = \sum_{j\,:\, m_j^* \neq 0} m_j^* \frac{\tau_{m_j^*}}{m_j^*} \geq \sum_{j\,:\, m_j^* \neq 0} m_j^* \min_{m \in [\eta_1]}\frac{\tau_{m}}{m} \\
        &=\sum_{j=1}^{k^*} m_j^* \min_{m \in [\eta_1]}\frac{\tau_{m}}{m} \geq \eta_1 \min_{m \in [\eta_1]}\frac{\tau_{m}}{m}.
    \end{align*}
    In the first inequality, we use that $m_j^* \in \{0, \dots, \eta_1\}$ for all $j \in [k^*].$ Next, using Lemma~\ref{lemma:tau_sync}, we get
    \begin{align*}
        t^{k(1)} &\geq \frac{1}{2} \min_{m \in [n]} \tau_{m}\left(1 + \frac{\eta_1}{m}\right).
    \end{align*}

    Using the same reasoning, for $j \in \{0, \dots, T-1\}$, $$t^{k(j+1)} \geq t^{k(j)} + \frac{1}{2} \min_{m \in [n]} \tau_{m}\left(1 + \frac{\eta_{j+1}}{m}\right),$$ where $\eta_{j+1}$ is the index of the first successful trial of Bernouilli random variables when $\textnormal{prog}(\cdot) = j.$ More formally, for all $j \in \{0, \dots, T - 1\}$:

    \begin{definition}[Sequence $(k^{\xi}_{j,p}, i^{\xi}_{j,p})$]
    Let us consider a set
    \begin{align*}
        \{(k, i) \in \N \times [n] \,|\, s^{k-1}_{q} = 1 \textnormal{ and } t^{k} \geq s^{k-1}_{t} + \tau_{i} \textnormal{ and } \textnormal{prog}(s^{k-1}_x) = j\}, \quad s^{k-1} \equiv (s^{k-1}_{t}, s^{k-1}_{q}, s^{k-1}_{x}).
    \end{align*}
    We order this set lexicographically and define the result sequence as $\{(k^{\xi}_{j,p}, i^{\xi}_{j,p})\}_{p=1}^{m_{j+1}},$ where $m_{j+1} \in [0, \infty]$ is the size of the sequence. 
    The sequence $s^{k-1}$ is defined in Protocol~\ref{alg:time_oracle_protocol}.
    \end{definition}

    Then, $$\eta_{j+1} \eqdef \inf \left\{i \,\middle|\, \xi^{k^{\xi}_{j,p}}_{i^{\xi}_{j,p}} = 1 \textnormal{ and } p \in [1, m_{j+1}]\right\} \in \N \cup \{\infty\}.$$
    By the definition of $k(j),$ $x^{k(j)}$ is the first iterate such that $\textnormal{prog}(\cdot) = j.$ Therefore, the oracle can potentially start returning gradient estimators with the non-zero $j+1$\textsuperscript{th} coordinate from the $k(j)$\textsuperscript{th} iteration.

    Thus, if $\inf_{k \in S_t} \mathbbm{1}\left[\textnormal{prog}(x^k) < T\right] < 1$ holds, then
    \begin{align*}
        \frac{1}{2} \sum_{i=1}^T \min_{m \in [n]} \tau_{m}\left(1 + \frac{\eta_{i}}{m}\right) \leq t^{k(T)} \leq t.
    \end{align*}
    Finally, we can conclude that
    \begin{align}
        \label{eq:aux_bound_aux}
        \Prob{\inf_{k \in S_t} \mathbbm{1}\left[\textnormal{prog}(x^k) < T\right] < 1} \leq \Prob{\frac{1}{2} \sum_{i=1}^T \min_{m \in [n]} \tau_{m}\left(1 + \frac{\eta_{i}}{m}\right) \leq t} \quad \forall t \geq 0.
    \end{align}
    As in Lemma~\ref{lemma:probs}, we show that
    \begin{restatable}{lemma}{LEMMAPROBSSYNC}
        \label{lemma:probs_sync}
        Let us take $l_{j+1} \in \N.$ Then
        \begin{align}
            \label{eq:aux_prob_eta}
            \ProbCond{\eta_{j+1} = l_{j+1}}{\eta_{j}, \dots, \eta_{1}} \leq (1 - p)^{l_{j+1} - 1}p
        \end{align}
        for all $j \in \{0, \dots, T-1\}.$
    \end{restatable}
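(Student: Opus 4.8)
The plan is to mirror the proof of Lemma~\ref{lemma:probs} almost verbatim, with the iteration indices $k^{\xi}_{j,i}$ there replaced by the lexicographically ordered pairs $(k^{\xi}_{j,p}, i^{\xi}_{j,p})$ that enumerate the individual Bernoulli components $\xi^{k}_{i}$ the oracle \eqref{eq:oracle_stochastic_delay_sync} actually consumes while $\textnormal{prog}(\cdot)=j$. Since the image of $\eta_1,\dots,\eta_j$ lies in $\N\cup\{\infty\}$, it suffices to fix $l_1,\dots,l_j\in\N\cup\{\infty\}$ with $\Prob{\bigcap_{i=1}^j\{\eta_i=l_i\}}>0$ and bound $\ProbCond{\eta_{j+1}=l_{j+1}}{\bigcap_{i=1}^j\{\eta_i=l_i\}}$. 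First I would dispose of the degenerate case: if some $l_i=\infty$, then the algorithm never reaches progress $j$, so the sequence $(k^{\xi}_{j,p},i^{\xi}_{j,p})$ is empty, $m_{j+1}=0$, and $\eta_{j+1}=\infty$ almost surely, so the conditional probability is $0$ for every $l_{j+1}\in\N$.

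For the main case $l_1,\dots,l_j<\infty$, I would rewrite $\{\eta_{j+1}=l_{j+1}\}$ as the intersection of $\{\xi^{k^{\xi}_{j,1}}_{i^{\xi}_{j,1}}=0\},\dots,\{\xi^{k^{\xi}_{j,l_{j+1}-1}}_{i^{\xi}_{j,l_{j+1}-1}}=0\}$, $\{\xi^{k^{\xi}_{j,l_{j+1}}}_{i^{\xi}_{j,l_{j+1}}}=1\}$, and $\{m_{j+1}\ge l_{j+1}\}$, then apply the law of total probability over the possible values of the pairs $(k^{\xi}_{j,p},i^{\xi}_{j,p})$ for $p\le l_{j+1}$, exactly as in the proof of Lemma~\ref{lemma:probs}. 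The crucial point is that the event ``$(k,i)$ is the $p$-th component consumed at progress $j$'' is determined by the oracle states and the algorithm's times $t^k$, hence is measurable with respect to the $\sigma$-algebra generated by the draws strictly preceding it in the lexicographic order. This permits peeling the factors off one at a time: the indicator that the $l_{j+1}$-th consumed draw equals $1$ is independent of everything to its left and has probability $p$; the indicator that the $(l_{j+1}-1)$-th consumed draw equals $0$ is independent of everything to its left and has probability $1-p$; and so on down to the first. Collapsing the remaining sums by additivity (the events $\{(k^{\xi}_{j,p},i^{\xi}_{j,p})=(k,i)\}$ are disjoint in $(k,i)$) and using $\Prob{A,B}\le\Prob{A}$ at each stage, an induction on $l_{j+1}$ yields $\ProbCond{\eta_{j+1}=l_{j+1}}{\bigcap_{i=1}^j\{\eta_i=l_i\}}\le p(1-p)^{l_{j+1}-1}$, which is the claim.

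The main obstacle — the only place the synchronized-start setting genuinely differs from Lemma~\ref{lemma:probs} — is verifying the measurability claim under this new indexing, since a single protocol iteration $k$ can cause the oracle to consume a whole block $\xi^k_1,\dots,\xi^k_{m_k}$ of components at once, with $m_k$ itself random. I would handle this by noting that by \eqref{eq:oracle_stochastic_delay_sync} the block size $m_k$ is a deterministic function of $t^k$, the incoming oracle state $s^{k-1}$, and the fixed delays $\tau_1,\dots,\tau_n$; that $t^k$ is output by $A^{k-1}$ as a function of $g^1,\dots,g^{k-1}$; and that, by \eqref{eq:stoch_grad_sync}, these outputs depend only on $\xi^1,\dots,\xi^{k-1}$ and hence only on components strictly preceding $\xi^k_1$ in the lexicographic order. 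Thus the entire block contributed by iteration $k$, together with its size, is measurable with respect to the $\sigma$-algebra generated by the earlier draws, while within the block $\xi^k_1,\dots,\xi^k_{m_k}$ are fresh i.i.d.\ $\textnormal{Bernouilli}(p)$ and independent of that $\sigma$-algebra — exactly what the peeling argument requires. Everything else is a routine transcription of the proof of Lemma~\ref{lemma:probs} in the new notation.
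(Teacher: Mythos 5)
Your proposal is correct and follows essentially the same route as the paper's proof: the degenerate $l_i=\infty$ case, the decomposition of $\{\eta_{j+1}=l_{j+1}\}$, the law of total probability over the lexicographically ordered pairs $(k^{\xi}_{j,p}, i^{\xi}_{j,p})$, the measurability of the events $A_{(k_p,i_p)}$ and of the consumed-block structure with respect to the $\sigma$-algebra of lexicographically earlier draws, and the peel-off/induction with additivity and $\Prob{A,B}\leq\Prob{A}$. The one subtlety you flag --- that a single oracle call consumes a random-sized block of components whose size is determined by $t^k$ and $s^{k-1}$, hence by strictly earlier draws --- is exactly the point the paper handles via its nested $\sigma$-algebras $\sigma^i_j$, so nothing is missing.
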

    See the proof in Section~\ref{sec:lemma_probs_sync}. Intuitively, the algorithm $A$ can not increase the probability of getting a successful Bernouilli random variable earlier with its decisions.
    
    Let us temporally define $$\widehat{t}_{\eta_i} \eqdef \frac{1}{2} \min_{m \in [n]} \tau_{m}\left(1 + \frac{\eta_{i}}{m}\right)$$ for all $i \in [T].$ \\
    \textbf{(Part 2):} \emph{Comment: in this part, we use the standard technique to bound the large deviations of the sum $\sum_{i=1}^T \widehat{t}_{\eta_{i}}$.}\\
    Note that a function $g\,:\,\R \rightarrow \R$ such that $g(x) \eqdef \frac{1}{2} \min_{m \in [n]} \tau_{m}\left(1 + \frac{x}{m}\right)$ is continuous, strongly-monotone and invertible.
    For $t' \geq 0,$ we have 
    \begin{align*}
        &\ProbCond{\widehat{t}_{\eta_{j+1}} \leq t'}{\eta_{j}, \dots, \eta_{1}} =\ProbCond{g(\eta_{j+1}) \leq t'}{\eta_{j}, \dots, \eta_{1}} = \ProbCond{\eta_{j+1} \leq g^{-1}(t')}{\eta_{j}, \dots, \eta_{1}}.
    \end{align*}
    Using \eqref{eq:aux_prob_eta}, we obtain
    \begin{align*}
        \ProbCond{\widehat{t}_{\eta_{j+1}} \leq t'}{\eta_{j}, \dots, \eta_{1}} \leq \sum_{j=1}^{\left\lfloor g^{-1}(t')\right\rfloor} (1 - p)^{j - 1} p \leq p \left\lfloor g^{-1}(t')\right\rfloor.
    \end{align*}
    Let us define $$p' \eqdef p \left\lfloor g^{-1}(t')\right\rfloor,$$ then 
    \begin{align*}
        \ProbCond{\widehat{t}_{\eta_{j+1}} \leq t'}{\eta_{j}, \dots, \eta_{1}} \leq p'.
    \end{align*}
    Using the Chernoff method, as in Lemma~\ref{lemma:boundtime}, one can get
    \begin{align*}
        \Prob{\sum_{i=1}^T \widehat{t}_{\eta_{i}} \leq \widehat{t}} \leq e^{\widehat{t} / t' - T + 2 p' T}.
    \end{align*}
    Let us take
    \begin{align*}
        t' = g\left(\frac{1}{4p}\right) = \frac{1}{2} \min_{m \in [n]} \tau_{m}\left(1 + \frac{1}{4 p m}\right),
    \end{align*}
    then
    $$p' = p \left\lfloor g^{-1}\left(g\left(\frac{1}{4p}\right)\right)\right\rfloor = p \left\lfloor\frac{1}{4p}\right\rfloor \leq \frac{1}{4}.$$ Therefore,
    \begin{align*}
        \Prob{\sum_{i=1}^T \widehat{t}_{\eta_{i}} \leq \widehat{t}} \leq e^{\widehat{t} / t' - \frac{T}{2}}.
    \end{align*}
    Using \eqref{eq:aux_bound_aux}, for $$t \leq \frac{1}{2} \min_{m \in [n]} \tau_{m}\left(1 + \frac{1}{4 p m}\right) \left(\frac{T}{2} + \log \delta\right),$$ we have
    \begin{align*}
        \Prob{\inf_{k \in S_t} \mathbbm{1}\left[\textnormal{prog}(x^k) < T\right] < 1} \leq \Prob{\sum_{i=1}^T \widehat{t}_{\eta_{i}} \leq t} \leq \delta.
    \end{align*}
    The last inequality concludes the proof.

\end{proof}

\subsubsection{Lemma~\ref{lemma:tau_sync}}

\begin{lemma}
    \label{lemma:tau_sync}
    Let us consider a sorted sequence $0 < \tau_1 \leq \dots \leq \tau_n$ and a constant $\eta \in \N.$ We define
    \begin{align*}
        t_1 \eqdef \eta \min_{m \in [\eta]}\frac{\tau_{m}}{m},
    \end{align*}
    and
    \begin{align*}
        t_2 \eqdef \min_{m \in [n]} \tau_{m}\left(1 + \frac{\eta}{m}\right).
    \end{align*}
    Then $$t_1 \leq t_2 \leq 2 t_1.$$
\end{lemma}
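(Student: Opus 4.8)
The plan is to prove the two inequalities $t_1 \le t_2$ and $t_2 \le 2t_1$ separately, by an elementary case analysis that uses only the monotonicity $\tau_1 \le \cdots \le \tau_n$ and the identity $\tau_m\!\left(1+\tfrac{\eta}{m}\right) = \tau_m + \eta\,\tfrac{\tau_m}{m}$. Throughout I assume $\eta \in [n]$, which is implicit in $t_1$ being well defined (the minimum in $t_1$ ranges over $[\eta]$, and only $\tau_1,\dots,\tau_n$ are given).

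For the lower bound $t_1 \le t_2$, I would show that \emph{every} term $\tau_m\!\left(1+\tfrac{\eta}{m}\right)$ appearing in the minimum defining $t_2$ is at least $t_1$; the claim then follows by minimizing over $m\in[n]$. Fix $m\in[n]$. If $m\le\eta$, then $\tau_m\!\left(1+\tfrac{\eta}{m}\right) \ge \eta\,\tfrac{\tau_m}{m} \ge \eta\min_{m'\in[\eta]}\tfrac{\tau_{m'}}{m'} = t_1$. If $m>\eta$, then by monotonicity $\tau_m \ge \tau_\eta = \eta\cdot\tfrac{\tau_\eta}{\eta} \ge \eta\min_{m'\in[\eta]}\tfrac{\tau_{m'}}{m'} = t_1$ (the point being that $\tfrac{\tau_\eta}{\eta}$ is one of the quantities over which the minimum in $t_1$ is taken), hence $\tau_m\!\left(1+\tfrac{\eta}{m}\right) \ge \tau_m \ge t_1$.

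For the upper bound $t_2 \le 2t_1$, let $m^* \in [\eta]$ attain $\min_{m\in[\eta]}\tfrac{\tau_m}{m}$, so that $t_1 = \eta\,\tfrac{\tau_{m^*}}{m^*}$. Since $m^* \le \eta \le n$, the index $m^*$ is feasible in the minimum defining $t_2$, so $t_2 \le \tau_{m^*}\!\left(1+\tfrac{\eta}{m^*}\right) = \tau_{m^*} + \eta\,\tfrac{\tau_{m^*}}{m^*} = \tau_{m^*} + t_1$. Finally $\tau_{m^*} = m^*\cdot\tfrac{\tau_{m^*}}{m^*} \le \eta\,\tfrac{\tau_{m^*}}{m^*} = t_1$ because $m^* \le \eta$, and therefore $t_2 \le t_1 + t_1 = 2t_1$.

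I do not anticipate a genuine obstacle: the statement is a clean two-sided comparison and the argument above is only a few lines. The one place calling for a touch of care is the split $m\le\eta$ versus $m>\eta$ in the lower bound — in the first regime one retains the $\eta\,\tau_m/m$ part of $\tau_m(1+\eta/m)$, while in the second regime one retains the $\tau_m$ part and invokes monotonicity together with the trivial observation that $\tau_\eta/\eta$ is itself one of the candidates in the minimum defining $t_1$.
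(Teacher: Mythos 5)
Your proof is correct and follows essentially the same route as the paper's: the upper bound uses that the minimizer $m^*\in[\eta]$ of $\tau_m/m$ is feasible in the minimum defining $t_2$ together with $\tau_{m^*}\le \eta\,\tau_{m^*}/m^*=t_1$, and the lower bound uses the same split on $m\le\eta$ versus $m>\eta$ (the paper applies it only to the minimizer of $t_2$, you apply it to every candidate, which is an immaterial difference). No gaps.
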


\begin{proof}
    Additionally, let us define
    \begin{align*}
        m_1 \eqdef \argmin_{m \in [\eta]}\frac{\tau_{m}}{m}
    \end{align*}
    and
    \begin{align*}
        m_2 \eqdef \argmin_{m \in [n]} \tau_{m}\left(1 + \frac{\eta}{m}\right).
    \end{align*}
    Then, using $m_1 \leq \eta,$ we have
    \begin{align*}
        t_2 = \min_{m \in [n]} \tau_{m}\left(1 + \frac{\eta}{m}\right) \leq \tau_{m_1}\left(1 + \frac{\eta}{m_1}\right) \leq 2 \tau_{m_1}\frac{\eta}{m_1} = 2 t_1.
    \end{align*}
    If $m_2 \leq \eta,$ then
    \begin{align*}
        t_1 = \eta \min_{m \in [\eta]}\frac{\tau_{m}}{m} \leq \eta \frac{\tau_{m_2}}{m_2} \leq \tau_{m_2}\left(1 + \frac{\eta}{m_2}\right) = t_2.
    \end{align*}
    Otherwise, if $m_2 > \eta,$
    \begin{align*}
        t_1 = \eta \min_{m \in [\eta]}\frac{\tau_{m}}{m} \leq \eta \frac{\tau_{\eta}}{\eta} = \tau_{\eta} \leq \tau_{m_2} \leq \tau_{m_2}\left(1 + \frac{\eta}{m_2}\right) = t_2.
    \end{align*}
\end{proof}

\subsubsection{Proof of Lemma~\ref{lemma:probs_sync}}
\label{sec:lemma_probs_sync}

In the following lemma, we use notations from Part 1 of the proof of Lemma~\ref{lemma:boundtime_sync}.

\LEMMAPROBSSYNC*

The idea of the following proof repeats the proof of Lemma~\ref{lemma:probs_sync}. But Protocol~\ref{alg:time_oracle_protocol} with the oracle \eqref{eq:oracle_stochastic_delay_sync} differ, so we present the proof for completeness.

\begin{proof}
    We prove that
    \begin{align*}
        \ProbCond{\eta_{j+1} = l_{j+1}}{\bigcap_{i=1}^{j}\{\eta_{i} = l_{i}\}} \leq (1 - p)^{l_{j+1} - 1}p.
    \end{align*}
    for all $l_1, \dots, l_{j} \in \N \cup \{\infty\}$ such that $\Prob{\bigcap_{i=1}^{j}\{\eta_{i} = l_{i}\}} > 0.$

    If exists $i \in [j]$ such that $l_i = \infty,$ then $$\ProbCond{\eta_{j+1} = l_{j+1}}{\bigcap_{i=1}^{j}\{\eta_{i} = l_{i}\}} = 0$$ for all $l_{j+1} \in \N$ (see details in the proof of Lemma~\ref{lemma:probs}).

    Assume that $l_i < \infty$ for all $i \in [j].$ By the definition of $\eta_{j+1},$ we have $m_{j+1} \geq l_{j+1}$ and $\xi^{k^{\xi}_{j,1}}_{i^{\xi}_{j,1}} = \dots = \xi^{k^{\xi}_{j,l_{j+1} - 1}}_{i^{\xi}_{j,l_{j+1} - 1}} = 0$ and $\xi^{k^{\xi}_{j,l_{j+1}}}_{i^{\xi}_{j,l_{j+1}}} = 1.$ Thus
    \begin{align*}
        &\ProbCond{\eta_{j+1} = l_{j+1}}{\bigcap_{i=1}^{j}\{\eta_{i} = l_{i}\}} \leq \ProbCond{\bigcap_{p=1}^{l_{j+1} - 1} \{\xi^{k^{\xi}_{j,p}}_{i^{\xi}_{j,p}}=0\}, \xi^{k^{\xi}_{j,l_{j+1}}}_{i^{\xi}_{j,l_{j+1}}} = 1, m_{j+1} \geq l_{j+1}}{\bigcap_{i=1}^{j}\{\eta_{i} = l_{i}\}}.
    \end{align*}
    Let us define a set 
    $$S_{l_{j+1}} \eqdef \{((k_1, i_1), \dots, (k_{l_{j+1}}, i_{l_{j+1}})) \in (\N \times [n])^{l_{j+1}} \,|\, \forall p < j \in [l_{j+1}] : (k_p, i_p) < (k_j, i_j)\} \quad \forall l_{j+1} \geq 1.$$
    Using the law of total probability, we have
    \begin{align*}
        &\ProbCond{\eta_{j+1} = l_{j+1}}{\bigcap_{i=1}^{j}\{\eta_{i} = l_{i}\}} \\
        &\leq \sum_{S_{l_{j+1}}} \ProbCond{\bigcap_{p=1}^{l_{j+1} - 1} \{\xi^{k^{\xi}_{j,p}}_{i^{\xi}_{j,p}}=0\}, \xi^{k^{\xi}_{j,l_{j+1}}}_{i^{\xi}_{j,l_{j+1}}} = 1, m_{j+1} \geq l_{j+1}, \bigcap_{p=1}^{l_{j+1}} \{(k^{\xi}_{j,p}, i^{\xi}_{j,p}) = (k_p, i_p)\}}{\bigcap_{i=1}^{j}\{\eta_{i} = l_{i}\}} \\
        &= \sum_{S_{l_{j+1}}} \ProbCond{\bigcap_{p=1}^{l_{j+1} - 1} \{\xi^{k_p}_{i_p}=0\}, \xi^{k_{l_{j+1}}}_{i_{l_{j+1}}} = 1, m_{j+1} \geq l_{j+1}, \bigcap_{p=1}^{l_{j+1}} \{(k^{\xi}_{j,p}, i^{\xi}_{j,p}) = (k_p, i_p)\}}{\bigcap_{i=1}^{j}\{\eta_{i} = l_{i}\}}.
    \end{align*}
    where we take the sum over all $((k_1, i_1), \dots, (k_{l_{j+1}}, i_{l_{j+1}})) \in S_{l_{j+1}}.$
    If the event 
    \begin{align*}
        \bigcap_{p=1}^{l_{j+1}} \{(k^{\xi}_{j,p}, i^{\xi}_{j,p}) = (k_p, i_p)\} \bigcap \{m_{j+1} \geq l_{j+1}\} 
    \end{align*}
    holds, then an event $\bigcap_{p=1}^{l_{j+1}} A_{(k_p, i_p)}$
    holds, where \begin{align*}
        A_{(k_p, i_p)} \eqdef \{s^{k_p-1}_{q} = 1 \textnormal{ and } t^{k_p} \geq s^{k_p-1}_{t} + \tau_{i_p} \textnormal{ and } \textnormal{prog}(s^{k_p-1}_x) = j\}.
    \end{align*}
    At the same time, if $\bigcap_{p=1}^{l_{j+1}} A_{(k_p, i_p)}$ holds, then $\{m_{j+1} \geq l_{j+1}\}$ holds. Therefore, 
    \begin{align*}
        \bigcap_{p=1}^{l_{j+1}} \{(k^{\xi}_{j,p}, i^{\xi}_{j,p}) = (k_p, i_p)\} \bigcap \{m_{j+1} \geq l_{j+1}\} = \bigcap_{p=1}^{l_{j+1}} \left(\{(k^{\xi}_{j,p}, i^{\xi}_{j,p}) = (k_p, i_p)\} \bigcap A_{(k_p, i_p)}\right)
    \end{align*}
    and 
    \begin{align*}
        &\ProbCond{\eta_{j+1} = l_{j+1}}{\bigcap_{i=1}^{j}\{\eta_{i} = l_{i}\}} \\
        &\leq \sum_{S_{l_{j+1}}} \ProbCond{\bigcap_{p=1}^{l_{j+1} - 1} \{\xi^{k_p}_{i_p}=0\}, \xi^{k_{l_{j+1}}}_{i_{l_{j+1}}} = 1, \bigcap_{p=1}^{l_{j+1}} \left(\{(k^{\xi}_{j,p}, i^{\xi}_{j,p}) = (k_p, i_p)\} \bigcap A_{(k_p, i_p)}\right)}{\bigcap_{i=1}^{j}\{\eta_{i} = l_{i}\}}.
    \end{align*}
    Let us define $\sigma^i_j$ as a sigma-algebra generated by $(\xi^1_1, \dots, \xi^1_n), (\xi^2_1, \dots, \xi^2_n), \dots, (\xi^{i}_1, \dots, \xi^{i}_j)$ for all $i \geq 0$ and $j \in \{0, \dots, n\}.$ Then, we have
    \begin{align*}
        \bigcap_{p=1}^{l_{j+1} - 1} \{\xi^{k_p}_{i_p}=0\} \in \sigma^{k_{l_{j+1} - 1}}_{i_{l_{j+1} - 1}},
    \end{align*}
    and
    \begin{align*}
        \bigcap_{p=1}^{l_{j+1}} \left(\{(k^{\xi}_{j,p}, i^{\xi}_{j,p}) = (k_p, i_p)\} \bigcap A_{(k_p, i_p)}\right) \in \sigma^{k_{l_{j+1}} - 1}_{n}
    \end{align*}
    since, for all $p \in [l_{j+1}],$ the event $A_{(k_p, i_p)}$ is only determined by $s^{k_p-1}$ and $t^{k_p}$ that \emph{do not} depend on $\{\xi^{j}\}_{j = k_{l_{j+1}}}^{\infty}.$ And, for all $p \in [l_{j+1}],$ the fact that $(k^{\xi}_{j,p}, i^{\xi}_{j,p}) = (k_p, i_p)$ \emph{does not} depend on $\{\xi^{j}\}_{j = k_{l_{j+1}}}^{\infty}.$ Note that $k^{\xi}_{i-1,l_{i}} < k^{\xi}_{j,l_{j+1}}$ (a.s.) for all $i \in [j].$ Thus
    \begin{align*}
        A_{(k_{l_{j+1}}, i_{l_{j+1}})} \bigcap \{(k^{\xi}_{j,l_{j+1}}, i^{\xi}_{j,l_{j+1}}) = (k_{l_{j+1}}, i_{l_{j+1}})\} \bigcap_{i=1}^{j}\{\eta_{i} = l_{i}\} \in \sigma^{k_{l_{j+1}} - 1}_{n}
    \end{align*}
    since, for all $i \in [j],$ this event implies that $k^{\xi}_{i-1,l_{i}} < k_{l_{j+1}}.$ All in all, we have that
    \begin{align*}
        \bigcap_{p=1}^{l_{j+1} - 1} \{\xi^{k_p}_{i_p}=0\} \bigcap_{p=1}^{l_{j+1}} \left(\{(k^{\xi}_{j,p}, i^{\xi}_{j,p}) = (k_p, i_p)\} \bigcap A_{(k_p, i_p)}\right)\bigcap_{i=1}^{j}\{\eta_{i} = l_{i}\} \in \sigma^{k_{l_{j+1}} - 1}_{n} \bigcup \sigma^{k_{l_{j+1} - 1}}_{i_{l_{j+1} - 1}}.
    \end{align*}
    Since $\{\xi^{k_{l_{j+1}}}_{i_{l_{j+1}}} = 1\}$ is independent of $\sigma^{k_{l_{j+1}} - 1}_{n} \bigcup \sigma^{k_{l_{j+1} - 1}}_{i_{l_{j+1} - 1}},$\footnote{For all $i, j \geq 0$ and $l, p \in \{0, \dots, n\}, $ the union of $\sigma^i_l$ and $\sigma^j_p$ is a sigma-algebra since either $\sigma^i_l \subseteq \sigma^j_p,$ or $\sigma^j_p \subseteq \sigma^i_l.$} we get
    \begin{align*}
        &\ProbCond{\eta_{j+1} = l_{j+1}}{\bigcap_{i=1}^{j}\{\eta_{i} = l_{i}\}} \\
        &\leq p \sum_{S_{l_{j+1}}} \ProbCond{\bigcap_{p=1}^{l_{j+1} - 1} \{\xi^{k_p}_{i_p}=0\}, \bigcap_{p=1}^{l_{j+1}} \left(\{(k^{\xi}_{j,p}, i^{\xi}_{j,p}) = (k_p, i_p)\} \bigcap A_{(k_p, i_p)}\right)}{\bigcap_{i=1}^{j}\{\eta_{i} = l_{i}\}}.
    \end{align*}
    If $l_{j+1} = 1,$ observe that the events $\{(k^{\xi}_{j,l_{j+1}}, i^{\xi}_{j,l_{j+1}}) = (k_{l_{j+1}}, i_{l_{j+1}})\} \bigcap A_{(k_{l_{j+1}}, i_{l_{j+1}})}$ do not intersect for all $(k_{l_{j+1}}, i_{l_{j+1}}) \in \N \times [n].$ Thus, we can use the additivity of the probability, and obtain
    \begin{align*}
        &\ProbCond{\eta_{j+1} = l_{j+1}}{\bigcap_{i=1}^{j}\{\eta_{i} = l_{i}\}} \\
        &\leq p \ProbCond{\bigcup_{(k, i) \in \N \times [n]} \left(\{(k^{\xi}_{j,l_{j+1}}, i^{\xi}_{j,l_{j+1}}) = (k, i)\} \bigcap A_{(k, i)}\right)}{\bigcap_{i=1}^{j}\{\eta_{i} = l_{i}\}} \leq p.
    \end{align*}
    Otherwise, if $l_{j+1} > 1,$ we also use the fact that the events do not intersect and get
    \begin{align*}
        &\ProbCond{\eta_{j+1} = l_{j+1}}{\bigcap_{i=1}^{j}\{\eta_{i} = l_{i}\}} \\
        &\leq p \sum_{S_{l_{j+1} - 1}} \ProbCond{\bigcap_{p=1}^{l_{j+1} - 1} \{\xi^{k_p}_{i_p}=0\}, \bigcap_{p=1}^{l_{j+1} - 1} \left(\{(k^{\xi}_{j,p}, i^{\xi}_{j,p}) = (k_p, i_p)\} \bigcap A_{(k_p, i_p)}\right), \right.\\
        &\qquad\qquad\qquad\qquad\left.\bigcup_{(k, i) > (k_{l_{j+1} - 1}, i_{l_{j+1} - 1})} \left(\{(k^{\xi}_{j,l_{j+1}}, i^{\xi}_{j,l_{j+1}}) = (k, i)\} \bigcap A_{(k, i)}\right)}{\bigcap_{i=1}^{j}\{\eta_{i} = l_{i}\}} \\
        &\leq p \sum_{S_{l_{j+1} - 1}} \ProbCond{\bigcap_{p=1}^{l_{j+1} - 1} \{\xi^{k_p}_{i_p}=0\}, \bigcap_{p=1}^{l_{j+1} - 1} \left(\{(k^{\xi}_{j,p}, i^{\xi}_{j,p}) = (k_p, i_p)\} \bigcap A_{(k_p, i_p)}\right)}{\bigcap_{i=1}^{j}\{\eta_{i} = l_{i}\}},
    \end{align*}
    where we used an inequality $\Prob{A, B} \leq \Prob{A}$ for any events $A$ and $B.$ 
    We take the sum over all $((k_1, i_1), \dots, (k_{l_{j+1} - 1}, i_{l_{j+1} - 1})) \in S_{l_{j+1} - 1},$ where $$S_{l_{j+1} - 1} = \{((k_1, i_1), \dots, (k_{l_{j+1} - 1}, i_{l_{j+1} - 1})) \in (\N \times [n])^{l_{j+1} - 1} \,|\, \forall p < j \in [l_{j+1} - 1] : (k_p, i_p) < (k_j, i_j)\}.$$
    Using the same reasoning, one can continue and get that
    \begin{align*}
        &\ProbCond{\eta_{j+1} = l_{j+1}}{\bigcap_{i=1}^{j}\{\eta_{i} = l_{i}\}} \leq p (1 - p)^{l_{j+1} - 1}.
    \end{align*}
\end{proof}

\subsection{Proof of Theorem~\ref{thm:sgd_homog_sync}}

\THEOREMSGDHOMOGSYNC*

\begin{proof}
    Note that $$x^{k+1} = x^k - \gamma \frac{1}{m}\sum_{i=1}^{m} \widehat{\nabla} f(x^k, \xi_i),$$ where the stochatsic gradients are i.i.d. Therefore, we can use the classical SGD result (see Theorem~\ref{theorem:sgd}). For a stepsize $$\gamma = \min\left\{\frac{1}{L}, \frac{\varepsilon m}{2 L \sigma^2}\right\},$$ we have
    \begin{align*}
        \frac{1}{K}\sum_{k=0}^{K-1}\Exp{\norm{\nabla f(x^k)}^2} \leq \varepsilon,
    \end{align*}
    if $$K \geq \frac{12 \Delta L}{\varepsilon} + \frac{12 \Delta L \sigma^2}{\varepsilon^2 m}.$$
\end{proof}

\subsection{Proof of Theorem~\ref{theorem:homog_time_sync}}

\THEOREMSGDHOMOGTIMESYNC*

\begin{proof}
    In this setup, the method converges after $K \times \tau_m$ seconds because the delay of each iteration is determined by the slowest worker. Thus, the time complexity equals
    \begin{align*}
        \tau_m \left(\frac{12 \Delta L}{\varepsilon} + \frac{12 \Delta L \sigma^2}{\varepsilon^2 m}\right) = \frac{12 \Delta L}{\varepsilon} \tau_m \left(1 + \frac{\sigma^2}{\varepsilon m}\right) = \frac{12 \Delta L}{\varepsilon}\min_{m' \in [n]} \tau_{m}\left(1 + \frac{\sigma^2}{m' \varepsilon}\right),
    \end{align*}
    where we use the choice of the number of workers $m.$
\end{proof}

\section{Proofs for Convex Case}

\subsection{The ``worst case'' function in convex case}
\label{sec:worst_case_convex}

In this proof, we use the construction from \citep{woodworth2018graph}. Let us define $B_2(0, R) \eqdef \left\{x \in \R^{T+1}\,:\, \norm{x} \leq R\right\}.$

Let us take functions $f_{l, \eta}\,:\,\R^{T+1} \rightarrow \R$ and $\widetilde{f}_{l, \eta}\,:\,\R^{T+1} \rightarrow \R$ such that 
\begin{align*}
    f_{l, \eta}(x) \eqdef \min_{y \in \R^{T+1}} \left\{\widetilde{f}_{l, \eta}(y) + \frac{\eta}{2} \norm{y - x}^2\right\}
\end{align*}
and 
\begin{align*}
    \widetilde{f}_{l, \eta}(x) \eqdef \max_{1 \leq r \leq T + 1} \left(l x_r - \frac{5 l^2 (r - 1)}{\eta} \right),
\end{align*}
where $l, \eta > 0$ are free parameters. Let us define
\begin{align*}
    y(x) \eqdef \argmin_{y \in \R^{T+1}} \left\{\widetilde{f}_{l, \eta}(y) + \frac{\eta}{2} \norm{y - x}^2\right\}.
\end{align*}

For ths function $f_{l, \eta},$ we have the following properties:
\begin{lemma}[\cite{woodworth2018graph}]
    \label{lemma:worst_function_convex} 
    The function $f_{l, \eta}$ satisfies:
    \begin{enumerate}
    \item
    (Lemma 4) The function $f_{l, \eta}$ is convex, $l$--Lipschitz, and $\eta$--smooth.
    \item (eq. 75)
    \begin{align*}
        \min_{x \in B_2(0, 1)} f_{l, \eta}(x) \leq -\frac{l}{\sqrt{T + 1}}.
    \end{align*}
    \item (Lemma 6) For all $x \in B_2(0, 1),$ $\textnormal{prog}(\nabla f_{l, \eta}(x)) \leq \textnormal{prog}(x) + 1$ and $\textnormal{prog}(y(x)) \leq \textnormal{prog}(x) + 1.$
    \end{enumerate}
\end{lemma}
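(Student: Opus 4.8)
The plan is to read off all three properties from the structure of $\widetilde{f}_{l,\eta}$ as a piecewise-linear ``staircase'' function and of $f_{l,\eta}$ as its Moreau envelope with parameter $1/\eta$. Concretely, $y(x)=\textnormal{prox}_{\widetilde{f}_{l,\eta}/\eta}(x)$ is the unique minimizer (the inner objective is strongly convex), and the standard Moreau-envelope identity gives $\nabla f_{l,\eta}(x)=\eta\,(x-y(x))$. All of Property~1 and Property~2 are then routine; the work is in Property~3.

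\textbf{Property 1.} $\widetilde{f}_{l,\eta}$ is a pointwise maximum of the affine functions $x\mapsto l x_r-5l^2(r-1)/\eta$, hence convex, and each has gradient $l e_r$ of norm $l$, so $\widetilde{f}_{l,\eta}$ is $l$-Lipschitz, with $\partial\widetilde{f}_{l,\eta}(y)=\textnormal{conv}\{\,l e_r : r\in R^\star(y)\,\}$ where $R^\star(y)$ is the set of active indices at $y$. Convexity and $\eta$-smoothness of $f_{l,\eta}$ are the textbook facts about the Moreau envelope of a closed convex function: $\textnormal{prox}_{\widetilde{f}_{l,\eta}/\eta}$ is firmly nonexpansive, hence $I-\textnormal{prox}_{\widetilde{f}_{l,\eta}/\eta}$ is $1$-Lipschitz and $\nabla f_{l,\eta}=\eta\,(I-\textnormal{prox}_{\widetilde{f}_{l,\eta}/\eta})$ is $\eta$-Lipschitz. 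Finally $f_{l,\eta}$ is $l$-Lipschitz because $\nabla f_{l,\eta}(x)=\eta(x-y(x))\in\partial\widetilde{f}_{l,\eta}(y(x))$, which has norm at most $l$.

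\textbf{Property 2.} Since $f_{l,\eta}(x)\le\widetilde{f}_{l,\eta}(x)$ for every $x$ (take $y=x$ in the infimum), evaluate at $x^\star=-\tfrac{1}{\sqrt{T+1}}\mathbf{1}\in B_2(0,1)$: $f_{l,\eta}(x^\star)\le\widetilde{f}_{l,\eta}(x^\star)=\max_{1\le r\le T+1}\bigl(-\tfrac{l}{\sqrt{T+1}}-\tfrac{5l^2(r-1)}{\eta}\bigr)=-\tfrac{l}{\sqrt{T+1}}$, the maximum being attained at $r=1$. Hence $\min_{x\in B_2(0,1)} f_{l,\eta}(x)\le -l/\sqrt{T+1}$.

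\textbf{Property 3 (the main obstacle).} Fix $x$ with $\textnormal{prog}(x)\le j$ and set $y=y(x)$. The prox optimality condition $0\in\partial\widetilde{f}_{l,\eta}(y)+\eta(y-x)$ gives $\eta(x-y)=\sum_{r\in R^\star(y)}\mu_r\,l e_r$ with $\mu_r\ge 0$ and $\sum_r\mu_r=1$; in particular $x-y$ is supported on $R^\star(y)$, and for every coordinate $r$ with $x_r=0$ we get $y_r=-\mu_r l/\eta\in[-l/\eta,0]$. It remains to show $R^\star(y)\subseteq\{1,\dots,j+1\}$. Suppose some $r_0>j+1$ is active; then necessarily $j+1\le T$, and since $\textnormal{prog}(x)\le j$ we have $x_{j+1}=x_{r_0}=0$, hence $y_{j+1}\ge -l/\eta$ and $y_{r_0}\le 0$. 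Comparing the active value at $r_0$ with the value at $r=j+1$,
\[
 l\,y_{r_0}-\frac{5l^2(r_0-1)}{\eta}\;\ge\; l\,y_{j+1}-\frac{5l^2 j}{\eta}\;\ge\; -\frac{l^2}{\eta}-\frac{5l^2 j}{\eta},
\]
so $l\,y_{r_0}\ge\frac{5l^2(r_0-1-j)}{\eta}-\frac{l^2}{\eta}\ge\frac{4l^2}{\eta}$, i.e. $y_{r_0}\ge 4l/\eta>0$, contradicting $y_{r_0}\le 0$. Therefore $\textnormal{prog}(y(x))\le j+1$, and since $\nabla f_{l,\eta}(x)=\eta(x-y(x))$ is supported in $\textnormal{supp}(x)\cup\textnormal{supp}(y(x))$, also $\textnormal{prog}(\nabla f_{l,\eta}(x))\le j+1$. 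The delicate point is the bookkeeping of the active set together with the fact that the staircase gap $5l^2/\eta$ strictly dominates the per-coordinate prox displacement $l/\eta$, which is exactly what prevents the active index from ``jumping ahead''; everything else is routine convex analysis of the Moreau envelope.
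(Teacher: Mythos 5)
Your proof is correct. Note that the paper itself offers no proof of this lemma: it is imported verbatim from \citet{woodworth2018graph} with pointers to their Lemma 4, eq.\ (75), and Lemma 6, so there is no ``paper proof'' to compare against line by line. What you have done is reconstruct those arguments from scratch, and the reconstruction holds up. Properties 1 and 2 are indeed the routine Moreau-envelope facts you cite (firm nonexpansiveness of the prox gives $\eta$-smoothness, $\nabla f_{l,\eta}(x)=\eta(x-y(x))\in\partial\widetilde{f}_{l,\eta}(y(x))$ gives $l$-Lipschitzness, and evaluating $\widetilde f_{l,\eta}$ at $-\tfrac{1}{\sqrt{T+1}}\mathbf{1}$ gives the upper bound on the minimum). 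Your Property 3 argument is the substantive part and is the same mechanism as in the source: the optimality condition forces $y_r=x_r-\mu_r l/\eta$ with $\mu\in\Delta$ supported on the active set, so any active index $r_0>j+1$ would have to beat the value at $j+1$ despite a staircase penalty gap of at least $5l^2/\eta$ while its own coordinate can only be displaced by at most $l/\eta$ below zero; the resulting inequality $y_{r_0}\ge 4l/\eta>0$ contradicts $y_{r_0}\le 0$. This correctly isolates why the constant $5$ appears in the construction. The only minor bookkeeping worth making explicit is that for $r>j+1$ not in the active set one has $y_r=x_r=0$ directly, which together with the exclusion of active indices beyond $j+1$ yields $\textnormal{prog}(y(x))\le j+1$; you use this implicitly and it is fine.
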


\subsection{Proof of Theorem~\ref{theorem:lower_bound_homog_convex}}

\THEOREMLOWERBOUNDCONVEX*

In Step 1, we almost repeat the proof from \cite{woodworth2018graph}. Steps 2 and 3 are very close to Steps 2 and 3 of the proofs for the nonconvex case.

\begin{proof}
    (\textbf{Step 1}: $f \in \cF^{\textnormal{conv}}_{R, M, L}$)
    Following \cite{woodworth2018graph}, we assume that $R = 1.$ Otherwise, one can rescale the parameters of the construction. Let us take the function $f_{l, \eta}$ from Section~\ref{sec:worst_case_convex} with parameters
    \begin{align*}
        l = \min\left\{M, \frac{L}{10 (T + 1)^{3/2}}\right\} \textnormal{ and } \eta = 10 (T + 1)^{3/2} l.
    \end{align*}
    Using Lemma~\ref{lemma:worst_function_convex}, one can see that $f_{l, \eta}$ is convex, $M$--Lipschitz, and $L$--smooth. Therefore, $f_{l, \eta} \in \cF^{\textnormal{conv}}_{R, M, L}.$ Further, we use the notation $f \eqdef f_{l, \eta}.$
    
    (\textbf{Step 2}: Oracle Class) \\
    Let us take 
    \begin{align*}[\widehat{\nabla} f(x; \xi)]_j \eqdef \nabla_j f(x) \left(1 + \mathbbm{1}\left[j > \textnormal{prog}(x)\right]\left(\frac{\xi}{p} - 1\right)\right) \quad \forall x \in \R^T,
    \end{align*} and $\mathcal{D}_i = \textnormal{Bernouilli}(p)$ for all $i \in [n],$ where $p \in (0, 1].$ We denote $[x]_j$ as the $j$\textsuperscript{th} index of a vector $x \in \R^{T+1}.$
    It is left to show this mapping is unbiased and $\sigma^2$-variance-bounded. Indeed, $$\Exp{[\widehat{\nabla} f(x, \xi)]_i} = \nabla_i f(x) \left(1 + \mathbbm{1}\left[i > \textnormal{prog}(x)\right]\left(\frac{\Exp{\xi}}{p} - 1\right)\right) = \nabla_i f(x)$$ for all $i \in [T+1],$ and
    \begin{align*}
        \Exp{\norm{\widehat{\nabla} f(x; \xi) - \nabla f(x)}^2} \leq \max_{j \in [T+1]} \left|\nabla_j f(x)\right|^2\Exp{\left(\frac{\xi}{p} - 1\right)^2}
    \end{align*}
    because the difference is non-zero only in one coordinate. Thus
    \begin{align*}
        \Exp{\norm{\widehat{\nabla} f(x, \xi) - \nabla f(x)}^2} &\leq \frac{\norm{\nabla f(x)}_{\infty}^2(1 - p)}{p} \leq \frac{\norm{\nabla f(x)}^2(1 - p)}{p} \\
        &\leq \frac{l^2 (1 - p)}{p} \leq \sigma^2 \quad \forall x \in B_2(0, 1).
    \end{align*}
    where we take 
    $$p = \min\left\{\frac{l^2}{\sigma^2}, 1\right\}.$$
    
    (\textbf{Step 3}: Analysis of Protocol) \\
    Protocol~\ref{alg:time_multiple_oracle_protocol} generates the sequence $\{x^k\}_{k=0}^{\infty}.$
    Then, we have
    \begin{align*}
        f(x^k) &= \max_{1 \leq r \leq T + 1} \left(l [y(x^k)]_r  - \frac{5 l^2 (r - 1)}{\eta} \right) + \frac{\eta}{2} \norm{y(x^k) - x^k}^2 \geq l [y(x^k)]_{T+1}  - \frac{5 l^2 T}{\eta}.
    \end{align*}
    Assume that $\textnormal{prog}(x^k) < T.$ Using Lemma~\ref{lemma:worst_function_convex}, if $\textnormal{prog}(x^k) < T,$ then $\textnormal{prog}(y(x^k)) \leq T$ and $[y(x^k)]_{T+1} = 0.$ Therefore, $f(x^k) \geq - \frac{5 l^2 T}{\eta} \geq - \frac{5 l^2 (T + 1)}{\eta}$ and
    \begin{align*}
        f(x^k) - \min_{x \in B_2(0, 1)} f(x) \geq \frac{l}{\sqrt{T + 1}} - \frac{5 l^2 (T+1)}{\eta} = \frac{l}{2 \sqrt{T + 1}} = \min\left\{\frac{M}{2 \sqrt{T + 1}}, \frac{L}{20 (T + 1)^2}\right\}
    \end{align*}
    if $\textnormal{prog}(x^k) < T.$ Let us take
    \begin{align*}
        T = \min\left\{\left\lfloor\frac{M^2}{64 \varepsilon^2} - 1\right\rfloor, \left\lfloor\frac{\sqrt{L}}{\sqrt{80} \sqrt{\varepsilon}} - 1\right\rfloor\right\}
    \end{align*}
    to ensure that
    \begin{align*}
        f(x^k) - \min_{x \in B_2(0, 1)} f(x) \geq \min\left\{\frac{M}{2 \sqrt{T + 1}}, \frac{L}{20 (T + 1)^2}\right\} \geq 4 \varepsilon > 2 \varepsilon
    \end{align*}
    if $\textnormal{prog}(x^k) < T.$
    It is left to use Lemma~\ref{lemma:boundtime} with $\delta = 1/2$:
    \begin{align}
        \label{eq:aux_convex_varepsilon}
        \Exp{\inf_{k \in S_t} f(x^k)} - \min_{x \in B_2(0, 1)} f(x) &> 2 \varepsilon \Prob{\inf_{k \in S_t} \mathbbm{1}\left[\textnormal{prog}(x^k) < T\right] \geq 1} > \varepsilon
    \end{align}
    for \begin{align*}
        t &\leq \frac{1}{24} \min_{m \in [n]} \left[\left(\sum_{i=1}^{m} \frac{1}{\tau_i}\right)^{-1} \left(\frac{\sigma^2}{l^2} + m\right)\right] \left(\frac{T}{2} - 1\right) \\
        &= \frac{1}{24} \min_{m \in [n]} \left[\left(\frac{1}{m} \sum_{i=1}^{m} \frac{1}{\tau_i}\right)^{-1} \left(\frac{\sigma^2}{m l^2} + 1\right)\right] \left(\frac{T}{2} - 1\right).
    \end{align*}
    We can take universal constants $c_1$ and $c_2$ equal to $8 \cdot 64$ to ensure that $T \geq 6.$ Therefore, $16\varepsilon \geq \frac{l}{2 \sqrt{T + 1}}$ and \eqref{eq:aux_convex_varepsilon} holds for 
    \begin{align*}
        t &\leq \frac{1}{96} \min_{m \in [n]} \left[\left(\frac{1}{m} \sum_{i=1}^{m} \frac{1}{\tau_i}\right)^{-1} \left(\frac{\sigma^2}{m l^2} (T + 1) + (T + 2)\right)\right],
    \end{align*}
    for
    \begin{align*}
        t &\leq \frac{1}{96} \min_{m \in [n]} \left[\left(\frac{1}{m} \sum_{i=1}^{m} \frac{1}{\tau_i}\right)^{-1} \left(\frac{\sigma^2}{m l^2} \frac{l^2}{32^2 \varepsilon^2} + (T + 2)\right)\right],
    \end{align*}
    and for
    \begin{align*}
        t &\leq \frac{1}{96} \min_{m \in [n]} \left[\left(\frac{1}{m} \sum_{i=1}^{m} \frac{1}{\tau_i}\right)^{-1} \left(\frac{\sigma^2}{32^2 m \varepsilon^2} + \min\left\{\frac{M^2}{64 \varepsilon^2}, \frac{\sqrt{L}}{\sqrt{80} \sqrt{\varepsilon}}\right\}\right)\right].
    \end{align*}
\end{proof}

\subsection{Proof of Theorem~\ref{thm:sgd_homog_convex}}

\THEOREMSGDHOMOGCONVEX*

\begin{proof}
    Using the same reasoning as in the proof of Theorem~\ref{thm:sgd_homog}, one can see that Method~\ref{alg:alg_server} is just the stochastic gradient method with the batch size $S.$ Method~\ref{alg:alg_server} can be rewritten as $x^{k+1} = x^k - \gamma \frac{1}{S} \sum_{i=1}^S \widehat{\nabla} f(x^k;\xi_i),$ where the $\xi_i$ are independent random samples. It means that we can use the classical SGD result (Theorem~\ref{theorem:sgd_convex}). For a stepsize $$\gamma = \frac{\varepsilon}{M^2 + \frac{\sigma^2}{S}},$$ we have
    \begin{align*}
        \Exp{f(\widehat{x}^K)} - f(x^*) \leq \varepsilon
    \end{align*}
    if $$K \geq \frac{2 M^2 \norm{x^* - x^{0}}^2}{\varepsilon^2} \geq \frac{(M^2 + \frac{\sigma^2}{S})\norm{x^* - x^{0}}^2}{\varepsilon^2}.$$
\end{proof}

\subsubsection{The classical SGD theorem in convex optimization}
We reprove the classical SGD result (see, for instance, \citep{lan2020first}) for convex functions.
\begin{theorem}
    \label{theorem:sgd_convex}
    Assume that Assumptions~\ref{ass:convex} and \ref{ass:lipschitz_constant_function} hold. We consider the SGD method: $$x^{k+1} = x^k - \gamma g(x^k),$$ where
    \begin{align*}
        \gamma = \frac{\varepsilon}{M^2 + \sigma^2}
    \end{align*} 
    For a fixed $x \in \R^d$, $g(x)$ is a random vector such that $\Exp{g(x)} \in \partial f(x)$ ($\partial f(x)$ is the subdifferential of the function $f$ at the point $x$), \begin{align*}
        \Exp{\norm{g(x) - \Exp{g(x)}}^2} \leq \sigma^2,
    \end{align*} and $g(x^k)$ are independent vectors for all $k \geq 0.$ Then
    \begin{align}
        \label{eq:stoch_mirr_desc}
        \Exp{f\left(\frac{1}{K} \sum_{k=0}^{K-1} x^k\right)} - f(x^*) \leq \varepsilon
    \end{align}
    for
    \begin{align*}
        K \geq \frac{(M^2 + \sigma^2)\norm{x^* - x^{0}}^2}{\varepsilon^2}.
    \end{align*}
\end{theorem}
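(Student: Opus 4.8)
The plan is to run the classical ``distance-to-optimum'' potential argument. First I would expand the squared distance one step: $\sqnorm{x^{k+1} - x^*} = \sqnorm{x^k - x^*} - 2\gamma \inp{g(x^k)}{x^k - x^*} + \gamma^2 \sqnorm{g(x^k)}$. Letting $\mathcal{G}^k$ denote the $\sigma$-algebra generated by $g(x^0), \dots, g(x^{k-1})$, so that $x^k$ is $\mathcal{G}^k$-measurable and $g(x^k)$ is independent of $\mathcal{G}^k$ (given $x^k$), I would take the conditional expectation given $\mathcal{G}^k$ and use three ingredients: (i) unbiasedness, $\ExpCond{g(x^k)}{\mathcal{G}^k} \in \partial f(x^k)$; (ii) the subgradient inequality for the convex function $f$, which gives $\inp{\ExpCond{g(x^k)}{\mathcal{G}^k}}{x^k - x^*} \geq f(x^k) - f(x^*)$; and (iii) the bias--variance decomposition $\ExpCond{\sqnorm{g(x^k)}}{\mathcal{G}^k} = \sqnorm{\ExpCond{g(x^k)}{\mathcal{G}^k}} + \ExpCond{\sqnorm{g(x^k) - \ExpCond{g(x^k)}{\mathcal{G}^k}}}{\mathcal{G}^k} \leq M^2 + \sigma^2$, where the bound $\sqnorm{\ExpCond{g(x^k)}{\mathcal{G}^k}} \leq M^2$ is precisely the standard fact that subgradients of an $M$--Lipschitz convex function have norm at most $M$ (Assumption~\ref{ass:lipschitz_constant_function}).

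Combining these yields the one-step inequality $\ExpCond{\sqnorm{x^{k+1} - x^*}}{\mathcal{G}^k} \leq \sqnorm{x^k - x^*} - 2\gamma\left(f(x^k) - f(x^*)\right) + \gamma^2(M^2 + \sigma^2)$. I would then take the full expectation, rearrange to isolate $2\gamma \Exp{f(x^k) - f(x^*)}$, and sum over $k = 0, \dots, K-1$; the distance terms telescope, leaving $2\gamma \sum_{k=0}^{K-1} \Exp{f(x^k) - f(x^*)} \leq \sqnorm{x^0 - x^*} + K\gamma^2(M^2 + \sigma^2)$, where I also drop the nonnegative term $\Exp{\sqnorm{x^K - x^*}}$. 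Dividing by $2\gamma K$ and applying Jensen's inequality to the convex $f$ with $\bar{x}^K \eqdef \frac{1}{K}\sum_{k=0}^{K-1} x^k$ gives $\Exp{f(\bar{x}^K)} - f(x^*) \leq \frac{1}{K}\sum_{k=0}^{K-1}\Exp{f(x^k) - f(x^*)} \leq \frac{\sqnorm{x^0 - x^*}}{2\gamma K} + \frac{\gamma(M^2 + \sigma^2)}{2}$.

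Finally I would substitute $\gamma = \varepsilon/(M^2 + \sigma^2)$: the second term becomes exactly $\varepsilon/2$, and the first term equals $\frac{(M^2 + \sigma^2)\sqnorm{x^0 - x^*}}{2\varepsilon K}$, which is at most $\varepsilon/2$ precisely when $K \geq (M^2 + \sigma^2)\sqnorm{x^0 - x^*}/\varepsilon^2$ — the stated bound. I do not expect a genuine obstacle here, as this is textbook SGD analysis; the only points that deserve a sentence of care are (a) using subgradients rather than gradients, since $f$ is assumed merely convex and Lipschitz and not differentiable, so step (ii) invokes the subgradient inequality and step (iii) invokes the subgradient-norm bound; and (b) the measurability bookkeeping justifying the conditioning on $\mathcal{G}^k$, which follows from the stated independence of the $g(x^k)$.
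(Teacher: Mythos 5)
Your proposal is correct and is essentially the same argument as the paper's: the paper's manipulation of $\inp{g(x^k)}{x^k - x}$ into $-\frac{\gamma}{2}\norm{g(x^k)}^2 + \frac{1}{2\gamma}\norm{x - x^{k}}^2 - \frac{1}{2\gamma}\norm{x - x^{k+1}}^2$ is exactly your one-step expansion of the squared distance, and both proofs then use the subgradient inequality, the bias--variance bound $M^2 + \sigma^2$, telescoping, and Jensen with the same final parameter choices. No substantive difference.
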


\begin{proof}
    We denote $\mathcal{G}^k$ as a sigma-algebra generated by $g(x^0), \dots, g(x^{k-1}).$ Using the convexity, for all $x \in \R^d,$ we have
    \begin{align*}
        f(x) \geq f(x^k) + \inp{\ExpCond{g(x^k)}{\mathcal{G}^k}}{x - x^k} = f(x^k) + \ExpCond{\inp{g(x^k)}{x - x^k}}{\mathcal{G}^k}.
    \end{align*}
    Note that
    \begin{align*}
        \inp{g(x^k)}{x - x^k} &= \inp{g(x^k)}{x^{k+1} - x^k} + \inp{g(x^k)}{x - x^{k+1}} \\
        &= -\gamma \norm{g(x^k)}^2 + \frac{1}{\gamma}\inp{x^k - x^{k+1}}{x - x^{k+1}} \\
        &= -\gamma \norm{g(x^k)}^2 + \frac{1}{2\gamma}\norm{x^k - x^{k+1}}^2 + \frac{1}{2\gamma}\norm{x - x^{k+1}}^2 - \frac{1}{2\gamma}\norm{x - x^{k}}^2 \\
        &= -\frac{\gamma}{2} \norm{g(x^k)}^2 + \frac{1}{2\gamma}\norm{x - x^{k+1}}^2 - \frac{1}{2\gamma}\norm{x - x^{k}}^2
    \end{align*}
    and
    \begin{align*}
        \ExpCond{\norm{g(x^k)}^2}{\mathcal{G}^k} = \ExpCond{\norm{g(x^k) - \ExpCond{g(x^k)}{\mathcal{G}^k}}^2}{\mathcal{G}^k} + \norm{\ExpCond{g(x^k)}{\mathcal{G}^k}}^2 \leq \sigma^2 + M^2.
    \end{align*}
    Therefore, we get
    \begin{align*}
        f(x^k) &\leq f(x) + \ExpCond{\inp{g(x^k)}{x^k - x}}{\mathcal{G}^k} \\
        &= f(x) + \frac{\gamma}{2} \ExpCond{\norm{g(x^k)}^2}{\mathcal{G}^k} + \frac{1}{2\gamma}\norm{x - x^{k}}^2 - \frac{1}{2\gamma}\ExpCond{\norm{x - x^{k+1}}^2}{\mathcal{G}^k} \\
        &\leq f(x) + \frac{\gamma}{2} \left(M^2 + \sigma^2\right) + \frac{1}{2\gamma}\norm{x - x^{k}}^2 - \frac{1}{2\gamma}\ExpCond{\norm{x - x^{k+1}}^2}{\mathcal{G}^k}.
    \end{align*}
    By taking the full expectation and summing the last inequality for $t$ from $0$ to $K - 1,$ we obtain
    \begin{align*}
        \Exp{\sum_{k=0}^{K-1} f(x^k)} &\leq K f(x) + \frac{K \gamma}{2} \left(M^2 + \sigma^2\right) + \frac{1}{2\gamma}\norm{x - x^{0}}^2 - \frac{1}{2\gamma}\Exp{\norm{x - x^{K}}^2} \\
        &\leq K f(x) + \frac{K \gamma}{2} \left(M^2 + \sigma^2\right) + \frac{1}{2\gamma}\norm{x - x^{0}}^2.
    \end{align*}
    Let divide the last inequality by $K,$ take $x = x^*,$ and use the convexity:
    \begin{align*}
        \Exp{f\left(\frac{1}{K} \sum_{k=0}^{K-1} x^k\right)} - f(x^*) \leq \frac{\gamma}{2} \left(M^2 + \sigma^2\right) + \frac{1}{2\gamma K}\norm{x^* - x^{0}}^2.
    \end{align*}
    The choices of $\gamma$ and $K$ ensure that \eqref{eq:stoch_mirr_desc} holds.
\end{proof}

\subsection{Proof of Theorem~\ref{theorem:homog_time_convex}}

\THEOREMSGDHOMOGCONVEXTIME*

\begin{proof}
    The proof is the same as in Theorem~\ref{theorem:homog_time}. It is only required to estimate the time that is required to collect a batch of size $S$. Method~\ref{alg:alg_server} returns a solution after
    \begin{align*}
        K \times 2 t'(j^*) = \frac{4 M^2 \norm{x^* - x^{0}}^2}{\varepsilon^2} \min_{j \in [n]} \left[\left(\sum_{i=1}^j \frac{1}{\tau_i}\right)^{-1}\left(S + j\right)\right]
    \end{align*}
    seconds.
\end{proof}

\subsection{Proof of Theorem~\ref{thm:sgd_homog_convex_acc}}

\THEOREMSGDHOMOGCONVEXACC*

\begin{proof}
    One can see that Accelerated Method~\ref{alg:alg_server} is just the accelerated stochastic gradient method with the batch size $S.$ It means that we can use the classical SGD result (Proposition 4.4 in \citet{lan2020first}). For a stepsize $$\gamma = \min\left\{\frac{1}{4L}, \left[\frac{3 R^2 S}{4 \sigma^2 (K + 1)(K + 2)^2}\right]^{1/2}\right\},$$ we have
    \begin{align*}
        \Exp{f(x^K)} - f(x^*) \leq \frac{4 L R^2}{K^2} + \frac{4 \sqrt{\sigma^2 R^2}}{\sqrt{S K}}.
    \end{align*}
    Therefore,
    \begin{align*}
        \Exp{f(x^K)} - f(x^*) \leq \varepsilon
    \end{align*}
    if $$K \geq \frac{8 \sqrt{L} R}{\sqrt{\varepsilon}} \geq 8\max\left\{\frac{\sqrt{L} R}{\sqrt{\varepsilon}}, \frac{\sigma^2 R^2}{\varepsilon^2 S}\right\},$$
    where we use the choice of $S.$
\end{proof}

\subsection{Proof of Theorem~\ref{theorem:homog_time_convex_acc}}

\THEOREMSGDHOMOGCONVEXACCTIME*

\begin{proof}
    The proof is the same as in Theorem~\ref{theorem:homog_time}. It is only required to estimate the time that is required to collect a batch of size $S$. Accelerated Method~\ref{alg:alg_server} returns a solution after
    \begin{align*}
        K \times 2 t'(j^*) = \frac{16 \sqrt{L} R}{\sqrt{\varepsilon}} \min_{j \in [n]} \left[\left(\sum_{i=1}^j \frac{1}{\tau_i}\right)^{-1}\left(S + j\right)\right]
    \end{align*}
    seconds.
\end{proof}

\section{Construction of Algorithm for Rennala SGD}
In this section, we provide the formal construction of the algorithm from Definition~\ref{def:time_algorithm} for Rennala SGD. We consider the fixed computation model, where $i$\textsuperscript{th} worker requires $\tau_i$ seconds to calculate stochastic gradients. We now define the corresponding sequence $\{A^k\}_{k=0}^{\infty}.$ Let us fix a starting point $x^0 \in \R^d$, a stepsize $\gamma \geq 0$, and a batch size $S \in \N.$

First, let us consider the sequence $\{\widehat{t}_k\}_{k=1}^{\infty}$ from Definition~\ref{def:time_seq} that represents the times when the workers would be ready to provide stochastic gradients. Additionally, let us define $\widehat{t}_0 \eqdef 0$ and a sequence of the workers' indices $\{\widehat{i}_k\}_{k=1}^{\infty}$ that are corresponding to the times $\{\widehat{t}_k\}_{k=1}^{\infty}$. We can define
\begin{align}
    \squeeze
\begin{split}
    &A^k (g^1, \dots, g^{k}) = 
\left\{
\begin{aligned}
    &\Big(\widehat{t}_{\left(\left\lfloor k / 2 \right\rfloor\right)}, \widehat{i}_{\left(\left\lfloor k / 2 \right\rfloor + 1\right)}, x^0 - \frac{\gamma}{S}\sum_{j=1}^{2S \times \left\lfloor k / (2S) \right\rfloor} g^j\Big), & k \Mod{2} = 0, \\
    &\Big(\widehat{t}_{\left(\left\lfloor k / 2 \right\rfloor + 1\right)}, \widehat{i}_{\left(\left\lfloor k / 2 \right\rfloor + 1\right)}, 0\Big), & k \Mod{2} = 1, \\
\end{aligned}
\right.
\end{split}
\end{align}
for all $k \geq 1,$ and $A^0 = (\widehat{t}_0, \widehat{i}_1, x^0).$ For the fixed computation model, one can use this algorithm in Protocol~\ref{alg:time_multiple_oracle_protocol} with the oracle \eqref{eq:oracle_stochastic_delay} to get an equivalent procedure to Method~\ref{alg:alg_server}.

\newpage
\section{Experiments}
\label{sec:experiments}

In this section, we compare Rennala SGD with Asynchronous SGD and Minibatch SGD on quadratic optimization tasks with stochastic gradients. The experiments were implemented in Python 3.7.9. The distributed environment was emulated on machines with Intel(R) Xeon(R) Gold 6248 CPU @ 2.50GHz.

\subsection{Setup}
We consider the homogeneous optimization problem \eqref{eq:main_task} with the convex quadratic function 
\begin{align*}
    f(x) = \frac{1}{2} x^\top \mA x - b^\top x \quad \forall x \in \R^d.
\end{align*}
We take $d = 1000,$ 
$$\mA = \frac{1}{4}\left( \begin{array}{cccc}
    2 & -1 & & 0\\
    -1 & \ddots & \ddots & \\
    & \ddots & \ddots & -1 \\
    0 & & -1 & 2 \end{array} \right) \in \R^{d \times d} \quad \textnormal{ and } \quad b = \frac{1}{4}\left[ \begin{array}{c}
        -1\\
        0\\
        \vdots\\
        0\end{array} \right] \in \R^{d}.$$

Assume that all $n$ workers has access to the following unbiased stochastic gradients:
\begin{align*}
    [\widehat{\nabla} f(x; \xi)]_j \eqdef \nabla_j f(x) \left(1 + \mathbbm{1}\left[j > \textnormal{prog}(x)\right]\left(\frac{\xi}{p} - 1\right)\right) \quad \forall x \in \R^d,
\end{align*}
where $\xi \sim \mathcal{D}_i = \textnormal{Bernouilli}(p)$ for all $i \in [n],$ where $p \in (0, 1].$ We denote $[x]_j$ as the $j$\textsuperscript{th} index of a vector $x \in \R^d.$ In our experiments, we take $p = 0.01$ and the starting point $x^0 = [\sqrt{d}, 0, \dots, 0]^\top.$ We emulate our setup by considering that the $i$\textsuperscript{th} worker requires $\sqrt{i}$ seconds to calculate a stochastic gradient. In all methods, we fine-tune step sizes from a set $\{2^i\,|\,i \in [-20, 20]\}$. In Rennala SGD, we fine-tune the batch size $S \in \{1, 5, 10, 20, 40, 80, 100, 200, 500, 1000\}.$

\subsection{Results}
In Figures~\ref{fig:n_100}, \ref{fig:n_1000}, and \ref{fig:n_10000}, we present experiments with different number of workers $n \in \{100, 1000, 10000\}.$ When the number of workers $n = 100$, Rennala SGD with Asynchronous SGD converge to the minimum at almost the same rate. However, when we start increasing the number of workers $n$, one can see that Asynchronous SGD\footnote{We implemented Asynchronous SGD with delay-adaptive stepsizes from \citep{koloskova2022sharper}} starts converging slower. This is an expected behavior since the maximum theoretical step size in Asynchronous SGD decreases as the number of workers $n$ increases \citep{koloskova2022sharper, mishchenko2022asynchronous}.

\section{Experiment with Small-Scale Machine Learning Task}
We also consider the methods in a more practical scenario. We solve a logistic regression problem with the \textit{MNIST} dataset \citep{lecun2010mnist}. We take $n = 1000$ workers that hold the \emph{same} subset of \textit{MNIST} of the size $3000.$ Each worker samples stochastic gradients of size $4.$ In Figures~\ref{fig:log} and \ref{fig:hist}, we provide convergence rates and a histogram of the time delays from the experiment. As in \citep{mishchenko2018delay}, we can observe that asynchronous methods converge faster than Minibatch SGD. Unlike Section~\ref{sec:experiments} where Rennala SGD converges faster than Asynchronous SGD, these methods have almost the same performance in this particular experiment.

\newpage

\begin{figure}[ht]
    \centering
    \includegraphics[width=0.75\textwidth]{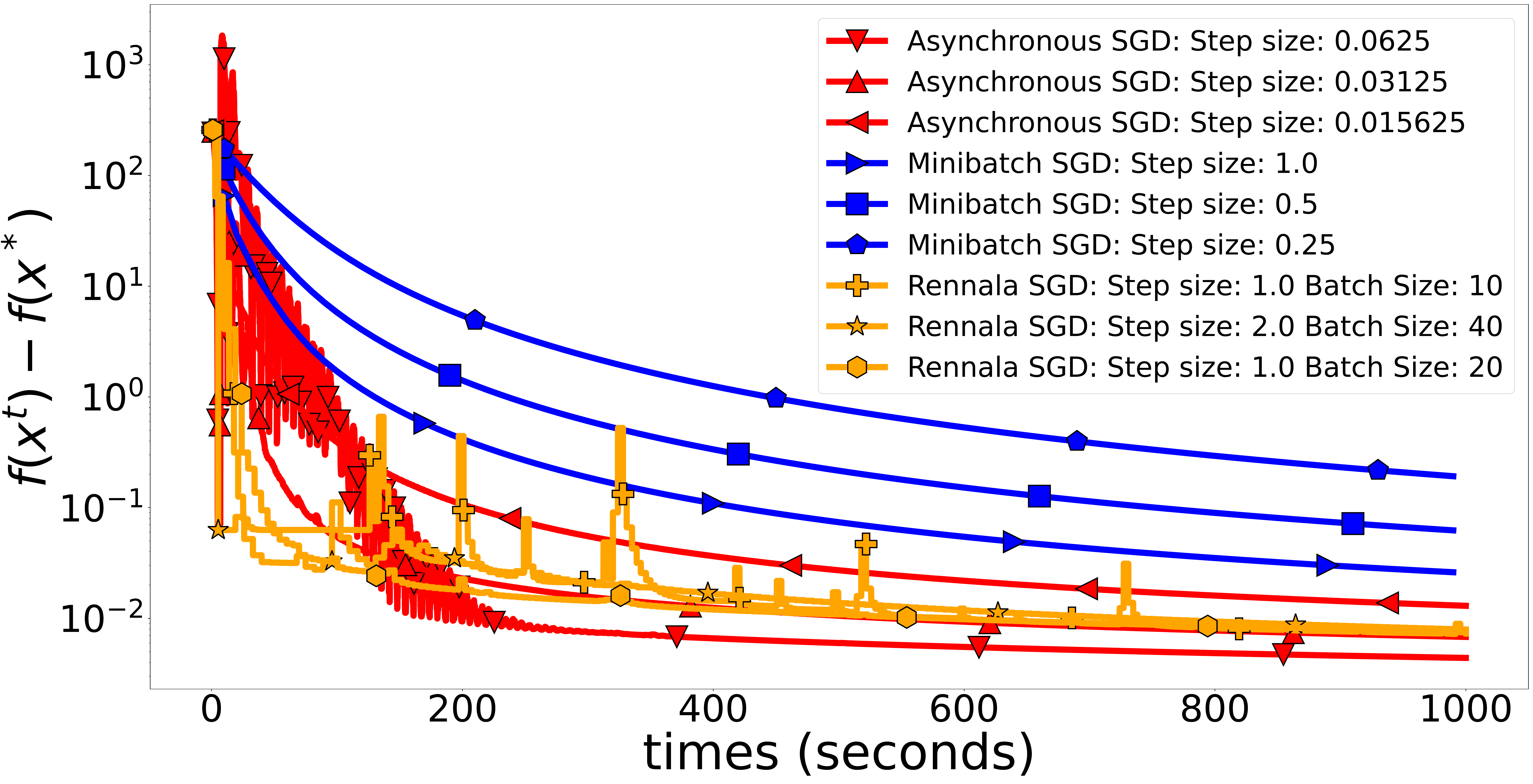}
    \caption{\# of workers $n = 100.$}
    \label{fig:n_100}
\end{figure}

\begin{figure}[ht]
    \centering
    \includegraphics[width=0.75\textwidth]{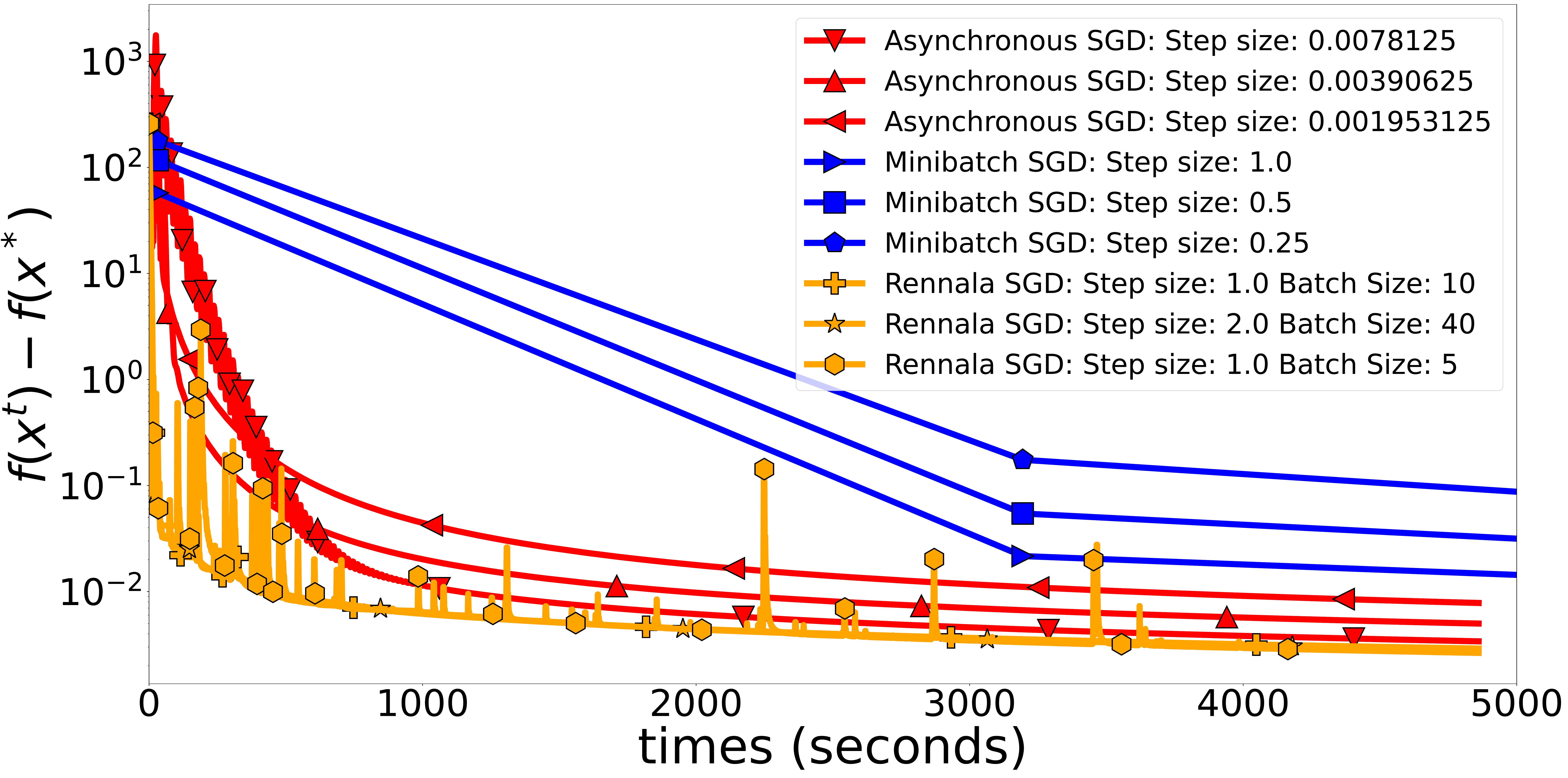}
    \caption{\# of workers $n = 1000.$}
    \label{fig:n_1000}
\end{figure}

\begin{figure}[ht]
    \centering
    \includegraphics[width=0.75\textwidth]{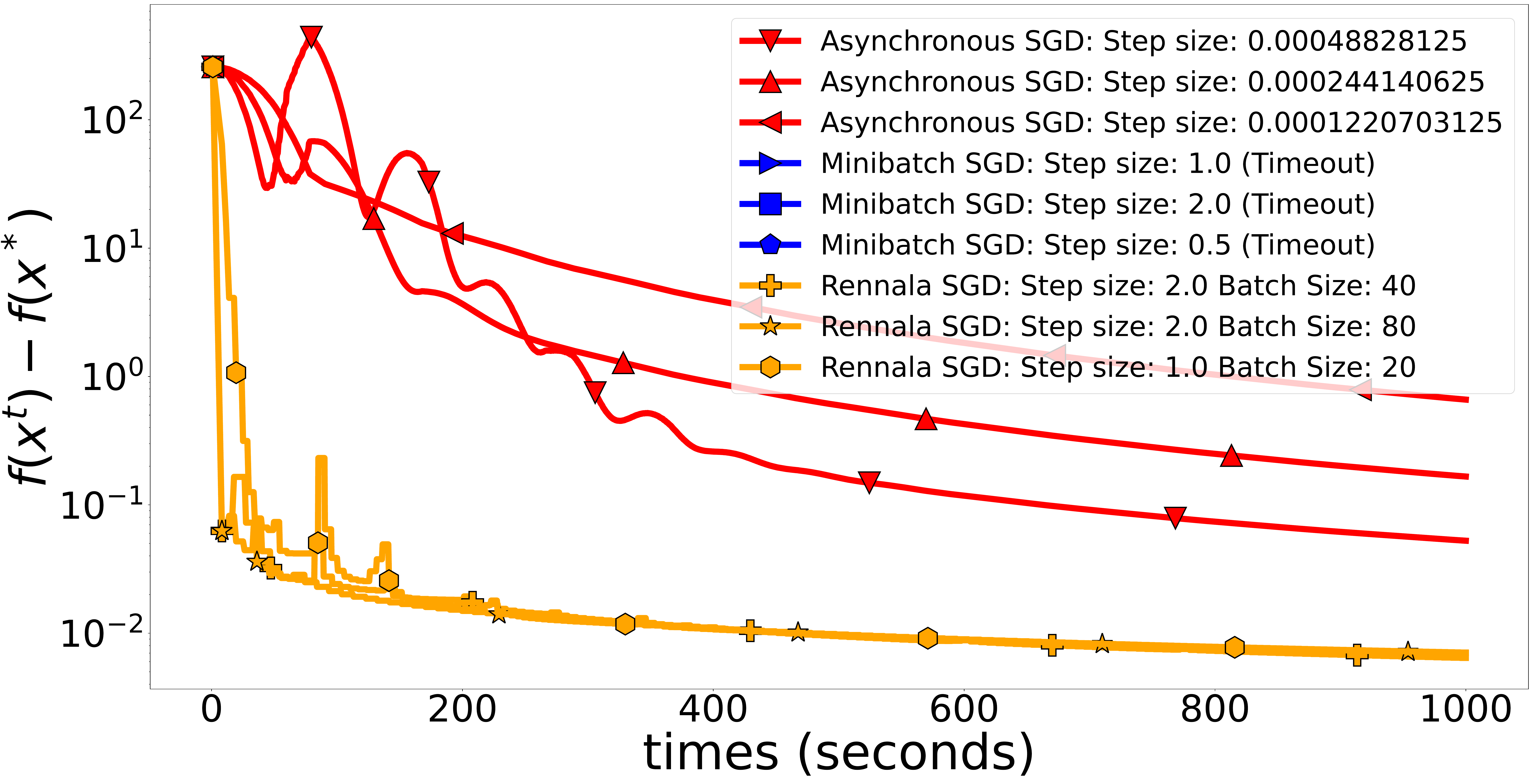}
    \caption{\# of workers $n = 10000.$}
    \label{fig:n_10000}
\end{figure}

\begin{figure}[ht]
    \centering
    \includegraphics[width=0.75\textwidth]{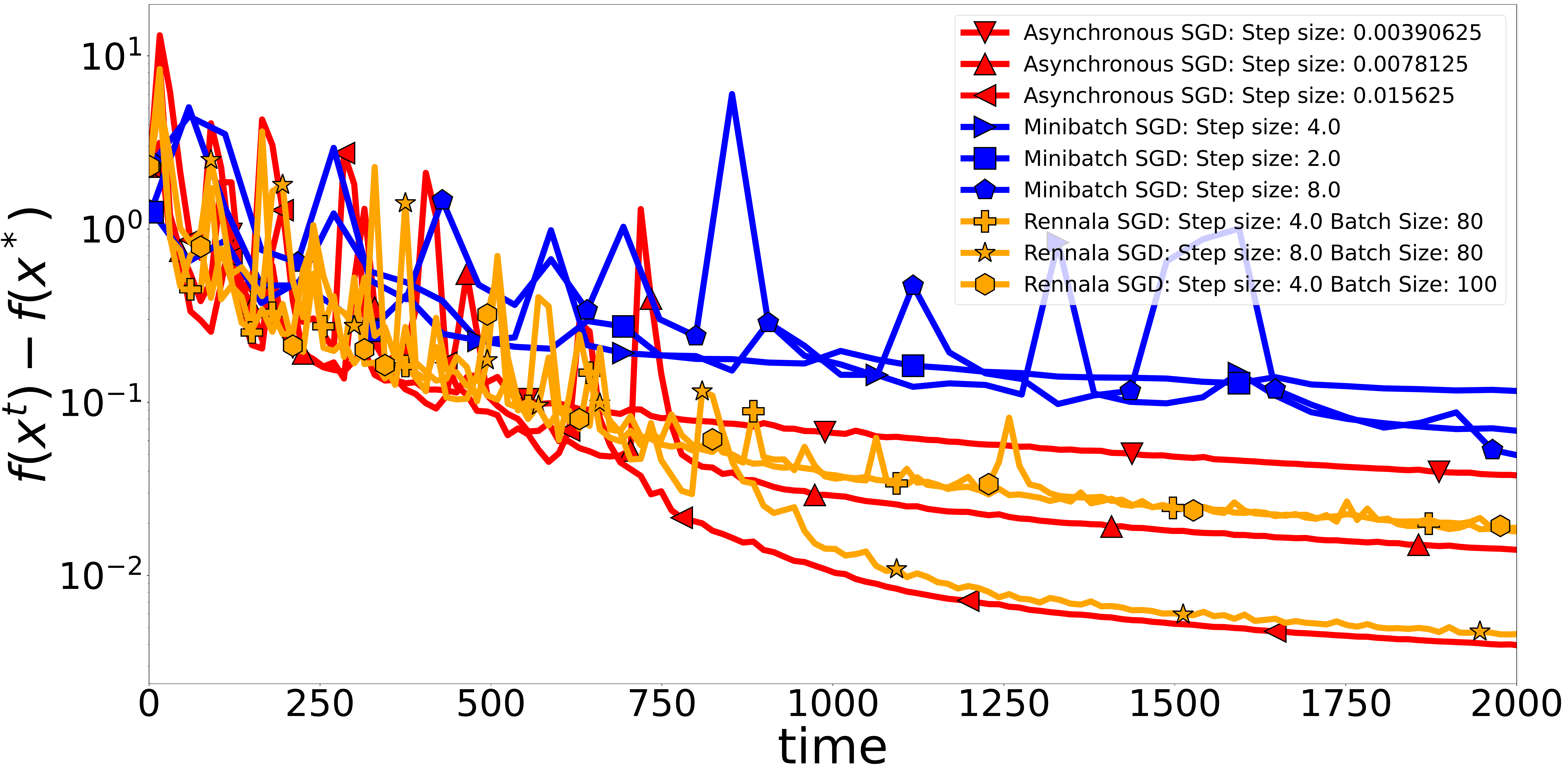}
    \caption{Logistic regression experiment}
    \label{fig:log}
\end{figure}

\begin{figure}[ht]
    \centering
    \includegraphics[width=0.75\textwidth]{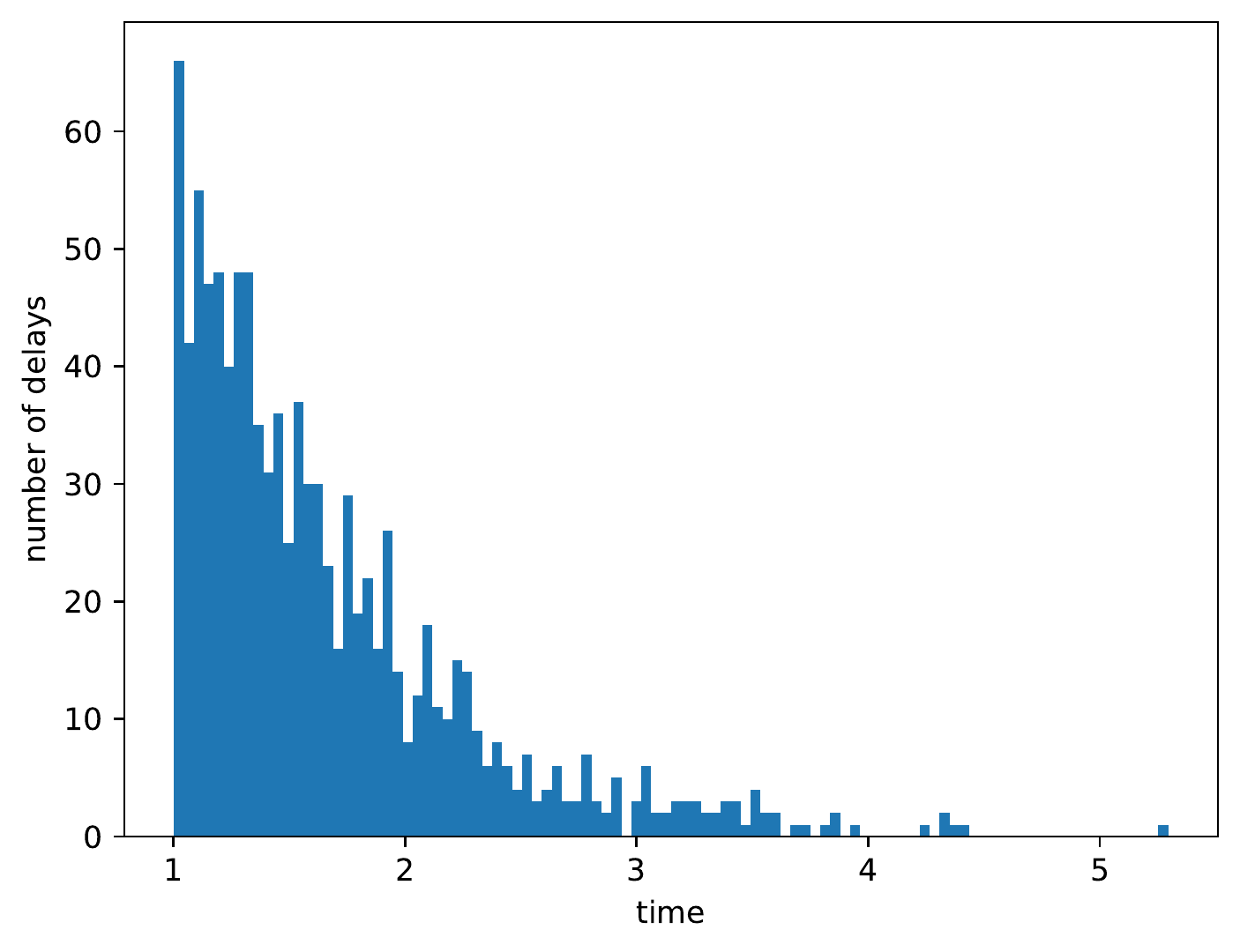}
    \caption{Histogram of time delays}
    \label{fig:hist}
\end{figure}

\newpage

\section{Time Complexity of Asynchronous SGD}
\label{sec:async_sgd_time}
The works \citep{mishchenko2022asynchronous, koloskova2022sharper} state that Asynchronous SGD convereges after 
\begin{align*}
    \operatorname{O}\left(\frac{n L \Delta}{\varepsilon} + \frac{\sigma^2 L \Delta}{\varepsilon^2}\right)
\end{align*}
iterations. This result directly does not reveal the time complexity of Asynchronous SGD. Let us provide the time complexity for the case when the workers require exactly $\tau_i$ seconds to compute stochastic gradients. Let us fix a time $t \geq 0.$ Then the workers will calculate at most 
\begin{align*}
    \sum_{i=1}^n \left\lfloor\frac{t}{\tau_i}\right\rfloor
\end{align*}
stochastic gradients. To get $\varepsilon$-stationary point, we have to find the minimal $t$ such that
\begin{align}
    \label{eq:async_eq}
    c \times \left(\frac{n L \Delta}{\varepsilon} + \frac{\sigma^2 L \Delta}{\varepsilon^2}\right) \leq \sum_{i=1}^n \left\lfloor\frac{t}{\tau_i}\right\rfloor,
\end{align}
where the quantity $c$ is a numerical constant, since the number of iterations can not be larger than the number of calculated gradients.
Note that 
\begin{align*}
    \sum_{i=1}^n \left\lfloor\frac{t}{\tau_i}\right\rfloor \leq \sum_{i=1}^n \frac{t}{\tau_i}
\end{align*}
The time $t'$ that satisfies 
\begin{align*}
    c \left(\frac{n L \Delta}{\varepsilon} + \frac{\sigma^2 L \Delta}{\varepsilon^2}\right) = \sum_{i=1}^n \frac{t'}{\tau_i}
\end{align*}
is
\begin{align}
    \label{eq:async_eq_2}
    t' = c\left(\frac{1}{n} \sum_{i=1}^n \frac{1}{\tau_i}\right)^{-1} \left(\frac{L \Delta}{\varepsilon} + \frac{\sigma^2 L \Delta}{n \varepsilon^2}\right).
\end{align}
Therefore, the minimal time $t$ that satisfies \eqref{eq:async_eq} is \emph{greater or equal} to \eqref{eq:async_eq_2}.

\section{Analysis of Fixed-Computation Model Using Graph Oracle Models}
\label{sec:graph_oracle}
In this section, we analyze the fixed computation model, where $i$\textsuperscript{th} worker requires $\tau_i$ seconds to calculate stochastic gradients. Without loss of generality, let us assume that $0 < \tau_1 \leq \dots \leq \tau_n.$ We use the graph oracle framework by \citet{woodworth2018graph}. Let us fix some $t > \tau_n.$ Then, in the fixed computation model, the depth of the graph oracle $D$ is at least $\Omega\left(\nicefrac{t}{\tau_1}\right).$ This is the number of gradients that the first node can calculate after $t$ seconds. The size of the graph $N$ equals $\Theta\left(\sum_{i=1}^n \frac{t}{\tau_i}\right)$ since all workers calculate in parallel. Applying these estimates to Theorem 1 of \citep{woodworth2018graph}, one can see that, for convex, $L$--smooth, $M$-Lipschitz problems with unbiased and $\sigma^2$-variance-bounded stochastic gradients on the ball $B_2(0, R)$,
 the lower bound is at most equals
\begin{align*}
    \operatorname{O}\left(\min\left\{\frac{M R}{\sqrt{t / \tau_1}}, \frac{L R^2}{(t / \tau_1)^2}\right\} + \frac{\sigma R}{\sqrt{\sum_{i=1}^n \frac{t}{\tau_i}}}\right).
\end{align*}
From this estimate, we can conclude that in order to get an $\varepsilon$--solution, it is required
\begin{align*}
    t = \operatorname{O}\left(\tau_1 \min\left\{\frac{M^2 R^2}{\varepsilon^2}, \frac{\sqrt{L} R}{\sqrt{\varepsilon}}\right\} + \left(\frac{1}{n}\sum_{i=1}^n \frac{1}{\tau_i}\right)^{-1}\frac{\sigma^2 R^2}{n \varepsilon^2}\right)
\end{align*}
seconds.

\subsection{Example when the lower bound from \citep{woodworth2018graph} is not tight}
Let us provide an example when the lower bound from Theorem~\ref{theorem:lower_bound_homog_convex} is strictly higher. Let us take $\tau_i = \sqrt{i}$ for all $i \in [n].$ Then, it is sufficient to compare
\begin{align*}
    \min \limits_{m \in [n]} \left[\left(\frac{1}{m} \sum_{i=1}^{m} \frac{1}{\tau_i}\right)^{-1} \left(\min\left\{\frac{\sqrt{L} R}{\sqrt{\varepsilon}}, \frac{M^2 R^2}{\varepsilon^2}\right\} + \frac{\sigma^2 R^2}{m \varepsilon^2}\right)\right]
\end{align*}
and
\begin{align*}
    \tau_1 \min\left\{\frac{\sqrt{L} R}{\sqrt{\varepsilon}}, \frac{M^2 R^2}{\varepsilon^2}\right\} + \left(\frac{1}{n} \sum_{i=1}^{n} \frac{1}{\tau_i}\right)^{-1}\frac{\sigma^2 R^2}{n \varepsilon^2}.
\end{align*}
Let us divide both formulas by the term with minimum and obtain
\begin{align*}
    \min \limits_{m \in [n]} \left[\left(\frac{1}{m} \sum_{i=1}^{m} \frac{1}{\tau_i}\right)^{-1} \left(1 + \frac{a}{m}\right)\right]
\end{align*}
and
\begin{align*}
    \tau_1 + \left(\frac{1}{n} \sum_{i=1}^{n} \frac{1}{\tau_i}\right)^{-1}\frac{a}{n}
\end{align*}
for some $a \geq 0.$ Since $\sum_{i=1}^{m} \frac{1}{\tau_i} = \sum_{i=1}^{m} \frac{1}{\sqrt{i}} = \Theta\left(\sqrt{m}\right),$ then we get
\begin{align*}
    \min \limits_{m \in [n]} \left[\left(\frac{1}{m} \sum_{i=1}^{m} \frac{1}{\tau_i}\right)^{-1} \left(1 + \frac{a}{m}\right)\right] = \Theta\left(\min \limits_{m \in [n]} \left[\sqrt{m} \left(1 + \frac{a}{m}\right)\right]\right) = \Theta\left(\sqrt{a}\right)
\end{align*}
and
\begin{align*}
    \tau_1 + \left(\frac{1}{n} \sum_{i=1}^{n} \frac{1}{\tau_i}\right)^{-1}\frac{a}{n} = \Theta\left(1 + \frac{a}{\sqrt{n}}\right),
\end{align*}
if $a \leq n.$ For instance, let us additionally assume that $a = \sqrt{n},$ then the first term equals $\Theta\left(n^{1/4}\right),$ while the second equals $\Theta(1).$ Theorefore, the lower bound from Theorem~\ref{theorem:lower_bound_homog_convex} is tighter.

\end{document}